\documentclass[11pt,a4paper,reqno]{amsart}
\usepackage[applemac]{inputenc}
\usepackage[T1]{fontenc}
\usepackage{amsmath}
\usepackage{amsthm}
\usepackage{amsfonts}
\usepackage{amssymb}
\usepackage{graphicx}
\usepackage{color}
\usepackage{amsbsy}
\usepackage{mathrsfs}
\usepackage{bbm}
\usepackage{amsbsy}

\newtheorem{theorem}{Theorem}[section]
\newtheorem{lemma}[theorem]{Lemma}

\newtheorem{proposition}[theorem]{Proposition}

\newtheorem{definition}[theorem]{Definition}
\theoremstyle{remark}
\newtheorem{remark}[theorem]{\it \bf{Remark}\/}

\numberwithin{equation}{section}
\catcode`@=11
\def\section{\@startsection{section}{1}%
  \z@{1.5\linespacing\@plus\linespacing}{.5\linespacing}%
  {\normalfont\bfseries\large\centering}}
\catcode`@=12
\newcommand{\be}{\begin{equation}}
\newcommand{\ee}{\end{equation}}
\newcommand{\bea}{\begin{eqnarray}}
\newcommand{\eea}{\end{eqnarray}}
\newcommand{\bee}{\begin{eqnarray*}}
\newcommand{\eee}{\end{eqnarray*}}
\newcommand{\bear}{\begin{array}{l}}
\newcommand{\ear}{\end{array}}

\def\pa{\partial}

\def\non{\nonumber}

\def\log{\ln}

\def\RR{\mathbb{R}}

\def\eps{\vare}
\def\wt{\tilde{w}}

\def\eps{\epsilon}

\catcode`@=11
\def\supess{\mathop{\operator@font Sup\,ess}}
\catcode`@=12

\def\RR{\mathbb{R}}

\def\H{{\mathcal H}}

\def\e{\varepsilon}

\def\R2+{\RR ^2_+}

\def\A{\mathcal A}

\def\pa{\partial}

\def\lim{\mathop{\rm lim}}

\def\sup{\mathop{\rm sup}}

\def\l{\lambda}

\def\LR{\Leftrightarrow}
\def\log{{\rm log}}

\def\ut{\tilde{u}}

\def\noi{\noindent}

\def\pa{\partial}

\def\Psit{\tilde{\Psi}}

\def\Lamdba{\Lambda}

\def\pa{\partial}
\def\la{\langle}
\def\matchal{\mathcal}
\def\ra{\rangle}

\def\noi{\noindent}
\def\und{\underline}

\def\nut{\tilde{\nu}}

\def\bear{\begin{array}{l}}
\def\ear{\end{array}}
\def\psit{\tilde{\psi}}

\def\phit{\tilde{\phi}}

\def\C{\mathcal C}
\def\R{\mathcal R}
\def\ve{{\bf e}}

\def\wt{\tilde{w}}

\def\A{\mathcal A}

\begin{document}

\title[]{On smooth inviscid vortices with fat tails}

\author[P. Rapha\"el]{Pierre Rapha\"el}
\address{Department of Pure Mathematics and Mathematical Statistics, Cambridge, UK}

\author[A. R. Sa\"id]{Ayman R. Sa\"id}
\address{CNRS, Laboratoire de Math\'ematiques, Reims, France}

\begin{abstract}  We derive a family of degenerate vortices with non trivial swirls and fat tails which can be used to bifurcate both stationary and self similar solutions to the three dimensional axi-symmetric incompressible Euler equations. \end{abstract}

\maketitle

\section{Introduction}

We consider the three dimensional incompressible Euler equations
\begin{equation}
\label{eulerincomp}
\left|\begin{array}{l}
\partial_tu+u\cdot\nabla u+\nabla p=0\\
\nabla \cdot u=0\\
(t,x)\in \Bbb R\times \Bbb R^3, \ \ u(t,x)\in \Bbb R^3
\end{array}\right.
\end{equation}
and address the classical problem of constructing stationary or self similar solutions.


\subsection{The axisymmetric problem}  


Under the additional assumption of cylindrical symmetry, the velocity field can be expressed in the cylindrical basis $(\ve_r,\ve_\phi,\ve_z)$ in terms of the swirl and stream functions
\be
\label{aysitniflow}
u=\frac{1}{r}\nabla^\perp \psi+u_\phi\ve_\phi, \ \ \nabla^\perp\psi=-\pa_z\psi\ve_r+\pa_r\psi\ve_z.
\ee

\noi\und{Vortex problem}. For the stationary equation, the Bernoulli function and swirl are transported by the flow $$\left|\begin{array}{l}
ru_\phi=\mathcal C(\psi)\\
\Pi=p+\frac{|u|^2}{2}=\mathcal H(\psi)
\end{array}\right.
$$  
for some free non linearities $\mathcal H,\mathcal C$, and the stationary equation reduces to the non linear elliptic equation
\be
\label{elltipicequations}
-\frac 1r\pa_r\left(\frac{1}{r}\pa_r\psi\right)-\frac{\pa_z^2\psi}{r^2}=\frac{\mathcal C(\psi)\matchal C'(\psi)}{r^2}-\mathcal H'(\psi).
\ee

\noi\und{Self similar equation}. Euler has a two parameters scaling symmetry, hence self similar renormalization yields the stationary self similar equation
\be
\label{selfsim}
\left|\bear
\left[\gamma+ (u+x)\cdot \nabla \right]u+\nabla p=0\\
\nabla \cdot u=0
\ear\right.
\ee
for some free parameter $\gamma>-1$. 

\subsection{Previous constructions and related problems} Constructing stationary (or travelling) vortices solutions is a classical problem, self similar solutions is much more delicate.\\

\noindent{\em Known vortices}. Some explicit examples are known \cite{Mo69}. A large class of axisymmetric finite energy $$\int_{\Bbb R^3}|u(x)|^2dx<\infty$$ vortex rings has been constructed in the seminal pioneering work \cite{Tur89}. An intrinsic feature of the  variational method is to produce profiles with compactly supported swirls, and this generates solutions which lose regularity on the corresponding free boundary. The existence proof also usually provides a poor understanding of the structure of the profile of the solution and the associated linearized operator, and hence for example most stability problems in the field are open. Remarkably, smooth vortices with compactly supported velocities are constructed in \cite{Gav19} using the hodograph transform.\\

\noindent{\em Self similar solutions}. The existence of solutions to \eqref{selfsim} with decay at infinity is a major open problem in the field. In the seminal work \cite{El19}, Elghindi constructed a cylindrically symmetric self similar solution with zero swirl  and limited regularity for $\gamma=-1+\e$, see also \cite{El23,Sk}, and we refer to \cite{CV} and references therein for an illuminating discussion on this subject. A fair conclusion is that there are very few examples of solutions to \eqref{selfsim}, and in fact very little understanding of how to construct them.\\
 
 \noindent{\em The tail problem}. We would like to stress an analogy with a simpler problem, the non linear heat equation 
 \be
 \label{vneoineonveonevo}
 \pa_tu=\Delta u +u^p.
 \ee 
 In the energy super critical range $p>\frac{d+2}{d-2}$, $d\ge 3$, there exists a unique radially symmmetric  ground state solution 
 \be
 \label{vnoivneionoenoive}
 \Delta Q+Q^p=0, \ \ Q>0.
 \ee
 An essential feature of this non linear profile is its $\mathcal C^\infty$ smoothness, and its {\em fat self similar tail} at infinity:
 $$
 Q(r)\sim \left[1+o_{r\to+\infty}(1)\right]\Phi_*(r)
 $$
 where 
 \be
 \label{enoenvoinoevin}
 \Phi_*(r)=\frac{c_*(d,p)}{r^{\frac{2}{p-1}}}
 \ee is the homogeneous tail. Note that a peculiar property of $\Phi_*$ is that it is {\em both} a stationary and a self similar solution that is 
 \be
 \label{vieobioeboibve}
 \left|\bear
 \Delta \Phi_*+\Phi_*^p=0\\
 \left(\frac{2}{p-1}+r\pa_r\right)\Phi_*=0.
 \ear\right.
 \ee
 The set of smooth solutions to \eqref{vnoivneionoenoive} in the energy super critical range is poorly understood, mostly because these are infinite energy solutions which are not amenable to variational methods. Understanding which asymptotics tails are generated by smooth solutions is a classical open problem. The simplest way to construct solutions to \eqref{vnoivneionoenoive} is to assume radial symmetry and study the corresponding ode through its Emden transform, a strategy which will inspire our approach to derive solutions to \eqref{elltipicequations}.\\
  
 \noindent{\em Blow up problem}.  The fat self similar tail \eqref{enoenvoinoevin} of {\em stationary solutions} is deeply connected to the existence of {\em smooth self similar solutions} for \eqref{vneoineonveonevo} which can be constructed through a subtle bifurcation process, \cite{MRS20}. In other words, in certain regimes of parameters $(d,p)$, the singular tail solution \eqref{vieobioeboibve} can be used to bifurcate smooth both stationary and self similar solutions. More generally, for other non linear fluid or dispersive problems like in particular the nonlinear Schr\"odinger equation or compressible fluid mechanics, and in connection to singularity formation problems, the works \cite{El19,HV94,MRR15,MRRS19-1,MRRS19-2,MRRS19-3} have demonstrated the importance of {\em infinite energy smooth with fat self similar tail} solutions to the either stationary or self similar equation. These energy super critical non linear bubbles are the key stone of the underlying mechanism of energy concentration and singularity formation.
  

\subsection{Statement of the result}


 Inspired by the above problem, we ask for the existence of suitable profiles at the threshold between stationary versus self similar solutions for \eqref{eulerincomp}. We propose a new  bifurcation scheme which starting point is the {\em anisotropic front renormalization} initiated in \cite{CMR19, MRS20}.\\
  
 The first step is the existence of  {\em degenerate homogeneous stationary vortices} which solve the limiting degenerate front equation.
  
\begin{proposition}[The degenerate homogeneous vortex]
\label{thmdegen}
Let 
\be
\label{gmamintro}
\gamma=2+\frac{1}{m}, \ \ m\in \Bbb N^*.
\ee 
There exists $A\equiv A(\gamma)>0$ such that for the scale invariant homogeneous non linearities 
\be
\label{veonoeinoineonevo}
\left|\begin{array}{l}
\mathcal C_0(\psi)=\frac{\gamma-2}{\gamma -1}\sqrt{A}\psi^{\frac{\gamma-1}{\gamma-2}}\\
\mathcal H_0'(\psi)=\psi^{\frac{\gamma+2}{\gamma-2}},
\end{array}\right.
\ee
the degenerate ode 
\be
\label{degenerateode}
-\frac 1r\pa_r\left(\frac{1}{r}\pa_r\psi\right)=\frac{\mathcal C_0(\psi)\matchal C'_0(\psi)}{r^2}-\mathcal H_0'(\psi)
\ee 
admits a solution $\psi_*(r)$ which is a bell shaped $\mathcal C^\infty$ non negative profile with self similar decay at infinity
$$\left|\begin{array}{l}
\psi_*(r)\sim C(\gamma)\left[1+o_{r\to +\infty}(1)\right]r^{2-\gamma}\\
C(\gamma)>0.
\end{array}\right.
$$
\end{proposition}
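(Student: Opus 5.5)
The plan is to reduce \eqref{degenerateode} to an autonomous planar system through a change of variables of Emden type, and to obtain $\psi_*$ as a heteroclinic orbit between the equilibrium at the origin and a positive equilibrium. First I compute the nonlinearity. With $\gamma-2=\frac1m$ we get
$$\mathcal C_0\mathcal C_0'=\frac{\gamma-2}{\gamma-1}A\,\psi^{\frac{\gamma}{\gamma-2}}=\frac{A}{m+1}\psi^{2m+1},\qquad \mathcal H_0'(\psi)=\psi^{4m+1}.$$
Both powers are odd integers, which will give $\mathcal C^\infty$ smoothness for free. The operator $\frac1r\pa_r$ equals $\pa_s$ for $s=\frac{r^2}{2}$, so \eqref{degenerateode} becomes
$$-\pa_s^2\psi=\frac{A}{2(m+1)}\frac{\psi^{2m+1}}{s}-\psi^{4m+1}.$$
A solution that is smooth in $s$ up to $s=0$ is smooth and even in $r$. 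Hence it suffices to build a solution $\psi(s)$ on $[0,\infty)$ that is smooth at $s=0$, positive, decreasing, and satisfies $\psi\sim c\,s^{-\alpha}$ with $\alpha=\frac{1}{2m}=\frac{\gamma-2}{2}$. This is exactly the decay $r^{2-\gamma}$. One checks that $\alpha$ is the unique exponent balancing all three terms, namely $\alpha+2=2m\alpha+1+\alpha=(4m+1)\alpha$.

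Next I perform the Emden transform $\psi(s)=s^{-\alpha}w(t)$ with $t=\log s$. This gives the autonomous equation
$$w''-(2\alpha+1)w'+V'(w)=0,\qquad V(w)=\frac{\alpha(\alpha+1)}{2}w^2+\frac{A}{2(m+1)(2m+2)}w^{2m+2}-\frac{w^{4m+2}}{4m+2},$$
together with the energy identity $\frac{d}{dt}\big(\frac12 w'^2+V(w)\big)=(2\alpha+1)w'^2$. The function $V'(w)/w$ is increasing in $w^{2m}$ up to a point and then decreasing, so $V'$ has exactly one positive zero $w_*(A)$. At $w_*$ the potential $V$ has a strict local maximum, and therefore $(w_*,0)$ is a hyperbolic saddle with a one-dimensional stable manifold. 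The origin has eigenvalues $\alpha$ and $\alpha+1$, so it is an unstable node. Trajectories leaving it behave like $c_1e^{\alpha t}+c_2e^{(\alpha+1)t}$, which corresponds to $\psi=c_1+c_2 s+\dots$, a smooth profile at $s=0$ with $\psi(0)=c_1>0$. Higher-order terms are powers of $e^{\alpha t}$ and $e^{t}$. Since $2m\alpha=1$, these recombine into integer powers of $s$, and smoothness in $s$ follows from analyticity of the unstable manifold together with the odd integer powers. The desired profile $\psi_*$ is therefore the branch $\Gamma_A$ of the stable manifold of $(w_*,0)$ that arrives from $w<w_*$, followed backward in time, provided it converges to $(0,0)$. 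Its limit as $t\to+\infty$ then gives $C(\gamma)=w_*\,2^{-\alpha}$.

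The core of the argument is the backward-time analysis of $\Gamma_A$. In reversed time the energy decreases, and on $\Gamma_A$ it is therefore at most $V(w_*)$. Since $V'>0$ on $(0,w_*)$, I will show that along $\Gamma_A$ we have $w'>0$ as long as $0<w<w_*$. This is the monotonicity that gives the bell shape, since $\pa_s\psi=s^{-\alpha-1}(w'-\alpha w)$. To prove it, I will use the invariant-region argument in the phase plane: the line $\{w'=0,\,0<w<w_*\}$ cannot be crossed from above in reversed time. So $\Gamma_A$ stays in the strip $0<w<w_*$ and tends either to $(0,0)$ or to a crossing of $\{w=0\}$ with $w'>0$. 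To exclude the crossing, and to get $w'-\alpha w<0$ (genuine decay of $\psi$ rather than only of $w$), I will use $A$ as a shooting parameter. Entering the node is an open condition in $A$, and so is crossing the axis. I would therefore compare the two regimes $A\to0$ and $A\to\infty$, where the rescaled equation degenerates to an explicitly integrable one. From this comparison I would extract some $A(\gamma)>0$ for which $\Gamma_A$ enters the origin along the slow direction $e^{\alpha t}$ with the correct sign. If needed, I would also add a secondary condition, such as the vanishing of the coefficient of the fast mode $c_2$, to fix $A$.

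This last step is where I expect the main difficulty. It requires controlling the global behavior of the one-dimensional stable manifold of the saddle as $A$ varies, and in particular ruling out that it escapes through $w=0$ or that the profile $\psi=s^{-\alpha}w$ fails to be monotone. I would try first to build explicit trapping regions of the form $\{w'\le\beta(w)\}$ adapted to the two powers $w^{2m+1}$ and $w^{4m+1}$, and to fall back on the continuity/shooting argument in $A$ only if the trapping estimates do not close.
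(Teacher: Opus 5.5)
There is a genuine gap, and it occurs at the origin, before the step you flag as the main difficulty. You look for a profile with $\psi(0)=c_1>0$ that leaves the node along the slow direction $e^{\alpha t}$, and you claim $\psi=c_1+c_2s+\dots$ is smooth. This is false because of the exact resonance $(2m+1)\alpha=\alpha+1$. The term $\frac{A}{2(m+1)}w^{2m+1}$ evaluated on $c_1e^{\alpha t}$ forces the linear operator $w''-(2\alpha+1)w'+\alpha(\alpha+1)w$ at its own eigenvalue $\alpha+1$. This produces a term $K\,t\,e^{(\alpha+1)t}$ with $K=-\frac{A}{2(m+1)}c_1^{2m+1}\neq 0$, that is, $\psi=c_1+c_2s+Ks\log s+\dots$. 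You can see it directly in your $s$-equation. If $\psi(0)>0$, then $\partial_s^2\psi\sim-\frac{Ac_1^{2m+1}}{2(m+1)s}$ is not integrable at $s=0$. Hence $\partial_s\psi\to+\infty$, $\partial_r^2\psi$ is unbounded, and the swirl $\mathcal C_0(\psi)/r$ is of order $c_1^{m+1}/r$ and blows up on the axis. So for $A>0$ there is no $\mathcal C^\infty$ solution with $\psi(0)>0$, and in particular no smooth decreasing one. Here ``bell shaped'' means $\psi_*(0)=0$, with $\psi_*$ increasing and then decreasing, so that $\partial_r\psi_*$ vanishes exactly once. (Also, $C(\gamma)=2^{\alpha}w_*$, not $2^{-\alpha}w_*$.)

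The smooth solutions are exactly those that leave the node along the strong direction, i.e.\ $c_1=0$. They have the form $\psi=s\Phi(s)$ with $\Phi(0)>0$, which is the paper's $\psi=r^2\Psi$ with $\Psi(0)=1$. For these, $\psi^{2m+1}/s=s^{2m}\Phi^{2m+1}$, so the equation for $\Phi$ has polynomial nonlinearities; this is where the choice $\gamma=2+\frac1m$ yields smoothness. This is a codimension-one condition, so your criterion ``$\Gamma_A$ enters the node'' is the wrong one. That criterion is open in $A$ and generically gives entry along the slow direction, hence singular profiles. It already holds for all small $A>0$, where $\Gamma_A$ is trapped below the strong separatrix, which overshoots the saddle. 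Likewise, the extra condition you mention should be $c_1=0$, not $c_2=0$.

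What is missing is a shooting argument: send the one-dimensional strong unstable separatrix of the origin into the stable manifold of the saddle. The paper does this in its Emden variables in three steps. For $A=0$, $F$ is concave and the separatrix stays above $u=\gamma\phi$, hence above $u=F(\phi)$, so it overshoots $\phi_+^*$. For $A$ large, a rescaling reduces the problem to a limiting ODE whose separatrix reaches $u=0$ before the saddle. Continuity in $A$ then gives a critical $A(\gamma)$ at which the separatrix lands on $(\phi_+^*,0)$. Finally, the single sign change of $\partial_r\psi_*$ follows from counting the contacts of the orbit with the line $u=(\gamma-2)\phi$.
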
 

We now claim that, at least in a suitable range of $\gamma$, the degenerate vortex profile can be used to bifurcate {\em both} a solution to the full stationary vortex equation \eqref{elltipicequations}, and an exact  self similar solution to \eqref{selfsim} smooth away from the origin.

\begin{theorem}[Existence of vortices or self similar profiles with fat tail]
\label{thmmain}
Let $m\ge m_*$ large enough, $\gamma$ given by \eqref{gmamintro} and $\psi_*$ be the smooth degenerate homogeneous vortex of Proposition \ref{thmdegen}. Then for all $0<a<a_*(\gamma)$ small enough, the following holds. Let the front renormalization $z=aZ$.\\

\noindent\underline{\em Vortex bifurcation.} There exists a smooth scaling factor $\nu(Z)>0$ and a corrected stream function $\psi_{\rm corr}$ such that $$\psit(r,Z)=\frac{1}{\left[\nu(Z)\right]^{\gamma-2}}\psi_*\left(\frac{r}{\nu(Z)}\right)\left[1+a^2\psi_{\rm cor}(r,Z)\right]
$$ satisfies the following:\\

\noindent {\em (i) Decay in $Z$}: $\nu$ and $\psit$ are even in $Z$ and there holds  $$\la Z\ra^{\eta}\lesssim \nu(Z)\lesssim \la Z\ra^{\eta}\ \ \mbox{for some}\ \ 0<\eta\ll1 .$$ 
{\em (ii) Uniform smallness of the correction}: $$a^2\|\la z\ra^\delta \psi_{\rm cor}\|_{L^\infty}\le \frac 12\ \ \mbox{for some}\ \ \delta>0.$$
{\em (iii) Regularity and equation:} for a suitable deformation $\matchal C$ of the homogeneous non linearity \eqref{veonoeinoineonevo}, the axisymmetric vector field 
$$\left|\bear
u(r,z)\equiv \ut(r,Z)=\frac{\nabla^\perp\psit}{r}+\tilde{u}_\phi\ve_\phi\\
\tilde{u}_\phi=\mathcal C(\psit)
\ear\right.
$$
  is a $\matchal C^\infty$ smooth stationary solution to \eqref{eulerincomp} with fat self similar tail at $+\infty$.\\
  
  \noindent\underline{\em Self similar bifurcation.} There exists a bifurcated profile $\psi_a(R)=\psi_*(R)\left[1+O_R(a)\right]$ such that the cylindrical vector field generated by  the stream function 
  \be
  \label{vneionveoinioenvnev}
  \psi(r,z)=\frac{1}{|aZ|^{\gamma-2}}\psi_a\left(\frac{r}{|Z|}\right)
  \ee 
  with swirl given by the homogeneous non linearity $\matchal C_0$  \eqref{veonoeinoineonevo} is a $\mathcal C^\infty$ smooth solution to the self similar equation \eqref{selfsim} in $\Bbb R^3\backslash \{0\}$ with asymptotic self similar decay.

\end{theorem}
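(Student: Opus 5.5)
We take Proposition \ref{thmdegen} as given and treat both bifurcations as perturbations of the degenerate ode \eqref{degenerateode}, after the front renormalization $z=aZ$.

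\emph{Renormalized equation.} In the variables $(r,Z)$, equation \eqref{elltipicequations} becomes
$$-\frac 1r\pa_r\Big(\frac 1r\pa_r\psi\Big)-a^2\frac{\pa_Z^2\psi}{r^2}=\frac{\mathcal C\mathcal C'(\psi)}{r^2}-\mathcal H'(\psi),$$
so the $Z$ derivatives enter only at order $a^2$. For the vortex we insert the modulated ansatz $\psit=\nu^{2-\gamma}\psi_*(r/\nu)[1+a^2\psi_{\rm cor}]$. Scale invariance of $\mathcal C_0,\mathcal H_0$ means that each slice $\nu^{2-\gamma}\psi_*(\cdot/\nu)$ solves \eqref{degenerateode} exactly. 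The error is therefore $a^2\pa_Z^2$ applied to the modulated profile, which involves $\nu'$, $\nu''$ and the generator $\Lambda\psi_*=((\gamma-2)+y\pa_y)\psi_*$.

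We expand $\psi_{\rm cor}$ and solve at each $Z$ the radial linearized problem $\mathcal L\psi_{\rm cor}=F$, where $\mathcal L$ is the linearization of \eqref{degenerateode} at $\psi_*$. Its kernel contains $\Lambda\psi_*$, and the associated solvability condition yields a second-order ode for $\nu(Z)$. The remaining freedom is the deformation of the nonlinearity $\mathcal C$ (and of $\mathcal H$), chosen as a small perturbation of $\mathcal C_0$ away from the tail. This freedom absorbs the obstruction that cannot be removed in $Z$.

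We look for an even solution $\nu$ with $\nu\sim\la Z\ra^\eta$. Substituting a power law into the modulation equation should force the exponent $\eta$ through the tail $\psi_*\sim Cr^{2-\gamma}$. Because $\gamma-2=1/m$ is small, the nonlinear exponents $\frac{\gamma-1}{\gamma-2}=m+1$ and $\frac{\gamma+2}{\gamma-2}=4m+1$ are integers. This integrality is what keeps $\mathcal C$ and $\mathcal H'$ smooth at $\psi=0$, and hence makes $u$ smooth. A large $m$ should make $\eta\ll1$.

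\emph{Closing the fixed point.} We close $\psi_{\rm cor}$ by a contraction in a weighted space with norm $\|\la z\ra^\delta\,\cdot\,\|_{L^\infty}$ together with derivatives. This uses decay estimates for $\mathcal L^{-1}$ on the complement of $\Lambda\psi_*$, with the self-similar tail $r^{2-\gamma}$ fixing the weights. Smallness of $a$ makes the $a^2$ errors contractive, which gives (ii). Smoothness follows from elliptic regularity of \eqref{elltipicequations}, since $\psit$ stays positive and comparable to the modulated $\psi_*$.

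\emph{Self-similar bifurcation.} We substitute \eqref{vneionveoinioenvnev} into \eqref{selfsim}, written as the axisymmetric system for $\psi$ and $ru_\phi$ with swirl $\mathcal C_0$. Homogeneity reduces it to an ode in $R=r/|Z|$ of the form
$$\text{\eqref{degenerateode}}(\psi_a)=a\,\mathcal N(\psi_a,R\pa_R\psi_a,\dots),$$
because the $Z$ derivatives and the drift $x\cdot\nabla$ along $z$ both carry powers of $a$. We then solve $\psi_a=\psi_*(1+w)$ by a contraction on $\mathcal L w=a\,\mathcal N$, projecting out the kernel direction $\Lambda\psi_*$ with the available scaling parameter. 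We check that the tail is preserved.

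\emph{Main obstacle.} The hardest step is the spectral and decay analysis of $\mathcal L$ around $\psi_*$ for large $m$: identifying its kernel as exactly $\Lambda\psi_*$, proving invertibility with weights compatible with the fat tail, and showing that the modulation equation for $\nu$ admits the slowly growing solution. We would attack this through the Emden-type change of variables $r=e^s$, in which $\psi_*$ becomes a heteroclinic orbit and $\mathcal L$ a Schrödinger operator whose limits at $s\to\pm\infty$ can be computed explicitly.
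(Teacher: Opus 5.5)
Your skeleton for the vortex matches the paper: the modulated ansatz $\nu^{2-\gamma}\psi_*(r/\nu)$, projection on $\zeta_*=\Lambda\psi_*$ to get an ODE for $\nu$, and integrality of $\frac{\gamma-1}{\gamma-2},\frac{\gamma+2}{\gamma-2}$ for smoothness. Two steps fail as proposed.

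\emph{Inverting slice by slice.} You cannot invert only the radial operator $\mathcal L$ at each slice and put $\frac{a^2}{r^2}\partial^2\psi_{\rm cor}$ in the slow variable (your $Z$, the paper's $z$) into the source. That term is second order, so each iterate loses two slow derivatives. No contraction in $\|\langle z\rangle^\delta\cdot\|_{L^\infty}$ with finitely many derivatives closes, however small $a$ is. The paper inverts the full anisotropic operator $L_a=M-\frac{a^2}{r^2}\partial_z^2$ by Lax--Milgram on $V=\bigcap_z\ker T_z$, i.e.\ slice-wise orthogonality to $h=\nu^{-p}\zeta_*(R)$. This yields $L_a\phi=f+D_f(z)h$ (Lemma \ref{vneokvneneonvie}), with coercivity from Lemma \ref{coercm}. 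The weighted decay in $r$ and $z$ (Section \ref{sectionresolventtwo}) comes from trading the $K/R^4$ tail of $V_*$, where $K\to\infty$ as $\gamma\downarrow 2$, for a high-dimensional Laplacian. This, together with $|\lambda_-|\to\infty$, is what large $m$ is for.

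\emph{The mechanism for $\nu\sim\langle z\rangle^\eta$.} With $\mathcal C_0,\mathcal H_0$ undeformed, the projection of $\frac{1}{r^2}\partial_z^2\psi_\nu$ on $\zeta_*$ is exactly $\frac{m_0^*}{\gamma-2}\partial_z^2\tilde\nu$, with $\tilde\nu=\nu^{2-\gamma}$. So at leading order in $a$ the solvability condition is $\tilde\nu''=0$, whose only even solutions are constants, i.e.\ the $z$-independent $\psi_*$. The tail $r^{2-\gamma}$ selects no exponent. The paper fixes the deformations $a^2\psi^{\beta_1}$ in $\mathcal C\mathcal C'$ and $a^2\psi^{\beta_2}$ in $\mathcal H'$, with $\beta_1>\beta_2$, precisely so that the projection becomes the Hamiltonian equation \eqref{equaitonoineog}, $-\tilde\nu''-d_1^*\tilde\nu^{\beta_1}+d_2^*\tilde\nu^{\beta_2}=\mathcal G_a(\nu)$. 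Its even decaying solution has $\tilde\nu\sim z^{-2/(\beta_2-1)}$, so $\eta=\frac{2}{(\beta_2-1)(\gamma-2)}=\frac{2}{4+m_2/m}$. Small $\eta$ therefore requires $m_2\gg m$; large $m$ alone gives $\eta\to\frac12$. Closing this ODE also needs the projected source in $\langle z\rangle^{-\beta}L^2$ with $\beta>\frac32$. That is why the paper carries $\langle z\rangle^{\frac32-\delta_*}$ weights and the extra $\langle z\rangle^{\frac1{20}}$ gain of Lemma \ref{lemmanonlin}. A small $\langle z\rangle^\delta$ weight in $L^\infty$ is far from enough for the quadratic terms.

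\emph{Self-similar part.} Here the same problem is sharper. Homogeneity of degree $2-\gamma$ gives $(\gamma+x\cdot\nabla)u=0$, so the drift cancels identically against $\gamma u$. What remains is the stationary equation, which in $x=\ln(r/|z|)$ is the Emden equation plus, after translation, $a^2e^{2x}(\partial_x^2+\partial_x)\phi$. This is not a small perturbation of $\mathcal L$. It is second order and dominates once $e^x\gtrsim a^{-1}$, i.e.\ in a conical region around the plane $z=0$, so $\mathcal Lw=a\mathcal N$ cannot be closed around $\psi_*$ there.

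The scaling parameter does not fix the solvability condition either, since translation in $x$ only changes $a$. Meanwhile $R^-_\gamma\big[e^{2x}(\partial_x^2+\partial_x)\phi_\gamma\big]$ contains $(\gamma-1)\big(\int e^{2(2-\gamma)y}(\phi_\gamma')^2dy\big)h_2$, with $h_2\sim e^{\lambda_{+,\gamma}x}$. This is a nonzero growing mode that only a change of the constant $A$ can cancel. The paper takes $A=A_\gamma+c+\epsilon$ with $c\approx -c_\gamma a^2$ (Lemma \ref{inenrsotuon}).

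Most importantly, checking that the tail is preserved misses the point of the construction. A generic solution behaves like $\phi^++M_\infty e^{-x}+O(e^{-2x})$ as $x\to+\infty$, i.e.\ contains $|Z|/r$ and has a kink across $z=0$. The paper solves an outer problem in $X=x-\ln(1/a)$, where the $a$-term is leading and stabilizing, with basis $f_1\sim e^{-X}$, $f_2\sim 1-f_{3,\gamma}e^{-X}$. It matches this to the inner solution at $x_*=\ln(\eta_*/a)$ and shoots in $\epsilon$ to get $M_\infty=0$ (Lemma \ref{elnannn}). Only then does the expansion go in powers of $(Z/r)^2$, giving $\mathcal C^\infty$ regularity through $z=0$.
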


\medskip
\noindent{\em Comments on the result.}\\

\noindent{\em 1. Range of $\gamma$}. The scaling constraint $\gamma>2$ in Theorem \ref{degenerateode} is {\em fundamental} and follows from the nature of the non linearities $\mathcal C_0, \mathcal H_0$ which shapes change dramatically for $\gamma\le 2$. We will see that the profile $\psi_*$ concentrates in a very non linear way as $\gamma \downarrow 2$,  Section \ref{degeneratevortex}, and in particular the associated swirl has a non trivial short scale connection from zero to the self similar tail which is spectacular.\\

\noindent{\em 2. Front renormalization}. The bifurcation process appears through an internal parameter $0<a\ll1 $ in the front renormalization $Z=az$. The case $a=0$ is the non linear ode \eqref{degenerateode} and can be investigated using non linear shooting methods. This strategy of reduction to a simpler model with study of the linearized flow (in the singular limit $a\downarrow 0$) is reminiscent from the works on compressible fluids \cite{MRRS19-1,MRRS19-2,MRRS19-3}. Moreover, as in these works, the $a$ term cannot be neglected globally since it contains the highest number of derivatives in the equation, it needs to push for us. This is particularly important for the construction of the self similar profile where the stabilization mechanism at $+\infty$ and the $\mathcal C^\infty$ regularity through $z=0$, which is not obvious in light of \eqref{vneionveoinioenvnev}, are generated by the $a$ terms.\\

\noindent{\em 3. Local classification}. For the sake of simplicity, we will give the proof of the vortex construction for a special case of deformation of \eqref{veonoeinoineonevo}, and we will explicitly enforce
$$\left|\begin{array}{l}
 \mathcal C\matchal C'(\psi)=A(\gamma)\psi^{\frac{\gamma}{\gamma-2}}\left[1+\frac{a^2}{A(\gamma)}\psi^{m_1}\right]\\
\mathcal H'(\psi)=\psi^{\frac{\gamma+2}{\gamma-2}}\left[1+a^2\psi^{m_2}\right]
  \end{array}\right.
  $$
  for some well chosen $m_1,m_2>0$. Given these deformations, we will construct  $(\nu(z),\psi_{\rm cor})$ using a non linear fixed point argument. Obviously the proof constructs more generally a map $(\matchal C,\mathcal H)\mapsto \psi\equiv(\nu,\psi_{\rm cor})$ locally defined around $(\mathcal C_0,\matchal H_0)$ in a suitable function space\footnote{We refer to \cite{CS12} for a related study of the classification of stationary solutions of 2d Euler in a domain near a given profile.}. As in \cite{CMR19,MRS20}, the understanding of the $\nu$ function which measures decay in $z$ is the heart of the analysis. It is computed as the unique smooth decaying solution to a non linear ode parameterized by ($\mathcal H,\mathcal C)$, see \eqref{equaitonoineog}, which corresponds to the projection of the flow onto the scaling instability of the linearized operator close to $\psi_*$.\\  
    
 \noindent{\em 4. Vortex bifurcation}. The classical difficulty in the study of \eqref{eulerincomp} is that even in axisymmetry, the stationary problem is a PDE, and the solutions of Theorem \ref{thmmain} arise with infinite energy and thus are not amenable to variational methods. A canonical advantage of the bifurcation approach with respect to abstract variational methods is to produce profiles whose shape is explicit, and the construction comes with a complete understanding of the associated linearized operator.\\
 
 \noindent{\em 5. Self similar and stationary solution}. A spectacular feature of the profile \eqref{vneionveoinioenvnev} is that, exactly like the profile $\Phi_*$ given by \eqref{vneionveoinioenvnev} for the non linear heat equation, it is {\em both} a stationary and a self similar solution, in the sense that the associated velocity field kills scaling 
 \be
 \label{veionoienoinoevnoive}
 \left(\gamma+x\cdot\nabla\right)u=0.
 \ee
 
  \noindent{\em 5. Self similar bifurcation}. The self similar velocity field associated to \eqref{vneionveoinioenvnev} has an essential singularity at $r=z=0$ where it diverges, and this is necessary in cylindrical symmetry in the range $\gamma>1$ as pointed out in \cite{CV}. But the reduction of the full self similar equation to an explicitly analyzable ode is remarkable. We hope that this approach, as has been the case in other problems, can yield some insight into fluid singularities, at least in some region in space.\\

 The rest of this paper is organized as follows. In section \ref{sectionhom}, we prove Proposition \ref{thmdegen} by transforming the problem into a second order system of autonomous ODEs and give a detailed study of the phase portrait and its asymptotics. In Section \ref{sectionresolventone} and Section \ref{sectionresolventtwo}, we study the linearized operator of the {\em full} problem \eqref{homvortedeuqation} close to the homogeneous vortex. In Section \ref{sectionnonlinone} and Section \ref{sectionnonlintwo}, we solve the non linear bifurcation problem for suitable  perturbations and derive in particular the ode for the $\nu$ function. In Section \ref{sectionselfsim}, we construct the self similar profile and study its decay at $+\infty$ and regularity away from the origin, hence concluding the proof of Theorem \ref{thmmain}.
 
 \subsection*{Notations} We let $(r,z)$ be the cylindrical coordinates and define in the cylindrical basis $(\ve_r,\ve_\phi,\ve_z)$ $$\nabla ^\perp\psi=\left|\begin{array}{lll} -\pa_z\psi\\0\\\pa_r\psi.\end{array}\right.$$   We let $$\la y\ra=\sqrt{1+y^2}.$$
 
 \subsection*{Acknowledgment} P.R and A. R. S. are supported by the ERC/UKRI advanced grant SWAT. A.R.S is supported by ANR SMASH.


\section{The degenerate homogeneous vortex profile}
\label{sectionhom}


In this section, we use a front renormalization  to reduce the problem to an asymptotic degenerate problem, and prove the existence of smooth degenerate profiles for the special choice of homogeneous nonlinearities.


\subsection{Front renormalization}


We proceed to the front renormalization of the axisymmetric flow which makes the limiting degenerate problem appear.

\begin{lemma}[Renormalization and elliptic formulation ]
\label{vneoivnovnkvnnve}
Pick $a>0$. and two $\mathcal C^1$ functionals $\mathcal H, \mathcal C$ and let $\psi(r,z)$ solve the elliptic equation
\be
\label{vnbeioneinveneoivbis}
-\frac 1r\pa_r\left(\frac{1}{r}\pa_r\psi\right)-\frac{a^2}{r^2}\pa_z^2\psi=\frac{\mathcal C(\psi)\matchal C'(\psi)}{r^2}-\mathcal H'(\psi).
\ee 
Let 
\be
\label{defevtit}
\left|\begin{array}{l}
Z=\frac{z}{a}\\
\psi(r,z)=\psit(r,Z)\\
\ut_\phi(r,Z)=\frac{\mathcal C(\psit)}{r}\\
\ut(r,Z)=\frac{\nabla^\perp\psit}{r}+\ut_\theta\ve_\theta\\
\tilde{p}(r,z)=\mathcal H(\psit)-\frac{|\ut|^2}{2}
\end{array}\right.
\ee 
then $(\ut,\tilde{p})$ solve the stationary flow problem:
\be
\label{remainingequationsbis}
\left|\begin{array}{l}
\left[\ut_r\pa_r+\ut_Z\pa_Z\right]\ut_r-\frac{\ut_\theta^2}{r}+\pa_r\tilde{p}=0\\
\left[\ut_r\pa_r+\ut_Z\pa_Z\right]\ut_\theta+\frac{\ut_\theta \ut_r}{r}=0\\
\left[\ut_r\pa_r+\ut_Z\pa_Z\right]\ut_Z+\pa_Z\tilde{p}=0\\
\frac{1}r\pa_r(r\ut_r)+\pa_Z\ut_Z=0.
\end{array}\right.
\ee
\end{lemma}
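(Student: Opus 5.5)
The lemma is a direct computation: after the change of variable $Z=z/a$, equation \eqref{vnbeioneinveneoivbis} becomes the standard axisymmetric Bragg--Hawthorne (Grad--Shafranov) equation in the variables $(r,Z)$. The plan is to check the four equations of \eqref{remainingequationsbis} one at a time, using the Lamb form of the stationary Euler equations and the identity $\tilde p+\frac{|\ut|^2}{2}=\mathcal H(\psit)$.

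\emph{Change of variable.} Since $\pa_z=\frac1a\pa_Z$, we have $a^2\pa_z^2\psi=\pa_Z^2\psit$. Set
$$L\psit=\frac1r\pa_r\left(\frac1r\pa_r\psit\right)+\frac{\pa_Z^2\psit}{r^2}.$$
Then \eqref{vnbeioneinveneoivbis} reads $-L\psit=\frac{\mathcal C\mathcal C'(\psit)}{r^2}-\mathcal H'(\psit)$. By \eqref{defevtit},
$$\ut_r=-\frac{\pa_Z\psit}{r},\qquad \ut_Z=\frac{\pa_r\psit}{r},\qquad r\ut_\theta=\mathcal C(\psit),$$
where I read the swirl component as $\ut_\phi=\ut_\theta$.

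\emph{Transport and incompressibility.}
\begin{itemize}
\item The last equation follows from $\frac1r\pa_r(r\ut_r)+\pa_Z\ut_Z=\frac1r\left(-\pa_r\pa_Z\psit+\pa_Z\pa_r\psit\right)=0$.
\item The field is tangent to the level sets of $\psit$: $(\ut_r\pa_r+\ut_Z\pa_Z)\psit=\frac1r\left(-\pa_Z\psit\,\pa_r\psit+\pa_r\psit\,\pa_Z\psit\right)=0$.
\item Consequently $(\ut_r\pa_r+\ut_Z\pa_Z)(r\ut_\theta)=\mathcal C'(\psit)(\ut_r\pa_r+\ut_Z\pa_Z)\psit=0$. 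Expanding the left-hand side gives $r\left[(\ut_r\pa_r+\ut_Z\pa_Z)\ut_\theta+\frac{\ut_r\ut_\theta}{r}\right]=0$, which is the second equation.
\end{itemize}

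\emph{Radial and vertical equations.} Here I use the Lamb identity: the axisymmetric convective terms equal $\omega\times\ut+\nabla\frac{|\ut|^2}{2}$, with $(\ut_r\pa_r+\ut_Z\pa_Z)\ut_r-\frac{\ut_\theta^2}{r}$ as the $r$-component. Since $\tilde p+\frac{|\ut|^2}2=\mathcal H(\psit)$, it suffices to show $\omega\times\ut+\nabla\mathcal H(\psit)=0$ in the $r$ and $Z$ components. The vorticity components are
$$\omega_\theta=\pa_Z\ut_r-\pa_r\ut_Z=-rL\psit,\qquad \omega_Z=\frac1r\pa_r(r\ut_\theta)=\frac{\mathcal C'(\psit)\pa_r\psit}{r},\qquad \omega_r=-\pa_Z\ut_\theta=-\frac{\mathcal C'(\psit)\pa_Z\psit}{r}.$$
For the radial component,
$$(\omega\times\ut)_r+\pa_r\mathcal H(\psit)=\omega_\theta\ut_Z-\omega_Z\ut_\theta+\mathcal H'(\psit)\pa_r\psit=\pa_r\psit\left[-L\psit-\frac{\mathcal C\mathcal C'(\psit)}{r^2}+\mathcal H'(\psit)\right]=0$$
by the rewritten equation. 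For the vertical component,
$$(\omega\times\ut)_Z+\pa_Z\mathcal H(\psit)=\omega_r\ut_\theta-\omega_\theta\ut_r+\mathcal H'(\psit)\pa_Z\psit,$$
and substituting the formulas above gives the same bracket multiplied by $\pa_Z\psit$, which again vanishes.

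There is no real obstacle. The only delicate points are sign and factor-of-$r$ bookkeeping in the cylindrical vorticity and Lamb identity, and confirming that the $a$-scaling falls entirely on the $\pa_Z^2$ term. Working in the variables $(r,Z)$ from the start and never reintroducing $z$ avoids any stray powers of $a$. The regularity needed is only $\mathcal C^1$ for $\mathcal H,\mathcal C$ together with a classical $\mathcal C^2$ solution $\psi$, which is what the statement assumes.
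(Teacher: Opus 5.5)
Your proposal is correct and is essentially the paper's own argument. Both write the stationary equations in Lamb form $\nabla(\tilde p+|\tilde u|^2/2)+\omega\times\tilde u=0$ with $\tilde p+|\tilde u|^2/2=\mathcal H(\tilde\psi)$, compute the cylindrical vorticity, and observe that the $r$ and $Z$ components are $\partial_r\tilde\psi$ and $\partial_Z\tilde\psi$ times the bracket $-L\tilde\psi-\frac{\mathcal C\mathcal C'(\tilde\psi)}{r^2}+\mathcal H'(\tilde\psi)$, which vanishes by the rescaled elliptic equation. Your sign and factor-of-$r$ bookkeeping checks out, and checking the swirl and divergence equations directly, rather than phrasing the computation as an equivalence as the paper does, is only a presentational difference.
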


\begin{proof}[Proof of Lemma \ref{vneoivnovnkvnnve}] This is a classical computation. Let $$\tilde{\Pi}=\tilde{p}+\frac{|\ut|^2}{2},$$ we recall the classical formula
$$\ut\wedge \tilde{w}=\nabla \left(\frac{|\ut|^2}{2}\right)-\ut\cdot\nabla \ut$$ so that the stationary equation \eqref{remainingequationsbis} is
$$\ut\cdot\nabla \ut+\nabla \tilde{p}=0\Leftrightarrow \nabla \tilde{\Pi}-\ut\wedge \tilde{w}=0.$$The  swirl is transported
$$\ut\cdot\nabla \ut_\phi+\frac{\ut_r\ut_\phi}r=0\Leftrightarrow \ut\cdot\nabla (r\ut_\phi)=0\Rightarrow r\ut_\phi=\mathcal C(\psi).$$
as well as the Bernoulli function
$$\ut\cdot\nabla \tilde{\Pi}=0\Rightarrow \tilde{\Pi}=\mathcal H(\psit).$$We now compute
 $$\ut\wedge \wt=\left|\begin{array}{l}
  -\frac 1r\pa_Z\psit\\
  \ut_\phi\\
  \frac{1}{r}\pa_r\psit
  \end{array}\right.\wedge
 \left|\begin{array}{l}-\pa_Z\ut_\phi\\ \wt_\phi\\ \frac 1r\pa_r(r\ut_\phi)\end{array}\right.= \left|\begin{array}{l}
  -\frac 1r\pa_Z\psit\\
  \frac{\mathcal C}{r}\\
  \frac{1}{r}\pa_r\psit
  \end{array}\right.\wedge
  \left|\begin{array}{l}
  -\mathcal C'\frac{\pa_Z\psit}{r}\\
  \wt_\phi\\
  \mathcal C'\frac{\pa_r\psit}{r}
  \end{array}\right.=\left|\begin{array}{l}
  \pa_r\psit\left[\frac{\mathcal C\matchal C'}{r^2}-\frac{\wt_\phi}{r}\right]\\
  0\\
   \pa_Z\psit\left[\frac{\mathcal C\matchal C'}{r^2}-\frac{\wt_\phi}{r}\right]
    \end{array}\right.
  $$
  and hence \eqref{remainingequationsbis} is equivalent to
  \be
  \label{neoneneov}
  \frac{\mathcal C\matchal C'}{r^2}-\frac{\wt_\phi}{R}=\mathcal H'.
  \ee
  Moreover:
  $$\wt_\phi=\pa_Z\ut_r-\pa_r \ut_Z=\pa_z\left(-\frac{1}{r}\pa_Z\psit\right)-\pa_r\left(\frac{1}{r}\pa_r\psit\right)=-\pa_r\left(\frac{1}{r}\pa_r\psit\right)-\frac{1}{r}\pa_Z^2\psit$$ and hence \eqref{neoneneov} is equivalent to 
  \bee
 &&-\pa_r\left(\frac{1}{r}\pa_r\psit\right)-\frac{1}{r}\pa_Z^2\psit=\frac{\mathcal C(\psi)\matchal C'(\psit)}{r^2}-\mathcal H'(\psit)\\
  &\Leftrightarrow& -\frac 1r\pa_r\left(\frac{1}{r}\pa_r\psi\right)-\frac{a^2}{r^2}\pa_z^2\psi=\frac{\mathcal C(\psi)\matchal C'(\psi)}{r^2}-\mathcal H'(\psi).
  \eee

\end{proof}


\subsection{Homogeneous vortex}


The limiting $a=0$ case in \eqref{vnbeioneinveneoivbis} is the degenerate vortex equation
\be
\label{homvortedeuqation}
-\frac 1r\pa_r\left(\frac{1}{r}\pa_r\psi\right)=\frac{\mathcal C\matchal C'}{r^2}-\mathcal H'.
\ee

\noindent{\em Homogeneous non linearities}. 
We pick  $\gamma>2$ and enforce the explicit choice 
\begin{equation}\label{eq: choice of C and H}
\left|\begin{array}{l}
\mathcal C_0(\psi)=\frac{\gamma-2}{\gamma -1}\sqrt{A}\psi^{\frac{\gamma-1}{\gamma-2}}\Rightarrow \matchal C_0\matchal C_0'(\psi) =A\psi^{\frac{\gamma}{\gamma-2}}\\
\mathcal H_0'(\psi)=\psi^{\frac{\gamma+2}{\gamma-2}},
\end{array}\right.
\end{equation}
and the problem then becomes
\be
\label{vnnrononr}
-\frac{1}{r}\pa_r\left(\frac{\pa_r\psi}{r}\right)=-\psi^{\frac{\gamma+2}{\gamma-2}}+\frac{A}{r^2}\psi^{\frac{\gamma}{\gamma-2}}.
\ee
\noindent{\em Change of variables.}  Let $\psi=r^2\Psi$, then 
\bea
\nonumber &&-\frac 1r\pa_r\left(\frac{1}{r}\pa_r\psi\right)=\frac{\mathcal C\matchal C'}{r^2}-\mathcal H'\Leftrightarrow
-\frac{1}{r}\pa_r\left(\frac{\pa_r\psi}{r}\right)=-\psi^{\frac{\gamma+2}{\gamma-2}}+\frac{A}{r^2}\psi^{\frac{\gamma}{\gamma-2}}\\
 &\Leftrightarrow&
\label{vneiovnineoneneovnvnoe}
\left|\begin{array}{l}\pa_r^2\Psi+\frac{3}{r}\pa_r\Psi- r^{\frac{2(\gamma+2)}{\gamma-2}}\Psi^{\frac{\gamma+2}{\gamma-2}}+Ar^{\frac{4}{\gamma-2}}\Psi^{\frac{\gamma}{\gamma-2}}=0\\
\Psi(0)=1\\
\Psi'(0)=0
\end{array}\right.
\eea
up to the scaling \be
\label{vniovneivnoneoiscalign}
\frac{1}{\l^\gamma}\Psi\left(\frac{r}{\l}\right).
\ee
induced by the homogeneous choice \eqref{eq: choice of C and H}.\\

\noindent{\em Reformulation of Proposition \ref{thmdegen}}. The rest of this section is devoted to the proof of the following Proposition \ref{thmdegen} which implies Theorem \ref{thmdegen}. We refer to Section \ref{degeneratevortex} for another proof restricted to $0<\gamma -2\ll1$ and which describes the profile as $\gamma\downarrow 2$.

\begin{proposition}[Existence of quantized smooth homogeneous vortex rings]
\label{propvrotexprofile}
Let 
\be
\label{quenionatgamma}
\gamma=2+\frac{1}{m}, \ \ m\in \Bbb N^*.
\ee Then there exists $A(\gamma)>0$ such that the unique solution to \eqref{vneiovnineoneneovnvnoe} is smooth, non negative and
satisfies $$\lim_{r\to +\infty}\Psi_*(r)=0.$$ Moreover, let $\psi_*=r^2\Psi_*$, then the following properties hold.\\

\noindent{\em (i) Non degeneracy.} $ \psi_*'$ vanishes exactly once for $r>0$ and
\be
\label{nondgeenera}
\zeta_*(r)=(\gamma-2)\psi_*+r\partial_r\psi_*>0\ \ \mbox{for}\ \ r>0.
\ee

\noindent{\em (ii) Tail.} let
\be
\label{numeoropoepoj}
A_\infty(\gamma)=\left(\frac{A(\gamma)+ \sqrt{A^2(\gamma)+4\gamma(\gamma-2)}}{2}\right)^{\frac{\gamma-2}{2}},\\
\ee
then there holds the asymptotic expansions near $r\to +\infty$: 
\be
\label{asymptoticexpansionvortex}
\left|\begin{array}{l}
\psi^*(r)=\frac{A_\infty(\gamma)(1+o(1))}{r^{\gamma-2}}\\
\zeta_*(r)=\frac{1+o(1)}{r^{\gamma-2+|\l_-|}}
\end{array}\right.,
\ee 
and similarly for higher derivatives.\\

\noindent{\em (iii) Regularity}: For any even function $\nu\in \mathcal C^\infty([0,+\infty),\Bbb R_*^+)$ with $\liminf_{z\in \Bbb R}\nu>0$, let 
\be
\label{vneiovnevneinenove}
\left|\begin{array}{l}
\psi_\nu(r,z)=\frac{1}{\left[\nu(z)\right]^{\gamma-2}}\psi_*\left(\frac{r}{\nu(z)}\right)\\
u_\phi=\pm\left[\frac{\gamma-2}{\gamma-1}A(\gamma)\right]^{\frac 12}\frac{\psi_\nu^{\frac{\gamma-1}{\gamma-2}}}{r}\\
u=\frac{\nabla^\perp\psi_\nu}{r}+u_\phi\ve_\phi
\end{array}\right.,
\ee then $u\in \mathcal C^\infty(\Bbb R^3,\Bbb R^3)$.\\

\noindent{\em (iv) Numerology as $\gamma \downarrow 2$}. There holds $$\inf_{\gamma>2}A(\gamma)>0.$$ Moreover,  let 
\be
\label{defnumberK}
\left|\begin{array}{l}
K(\gamma)=\frac{A_\infty(\gamma)^{\frac{2}{\gamma-2}}}{\gamma-2}\left[(\gamma+2)A_\infty(\gamma)^{\frac{2}{\gamma-2}}-\gamma A(\gamma)\right]\\
\l_-(\gamma)=\gamma-1-\sqrt{1+K(\gamma)} 
\end{array}\right.,
\ee
then
\be
\label{vneoinoenoveenolminus}
\l_-(\gamma)<0,
\ee
and
\be
\label{fneionfenvoinoe}
\lim_{\gamma \downarrow 2} K(\gamma)=+\infty.
\ee

\end{proposition}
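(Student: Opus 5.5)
Following the paper's announced strategy, we pass to Emden variables and reduce \eqref{vneiovnineoneneovnvnoe} to a planar autonomous system. We write $\psi_*(r)=r^{2-\gamma}w(t)$ with $t=\log r$. The homogeneity of \eqref{vnnrononr} under the scaling \eqref{vniovneivnoneoiscalign} removes all explicit $r$ dependence, since both $\psi^{\frac{\gamma+2}{\gamma-2}}$ and $r^{-2}\psi^{\frac{\gamma}{\gamma-2}}$ scale like $r^{-\gamma-2}$. This leaves a second order autonomous ode
$$w''-2(\gamma-1)w'+\gamma(\gamma-2)w-w^{\frac{\gamma+2}{\gamma-2}}+Aw^{\frac{\gamma}{\gamma-2}}=0,$$
up to sign conventions I would recompute. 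I will study its phase portrait in $(w,w')$. The origin corresponds to the regular solution at $r=0$, since $\psi_*=r^2\Psi$ with $\Psi(0)=1$ gives $w\sim e^{\gamma t}$ as $t\to-\infty$, which is an unstable direction of the origin. The nonzero equilibrium $w_\infty$ solves $\gamma(\gamma-2)=w^{\frac{4}{\gamma-2}}-Aw^{\frac{2}{\gamma-2}}$. Setting $X=w^{\frac{2}{\gamma-2}}$ gives $X^2-AX-\gamma(\gamma-2)=0$, so $X=\frac{A+\sqrt{A^2+4\gamma(\gamma-2)}}{2}$, which is exactly $A_\infty(\gamma)$ in \eqref{numeoropoepoj}.

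Linearizing at $w_\infty$ gives eigenvalues $\gamma-1\pm\sqrt{1+K}$, with $K$ as in \eqref{defnumberK}. This is where $\l_-$ comes from. The claim $\l_-<0$ amounts to $K>\gamma(\gamma-2)$, which follows from the quadratic relation for $X$. So $w_\infty$ is a saddle. Condition (ii) says the trajectory leaving the origin along the unstable branch must land exactly on the one dimensional stable manifold of $w_\infty$, with $\zeta_*\sim r^{2-\gamma}e^{\l_- t}w'$ decaying at rate $|\l_-|$.

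The existence of $A(\gamma)$ is the heart of the argument, and I expect it to be the main obstacle. I would run a shooting argument in $A$, following the unstable manifold of $0$ as $A$ varies. For one extreme of $A$ (I expect $A$ small) the trajectory overshoots $w_\infty$ and $w$ blows up, or $w'$ changes sign twice. For the other extreme ($A$ large) it undershoots and returns toward $w=0$, which means $\Psi$ crosses zero or $\psi_*$ touches zero. Both behaviours are open conditions in $A$, so by connectedness some $A$ lies in neither set, and its trajectory must converge to $w_\infty$ along the stable manifold. To rule out oscillation and other limit sets I would use a Lyapunov function: the energy $E=\frac12 w'^2+V(w)$ satisfies $E'=2(\gamma-1)w'^2$, which excludes periodic orbits. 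Nonnegativity comes from never reaching $w=0$.

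The quantization $\gamma=2+\frac1m$ enters only through smoothness. The exponents $\frac{\gamma}{\gamma-2}=1+2m$ and $\frac{\gamma+2}{\gamma-2}=1+4m$ are integers, so the nonlinearity is polynomial, and $\Psi$ is analytic near $0$ and even in $r$. This gives $\mathcal C^\infty$ regularity of $\Psi$, and with it $\psi_*$. For (iii), $\psi_\nu$ is a smooth function of $(r^2,z)$ because $\psi_*(r)=r^2\Psi_*(r)$ with $\Psi_*$ even and $\nu$ smooth and bounded below. Also $u_\phi=c\,\psi_\nu^{m+1}/r$ is $r$ times a smooth function of $r^2$, hence a smooth axisymmetric swirl. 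Together these give $u\in\mathcal C^\infty$.

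For (i), the single critical point of $\psi_*$ follows from the monotone approach in the phase plane. Positivity of $\zeta_*=r^{2-\gamma}w'$ means $w'>0$, i.e. $w$ increases monotonically to $w_\infty$, which is part of what the shooting selects. The uniform lower bound on $A(\gamma)$ in (iv) and $K\to\infty$ would follow from an asymptotic analysis as $\gamma\downarrow2$. With $X\to A$, we have $K\approx\frac{X}{\gamma-2}\bigl[(\gamma+2)X-\gamma A\bigr]\approx\frac{2A^2}{\gamma-2}\to\infty$, provided $\liminf A>0$. That lower bound itself I would obtain by contradiction: if $A\to0$ along a sequence, the rescaled limit problem admits no connecting orbit.
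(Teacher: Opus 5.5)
Your route is the paper's. You use Emden variables $w=r^{\gamma-2}\psi_*$, the saddle at $(w_\infty,0)$ with $\lambda_\pm=\gamma-1\pm\sqrt{1+K}$, a shooting in $A$ along the orbit leaving the origin with slope $\gamma$, and integrality of the exponents for smoothness. Your open-set phrasing of the continuity step is actually cleaner than the paper's. But the two facts that make the shooting work are only announced (``I expect''), and they are the real content.

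\emph{Small $A$.} This side is easy but must be shown. At $A=0$ the integral formula for $\Psi'$ gives $\Psi'>0$, so $w\ge r^\gamma$ crosses $w_\infty$ with $w'>0$. This persists for small $A$ by continuity of the strong unstable branch. Note that the origin is a source with eigenvalues $\gamma$ and $\gamma-2$, so you must follow the unique curve of slope $\gamma$, not ``the unstable manifold''.

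\emph{Large $A$.} This side needs a genuine argument. The paper rescales $\Psi(r)=\Phi(r/\lambda)$ with $A\lambda^{2\gamma/(\gamma-2)}=1$, which leaves a factor $A^{-2}$ on the $\Psi^{(\gamma+2)/(\gamma-2)}$ term. In the limit the Emden system has $F(\phi)=\frac{\gamma(\gamma-2)\phi+\phi^{\gamma/(\gamma-2)}}{2(\gamma-1)}>0$ for all $\phi>0$, so there is no positive equilibrium. The separatrix then crosses $u=F$ and reaches $u=0$ in finite time, an open property that transfers to large finite $A$.

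\emph{Trichotomy.} This should come from the sign of $F$, not from the Lyapunov function. Work in the box $\{0<w<w_\infty,\ w'>0\}$:
\begin{itemize}
\item leaving through $w=w_\infty$ forces $w\to\infty$, since $F<0$ beyond $w_\infty$;
\item leaving through $w'=0$ forces $w'<0$ afterwards and $w\to0$ in finite time, since $F>0$ below $w_\infty$;
\item staying forever forces monotone convergence to the saddle, hence along its stable manifold.
\end{itemize}

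Two further claims do not follow from what you wrote.

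\emph{Part (i).} Here $w'>0$ gives $\zeta_*>0$, but not that $\psi_*'$ has a single zero, because $r\psi_*'=r^{2-\gamma}\bigl(w'-(\gamma-2)w\bigr)$. You must show the orbit crosses the line $u=(\gamma-2)\phi$ exactly once. With $Y=\phi^{2/(\gamma-2)}$, at any contact point one has $\frac{d}{d\phi}\bigl(u-(\gamma-2)\phi\bigr)=\frac{Y(Y-A)}{\gamma-2}$. Its sign is monotone along the orbit, on which $\phi$ increases, so after an upward crossing no downward one can follow. Since the orbit starts above the line and ends below it, there is exactly one crossing; this is the paper's argument.

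\emph{Part (iv).} The sentence ``the rescaled limit problem admits no connecting orbit'' is not yet an argument. Note also that $A\to0$ does not push the equilibrium to zero: only $X=w_\infty^{2/(\gamma-2)}$ tends to zero, while $w_\infty=X^{(\gamma-2)/2}\to1$ whenever $A$ stays bounded. This is also where the paper's own contradiction argument is loose.

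A quantitative route is available. Set $q=w'/w$ and $\ell=\frac{\log w}{\gamma-2}$, so that $Y=e^{2\ell}$ increases along the orbit. Then $\frac{d}{d\ell}\frac{q^2}{2}=(\gamma-2)\bigl[(\gamma-q)(q-\gamma+2)+Y^2-AY\bigr]$.
\begin{itemize}
\item Along the connecting orbit $q\le\gamma$. At contacts with $u=\gamma\phi$ the analogous slope is $\frac{Y(Y-A)}{\gamma}$, and the orbit starts below that line because $\Psi'<0$ near $0$. An upward crossing could then never be undone, contradicting $u\to0$.
\item Also $q>\gamma-2$ up to the crossing from (i), and there $Y\le A$.
\end{itemize}
Integrating from $\ell=-\infty$ up to that crossing gives $(\gamma-2)A^2\ge\gamma^2-(\gamma-2)^2=4(\gamma-1)$, so $A(\gamma)\to\infty$. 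Consequently $K=\gamma(\gamma+2)+\frac{2AX}{\gamma-2}\ge\gamma(\gamma+2)+\frac{2A^2}{\gamma-2}\to\infty$.
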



\subsection{Emden transform}


We use the Emden transform to reduce \eqref{vneiovnineoneneovnvnoe} to the study of an explicit phase portrait. 
\begin{lemma}[Emden variables]
\label{lemamphaseportrait}
Let the Emden transform $$\left|\bear
s=\log r\\
\phi=r^\gamma \Psi\\
u=\frac{d\phi}{ds}\equiv \phi',
\ear\right.
$$
then \eqref{vneiovnineoneneovnvnoe} is 
\be
\label{vneioenenoven}
 \frac{1}{2(\gamma-1)}u\frac{du}{d\phi}=u-F(\phi)
 \ee 
with
\be
\label{vneoneineone}
F(\phi)=\frac{\gamma(\gamma-2)\phi-\phi^{\frac{\gamma+2}{\gamma-2}}+A\phi^{\frac{\gamma}{\gamma-2}}}{2(\gamma-1)}.
\ee
\end{lemma}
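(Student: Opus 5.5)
The plan is a direct computation: pass to the logarithmic variable $s=\log r$, in which the radial operator has constant coefficients, and then use the autonomy of the resulting ODE to reduce its order.

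\emph{Step 1: the linear part.} With $s=\log r$ we have $r\pa_r=\pa_s$, so
$$\pa_r^2\Psi+\frac 3r\pa_r\Psi=\frac{1}{r^2}\left(\pa_s^2+2\pa_s\right)\Psi.$$
Substituting $\Psi=e^{-\gamma s}\phi$ and using the conjugation identity $e^{\gamma s}\pa_s e^{-\gamma s}=\pa_s-\gamma$ gives
$$r^{2}e^{\gamma s}\left(\pa_s^2+2\pa_s\right)\left(e^{-\gamma s}\phi\right)=(\pa_s-\gamma+2)(\pa_s-\gamma)\phi=\phi''-2(\gamma-1)\phi'+\gamma(\gamma-2)\phi.$$

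\emph{Step 2: the nonlinear terms.} Multiply \eqref{vneiovnineoneneovnvnoe} by $r^{2+\gamma}$ and check that the explicit powers of $r$ cancel. Write $p=\frac{\gamma+2}{\gamma-2}$ and $q=\frac{\gamma}{\gamma-2}$, and use $\Psi^p=r^{-\gamma p}\phi^p$, $\Psi^q=r^{-\gamma q}\phi^q$.
\begin{itemize}
\item For the first term the exponent is $2+\gamma+\frac{2(\gamma+2)}{\gamma-2}-\frac{\gamma(\gamma+2)}{\gamma-2}=2+\gamma-(\gamma+2)=0$.
\item For the second term it is $2+\gamma+\frac{4}{\gamma-2}-\frac{\gamma^2}{\gamma-2}=2+\gamma-(\gamma+2)=0$.
\end{itemize}
This cancellation is exactly the scaling invariance \eqref{vniovneivnoneoiscalign} built into the homogeneous choice \eqref{eq: choice of C and H}. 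We obtain the autonomous equation
$$\phi''-2(\gamma-1)\phi'+\gamma(\gamma-2)\phi-\phi^{\frac{\gamma+2}{\gamma-2}}+A\phi^{\frac{\gamma}{\gamma-2}}=0 .$$

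\emph{Step 3: reduction to a first order equation.} Since the equation is autonomous, on any interval where $u=\phi'\neq 0$ we may use $\phi$ as the independent variable. Then $\phi''=u\frac{du}{d\phi}$, and the equation becomes
$$u\frac{du}{d\phi}=2(\gamma-1)u-\left[\gamma(\gamma-2)\phi-\phi^{\frac{\gamma+2}{\gamma-2}}+A\phi^{\frac{\gamma}{\gamma-2}}\right].$$
Dividing by $2(\gamma-1)$ gives \eqref{vneioenenoven} with $F$ as in \eqref{vneoneineone}.

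The only point needing care is this last step. The relation $u\,du/d\phi$ holds only away from the critical points of $\phi$, i.e.\ where $u=0$. The invariant meaning of the reduction is the planar first order system
$$\phi'=u,\qquad u'=2(\gamma-1)\left(u-F(\phi)\right),$$
which is equivalent to the second order equation. The lemma should therefore be read as describing the trajectories of this system in the $(\phi,u)$ phase plane, with \eqref{vneioenenoven} valid on each arc where $u\neq0$.
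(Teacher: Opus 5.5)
Your proof is correct and follows the paper's route. The paper also writes the radial operator as $\pa_s^2+2\pa_s$ in $s=\log r$, conjugates by $e^{-\gamma s}$ to get $\phi''-2(\gamma-1)\phi'+\gamma(\gamma-2)\phi$, uses homogeneity to cancel all explicit powers of $r$, and passes through the same first order system $\phi'=u$, $u'=2(\gamma-1)(u-F(\phi))$; your caveat about arcs where $u\neq 0$ is a welcome extra precision. One cosmetic slip: the factor $r^2$ in your Step 1 display belongs with $(\pa_r^2+\frac{3}{r}\pa_r)\Psi$, not with $e^{\gamma s}(\pa_s^2+2\pa_s)(e^{-\gamma s}\phi)$, which alone already equals $\phi''-2(\gamma-1)\phi'+\gamma(\gamma-2)\phi$.
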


\begin{proof}[Proof of Lemma \ref{lemamphaseportrait}]  Let 
\be
\label{neionivnone}
s=\log r, \ \ \phi=r^\gamma \Psi,
\ee 
then
$$\Lambda^2_r=r\pa_r(r\pa_r)=r^2\pa_r^2+r\pa_r\Rightarrow r^2\pa_r^2=\Lambda^2_r-\Lambda_r,$$ from which 
$$r^2\left(\pa_r^2\Psi+\frac{3}{r}\pa_r\Psi\right)=(\pa_s^2+2\pa_s)(e^{-\gamma s}\phi),$$ and $$\left|\begin{array}{l}
\pa_s(e^{-\gamma s}\phi)=(\phi'-\gamma\phi)e^{-\gamma s}\\
\pa^2_s(e^{\gamma s}\phi)=(\phi''-\gamma\phi'-\gamma(\phi'-\gamma\phi))e^{-\gamma s}=(\phi''-2\gamma\phi'+\gamma^2\phi)e^{-\gamma s}
\end{array}\right.,
$$
giving
\bee
e^{\gamma s}(\pa_s^2+2\pa_s)(e^{-\gamma s}\phi)&=&\phi''-2\gamma\phi'+\gamma^2\phi+2(\phi'-\gamma\phi)\\
& = & \phi''-2(\gamma-1)\phi'+\gamma(\gamma-2)\phi.
\eee
We compute
$$\left|\begin{array}{l}
r^{\frac{2(\gamma+2)}{\gamma-2}}\Psi^{\frac{\gamma+2}{\gamma-2}}=r^{\frac{2(\gamma+2)}{\gamma-2}-\frac{\gamma(\gamma+2)}{\gamma-2}}\phi^{\frac{\gamma+2}{\gamma-2}}=r^{-(\gamma+2)}\phi^{\frac{\gamma+2}{\gamma-2}}\\
r^{\frac{4}{\gamma-2}}\Psi^{\frac{\gamma}{\gamma-2}}=r^{\frac{4}{\gamma-2}-\frac{\gamma^2}{\gamma-2}}\phi^{\frac{\gamma}{\gamma-2}}=r^{-(\gamma+2)}\phi^{\frac{\gamma}{\gamma-2}}
\end{array}\right.,
$$
and hence the Emden transform formulation of \eqref{vneiovnineoneneovnvnoe} becomes
\be
\label{emdnenfonro}
\phi''-2(\gamma-1)\phi'+\gamma(\gamma-2)\phi-\phi^{\frac{\gamma+2}{\gamma-2}}+A\phi^{\frac{\gamma}{\gamma-2}}=0,
\ee
which we rewrite as a first order system of ODEs
\bea
\nonumber &&\left|\begin{array}{l}
\phi'=u\\
u'=2(\gamma-1)u-\gamma(\gamma-2)\phi+\phi^{\frac{\gamma+2}{\gamma-2}}-A\phi^{\frac{\gamma}{\gamma-2}}
\end{array}\right.\\
&\Leftrightarrow&\label{eq: ode system hom vortex}
    \left|\begin{array}{l}
\phi'=u\\
u'=2(\gamma-1)\left[u-F(\phi)\right]\end{array}\right.\\
\nonumber &\Rightarrow&
 \frac{1}{2(\gamma-1)}\frac{du}{d\phi}=\frac{u-F(\phi)}{u}.
 \eea
 \end{proof}

 \subsection{The radial solution at the origin}
 \label{subsec:initial cond}
 
 We reformulate the regularity of the $\Psi$ solution at $r=0$ in the language of the Emden variables. The linearized system at the origin $r=0$ is as $s\to -\infty$ and $(u,\phi)=(0,0)$
\bee
&&\left|\begin{array}{l}
\phi'=u\\
u'=2(\gamma-1)u-\gamma(\gamma-2)\phi
\end{array}\right.\Leftrightarrow \left|\begin{array}{l}u=\phi'\\ \phi''-2(\gamma-1)\phi'+\gamma(\gamma-2)\phi=0\end{array}\right.\\
&\Leftrightarrow & \left|\begin{array}{l}
\phi=a_1e^{\gamma s}+a_2e^{(\gamma-2)s}\\ u=a_1\gamma e^{\gamma s}+a_2(\gamma-2)e^{(\gamma-2)s},
\end{array}\right.
\eee
and hence the two principal slopes at $(u,\phi)=(0,0)$ are given by: $$\left|\begin{array}{l}
\frac{u}{\phi}=\gamma-2, \ \ {\rm stable}\\ 
\frac{u}{\phi}=\gamma, \ \ {\rm unstable}.
\end{array}\right.
$$
Observe that $$\gamma-2-F'(0)=\gamma-2-\frac{\gamma(\gamma-2)}{2(\gamma-1)}=\frac{(\gamma-2)^2}{2(\gamma-1)}>0.$$
Recall that in $r$ we had the initial condition $$\Psi(0)=1, \ \ \Psi'(0)=0,$$ which from \eqref{vneiovnineoneneovnvnoe} becomes the solution to the fixed point equation: 
\be
\label{vneonneinvnqyrtueytuy}
\Psi'(r)=\frac{1}{r^3}\int_0^r\left[\tau^{\frac{2(\gamma+2)}{\gamma-2}}\Psi^{\frac{\gamma+2}{\gamma-2}}-A\tau^{\frac{4}{\gamma-2}}\Psi^{\frac{\gamma}{\gamma-2}}\right]\tau^3d\tau,
\ee 
and hence $$\left|\begin{array}{l}
\phi=r^{\gamma}\Psi=e^{\gamma s}(1+o_{s\to -\infty}(1))\\
u=\frac{d\phi}{ds}=r\pa_r\phi=r\pa_r(r^\gamma\Psi)=\gamma r^\gamma \Psi+r^{\gamma+1}\pa_r\Psi=\gamma e^{\gamma s}(1+o_{s\to -\infty(1)}),\end{array}\right.$$ which yields$$u=\gamma \phi\left[1+o_{s\to -\infty}(1)\right]$$ for the radial solution. Hence the radial solution is the separatrix emerging from the origin with exceptional (unstable) $\gamma$ slope.
 
 
\subsection{Drawing the phase portrait} 


For this, we need only understand the function $F$.\\

\noindent{\em Properties of $F$}. We compute $$
2(\gamma-1)F'(\phi)=\gamma(\gamma-2)-\frac{\gamma+2}{\gamma-2}\phi^{\frac{4}{\gamma-2}}+\frac{A\gamma}{\gamma-2}\phi^{\frac{2}{\gamma-2}},$$
and
\bee
2(\gamma-1)F''(\phi)&= &-\frac{4(\gamma+2)}{(\gamma-2)^2}\phi^{\frac{4}{\gamma-2}-1}+\frac{2A\gamma}{(\gamma-2)^2}\phi^{\frac{2}{\gamma-2}-1}\\
&= &\frac{1}{(\gamma-2)^2}\phi^{\frac{2}{\gamma-2}-1}\left[-4(\gamma+2)\phi^{\frac{2}{\gamma-2}}+2A\gamma\right].
\eee
Let $\phi^*$ be defined by $$(\phi^*)^{\frac{2}{\gamma-2}}=\frac{A\gamma}{2(\gamma+2)} \Leftrightarrow F''(\phi^*)=0,$$
then 
\bee
2(\gamma-1)F'(\phi^*)&= &\gamma(\gamma-2)-\frac{\gamma+2}{\gamma-2}\phi_*^{\frac{4}{\gamma-2}}+\frac{A\gamma}{\gamma-2}\phi_*^{\frac{2}{\gamma-2}}\\
& = & \gamma(\gamma-2)-\frac{\gamma+2}{\gamma-2}\left[\frac{A\gamma}{2(\gamma+2)}\right]^2+\frac{A\gamma}{\gamma-2}\left[\frac{A\gamma}{2(\gamma+2)}\right]\\
& = & \gamma(\gamma-2)-\frac{A^2\gamma^2}{4(\gamma-2)(\gamma+2)}+\frac{A^2\gamma^2}{2(\gamma-2)(\gamma+2)}\\
& = & \gamma(\gamma-2)+\frac{A^2\gamma^2}{4(\gamma-2)(\gamma+2)}>0.
\eee
We conclude that:$$F''\left|\begin{array}{l} >0\ \ \mbox{for}\ \ 0\le \phi< \phi^*\\ <0 \  \ \mbox{for}\ \ 0\le \phi< \phi^*\end{array}\right.,$$ 
and hence since $F'(\phi^*)>0$, $F'(0)>0$ and $\lim_{\phi\to +\infty} F'(\phi)=-\infty$, there exist $0<\phi^*<\phi^*_1$ such that: $$F'\left|\begin{array}{l} >0\ \ \mbox{for}\ \ 0\le \phi< \phi_1^*\\ <0\ \  \mbox{for}\ \ \phi^*_1<\phi\end{array}\right..
$$
The sign of $F$ may now be exactly computed by noticing that
\be
\label{vneiovnenvenonenv}
\left|\begin{array}{l}
2(\gamma-1)F(\phi)=\phi\left[\gamma(\gamma-2)-\phi^{\frac{4}{\gamma-2}}+A\phi^{\frac{2}{\gamma-2}}\right]=\phi\left[-x^2+Ax+\gamma(\gamma-2)\right]\\
 x=\phi^{\frac{2}{\gamma-2}}
 \end{array}\right.,
 \ee and hence the discriminant $\Delta=A^2+4\gamma(\gamma-2)>0,$ which yields the only positive root
\be
\label{venonoenoeovnvjeojejioe}
(\phi^*_+)^{\frac{2}{\gamma-2}}=\frac{A+ \sqrt{A^2+4\gamma(\gamma-2)}}{2}.
\ee

\noindent{\em Phase portrait}. The formulation \eqref{vneioenenoven} now allows us to draw the phase portrait in the $(\phi,u)$ plane, see Figure \ref{fig:vect field plot 1} below.\\

 \begin{figure}[h]
     \centering
    \includegraphics[width=0.4\linewidth]{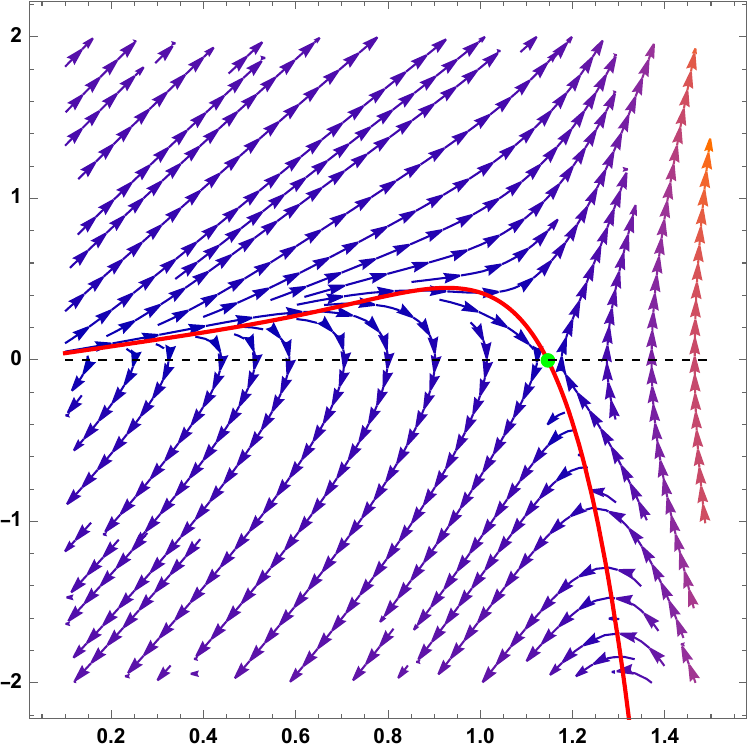}
     \caption{Plot of the vector field for $\gamma=2.5$ and $A=1$ in $(\phi,u)$ coordinates. In red we have the graph $u=F(\phi)$ and in green $\phi_+^*$.}
     \label{fig:vect field plot 1}
 \end{figure}
 

\subsection{The stable manifold entering $(\phi_+^*,0)$}


  We now study the flow near the stationary solution $(\phi^*_+,0)$.\\

\noindent{\em Linearisation around $(\phi_+^*,0)$}. We linearize \eqref{eq: ode system hom vortex}
at $\phi^*_+$ and obtain the linearized system
$$\left|\begin{array}{l}
\phi'=u\\
u'=2(\gamma-1)(u-F'(\phi^*_+)\phi)
\end{array}\right.\Leftrightarrow \left|\begin{array}{l} u=\phi'\\\phi''-2(\gamma-1)\phi'+2(\gamma-1)F'(\phi^*_+)\phi=0.
\end{array}\right.
$$
The equation
\be
\label{equationsolminsu}
z^2-2(\gamma-1)z+2(\gamma-1)F'(\phi^*_+)=0
\ee
 has positive discriminant from the fact that $F'(\phi^*_+)<0$, and hence the roots are given by 
 \bea
 \label{vneioneinveoinenoier}
 \l_{\pm}&= &\frac{2(\gamma-1)\pm\sqrt{4(\gamma-1)^2-8(\gamma-1)F'(\phi^*_+)}}{2}\nonumber \\
 &= &\gamma-1\pm\sqrt{(\gamma-1)^2-2(\gamma-1)F'(\phi^*_+)},
  \eea
 with $\l_+>0$ and $\l_-<0.$ Hence the only separatrix that enters $(\phi^*_+,0)$ as $s\to +\infty$ has slope $\l_-$.\\
 
 \noindent{\em Proof of \eqref{vneoinoenoveenolminus}}. Let $K$ be given by \eqref{defnumberK}. Recall \eqref{vneoneineone}, \eqref{vneioneinveoinenoier} and compute with
 $x=(\phi_+^*)^{\frac{2}{\lambda-2}}=A_\infty^{\frac{2}{\lambda-2}}$:
\bee
    \l_-&= &\gamma-1-\sqrt{(\gamma-1)^2-2(\gamma-1)F'(\phi_+^*)}\\
    &= &\gamma-1-\sqrt{(\gamma-1)^2-\left(\gamma^2-2\gamma-\frac{\gamma+2}{\gamma-2}x^2+\frac{A\gamma}{\gamma-2}x\right)}\\
& =  &\gamma-1-\sqrt{1+\frac{\gamma+2}{\gamma-2}x^2-\frac{A\gamma x}{\gamma-2}}=\gamma-1-\sqrt{1+\frac{x}{\gamma-2}[(\gamma+2)x-\gamma, A]}\eee 
thus
\[
\l_-=\gamma-1-\sqrt{1+K},
\]
which is \eqref{defnumberK}, and \eqref{vneoinoenoveenolminus} is proved.\\
 
\noindent{\em Relative position of the stable manifold}. First compute 
\[\l_-=\gamma-1-\sqrt{(\gamma-1)^2-2(\gamma-1)F'(\phi^*_+)}=\gamma-1-\sqrt{(\gamma-1)^2+2(\gamma-1)|F'(\phi^*_+)|},
\]
and thus 
\bee
&&|\l_-|<|F'(\phi^*_+)|\Leftrightarrow \sqrt{(\gamma-1)^2+2(\gamma-1)|F'(\phi^*_+)|}<|F'(\phi^*_+)|+(\gamma-1)\\
&\Leftrightarrow& (\gamma-1)^2+2(\gamma-1)|F'(\phi^*_+)|<(\gamma-1)^2+2(\gamma-1)|F'(\phi^*_+)|+|F'(\phi^*_+)|^2,
\eee
which holds true and thus the stable manifold entering $(0,\phi_+^*)$ with slope $\lambda_-$ is below $u=F(\phi)$  near $(\phi_+^*,0)$.\\
We now claim that  this stable manifold is globally below its tangent at $(0,\phi_+^*)$. Let $$h(\phi)=u-\l_-(\phi-\phi^*),$$ then from  \eqref{equationsolminsu}:
\bee
&&\frac{dh}{d\phi}=2(\gamma-1)\frac{u-F(\phi)}{u}-\l_-=\frac{2(\gamma-1)(u-F(\phi))-\l_-u}{u}\\
& \Rightarrow  &uh'= 2(\gamma-1)(h+\l_-(\phi-\phi^*)-F(\phi))-\l_-[h+\l_-(\phi-\phi^*)]\\
&\Rightarrow &uh'+(\l_--2(\gamma-1))h=2(\gamma-1)\left[-F(\phi)+(\phi-\phi^*)\left(\l_--\frac{\l_-^2}{2(\gamma-1)}\right)\right]\\
&\Rightarrow &uh'+(\l_--2(\gamma-1))h=-2(\gamma-1)\left[F(\phi)-F'(\phi^*)(\phi-\phi^*)\right]>0,
\eee
for $0<\phi<\phi^*_+$. Since $u\sim |\l_-|(\phi^*-\phi)$ for the tangent this yields
$$h'-\frac{|\l_-|+2(\gamma-1)}{|\l_-|(\phi^*-\phi)}h=H(\phi)>0,$$ which implies with $\sigma=\phi^*-\phi$:
$$-\frac{dh}{d\sigma}-\frac{|\l_-|+2(\gamma-1)}{|\l_-|\sigma}h=H(\phi)\Leftrightarrow \frac{1}{\sigma^\alpha}\frac{d}{d\sigma}(\sigma^\alpha h)<0, \ \ \alpha=1+2\frac{\gamma-1}{|\l_-|}>0,$$ and hence $h<0$ for $\phi<\phi^*$, and the curve is below its tangent initially. At a previous point of contact with $h=0$, the derivative is positive which is a contradiction, and hence the curve is globally below its tangent.\\

\noindent{\em Connection problem}. The heart of the proof of Proposition \ref{propvrotexprofile} is now  to show that there exists $A(\gamma)>0$ such that the solution emanating from the unstable separatrix with $\gamma$ slope at $(0,0)$ enters the critical point $(\phi_+^*,0)$ when $s\to +\infty$. We will argue using a classical bifurcation argument on $A(\gamma)$.


\subsection{The case $A$ large}\label{sectionc3large}


\begin{lemma}[Limiting problem]
\label{limtiengningrrp}
 Consider the ODE
\be
\label{newinvoeihroie}
\left|\begin{array}{l}
\phi'=u\\
u'=2(\gamma-1)u-\gamma(\gamma-2)\phi-\phi^{\frac{\gamma}{\gamma-2}}
\end{array}\right.,
\ee
then the  curve with $\gamma$ slope at $(u,\phi)=(0,0)$ touches $u=0$ in finite time $s>0.$
\end{lemma}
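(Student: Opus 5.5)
We argue directly in the phase plane, in the same way as for \eqref{vneioenenoven}. Set $p=\frac{\gamma}{\gamma-2}>1$ and
$$G(\phi)=\frac{\gamma(\gamma-2)\phi+\phi^{p}}{2(\gamma-1)},$$
so that \eqref{newinvoeihroie} reads $\phi'=u$, $u'=2(\gamma-1)\left[u-G(\phi)\right]$. Wherever $u>0$ the orbit is a graph over $\phi$, with
$$\frac{du}{d\phi}=2(\gamma-1)\frac{u-G(\phi)}{u}.$$
The function $G$ is strictly increasing and convex, vanishes only at $\phi=0$ on $[0,+\infty)$, and grows superlinearly. The plan has four steps: follow the separatrix until it crosses the nullcline $u=G$, show that it stays below afterwards, show that $u$ then reaches $0$ at a finite value of $\phi$, and convert this into a finite value of $s$.

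\emph{Step 1: the orbit crosses the nullcline.} Near the origin the separatrix satisfies $u=\gamma\phi(1+o(1))$, exactly as computed in Section \ref{subsec:initial cond}. Since
$$G'(0)=\frac{\gamma(\gamma-2)}{2(\gamma-1)}<\gamma,$$
the orbit starts strictly above the nullcline $u=G(\phi)$, in the region $u>0$ where $\phi$ increases.

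Suppose first that the orbit stays in $\{u>G\}$ forever. Then $u>0$, so $\phi$ is increasing.
\begin{itemize}
\item If $\phi$ stayed bounded, with $\phi\to L>0$, then $u$ could not tend to $0$: $u$ is increasing there, since $u'>0$ in $\{u>G\}$. This contradicts $\int u\,ds<\infty$.
\item Hence $\phi\to+\infty$. But $\frac{du}{d\phi}\le 2(\gamma-1)$ in this region, so $u\le C+2(\gamma-1)\phi$. This is incompatible with $u>G(\phi)\gtrsim\phi^{p}$ for large $\phi$.
\end{itemize}
So the orbit reaches $u=G(\phi)$ at some finite $\phi_0>0$.

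\emph{Step 2: the orbit stays below the nullcline.} At the crossing point $\frac{du}{d\phi}=0<G'(\phi_0)$. Hence the orbit enters $\{0<u<G\}$. There $\frac{du}{d\phi}<0$ while $G$ is increasing, so the orbit can never return to the nullcline: at a first return point one would need $\frac{du}{d\phi}\ge G'>0$, which is impossible.

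\emph{Step 3: $u$ vanishes at a finite value of $\phi$.} Beyond $\phi_0$ the function $u$ is decreasing in $\phi$ and bounded by $u_0=u(\phi_0)$. Therefore
$$\frac{d}{d\phi}\left(\frac{u^2}{2}\right)=2(\gamma-1)\left(u-G(\phi)\right)\le 2(\gamma-1)\left(u_0-G(\phi)\right).$$
Moreover $\phi$ cannot stall at a finite value: as in Step 1, $u'\to-2(\gamma-1)G(L)<0$ there. For $\phi$ large the right-hand side is $\le -c<0$. Thus $u^2$ hits $0$ at some finite $\phi_1$.

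\emph{Step 4: conversion to finite time.} This is the main (mild) obstacle: the orbit must reach $u=0$ in finite $s$, not merely at a finite $\phi$. Near $\phi_1$ we have $u\to0$ and $G(\phi)\ge G(\phi_0)>0$, so
$$\frac{d(u^2)}{d\phi}\in\left[-C,-c\right].$$
Consequently $u\ge\sqrt{c(\phi_1-\phi)}$, and
$$s_1-s_0=\int_{\phi_0}^{\phi_1}\frac{d\phi}{u}<\infty.$$
On $[\phi_{\rm start},\phi_0]$ the time is finite trivially, since $u$ is bounded below away from the start point. The start point itself may be placed at any finite $s$ along the separatrix. Hence the curve touches $u=0$ at a finite $s>0$ after translation in $s$, which is the claim of Lemma \ref{limtiengningrrp}.
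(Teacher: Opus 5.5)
Your proof is correct. From the crossing of the nullcline onward it is the paper's Step 2: decrease of $u$, the inequality for $\frac{d}{d\phi}u^2$, and arrival at a non-critical point of $\{u=0\}$. You also fill in details the paper leaves implicit: no return to $\{u=G\}$, no stalling of $\phi$, and the bound $u^2\ge c(\phi_1-\phi)$ that makes ``non-critical, hence finite $s$'' precise.

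The genuine difference is how you reach the nullcline. The paper uses a barrier. For $\delta\ge 2$ it follows backward the integral curve through $(\phi_\delta,F(\phi_\delta))$ and shows, via $F(\phi)>F'(0)\phi$, that this curve stays above the line $u=(\gamma-2+\delta)\phi$. It then uses non-crossing of integral curves to trap the $\gamma$-slope separatrix underneath, so the separatrix must hit $u=F(\phi)$ at some $\phi\le\phi_\delta$.

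You argue directly instead. Above the nullcline $du/d\phi<2(\gamma-1)$, so $u$ grows at most linearly in $\phi$, which is incompatible with $u>G(\phi)\gtrsim\phi^{\gamma/(\gamma-2)}$. Bounded $\phi$ is ruled out because $u$ increases in that region.

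Your version is shorter and needs only $G>0$, $G'(0)<\gamma$ and superlinear growth of $G$. It also avoids the phase-portrait discussion of where the backward barrier orbit ends. The paper's version gives an explicit localization of the crossing point. Integrating your slope bound from the origin ($u<2(\gamma-1)\phi$ on the arc above the nullcline) gives a comparable explicit bound as well.
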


\begin{proof}[Proof of Lemma \ref{limtiengningrrp}] Equivalently 
\be
\label{vneioenenovenbis}
 \frac{1}{2(\gamma-1)}\frac{du}{d\phi}=\frac{u-F(\phi)}{u}, \text{ with }F(\phi)=\frac{\gamma(\gamma-2)\phi+\phi^{\frac{\gamma}{\gamma-2}}}{2(\gamma-1)}.\ee
Thus the slopes of the stable and unstable manifolds at the origin are the same as previously i.e $\gamma-2$ and $\gamma$ respectively. Plotting the vector field we get Figure \ref{fig:vect field plot 2}.\\
 \begin{figure}[h]
     \centering
     \includegraphics[width=0.4\linewidth]{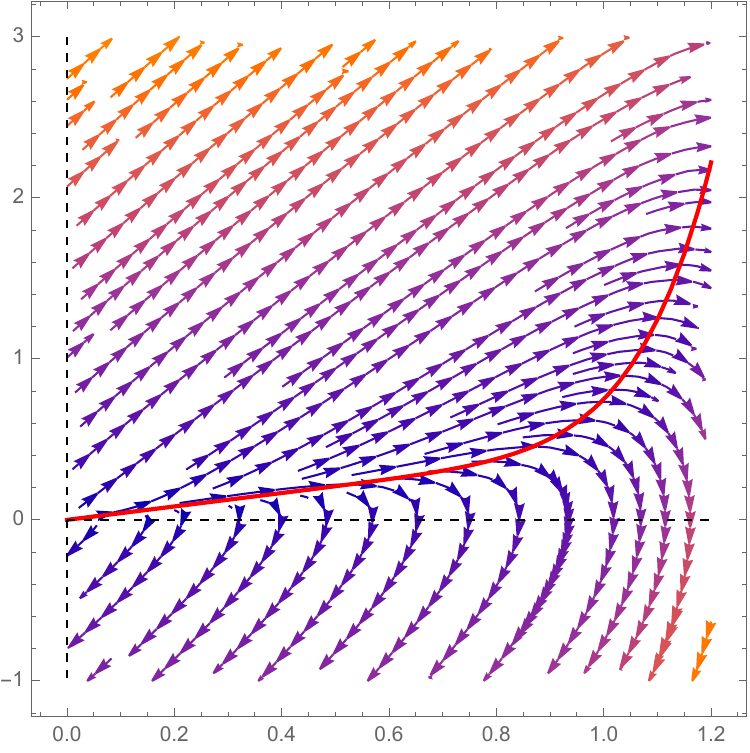}
     \caption{Plot of the vector field for $\gamma=2.5$ in $(\phi,u)$ coordinates. In red we have the graph $u=F(\phi)$.}
     \label{fig:vect field plot 2}
 \end{figure}

\noindent{\bf step 1} Touching $u=F(\phi)$ in finite time. We compute for $\delta\ge 2$:
\bee
&&F(\phi)-(\gamma-2+\delta)\phi=0\Leftrightarrow \frac{\gamma(\gamma-2)\phi+\phi^{\frac{\gamma}{\gamma-2}}}{2(\gamma-1)}=(\gamma-2+\delta)\phi\\
&\Leftrightarrow&-\phi^{\frac{\gamma}{\gamma-2}}+\left[2(\gamma-1)(\gamma-2+\delta)-\gamma(\gamma-2)\right]\phi=0\\
& \Leftrightarrow& \phi\left[-\phi^{\frac{2}{\gamma-2}}+(\gamma-2)^2+2\delta(\gamma-1)\right]=0,
\eee
and hence $$\phi_\delta^{\frac{2}{\gamma-2}}=(\gamma-2)^2+2\delta(\gamma-1).$$
We now consider the integral curve emanating from $(\phi_\delta,F(\phi_\delta))$ such that $$[u-(\gamma-2+\delta)\phi](\phi_\delta)=0.$$ Assume that there is another point of contact $0<\phi_1<\phi_\delta$ with $$[u-(\gamma-2+\delta)\phi](\phi_1)=0,$$ then at this point:
\bee
&&\frac{d}{d\phi}[u-(\gamma-2+\delta)\phi]=\frac{du}{d\phi}-(\gamma-2+\delta)\\
&=& 2(\gamma-1)\left[1-\frac{F(\phi)}{u}\right]-(\gamma-2+\delta)=\gamma-\delta-2(\gamma-1)\frac{F(\phi)}{(\gamma-2+\delta) \phi}\\
& = & \frac{2(\gamma-1)}{(\gamma-2+\delta)\phi}\left[\frac{(\gamma-\delta)(\gamma-2+\delta)}{2(\gamma-1)}\phi-F(\phi)\right]\\
&=& \frac{2(\gamma-1)}{(\gamma-2+\delta)\phi}\left[\left(F'(0)-\frac{\delta(\delta-2)}{2(\gamma-1)}\right)\phi-F(\phi)\right]\\
&\leq &\frac{2(\gamma-1)}{(\gamma-2+\delta)\phi}\left[F'(0)\phi-F(\phi)\right]=-\frac{2(\gamma-1)}{(\gamma-2+\delta)\phi}\frac{\phi^{\frac{\gamma}{\gamma-2}}}{2(\gamma-1)}<0,
\eee
and hence since $u>0$ for $\phi_1<\phi<\phi_\delta$ from the phase portrait, there cannot be a point of contact before. We conclude that the integral curve must remain above the line $u=(\gamma-2+\delta)\phi$ and hence cannot reach $(0,0)$ with $\gamma-2$ slope. It must therefore either enter $(0,0)$ with the $\gamma$ slope or from the phase portrait touch $\phi=0$ at $u>0$. In both cases since integral curves cannot cross the curve with $\gamma$ slope at $(0,0)$ must lie below this trajectory and hence touch $u=F(\phi)$.\\

\noindent{\bf step 2} Touching $u=0$ in finite time. We now claim that the curve with $\gamma$ slope at $(0,0)$ touches the axis  $u=0$ in finite $s$.  Indeed, after touching $u=F(\phi)$ which from the phase portrait must be crossed strictly, we have $\frac{du}{d\phi}<0$ from \eqref{vneioenenovenbis}, and hence $u$ decreases. Hence \eqref{vneioenenovenbis} forces  $$\frac{1}{4(\gamma-1)}\frac{du^2}{d\phi}\le u^*- F(\phi)\le u^*-\frac{\gamma(\gamma-2)}{2(\gamma-1)} \phi,$$ which forces $u^2$ to vanish in finite $\phi> 0$ and such point being non critical this also occurs for finite $s$.

\end{proof}

\begin{lemma}[$A$ large]
\label{lemmanveneonoe}
 Assume $\gamma>2$, then there exists $A(\gamma)$ such that for $A\geq A(\gamma)$, the curve with $\gamma$ slope at $(0,0)$ touches $u=0$ in finite $s>0.$
\end{lemma}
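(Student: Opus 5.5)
Our plan is to reduce to the limiting problem of Lemma \ref{limtiengningrrp} by rescaling $\phi$ to absorb $A$, and then conclude by continuous dependence on parameters. In \eqref{emdnenfonro}, set $\phi=\mu\tilde\phi$. The two nonlinear terms become $\mu^{\frac{\gamma+2}{\gamma-2}}\tilde\phi^{\frac{\gamma+2}{\gamma-2}}$ and $A\mu^{\frac{\gamma}{\gamma-2}}\tilde\phi^{\frac{\gamma}{\gamma-2}}$. Divide by $\mu$ and choose $\mu$ so that $A\mu^{\frac{2}{\gamma-2}}=1$, i.e. $\mu=A^{-\frac{\gamma-2}{2}}$. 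The equation becomes
\[
\tilde\phi''-2(\gamma-1)\tilde\phi'+\gamma(\gamma-2)\tilde\phi-\e\,\tilde\phi^{\frac{\gamma+2}{\gamma-2}}+\tilde\phi^{\frac{\gamma}{\gamma-2}}=0,\qquad \e=\mu^{\frac{4}{\gamma-2}}=A^{-2}.
\]
The linearization at the origin is unchanged, so the separatrix with $\gamma$ slope is still the relevant curve. It is characterized by $\tilde\phi\sim c e^{\gamma s}$ as $s\to-\infty$, with the normalization fixed by translation invariance in $s$. As $A\to+\infty$ we have $\e\to0$, and the system converges to the limiting system of Lemma \ref{limtiengningrrp}, up to the sign convention for $F$, which we check matches \eqref{vneioenenovenbis}.

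The first step is to fix the limiting trajectory $(\tilde\phi_0,\tilde u_0)$ from Lemma \ref{limtiengningrrp}. It touches $u=0$ at some finite $s_0$ with $\tilde\phi_0(s_0)=\phi_0>0$. There $\tilde u_0'(s_0)=-\gamma(\gamma-2)\phi_0-\phi_0^{\frac{\gamma}{\gamma-2}}<0$, so the crossing is transverse and $\tilde u_0$ takes strictly negative values shortly after $s_0$.

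The second step is to show that the unstable separatrix for parameter $\e$ depends continuously on $\e\in[0,\e_0]$, uniformly on $(-\infty,s_0+1]$. We would use the standard stable/unstable manifold theorem with parameters at the hyperbolic saddle $(0,0)$; the eigenvalues $\gamma$ and $\gamma-2$ do not depend on $\e$. Concretely, we write the separatrix as the solution of the fixed point problem
\[
\tilde\phi(s)=e^{\gamma s}+\int_{-\infty}^s K(s-\sigma)N_\e(\tilde\phi)(\sigma)\,d\sigma
\]
on $s\le s_1$ with $s_1\ll -1$, where $K$ is the variation-of-constants kernel for the operator $\pa_s^2-2(\gamma-1)\pa_s+\gamma(\gamma-2)$. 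The nonlinearity satisfies $N_\e=O(\tilde\phi^{\frac{\gamma}{\gamma-2}})$ uniformly in $\e$. Because $\frac{\gamma}{\gamma-2}>1$, the Duhamel term is superlinearly small, so a contraction in a weighted space $\sup e^{-\gamma s}|\tilde\phi|$ gives a solution depending continuously on $\e$. On the compact interval $[s_1,s_0+1]$, continuity then follows from the regular dependence of the ODE flow on $\e$. The only degeneracy is the non-Lipschitz behaviour of $\tilde\phi^{p}$ near $\tilde\phi=0$, but the exponents exceed $1$, so the vector field is $\mathcal C^1$ on $\tilde\phi\ge0$. This is the main technical obstacle, and the exponent condition resolves it.

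The conclusion then follows. For $\e$ small, the perturbed $\tilde u_\e$ is within any prescribed distance of $\tilde u_0$ on $[s_1,s_0+1]$, so $\tilde u_\e$ becomes negative before $s_0+1$ while $\tilde\phi_\e$ stays positive. Hence the curve touches $u=0$ in finite $s$. Since $u=\mu\tilde u$ with $\mu>0$, the same holds for the original system. This gives the existence of $A(\gamma)=\e_0^{-1/2}$ such that the statement holds for all $A\ge A(\gamma)$.
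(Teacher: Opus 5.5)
Your proof is correct and follows the paper's argument. You rescale so that $A^{-2}$ multiplies the $\phi^{\frac{\gamma+2}{\gamma-2}}$ term, recover the limiting system \eqref{newinvoeihroie} of Lemma~\ref{limtiengningrrp} as $A\to+\infty$, and conclude by continuity; you usefully make that continuity step explicit via the Duhamel fixed point from $-\infty$ and the transversality $\tilde u_0'(s_0)<0$.

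One small slip: both eigenvalues $\gamma-2$ and $\gamma$ at $(0,0)$ are positive, so the origin is an unstable node rather than a saddle, and the $\gamma$-slope curve is its strong unstable manifold. That is exactly what your integral equation from $-\infty$ selects, so the argument is unaffected.
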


\begin{proof}[Proof of Lemma \ref{lemmanveneonoe}] The curve with $\gamma$ curve at $(0,0)$ corresponds up to scale invariance to the radial solution $\Psi$ solution to 
$$
\left|\begin{array}{l}\pa_r^2\Psi+\frac{3}{r}\pa_r\Psi- r^{\frac{2(\gamma+2)}{\gamma-2}}\Psi^{\frac{\gamma+2}{\gamma-2}}+Ar^{\frac{4}{\gamma-2}}\Psi^{\frac{\gamma}{\gamma-2}}=0\\
\Psi(0)=1\\
\Psi'(0)=0
\end{array}\right.,
$$
We renormalize
$$\Psi=\mu\Phi\left(\frac{r}{\l}\right),$$ and hence
\bee
&&\pa_r^2\Psi+\frac{3}{r}\pa_r\Psi- r^{\frac{2(\gamma+2)}{\gamma-2}}\Psi^{\frac{\gamma+2}{\gamma-2}}+Ar^{\frac{4}{\gamma-2}}\Psi^{\frac{\gamma}{\gamma-2}}=0\\
&\Leftrightarrow & \frac{\mu}{\l^2}\left(\Phi''+\frac{3}{R}\Phi'\right)-\mu^{\frac{\gamma+2}{\gamma-2}}(\l R)^{\frac{2(\gamma+2)}{\gamma-2}}\Phi^{\frac{\gamma+2}{\gamma-2}}+A\mu^{\frac{\gamma}{\gamma-2}}(\l R)^{\frac{4}{\gamma-2}}\Phi^{\frac{\gamma}{\gamma-2}}=0\\
&\Leftrightarrow& \Phi''+\frac{3}{R}\Phi'+(\mu \l^\gamma)^{\frac{4}{\gamma-2
}}R^{\frac{2(\gamma+2)}{\gamma-2}}\Phi^{\frac{\gamma+2}{\gamma-2}}-A (\mu \l^\gamma)^{\frac{2}{\gamma-2
}}R^{\frac{4}{\gamma-2}}\Phi^{\frac{\gamma}{\gamma-2}}=0,
\eee
and hence the choice $$\mu=1, \ \ A\l^{\frac{2\gamma}{\gamma-2}}=1,$$ yields
$$\left|\begin{array}{l}
 \Phi''+\frac{3}{R}\Phi'-\frac{1}{A^2}R^{\frac{2(\gamma+2)}{\gamma-2}}\Phi^{\frac{\gamma+2}{\gamma-2}}-R^{\frac{4}{\gamma-2}}\Phi^{\frac{\gamma}{\gamma-2}}=0\\
\Phi(0)=1, \ \ \Phi'(0)=0
\end{array}\right.,
$$
which locally converges  as $A\to +\infty$ to the solution to the limiting problem \eqref{newinvoeihroie} after performing an Emden transform, and hence the claim follows from Lemma \ref{limtiengningrrp} and a straightforward continuity argument.

\end{proof}

\subsection{The case $A$ small}\label{sectionc3small}

\begin{lemma}[Case $A=0$]
\label{lemannvdnononr}
The separatrix with $\gamma$ slope at $(0,0)$ remains above $u=F(\phi)$.
\end{lemma}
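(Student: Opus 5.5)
The plan is to avoid phase-plane barrier arguments altogether. I would go back to the original radial variable $r$, where for $A=0$ the equation has a sign, and from there derive the explicit lower bound $u\ge\gamma\phi$ along the separatrix. That bound is then compared with the elementary upper bound $F(\phi)\le F'(0)\phi$, which holds when $A=0$.

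\textbf{Monotonicity of $\Psi$.} By Section \ref{subsec:initial cond}, the separatrix with $\gamma$ slope at $(0,0)$ corresponds, up to the scaling \eqref{vniovneivnoneoiscalign}, to the solution of \eqref{vneiovnineoneneovnvnoe} with $A=0$, $\Psi(0)=1$, $\Psi'(0)=0$. For $A=0$ the equation reads
$$\frac{1}{r^3}\pa_r\left(r^3\pa_r\Psi\right)=r^{\frac{2(\gamma+2)}{\gamma-2}}\Psi^{\frac{\gamma+2}{\gamma-2}}.$$
Equivalently, \eqref{vneonneinvnqyrtueytuy} with $A=0$ gives $\Psi'(r)=r^{-3}\int_0^r \tau^{\frac{2(\gamma+2)}{\gamma-2}+3}\Psi^{\frac{\gamma+2}{\gamma-2}}d\tau$. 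I would run a continuity argument on the maximal interval of existence $[0,r_{\max})$:
\begin{itemize}
\item as long as $\Psi>0$, the integrand is positive, so $\Psi'>0$;
\item hence $\Psi\ge 1$ on $[0,r_{\max})$, and positivity can never be lost.
\end{itemize}
If $r_{\max}<\infty$ (finite-$r$ blow up is possible here since the nonlinearity is superlinear), this is harmless: the statement only concerns the trajectory while it exists.

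\textbf{Lower bound on $u$.} In the Emden variables of Lemma \ref{lemamphaseportrait},
$$\phi=r^\gamma\Psi>0,\qquad u=r\pa_r(r^\gamma\Psi)=\gamma r^\gamma\Psi+r^{\gamma+1}\pa_r\Psi.$$
The second term is nonnegative by the previous step, so
$$u\ge\gamma\phi>0\quad\mbox{for all } s\in(-\infty,\log r_{\max}).$$

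\textbf{Upper bound on $F$.} With $A=0$, \eqref{vneoneineone} gives
$$F(\phi)=\frac{\gamma(\gamma-2)\phi-\phi^{\frac{\gamma+2}{\gamma-2}}}{2(\gamma-1)}<F'(0)\phi=\frac{\gamma(\gamma-2)}{2(\gamma-1)}\phi\quad\mbox{for }\phi>0.$$
Since $\frac{\gamma-2}{2(\gamma-1)}<1$, we get $F'(0)<\gamma$. Combining the two bounds,
$$u\ge\gamma\phi>F'(0)\phi>F(\phi)$$
along the whole trajectory. So the separatrix stays strictly above the graph $u=F(\phi)$, which also confirms its $\gamma$ slope at the origin.

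\textbf{Main obstacle.} The only delicate point is to justify the identification of the phase-plane separatrix with the $r$-solution, including positivity of $\Psi$ along it. This is exactly what Section \ref{subsec:initial cond} provides, together with the positivity of the integrand. Should a purely phase-plane argument be preferred, I would use the line $u=\gamma\phi$ as a barrier. On that line,
$$\frac{du}{d\phi}=2(\gamma-1)\left(1-\frac{F}{\gamma\phi}\right)\ge 2(\gamma-1)-(\gamma-2)=\gamma,$$
so trajectories cannot cross the line from above. This reproduces the same bound.
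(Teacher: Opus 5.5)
Your proof is correct, but its main line differs from the paper's. The paper uses \eqref{vneonneinvnqyrtueytuy} only to get $\Psi'>0$ near $r=0$, i.e.\ $u-\gamma\phi>0$ as $s\to-\infty$. It then propagates this in the phase plane by a first-contact argument: at a first touching point of the line $u=\gamma\phi$ one would have $\frac{d}{d\phi}[u-\gamma\phi]=\frac{2(\gamma-1)}{\gamma\phi}[F'(0)\phi-F(\phi)]>0$ by concavity of $F$, which is impossible. That is exactly your fallback barrier, which to be airtight should use the strict inequality $F(\phi)<F'(0)\phi$ to get $\frac{du}{d\phi}>\gamma$ on the line.

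Your main route instead uses the fact that, for $A=0$, positivity of the integrand in the formula for $\Psi'$ propagates itself ($\Psi>0\Rightarrow\Psi'>0\Rightarrow\Psi\ge 1$). So $\Psi'>0$ on the whole existence interval, and $u=\gamma\phi+r^{\gamma+1}\pa_r\Psi>\gamma\phi$ along the entire trajectory with no barrier at all. The closing comparison $\gamma\phi>F'(0)\phi>F(\phi)$ is the same in both.

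Your version is shorter and reads the conclusion directly off the sign structure of the $A=0$ radial equation. It is, however, a statement about this one solution. The paper's computation shows the vector field points strictly into $\{\phi>0,\ u>\gamma\phi\}$ across the line, so that region is forward invariant for every trajectory; this is the phase-plane viewpoint used in the surrounding lemmas.

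One minor point: your remark that the bound ``confirms'' the $\gamma$ slope overstates things, since $u\ge\gamma\phi$ is merely consistent with it. Nothing in your argument depends on this.
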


\begin{proof}[Proof of Lemma \ref{lemannvdnononr}] The shape of $F$ is the same, but $$2(\gamma-1)F''(\phi)=\frac{1}{(\gamma-2)^2}\phi^{\frac{2}{\gamma-2}-1}\left[-4(\gamma+2)\phi^{\frac{2}{\gamma-2}}\right]<0,$$ and hence $F$ is concave. Moreover, the separatrix with $\gamma$ slope corresponds from \eqref{vneonneinvnqyrtueytuy} to
$$\Psi'(r)=\frac{1}{r^3}\int_0^r\left[\tau^{\frac{2(\gamma+2)}{\gamma-2}}\Psi^{\frac{\gamma+2}{\gamma-2}}-A\tau^{\frac{4}{\gamma-2}}\Psi^{\frac{\gamma}{\gamma-2}}\right]\tau^3d\tau=\frac{1}{r^3}\int_0^r\left[\tau^{\frac{2(\gamma+2)}{\gamma-2}}\Psi^{\frac{\gamma+2}{\gamma-2}}\right]\tau^3d\tau>0,$$
and hence $\Psi'(r)>0$ near $r>0$. Hence $$r^\gamma\frac{d}{dr}\left(\frac{\phi}{r^\gamma}\right)>0\Leftrightarrow \phi'-\gamma\phi>0\ \ \mbox{near}\ \ s\to -\infty,$$ which means that $$u-\gamma\phi>0\ \ \mbox{near}\ \ \phi=0.$$
Now suppose a first point of contact $u-\gamma \phi=0$ occurs, then at that point we have
\bee
\frac{d}{d\phi}[u-\gamma\phi]&= &\frac{du}{d\phi}-\gamma=2(\gamma-1)\left[1-\frac{F(\phi)}{u}\right]-\gamma=\gamma-2-2(\gamma-1)\frac{F(\phi)}{\gamma\phi}\\
& = & \frac{2(\gamma-1)}{\gamma\phi}\left[\frac{\gamma(\gamma-2)}{2(\gamma-1)}\phi-F(\phi)\right]=\frac{2(\gamma-1)}{\gamma \phi}[F'(0)\phi-F(\phi)]>0,
\eee
since $F$ is concave. Thus the unstable manifold with $\gamma$ slope at $(0,0)$ remains above it's tagent which lies strictly above $F$.
\end{proof}


\subsection{Existence of a critical $A(\gamma)$}


\begin{lemma}[Case $A(\gamma)$ critical]
\label{lemannvdnononrml;mv}
There exists $A(\gamma)>0$ such that the unstable manifold with $\gamma$ slope at $(0,0)$ touches $u=F(\phi)$ at $\phi<\phi_1^*$, and exits on the other side of $u=F(\phi)$.
\end{lemma}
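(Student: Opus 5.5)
This is a shooting argument in $A$: a continuity argument on the set of parameters where the separatrix crosses the graph of $F$. For each $A\ge 0$, let $\Gamma_A$ denote the branch of the unstable manifold leaving $(0,0)$ with slope $\gamma$, parametrized by $s\in(-\infty,s_A)$ up to its first contact with $\{u=F(\phi)\}$, with $s_A=+\infty$ if there is no contact. Set
$$\mathcal A=\{A\ge 0:\ \Gamma_A \mbox{ touches } u=F(\phi) \mbox{ at some } \phi<\phi_1^* \mbox{ and crosses it strictly}\}.$$

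\textbf{Step 1 (large $A$ lies in $\mathcal A$).} By Lemma \ref{lemmanveneonoe}, $[A(\gamma),+\infty)\subset\mathcal A$ after rescaling. The proof of Lemma \ref{limtiengningrrp} shows that the limiting curve reaches $u=F(\phi)$ in the region where $F$ is increasing. I will check that in the original variables the contact point $\phi_c(A)$ satisfies $\phi_c(A)<\phi_1^*(A)$ for $A$ large. This should follow because $\phi_1^*(A)$ grows like a power of $A$, while the rescaled contact point stays bounded.

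\textbf{Step 2 (small $A$ lies in the complement).} By Lemma \ref{lemannvdnononr}, $\Gamma_0$ stays above its tangent $u=\gamma\phi$, and this tangent lies above $F$ at $A=0$. I will extend this to small $A>0$. On a compact $\phi$-range, the unstable manifold and $F$ depend continuously on $A$. For large $\phi$, the term $-\phi^{\frac{\gamma+2}{\gamma-2}}$ dominates, so $F<0<u$ there, uniformly in small $A$, as long as $u>0$. Hence $0\notin\overline{\mathcal A}$, and in fact $[0,\epsilon)\cap\mathcal A=\emptyset$.

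\textbf{Step 3 (critical value).} Define $A(\gamma)=\inf\mathcal A>0$. I will argue that $\mathcal A$ is open. Indeed, a transversal crossing of the graph of $F$ persists under perturbation, because the unstable manifold depends continuously on $A$ on compact $s$-intervals (stable manifold theorem with parameters, in the normalization of \eqref{vneonneinvnqyrtueytuy}), and the strict inequality $\phi<\phi_1^*$ also persists. Consequently $A(\gamma)\notin\mathcal A$.

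At $A=A(\gamma)$, approximating parameters $A_n\downarrow A(\gamma)$ have contact points $\phi_n<\phi_1^*(A_n)$. By Step 2 these contact points cannot escape to $0$ (the tangent slope $\gamma$ exceeds $F'(0)$) or to infinity. Passing to the limit, $\Gamma_{A(\gamma)}$ touches $u=F(\phi)$ at some $\phi\le\phi_1^*$. Since $A(\gamma)\notin\mathcal A$, this contact must be degenerate, that is tangential or located at $\phi=\phi_1^*$.

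\textbf{Main obstacle.} The hard point is reconciling the last step with the statement, which asserts that at the critical parameter the curve touches $u=F(\phi)$ at $\phi<\phi_1^*$ and exits on the other side. I therefore expect the intended definition of $A(\gamma)$ to be $\sup$ of the complement, or equivalently a definition in which $\mathcal A$ is closed.

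To settle this, I will first analyze the local behavior at a contact point. On $u=F(\phi)$ with $u>0$, we have $du/d\phi=0$ by \eqref{vneioenenoven}. So if $F'(\phi)>0$ at the contact, the curve necessarily crosses from above to below, and the crossing is automatic. Every contact with $\phi<\phi_1^*$ is therefore a genuine exit, the set of such $A$ is closed by the limiting argument above, and $A(\gamma)=\min\mathcal A$ belongs to it.

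The remaining risk is a contact at exactly $\phi_1^*$. I would exclude it by a comparison with the tangent line $u=F'(0)\phi$ combined with the convexity of $F$ on $[0,\phi^*]$. If that comparison fails, I would instead restrict the minimum to $\phi<\phi_1^*$ using the fact that at $\phi_1^*$ the contact would force $u'=0$ and $u''=0$ simultaneously, and then check that this is non-generic in $A$.
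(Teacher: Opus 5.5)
Your Steps 1--3, up to the conclusion $\inf\mathcal A\notin\mathcal A$, are sound and match the paper's picture. The paper likewise observes that at the first value of contact the curve touches $u=F(\phi)$ exactly at the maximum $\phi_1^*$ of $F$.

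The gap is in your final paragraph, where you claim that the set of $A$ with a contact at $\phi<\phi_1^*$ is closed, so that $\min\mathcal A$ exists and lies in $\mathcal A$. This contradicts the openness you just proved, since $[0,+\infty)$ is connected and $\mathcal A$ is nonempty by Step 1 and proper by Step 2. Your limiting argument only produces a contact at $\phi\le\phi_1^*$ in the limit, and the limit of the contact points $\phi_n<\phi_1^*(A_n)$ can reach $\phi_1^*$.

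The case you call the remaining risk cannot be excluded, because it is exactly what happens at $\inf\mathcal A$. Set $h=u-F$. The equation gives $h'=2(\gamma-1)h/u-F'(\phi)$, so at a contact at $\phi_1^*$ one has $h=h'=0$ and $h''=-F''(\phi_1^*)>0$, using $\phi_1^*>\phi^*$. Thus the curve touches the maximum of $F$ from above and stays above. It does not exit, and it can never cross later, since a first contact from above at $\phi>\phi_1^*$ would have $h'=-F'>0$.

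Your observation that $u'=u''=0$ there is correct. However, non-genericity in $A$ only means this happens at isolated values of $A$, and the infimum is one of them. Likewise, taking $\sup$ of the complement would not help, since that is again a point outside $\mathcal A$.

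The fix is to stop insisting that the threshold value itself belongs to $\mathcal A$. The lemma only asserts that $\mathcal A$ contains some $A>0$. The paper gets this by deforming slightly past the first contact value: by definition of the infimum there are elements of $\mathcal A$ arbitrarily close above $\inf\mathcal A$, and for these the contact has moved to $\phi<\phi_1^*$ with a strict crossing. Alternatively, your Step 1 already suffices on its own. Combine it with your remark that at any contact with $\phi<\phi_1^*$ one has $du/d\phi=0<F'(\phi)$, so the crossing is automatically transversal. Every element of $\mathcal A$ is positive by Step 2.
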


\begin{proof}[Proof of Lemma \ref{lemannvdnononrml;mv}]
The unstable manifold with $\gamma$ slope at $(0,0)$ is a continuous function of $A(\gamma)$ (through a standard Banach fixed point scheme with parameter $A(\gamma)$). For $A(\gamma)$ small, by continuous deformation of Lemma \ref{lemannvdnononr}, the curve cannot touch $u=F(\phi)$, and it does for $A>A(\gamma)$ from Lemma \ref{lemmanveneonoe}. Hence at the first value of contact, the point of contact must be at the maximum of $F$, and the conclusion follows by continuous deformation of this value.
\end{proof}

\subsection{Proof of Proposition \ref{propvrotexprofile}}

We are now in position to conclude the proof of Proposition \ref{propvrotexprofile}.\\

\noindent{\bf step 1} Existence of $A(\gamma)>0$. We conclude from section \ref{sectionc3large} and section \ref{sectionc3small} and a straightforward continuity argument on $A$ that for $\gamma>2$, there exists at least one value $A(\gamma)>0$ for which the curve with slope $\gamma$ as $s\to-\infty$ coincides with the separatrix which enters $(\phi^+_*,0)$ as $s\to \infty$. We let $\psi=r^2\Psi$ be such a curve which is the smooth radial solution to \eqref{vneiovnineoneneovnvnoe}.\\

\noindent{\bf step 2} Lower bound as $\gamma\downarrow 2$. We claim
\be
\label{nienneinv}
\liminf_{\gamma\downarrow 2} A(\gamma)>0.
\ee
Indeed, assume by contradiction that we can find a sequence $\gamma_n\downarrow 2$ with $A(\gamma_n)\to 0$, then from \eqref{venonoenoeovnvjeojejioe}, 
\be
\label{vneovnenenvo}
\phi^*_+(\gamma_n)\to 0.
\ee On the other hand, from \eqref{vneonneinvnqyrtueytuy}, the separatrix with $\gamma$ slope at $(0,0)$ is the solution to the fixed point equation:
\bee
\Psi(r)&=&1+\int_0^r\left(\int_0^{\tau}\left[\sigma^{\frac{2(\gamma+2)}{\gamma-2}}\Psi^{\frac{\gamma+2}{\gamma-2}}-A\sigma^{\frac{4}{\gamma-2}}\Psi^{\frac{\gamma}{\gamma-2}}\right]\sigma^3d\sigma\right) \frac{1}{\tau^3}d\tau\\
& =  & 1+\int_0^r\left(\int_0^{\tau}\left[\left(\sigma^2\Psi\right)^{\frac{\gamma+2}{\gamma-2}}-A\left(\sigma^4\Psi^{\gamma}\right)^{\frac{1}{\gamma-2}}\right]\sigma^3d\sigma\right) \frac{1}{\tau^3}d\tau.
\eee
Pick $0<\delta\ll 1$ such that $0<\gamma-2<\e(\delta),$ then we have $$\forall r\in \left[0,\frac 12\right], \ \ |\Psi-1|+|\Psi'|\le \delta,$$ and hence for $r\in [0,\frac 12]$: 
$$|\Psi'(r)|\le r^{\frac{1}{10(\gamma-2)}},$$ which implies: 
\bee
&&\left|\Psi'\right|=\left|\frac{d}{dr}\left(\frac{\phi}{r^\gamma}\right)\right|=\frac{|u-\gamma \phi|}{r^{\gamma+1}}<r^{\frac{1}{10(\gamma-2)}}\\
&\Rightarrow& u(r)\ge \gamma \phi(r) -r^{100}\ge \frac{r^{\gamma}}{2}.
\eee
Hence the $u(\phi)$ trajectory remains for $r\in \left(0,\frac 12\right]$ in the zone $(\phi>0,u>0)$ and reaches at $r=\frac 12$ a point for which $\phi(\frac 12)>  \phi^*_+(\gamma_n)$ for $n$ sufficiently large by \eqref{vneovnenenvo}, and hence from the phase portrait $u'(\phi)>0$ after this point, and hence in particular $u(\phi^*_+)>0$ which is a contradiction.\\
 
\noindent{\bf step 3} Monotonicity of the stream function. We claim that $\psi'$ vanishes exactly once on $(0,+\infty)$ and 
\be
\label{senocnoen}
(\gamma-2)\psi+r\pa_r\psi>0.
\ee
Indeed, we first compute
$$\frac{d\psi}{dr}=\frac1r\frac{d}{ds}\left(\frac{\phi}{r^{\gamma-2}}\right)=\frac{r^{\gamma-2}}{r}(u-(\gamma-2)\phi).$$
Let $$h(\phi)=u-(\gamma-2)\phi,$$ then at a point of contact $h(\phi)=0$, we have 
\bee
\frac{d}{d\phi}[u-(\gamma-2)\phi]&= & \frac{du}{d\phi}-(\gamma-2)
= 2(\gamma-1)\left[1-\frac{F(\phi)}{u}\right]-(\gamma-2)\\
&= & \gamma-2(\gamma-1)\frac{F(\phi)}{(\gamma-2) \phi}
 = \frac{2(\gamma-1)}{(\gamma-2)\phi}\left[\frac{\gamma(\gamma-2)}{2(\gamma-1)}\phi-F(\phi)\right]\\
&= & \frac{2(\gamma-1)}{(\gamma-2)\phi}\left[F'(0)\phi-F(\phi)\right]=-\frac{2(\gamma-1)}{(\gamma-2)\phi}\frac{A(\gamma)\phi^{\frac{\gamma}{\gamma-2}}-\phi^{\frac{\gamma+2}{\gamma-2}}}{2(\gamma-1)}\\
& = & -\frac{2\phi^{\frac{2}{\gamma-2}}(A(\gamma)-\phi^{\frac{2}{\gamma-2}})}{\gamma-2}.
\eee
The solution starts with $\gamma$ slope at the origin and hence $u-\gamma \phi>0$ initially which forces $h'(\phi_0)\le  0$ at the first point of contact which must exist. If there is another point of contact $\phi_1$, then the curve passes above the line and hence there must be another point of contact $\phi_2>\phi_1$. But then $$h'(\phi_1)\ge 0\Rightarrow A-\phi_1^{\frac{2}{\gamma-2}}\le 0\Rightarrow A-\phi_2^{\frac{2}{\gamma-2}}< 0\Rightarrow h'(\phi_2)>0,$$
 which is a contradiction. Finally,
 $$\frac{d}{dr}(r^{\gamma-2}\psi)=\frac{d}{dr}\phi=\frac{u}r>0,$$ from the phase portrait, and \eqref{senocnoen} is proved as well as $\psi>0$ for $r>0$.\\
 
  \noindent{\bf step 4} Tail of the homogeneous vortex. From the phase portrait
$$r^{\gamma-2}\psi=\phi=\phi_+^*\left(1+e^{\l_-s}+{\rm lot}\right)=A_\infty(\gamma)\left(1+\frac{1+o(1)}{r^{|\l_-|}}\right),$$ and \eqref{asymptoticexpansionvortex} follows. Next 
\[
\xi(r)=\frac{1}{r^{\gamma-2}}\frac{d}{dr}(r^{\gamma-2}\psi)=\frac{1}{r^{\gamma-2}}\frac{d}{dr}\phi=\frac{1}{r^{\gamma-2}}\frac{u}r=\frac{1}{r^{\gamma-2}}\left(e^{\l_-s}+{\rm lot}\right)=\frac{1}{r^{\gamma-2}}\frac{1+o(1)}{r^{|\l_-|}},
\]
and estimates for higher derivatives follows from \eqref{vnnrononr}.\\

\noindent{\bf step 5} Regularity. We now prove point (iv) of Proposition \ref{propvrotexprofile}. We compute using $\gamma=2+\frac{1}{m}$:
 \be
\label{formulasink}
\left|\begin{array}{l}
\frac{2(\gamma+2)}{\gamma-2}=2(1+4m) \\
\frac{\gamma+2}{\gamma-2}=1+4m, \\
\frac{4}{\gamma-2}=4m,\
\frac{\gamma}{\gamma-2}=1+2m,\
\frac{\gamma-1}{\gamma-2}=1+m
\end{array}\right..
\ee
Recall \eqref{vneiovnineoneneovnvnoe}
$$\pa_r^2\Psi+\frac{3}{r}\pa_r\Psi- r^{\frac{2(\gamma+2)}{\gamma-2}}\Psi^{\frac{\gamma+2}{\gamma-2}}+A(\gamma)r^{\frac{4}{\gamma-2}}\Psi^{\frac{\gamma}{\gamma-2}}=0,$$
 then \eqref{formulasink} and the normalization $\Psi(0)=1$ ensure that $\Psi$ is a smooth function of $r^2$. Moreover, $$u_\phi=\frac{\matchal C(\psi)}{r}=\frac{(r^2\Psi)^{\frac{\gamma-1}{\gamma-2}}}{r}=\frac{(r^2\Psi)^{1+m}}{r}=r(r^2\Psi)^m,$$ and the $\mathcal C^\infty$ regularity of the vector field \eqref{vneiovnevneinenove} at $r=0$ follows. Regularity for $r>0$ follows from the regularity of $\Psi_*$ which since $\Psi_*$ does not vanish follows from \eqref{vneiovnineoneneovnvnoe} and standard elliptic regularity. Finally observe that 
 \[
 K(\gamma)\underset{\gamma\to 2}{\sim}\frac{2A(\gamma)}{\gamma-2}\geq  \frac{1}{\gamma-2}\liminf_{\gamma'\downarrow 2} A(\gamma')\underset{\gamma\to 2}{\to} +\infty.
 \]
 This concludes the proof of Proposition \ref{propvrotexprofile}.
 

\subsection{Degenerate vortices in the limit $\gamma\downarrow 2$}
\label{degeneratevortex}


In this section, we revisit the proof of Proposition \ref{thmdegen} in the limit $\gamma\downarrow 2$. We give a complete description of the profile in this singular limit, which will be important for the construction of the self similar solution in section \ref{sectionselfsim}. Interestingly enough, the solution displays a violent {\em concentration ring} which is reminiscent from the study of limiting self similar profiles for the compressible Euler equations in \cite{MRRS19-1}.

\begin{proposition}[Shape of the degenerate vortex as $\gamma \downarrow 2$]
\label{propappendix}
Assume $0<\gamma-2<\e_*$ universal small enough, then up to scaling invariance, the solution to Proposition \ref{propvrotexprofile} is given by the following renormalization.\\

\noindent{\em 1. Value of $A(\gamma)$.}
$$
A(\gamma)=\frac{\sqrt{2}\gamma}{\sqrt{\gamma-2}}\left[1+o_{\gamma\to 2}(1)\right].
$$
In particular, \eqref{fneionfenvoinoe} holds.\\

\noindent{\em 2. Profile.} Let $$\left|\bear
b=\frac{\sqrt{\gamma-2}}{A(\gamma)}\\
y_{\gamma}=-\frac{\gamma-2}{2b\gamma}\log A(\gamma)
\ear\right.,
$$
then $$\left|\bear
r^\gamma\Psi_*(r)= \phi(s)\\
y=\frac{s+by_{\gamma}}{b}\\
\phi(s)=\left[h(y)\sqrt{A(\gamma)}\right]^{\gamma-2}\\
h(y)=h_0(y)\left[1+h_{\rm corr}(y)\right]\\
h_0(y)=\frac{1}{\left(1+e^{-\sqrt{2}y}\right)^{\frac 12}}
\ear\right.
$$
with $$\|h_{\rm corr}\|_{L^\infty}\lesssim b\underset{\gamma \to 2}{\lesssim}(\gamma-2).$$ 

\end{proposition}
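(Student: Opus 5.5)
The plan is to rewrite the Emden equation \eqref{emdnenfonro} in the variables of the statement, show that as $\gamma\downarrow2$ it becomes a conservative ODE with an explicit heteroclinic orbit $h_0$, and then treat the remaining terms perturbatively. Set $\e=\gamma-2$ and $\phi=(h\sqrt{A})^{\e}$. Then $\phi^{\frac{2}{\e}}=Ah^2$, so that $\phi^{\frac{\gamma+2}{\e}}=\phi A^2h^4$ and $\phi^{\frac{\gamma}{\e}}=\phi Ah^2$. Moreover $\phi'/\phi=\e\,\pa_s\log h$. Put $s=b\,y+{\rm const}$ with $b=\sqrt{\e}/A$ and $w=\pa_y\log h$. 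Dividing \eqref{emdnenfonro} by $\phi A^2$ and using $\e/(b^2A^2)=1$ and $\e/(bA^2)=b$, the equation becomes the first-order system
$$
\pa_yh=hw,\qquad \pa_yw+h^2(1-h^2)=-\e w^2+2(\gamma-1)b\,w-\gamma b^2 .
$$
At $b=\e=0$ this system is conservative with energy $E=\frac{w^2}{2}+\frac{h^2}{2}-\frac{h^4}{4}$. Its heteroclinic orbit from $(0,\frac1{\sqrt2})$ to $(1,0)$ lies on $w=\frac{1-h^2}{\sqrt2}$, and integrating gives exactly $h_0=(1+e^{-\sqrt2y})^{-1/2}$. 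I will check that $(\log h_0)''=-h_0^2(1-h_0^2)$ and that $\pa_y\log h_0=(1-h_0^2)/\sqrt2$.

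\textbf{Step 1: the value of $A(\gamma)$ from an energy identity.} The two endpoints are characterised as follows.
\begin{itemize}
\item As $s\to-\infty$, the separatrix with slope $\gamma$ at $(0,0)$ (Section \ref{subsec:initial cond}) corresponds to $w\to w_-=\gamma b/\e=\gamma/(A\sqrt\e)$ and $h\to0$.
\item As $s\to+\infty$, the orbit enters $(\phi_+^*,0)$, so $w\to0$ and $h^2\to h_+^2=x_+/A$, where $x_+$ is the positive root in \eqref{venonoenoeovnvjeojejioe}. Hence $h_+^2=1+O(\e/A^2)=1+O(b^2)$.
\end{itemize}
Along any trajectory the energy satisfies
$$
\pa_yE=w\left(-\e w^2+2(\gamma-1)bw-\gamma b^2\right).
$$
Integrating over $\Bbb R$ gives
$$
\frac14+O(b^2)-\frac{w_-^2}{2}=\int_{\Bbb R}\left(2(\gamma-1)b\,w^2-\e w^3-\gamma b^2w\right)dy .
$$
I then need a priori bounds on the connecting orbit of Proposition \ref{propvrotexprofile}: $w>0$ (from $u>0$ in step 3 there), and $\int w^2\,dy\lesssim1$, $\int w\,dy=\log h_+-\log h(-\infty)$ handled with $w\approx w_-$ at the left end. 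These come from a bootstrap comparing with the conservative flow. Granting them, the right-hand side is $O(b+\e)$, so $w_-^2=\frac12+O(\e)$. This yields $A=\frac{\sqrt2\gamma}{\sqrt\e}(1+o(1))$, hence $b\sim\e/(\sqrt2\gamma)\lesssim\e$, and \eqref{fneionfenvoinoe} follows from $K\sim 2A/\e$.

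\textbf{Step 2: the profile.} With $A$ now known, I write $h=h_0(1+h_{\rm corr})$ and fix the translation invariance in $y$ by the shift $y_\gamma$. The shift $y_\gamma$ is exactly the one forced by the normalisation $\Psi(0)=1$: $\phi\sim e^{\gamma s}$ must match $A^{\e/2}h^{\e}$ with $h\sim e^{y/\sqrt2}$. I then split the line into three regions.
\begin{itemize}
\item \emph{Compact window $|y|\le Y$.} Gronwall on the perturbed system gives $|h_{\rm corr}|+|w-w_0|\lesssim b\,e^{CY}$.
\item \emph{Left region $y\to-\infty$.} The equation is linear to leading order ($h^2$ is exponentially small). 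The slope-$\gamma$ condition pins $w-w_-$ to decay like $h^2$, which gives uniform control.
\item \emph{Right region $y\to+\infty$.} I linearise at the hyperbolic point $(h_+,0)$, whose eigenvalues are $\approx\pm\sqrt2$ up to $O(b)$. Being on the stable manifold with the $\lambda_-$ direction forces exponential convergence, and $|h_+-1|\lesssim b^2$.
\end{itemize}
Combining the three regions with $Y$ fixed large gives $\|h_{\rm corr}\|_{L^\infty}\lesssim b$.

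\textbf{Main obstacle.} The hardest step is the a priori control needed in Step 1, because the limit heteroclinic is non-generic. The dissipative term $2(\gamma-1)bw$ is a singular perturbation relative to the conservative flow, and one must rule out the true orbit lingering near the saddle, or oscillating, before entering $(h_+,0)$. This is a Melnikov-type issue. I would first try a continuity/bootstrap argument: the bounds $w\ge0$ and $w\le w_-+O(b)$ persist because the energy is monotone up to $O(b)$ errors along the orbit. I would then use the global phase-portrait information already established (the orbit stays below its tangent line at $(\phi_+^*,0)$, and $u>0$) to close the bounds uniformly in $\e$.
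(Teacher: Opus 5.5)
The gap is the step you label the main obstacle, and it is where $A(\gamma)$ gets selected. You work with the orbit of Proposition~\ref{propvrotexprofile}, whose existence comes from a non-quantitative continuity argument in $A$. The only a priori information is $\liminf A>0$, i.e.\ $b\lesssim\sqrt{\epsilon}$ and $w_-=\gamma/(A\sqrt{\epsilon})\lesssim\epsilon^{-1/2}$. In the regime $A=O(1)$, where $w_-\sim\epsilon^{-1/2}$, the terms $\epsilon w^2$ and $2(\gamma-1)bw$ are of order one along the orbit, so there is no small perturbation of the conservative flow to exploit.

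The bounds you propose to bootstrap cannot help. The bound $w\le w_-+O(b)$ holds for every value of $A$, since at $w=w_-$ and $h<1$ one has $w'=-h^2(1-h^2)<0$. The bound $w>0$ is just the connection property itself. So neither can pin down $w_-$.

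What is missing is an actual selection argument:
\begin{enumerate}
\item an a priori bound $w_-\lesssim 1$;
\item a shooting exclusion: if $|w_--\frac{1}{\sqrt2}|\ge\delta$, the orbit shadows for a bounded time the conservative orbit of energy $\frac{w_-^2}{2}$. It therefore either reaches $w=0$ with $h<1$, contradicting $u>0$, or crosses $h=h_+$ with $w>0$, contradicting monotone convergence to $(h_+,0)$. A rescaling is needed to handle $w_-\to 0$.
\end{enumerate}
Only after this do your energy identity and three-region comparison give $O(\epsilon)$ information.

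Note also that your bound $\int_{\mathbb{R}}w^2\,dy\lesssim 1$ is false, since $w\to w_-\neq 0$ at $-\infty$. The identity must be used in the form $E(+\infty)-E(-\infty)=-\epsilon\int_{\mathbb{R}}w(w-w_-)(w-b)\,dy$, which converges because $w-w_-=O(h^2)$ at $-\infty$.

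The paper avoids all of this by constructing the orbit rather than analysing a given one. With $H=-\log h$ it writes $H_{\rm sol}=H_0+\epsilon H$ and ties the unknown parameter to the solution through the slope condition $b=-\frac{\epsilon}{\gamma}H'_{{\rm sol},-\infty}$. It inverts $L_0=-\partial_y^2+4e^{-4H_0}-2e^{-2H_0}$ explicitly via the factorization built on the kernel element $H_0'$, with translation fixed by $u_{-\infty}=0$. It then closes a contraction in a space allowing linear behaviour at $-\infty$ and a finite limit at $+\infty$. This yields a connecting orbit together with its value of $A$, which suffices because Proposition~\ref{propvrotexprofile} only asserts existence of some $A(\gamma)$. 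Your a posteriori route would give more (every connecting orbit has this shape), but only once the selection step is supplied.

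Finally, your left-region claim of uniform control cannot hold. We have $\partial_y\log(h/h_0)=w-w_0\to w_--\frac{1}{\sqrt2}$. Evaluating the right-hand side of your identity at leading order on $(h_0,w_0)$ gives $w_-^2=\frac12-\frac{\epsilon}{4}+O(\epsilon^2)$, so $\log(h/h_0)$ grows linearly as $y\to-\infty$. The paper's proof has the same feature: its space allows $H'_{-\infty}\neq 0$, and that is what occurs here. So what is actually obtained on $\mathbb{R}_-$ is $|\log(1+h_{\rm corr})|\lesssim b\langle y\rangle$, not a uniform $L^\infty$ bound.
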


\begin{proof}[Proof of Proposition \ref{propappendix}] We recall the Emden formulation of Lemma \ref{lemamphaseportrait} and \eqref{eq: ode system hom vortex}:
$$\left|\bear
s=\log r
, \ \phi=r^{\gamma-2}\psi\\
\phi''-2(\gamma-1)\phi'+\gamma(\gamma-2)\phi-\phi^{\frac{\gamma+2}{\gamma-2}}+A\phi^{\frac{\gamma}{\gamma-2}}=0.
\ear\right.
$$

\noindent{\bf step 1} Renormalization.
\bee
&&\phi''-2(\gamma-1)\phi'+\gamma(\gamma-2)\phi-\phi^{\frac{\gamma+2}{\gamma-2}}+A\phi^{\frac{\gamma}{\gamma-2}}=0\\
&\Leftrightarrow & \frac{\phi''}{\phi}-2(\gamma-1)\frac{\phi'}{\phi}+\gamma(\gamma-2)-\phi^{\frac{4}{\gamma-2}}+A\phi^{\frac{2}{\gamma-2}}=0\\
&\Leftrightarrow & \left(\frac{\phi'}{\phi}\right)'+\left(\frac{\phi'}{\phi}\right)^2-2(\gamma-1)\frac{\phi'}{\phi}+\gamma(\gamma-2)-\phi^{\frac{4}{\gamma-2}}+A\phi^{\frac{2}{\gamma-2}}=0.
\eee
Let $$\phi=(\kappa \hat{h})^{\gamma-2}, \ \ \kappa=\kappa(\gamma),$$ then equivalently:
\bee
&&(\gamma-2)\left(\frac{\hat{h}'}{\hat{h}}\right)'+\left[(\gamma-2)\frac{\hat{h}'}{\hat{h}}\right]^2-2(\gamma-1)(\gamma-2)\frac{\hat{h}'}{\hat{h}}+\gamma(\gamma-2)-(\kappa \hat{h})^4+A(\kappa \hat{h})^2=0\\
&\Leftrightarrow& \left(\frac{\hat{h}'}{\hat{h}}\right)'+(\gamma-2)\left(\frac{\hat{h}'}{\hat{h}}\right)^2-2(\gamma-1)\frac{\hat{h}'}{\hat{h}}+\gamma-\frac{\kappa^4}{\gamma-2} \hat{h}^4+\frac{A\kappa^2}{\gamma-2} \hat{h}^2=0.
\eee
We let $$\left|\begin{array}{l}
\kappa^2=A\\
b^2=\frac{\gamma-2}{A^2},
\end{array}\right.$$ and obtain the semi classical formulation:
\bee
&& \left(\frac{\hat{h}'}{\hat{h}}\right)'+(\gamma-2)\left(\frac{\hat{h}'}{\hat{h}}\right)^2-2(\gamma-1)\frac{\hat{h}'}{\hat{h}}+\gamma+\frac{A^2}{\gamma-2}\left[- \hat{h}^4+\hat{h}^2\right]=0\\
&\Leftrightarrow& b^2\left[ \left(\frac{\hat{h}'}{\hat{h}}\right)'+(\gamma-2)\left(\frac{\hat{h}'}{\hat{h}}\right)^2-2(\gamma-1)\frac{\hat{h}'}{\hat{h}}+\gamma\right]-\hat{h}^4+\hat{h}^2=0.
\eee
We let $$\hat{h}=e^{-\hat{H}}$$ so that 
$$b^2\left[ -\hat{H}''+(\gamma-2)(\hat{H}')^2+2(\gamma-1)\hat{H}'+\gamma\right]-e^{-4\hat{H}}+e^{-2\hat{H}}=0.$$ We then renormalize $$\left|\begin{array}{l}
\hat{H}(s)\equiv \tilde{H}(\tilde{y})\\
\tilde{y}=\frac{s}{b},
\end{array}\right.
$$
and obtain the equivalent flow 
\be
\label{vnenvenvenove}
\tilde{H}''-(\gamma-2)(\tilde{H}')^2-2b(\gamma-1)\tilde{H}'-b^2\gamma+e^{-4\tilde{H}}-e^{-2\tilde{H}}=0.
\ee

\noindent{\bf step 2} Boundary condition at the origin. We normalize $$\Psi=\frac{\phi}{r^\gamma}\to 1\ \ \mbox{as}\ \ r\to 0,$$ and compute:
$$
\log\left(\frac{\phi}{r^\gamma}\right)=-\gamma\log r+(\gamma-2)\log (\kappa \hat{h})=-b\gamma \tilde{y}+(\gamma-2)\log \kappa -(\gamma-2)\tilde{H},
$$
and hence the boundary condition 
$$
\tilde{H}=-\frac{b\gamma\tilde{y}}{\gamma-2}+\log \kappa+o_{y\to -\infty}(1).
$$
Let $$\left|\begin{array}{l} \tilde{H}(\tilde{y})=H(y)\\
y=\tilde{y}+y_{\gamma}\\
y_{\gamma}=-\frac{\gamma-2}{b\gamma}\log \kappa=-\frac{\gamma-2}{2b\gamma}\log A.
\end{array}\right.
$$
then the translation invariance of \eqref{vnenvenvenove} yields the equivalent equation:
\be
\label{tobesolved}
\left|\begin{array}{l}
H''-(\gamma-2)(H')^2-2b(\gamma-1)H'-b^2\gamma+e^{-4H}-e^{-2H}=0\\
H(y)=-\frac{b\gamma y}{\gamma-2}+o_{y\to -\infty}(1).
\end{array}\right.
\ee
We collect the changes of variables
$$y=\tilde{y}+y_{\gamma}=\frac{s}{b}+y_{\gamma}=\frac{s+by_{\gamma}}{b}=\frac{s-\frac{\gamma-2}{\gamma}\log \kappa}{b}.$$

\noindent{\bf step 3} Limiting profile. Let $H_0$ solve 
\be
\label{vneivneoneovn}
H_0''+e^{-4H_0}-e^{-2H_0}=0.
\ee
The phase portrait reveals that there is a unique solution with $$\lim_{y\to +\infty}(H_0',H_0)=(0,0),$$ and in fact explicitly
$$\frac{(H'_0)^2}{2}-\frac{e^{-4H_0}}{4}+\frac{e^{-2H_0}}{2}=\frac14\Leftrightarrow (H_0')^2=\frac{(1-e^{-2H_0})^2}{2}$$  and hence looking for $H_0>0\Longrightarrow H_0'<0$: 
$$\frac{H_0'}{1-e^{-2H_0}}=-\frac{\sqrt{2}}{2}\Leftrightarrow \frac 12\log(e^{2H_0}-1)=-\frac{\sqrt{2}}{2}y$$
 yields the limiting profile :
\begin{equation}\label{formulahzero}
\left|\begin{array}{l}
H_0(y)=\frac12\log\left(1+e^{-\sqrt{2}y}\right)\\
h_0=e^{-H_0}=\frac{1}{\left(1+e^{-\sqrt{2}y}\right)^{\frac 12}}.
\end{array}\right.
\end{equation}
In view of \eqref{tobesolved} and consistently with the fact that $H_0$ is the leading order term, we have to leading order
\be
\label{ahuiteynsnojovuors}
\left|\begin{array}{l}
\frac{\sqrt{2}}2|y|=\left[\frac{b \gamma }{\gamma-2}+{\rm lot}\right]|y|\Rightarrow b=\frac{\sqrt{2}}{2}\frac{\gamma-2}{\gamma}+{\rm lot}\\
A=\frac{\sqrt{\gamma-2}}{b}=\frac{\sqrt{2}\gamma}{\sqrt{\gamma-2}}+{\rm lot}\\
y_{\gamma}=-\frac{\gamma-2}{b \gamma }\log \kappa=-\frac{\sqrt{2}}{2}\log A=-\frac{\sqrt{2}}{2}\log \left(\frac{\sqrt{\gamma-2}}{\sqrt{2}\gamma}\right).
\end{array}\right.
\ee

\noindent{\bf step 4} Linearized operator close to $H_0$. We set up the functional setting to invert the linearized operator.\\

\noindent\und{\em E space}. Pick a small universal loss $0<\delta\ll 1$. We let 
$$E=\{u\in \mathcal C^2(\Bbb R,\Bbb R), \ \|u\|_E<+\infty\},$$ with 
\bea
\label{defnorme}
\|u\|_E&=&\|e^{(\sqrt{2}-\delta)|y|}u''\|_{L^\infty(\Bbb R_-)}+\|u'\|_{L^\infty(\Bbb R_-)}+\left\|\frac{u}{\la y\ra}\right\|_{L^\infty(\Bbb R_-)}\\
\nonumber &+& \|e^{(\sqrt{2}-\delta)|y|}u''\|_{L^\infty(\Bbb R_+)}+\|e^{(\sqrt{2}-\delta)|y|}u'\|_{L^\infty(\Bbb R_+)}+\left\|u\right\|_{L^\infty(\Bbb R_+)}.
\eea
Let $u\in E$, then $u''\in L^1$ and hence $\displaystyle u'_{-\infty}=\lim_{y\to -\infty} u'(y)$ exists and is well defined with 
\be
\label{estderviative}
\left|\begin{array}{l}
|u'_{-\infty}|\lesssim \|u\|_E\\
|u'(y)-u'_{-\infty}|=\left|\int_{-\infty}^yu''d\tau\right|\lesssim e^{-(\sqrt{2}-\delta)|y|}\|u\|_E.
\end{array}\right.
\ee
 Hence for $y<0$:
\bee
u(0)-u(y)&=&\int_y^0 u'(t)dt=\int_y^0 \left[u'(t)-u'_{-\infty}\right]dt+u'_{-\infty}y\\
&=&u'_{-\infty}y+ \int_{-\infty}^0 \left[u'(t)-u'_{-\infty}\right]dt+O\left(\|u\|_Ee^{-(\sqrt{2}-\delta)|y|}\right),
\eee
and hence there exists $u_{-\infty}', u_{-\infty}$ with 
\be
\label{esatoenoineneg}
\left|\begin{array}{l}
|u_{-\infty}'|+|u_{-\infty}|\lesssim \|u\|_E, \ \ i=0,1,\\
\left|u(y)-u'_{-\infty}y-u_{-\infty}\right|\lesssim \|u\|_E e^{-(\sqrt{2}-\delta)|y|}, \ \ y<0.
\end{array}\right.
\ee
Similarly on the right, 
\be
\label{esatoenoinenegbis}
\left|\begin{array}{l}
|u_{+\infty}|\lesssim \|u\|_E,\\
\left|u(y)-u_{+\infty}\right|\lesssim \|u\|_Ee^{-(\sqrt{2}-\delta)|y|}, \ \ y>0.
\end{array}\right.
\ee
\noindent\und{\em $E'$ space}. 
We define similarly $$E'=\{F\in \mathcal C^1(\Bbb R,\Bbb R), \ \|F\|_{E'}<+\infty\},$$ with 
\bea
\label{defeprime}
\|F\|_{E'}&=&\|e^{(\sqrt{2}-\delta)|y|}F'\|_{L^\infty(\Bbb R_-)}+\left\|e^{(\sqrt{2}-\delta)|y|}F\right\|_{L^\infty(\Bbb R_-)}\\
&+&\|e^{(\sqrt{2}-\delta)|y|}F'\|_{L^\infty(\Bbb R_+)}+\left\|F\right\|_{L^\infty(\Bbb R_+)}.
\eea
We have similarly the representation $$\left|\begin{array}{l}
F=F_{+\infty}+O\left(\|F\|_{E'}e^{-(\sqrt{2}-\delta)y}\right), \ \ y>0\\
|F_{+\infty}|\lesssim \|F\|_{E'}.
\end{array}\right.
$$

\noindent{\bf step 5} Resolvent estimate. Let the linearized operator be $$L_0=-\frac{d^2}{dy^2}+4e^{-4H_0}-2e^{-2H_0}\\
.$$ 
Let $F\in E'$, we claim that there exists a unique solution to 
\be
\label{venoivbnenoveinvo}
\left|\begin{array}{l}
L_0u=F\\
u\in E\\
u_{-\infty}=0,
\end{array}\right.
\ee
and 
\be
\label{bnoigeneoipotinrtesolvetn}
\|u\|_E\lesssim \|F\|_{E'}.
\ee
\noi {\em Proof of \eqref{bnoigeneoipotinrtesolvetn}} This will follows from the explicit representation of the resolvent.\\

\noindent\und{\em Resolvent formula}. By translation invariance of \eqref{vneivneoneovn}:
$$\left|\begin{array}{l}
L_0H'_0=0\\
H_0'=-\frac{\sqrt{2}}{2}\frac{1}{e^{\sqrt{2}y}+1}.
\end{array}\right.
$$ Since $H_0'$ does not vanish, there holds the factorization formula
$$\left|\begin{array}{l}
L_0=A^*A\\
Au=-u'+\frac{H_0''}{H'_0}u=-H'_0\left(\frac{u}{H'_0}\right)'\\
A^*u=u'+\frac{H_0''}{H'_0}u=\frac{(uH'_0)'}{H'_0}.
\end{array}\right.
$$
Hence a solution to $L_0H=F$ is given by $$\left|\begin{array}{l}
Au=-\frac{1}{H'_0}\int_{y}^{+\infty}FH'_0\\
u=c_0H_0'-H_0'\int_{0}^{y}\frac{Au}{H_0'}.
\end{array}\right.
$$
Moreover this ensures that $$\left|\begin{array}{l}
L_0u=0\\
u\in E\\
u_{-\infty}=0
\end{array}\right. \Rightarrow \left|\begin{array}{l}
u=cH_0'\\
u_{-\infty}=0
\end{array}\right. \Rightarrow u=0,
$$
which implies that there exists at most one solution to \eqref{venoivbnenoveinvo}.\\

\noindent\und{\em Choice of $c_0$ and estimates}.\\

\noindent\underline{$y<0$}. We estimate:
\bee
Au&=& -\frac{1}{H'_0}\int_{y}^{+\infty}FH'_0d\tau=-\frac{1}{-\frac{\sqrt{2}}{2}+O(e^{-\sqrt{2}|y|})}\int_y^{+\infty} FH_0'd\tau\\
& = & \sqrt{2}\int_{-\infty}^{\infty}FH_0'd\tau+O\left(\|F\|_{E'}  e^{-(\sqrt{2}-\delta)|y|}\right),
\eee
and 
\bee
u&=&c_0H_0'-H_0'\int_{0}^{y}\frac{Au}{H_0'}=-\frac{\sqrt{2}}{2}c_0\left[1+O(e^{-\sqrt{2}|y|})\right]\\
&-&\frac{\sqrt{2}}{2}\left[1+O(e^{-\sqrt{2}|y|})\right]\int_y^0\frac{(Au)_{-\infty}+Au-(Au)_{-\infty}}{-\frac{\sqrt{2}}{2}+O(e^{-\sqrt{2}|\tau|})}d\tau\\
& = & -(Au)_{-\infty}y-\frac{\sqrt{2}}{2}c_0+\int_{-\infty}^{0}(Au-(Au)_{-\infty})d\tau+O\left[(|c_0|+\|F\|_{E'}) e^{-(\sqrt{2}-\delta)|y|}\right],
\eee
and we freeze the choice $$c_0=\frac{2}{\sqrt{2}}\int_{-\infty}^{0}(Au-(Au)_{-\infty})d\tau=O(\|F\|_{E'}),$$ which ensures $u_{-\infty}=0.$ This ensures:
$$\left|\begin{array}{l}
u'=-Au+\frac{H_0''}{H_0'}u=O(\|F\|_{E'})\\
u''=F-4e^{-4H_0}u+2e^{-2H_0}u=O\left(\|F\|_{E'} e^{-(\sqrt{2}-\delta)|y|}\right).
\end{array}\right.
$$

 \noindent\underline{$y>0$}. We estimate:
\bee
Au&=& -\frac{1}{H'_0}\int_{y}^{+\infty}FH'_0d\tau\\
&=&-e^{\sqrt{2}y}\left[1+O(e^{-\sqrt{2}y})\right]\int_y^{+\infty}\left[F_{+\infty}+O(\|F\|_{E'}e^{-(\sqrt{2}-\delta)\tau})\right]e^{-\sqrt{2}\tau}\left[1+O(e^{-\sqrt{2}\tau})\right]d\tau\\
&=&-\frac{F_{+\infty}}{\sqrt{2}}+O\left(\|F\|_{E'}e^{-(\sqrt{2}-\delta)y}\right)
\eee
and 
\bee
u&=&c_0H_0'-H_0'\int_{0}^{y}\frac{Au}{H_0'}=c_0e^{-\sqrt{2}y}\left[1+O(e^{-\sqrt{2}y})\right]\\
&-&e^{-\sqrt{2}y}\left[1+O(e^{-\sqrt{2}y})\right]\int_0^{y}\left[-\frac{F_{+\infty}}{\sqrt{2}}+O\left(\|F\|_{E'}e^{-(\sqrt{2}-\delta)\tau}\right)\right]e^{\sqrt{2}\tau}\left[1+O(e^{-\sqrt{2}\tau})\right]d\tau\\
&=& \frac{F_{+\infty}}{2}+O\left(\|F\|_{E'}\la y\ra e^{-(\sqrt{2}-\delta)y}\right)
\eee
which ensures:
$$\left|\begin{array}{l}
u'=-Au+\frac{H_0''}{H_0'}u=O\left(\|F\|_{E'} e^{-(\sqrt{2}-\delta)y}\right)\\
u''=F-4e^{-4H_0}u+2e^{-2H_0}u=O\left(\|F\|_{E'}e^{-(\sqrt{2}-\delta)|y|}\right).
\end{array}\right.
$$
\noindent\und{\em Conclusion}. The collection of the above bounds yields \eqref{bnoigeneoipotinrtesolvetn} and concludes the proof of \eqref{bnoigeneoipotinrtesolvetn}.\\

\noindent{\bf step 6} Non linear fixed point. We now solve \eqref{tobesolved}\[
H_{sol}''-(\gamma-2)(H_{sol}')^2-2b(\gamma-1) H_{sol}'-b^2\gamma+e^{-4H_{sol}}-e^{-2H_{sol}}=0,\]
by a fixed point argument near $H_0$.\\

\noi\und{Boundary condition}. Observe that $H_{sol}\in E$ imposes by taking the limit $y\to -\infty$
\[
b^2\gamma+2(\gamma-1) {H_{sol}}'_{-\infty}b+(\gamma-2)({H_{sol}}'_{-\infty})^2=0,
\]
thus $b=-{H_{sol}}'_{-\infty} \text{ or } (\gamma-2)\frac{-{H_{sol}}'_{-\infty}}{\gamma},$ and we choose the latter. The equation then becomes with $\epsilon=\gamma-2$
\bee
&&H_{sol}''-\epsilon\left[(H_{sol}')^2-({H_{sol}}'_{-\infty})^2+\frac{2(\gamma-1)}{\gamma}{H_{sol}}'_{-\infty} \left(H_{sol}'-{H_{sol}}'_{-\infty}\right)\right]\\
&+&e^{-4H_{sol}}-e^{-2H_{sol}}=0.\eee

\noi\und{Fixed point formulation}. We look for a solution in the form $H_{sol}=H_0+\epsilon H$ which after division by $\epsilon$  yields the equation on $H$ 
$$\left|\bear
L_0 H=F(H,\epsilon)=F_0(H_0)+2\epsilon A H+\epsilon^2 B(H)+N(H,\epsilon)\\
F_0(H_0)=[(H_0')^2-({H_0}'_{-\infty})^2]+\frac{2(\gamma-1)}{\gamma}{H_0}'_{-\infty} \left(H_0'-{H_0}'_{-\infty}\right)\\
AH=(H_0'-{H_0}'_{-\infty})\left(H'+\frac{\gamma-1}{\gamma}H'_{-\infty}\right)+\frac{2\gamma-1}{\gamma}{H_0}'_{-\infty}(H'-H'_{-\infty})\\
B(H)=(H')^2-(H'_{-\infty})^2+\frac{2(\gamma-1)}{\gamma}H'_{-\infty} \left(H'-H'_{-\infty}\right)\\
N(H,\epsilon)=e^{-4 H_0}\frac{e^{-4\epsilon H}-1+4\epsilon H}{\epsilon}-e^{-2 H_0}\frac{e^{-2\epsilon H}-1+2\epsilon H}{\epsilon}.
\ear\right.
$$

\noindent{\bf step 7} Banach fixed point.  The proof of Proposition \ref{propappendix} now follows from a standard application of the Picard fixed point Theorem. Indeed, we claim that there exists $M$ a large enough universal constant such that for all $0<\epsilon<\epsilon^*$ small enough, the mapping $$H \mapsto L_0^{-1}F(H,\epsilon)$$ is a contraction on 
\[B_{M}=\{H\in E, \left\Vert H\right\Vert_{E}\leq M \text{ and }H_{-\infty}=0\}.\] 
We give estimates as well as difference estimates for each term in $L^{-1}F(H,\epsilon)$ separately with $H\in B_M$.\\

\noindent\und{\em Inhomogeneous terms.}  We directly observe that $F_0(H_0)\in E'$ is ensured by the choice of $b$. Hence from \eqref{bnoigeneoipotinrtesolvetn}, we may take $M$ large enough such that  
\[
2 \left\Vert L^{-1}\right\Vert_{E'\to E} \left\Vert F_0(H_0)\right\Vert_{E'}\leq M.
\]

\noindent\und{\em  Linear and quadratic terms.} Again the choice of $b$ ensures $AH$ and $B(H)\in E'$ thus
\[
\left\Vert L^{-1}A H \right\Vert_{E}\lesssim  \left\Vert L^{-1}\right\Vert_{E'\to E}\left\Vert A H \right\Vert_{E'} \leq C M,
\]
\[
\left\Vert L^{-1}B(H) \right\Vert_{E}\lesssim  \left\Vert L^{-1}\right\Vert_{E'\to E}\left\Vert B(H) \right\Vert_{E'} \leq C M^2,
\]
and for $\tilde{H}\in B_M$
\[
\left\Vert L^{-1}\left(B(H)-B(\tilde{H})\right)\right\Vert_{E}\lesssim  \left\Vert L^{-1}\right\Vert_{E'\to E}\left\Vert B(H) -B(\tilde{H})\right\Vert_{E'} \leq C M \left\Vert H-\tilde{H}\right\Vert_{E}.
\]
\noindent\und{\em  Exponential terms.} It suffices to study 
\[
L^{-1}\left(e^{-c H_0}\frac{e^{-c\epsilon H}-1+c\epsilon H}{\epsilon}\right)=\epsilon L^{-1}\left(\frac{1}{(1+e^{-\sqrt{2}y})^{\frac{c}{2}}}\int^{1}_{0}e^{-c\epsilon H s}dsH^2\right),
\]
with $c\in \{2,4\}$. And observe that by choosing the small loss $\delta$ such that 
\[
\frac{c}{2}\sqrt{2}-c  \epsilon C M>\sqrt{2}-\delta \Leftrightarrow \delta>4\epsilon C M,
\]
we have 
\[
\left\Vert L^{-1}\left(\frac{1}{(1+e^{-\sqrt{2}y})^{\frac{c}{2}}}\int^{1}_{0}e^{-c\epsilon H s}dsH^2\right) \right\Vert_{E} \leq C e^{4\epsilon M} M^2,
\]
and analogously 
\[
\left\Vert L^{-1}\left(N(H,\epsilon)-N(\tilde{H},\epsilon)\right)\right\Vert_{E}\leq C \epsilon M e^{4\epsilon M} \left\Vert H-\tilde{H}\right\Vert_{E}. 
\]
\noindent\und{\em Conclusion.} The collection of the bounds give 
\[
\left\Vert L^{-1}F(H,\epsilon) \right\Vert_{E} \leq \left(\frac{1}{2}+\epsilon C+C\epsilon^2 M+C \epsilon e^{4\epsilon M}M  \right)M,
\]
and 
\[
\left\Vert L^{-1}\left(F(H,\epsilon)-F(\tilde{H},\epsilon)\right)\right\Vert_{E} \leq C \epsilon \left[1+M (1+e^{4\epsilon M})\right]\left\Vert H-\tilde{H}\right\Vert_{E},
\]
which give the desired result for $\epsilon$ small enough.

\end{proof}


\section{Energy estimates for the resolvent}
\label{sectionresolventone}


We now start the proof of the vortex construction of Therorem \ref{thmmain}. This section is devoted to the derivation of energy estimates for the resolvent of the linearized operator close to the homogeneous vortex $\psi_\nu$ using a Lax-Milgram type argument. From now on, we fix a small enough $0<a\ll1$.

 
 \subsection{Linearized operator close to $\psi_*$}
 
Define
$$M_*\psi=-\frac{1}{R}\pa_R\left(\frac{1}{R}\pa_R\psi\right)+  \left[\mathcal H'_0-\frac{(\C_0')^2+\C_0\C_0''}{R^2}\right](\psi_*)\psi$$ acting on functions $$\psi=R^2\Psi, \ \ \Psi\in \mathcal D_{\rm rad}(\Bbb R^2),$$
where $D_{\rm rad}(\Bbb R^2)$ are smooth compactly supported radially symmetric functions vanishing at the origin. 
\begin{lemma}[Coercivity of $M$]
\label{coercm}
Assume $2<\gamma<2+\epsilon_*$ for some small enough universal constant $0<\epsilon_*\ll1$.  Let 
$$\zeta_*(R)=(\gamma-2)\psi_*+r\pa_r\psi_*,$$ Then there exists $c_{*}=c_*(\gamma)>0$ such that
$$\int_{R>0}(M_*\psi)\psi RdR\ge c_{*}\int_{R>0}\left(\frac{|\pa_R\psi|^2}{R^2}+\frac{\psi^2}{R^4}\right)RdR-\frac1{c_{*}}\left(\int_{R>0} \psi \zeta_*RdR\right)^2.$$
\end{lemma}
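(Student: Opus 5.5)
The plan is to exploit the fact that the scaling direction $\zeta_*$ is an exact, positive zero mode of $M_*$, and to combine a ground state factorization with a compactness argument.

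\textbf{Step 1: $\zeta_*$ is a positive kernel element in the energy space.} By the homogeneity of the nonlinearities \eqref{eq: choice of C and H}, the family $\lambda^{-(\gamma-2)}\psi_*(R/\lambda)$ solves \eqref{vnnrononr} for every $\lambda>0$. Differentiating at $\lambda=1$ gives $M_*\zeta_*=0$. By Proposition \ref{propvrotexprofile} (i), $\zeta_*>0$ on $(0,+\infty)$. Near the origin, $\psi_*=R^2\Psi_*$ with $\Psi_*$ smooth in $R^2$, so $\zeta_*\sim\gamma R^2$. At infinity, $\zeta_*\sim R^{-(\gamma-2+|\lambda_-|)}$ by \eqref{asymptoticexpansionvortex}, and $|\lambda_-|\to+\infty$ as $\gamma\downarrow2$ by \eqref{fneionfenvoinoe}. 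Hence $\int(|\pa_R\zeta_*|^2R^{-2}+\zeta_*^2R^{-4})RdR<\infty$, so $\zeta_*$ is an admissible direction, and the subtracted term in the statement is exactly what a nontrivial kernel forces.

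\textbf{Step 2: nonnegativity via factorization.} Write $\psi=\zeta_*w$. The standard ground state identity (integrate by parts, using $M_*\zeta_*=0$ and the vanishing of boundary terms for $\psi=R^2\Psi$, $\Psi\in\mathcal D_{\rm rad}$) gives
$$\int(M_*\psi)\psi\,RdR=\int\frac{\zeta_*^2|\pa_Rw|^2}{R}dR\ge0,$$
with equality only on multiples of $\zeta_*$.

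\textbf{Step 3: coercivity modulo $\zeta_*$, by contradiction and compactness.} Suppose the inequality fails. Then there is a sequence $\psi_n$ normalized in the energy norm $\|\psi\|_{\dot H}^2=\int(|\pa_R\psi|^2R^{-2}+\psi^2R^{-4})RdR$ with $(\psi_n,\zeta_*)\to0$ and $(M_*\psi_n,\psi_n)\to0$. Two facts are needed.

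\emph{Near $R=0$.} The potential $V=\mathcal H_0''(\psi_*)-R^{-2}(\mathcal C_0\mathcal C_0')'(\psi_*)$ is of size $O(\psi_*^{2/(\gamma-2)}R^{-2})=O(R^{4m-2})$. Since $\psi=R^2\Psi$, the leading term $-\frac1R\pa_R(\frac1R\pa_R)$ is the four-dimensional radial Laplacian on $\Psi$, and Hardy in $\Bbb R^4$ controls $\int\psi^2R^{-4}RdR$ by $\int|\pa_R\psi|^2R^{-2}RdR$ locally.

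\emph{Near $R=\infty$.} Using the tail $\psi_*\sim A_\infty R^{2-\gamma}$, one finds $R^4V\to(\gamma-2)^{-1}x[(\gamma+2)x-\gamma A]$ with $x=A_\infty^{2/(\gamma-2)}$. This is exactly $K(\gamma)$ from \eqref{defnumberK}, which tends to $+\infty$ as $\gamma\downarrow2$ by \eqref{fneionfenvoinoe}. Passing to $s=\log R$ and $\psi=R\,v$, the form at infinity becomes $\int(|v'|^2+(K-1+o(1))v^2)ds$ up to constants. Hence for $\gamma-2<\epsilon_*$ the potential is strongly positive outside a large compact set, and the quadratic form dominates $\|\psi\|_{\dot H}^2$ there.

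Consequently, the mass of $\psi_n$ cannot escape to $0$ or $\infty$, and Rellich compactness on compact subsets of $(0,\infty)$ yields a weak limit $\psi_\infty\ne0$. By lower semicontinuity, $(M_*\psi_\infty,\psi_\infty)\le0$ and $(\psi_\infty,\zeta_*)=0$. Step 2 then forces $\psi_\infty=c\zeta_*$, and orthogonality gives $c=0$, a contradiction. The standard splitting $\psi=\alpha\zeta_*+\psi^\perp$ then converts "coercive on $\zeta_*^\perp$" into the stated inequality with the $\frac1{c_*}(\int\psi\zeta_*RdR)^2$ correction.

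\textbf{Main obstacle.} The hard part is Step 3 at $R\to\infty$ and $R\to0$. The weight $R^{-4}$ is critical for Hardy in these variables, so the quadratic form controls it only thanks to a positive potential (at infinity) or the $R^2$ vanishing (at the origin). The first thing I would try is to make the $K\gg1$ mechanism quantitative in the Emden variable $s$, using Proposition \ref{propappendix} to control $V$ uniformly on the transition ring when $\gamma$ is close to $2$. The compactness step itself is routine once these endpoint Hardy bounds are in place.
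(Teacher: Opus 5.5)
Your proposal is correct and follows the paper's proof essentially step for step. A Hardy bound plus the tail $R^4V_*\to K(\gamma)>0$ reduce the form to a compact perturbation. A normalized contradiction sequence then yields a nontrivial weak limit with $(M_*\psi,\psi)\le 0$ orthogonal to $\zeta_*$. Your ground-state substitution $\psi=\zeta_* w$ is exactly the paper's factorization $M_*=A^*A$ with $A=-\frac1R\pa_R+\frac{\pa_R\zeta_*}{R\zeta_*}$, which forces $\psi=c\zeta_*$.

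One small correction: in the variables $s=\log R$, $\psi=Rv$, the form is $\int\left(|\pa_s v|^2+(1+R^4V_*)v^2\right)ds$, not with $K-1$. So positivity at infinity only needs $K>-1$, which holds for every $\gamma>2$. The smallness of $\gamma-2$ is really used, as in the paper, through the fast decay of $\zeta_*$, which makes $\psi\mapsto\int\psi\zeta_*RdR$ continuous on the energy space.
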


\begin{proof}[Proof of Lemma \ref{coercm}] This will follow from the positivity of $M_*$ up to a compact perturbation. We recall from \eqref{asymptoticexpansionvortex}
\be
\label{vneinveonenenoev}
\zeta_*(R)=\frac{1+o_{R\to +\infty}(1)}{R^{\gamma-2+|\l_-|}},
\ee where from \eqref{defnumberK}, \eqref{fneionfenvoinoe}:
\be
\label{funoenioenone}
\lim_{\gamma\downarrow 2}|\l_-|=+\infty.
\ee

\noindent{\bf step 1} Hardy inequality. We claim that for all  $$\psi =R^2\Psi, \ \ \Psi\in \mathcal D_{\rm rad}(\Bbb R^2),$$ there holds 
\be
\label{Hardyenienige}
\int_{R>0}\frac{|\pa_R\psi|^2}{R^2}RdR\ge \int_{R>0}\frac{\psi^2}{R^4}RdR.
\ee
Indeed, we integrate by parts using the vanishing at the origin
\bee
&&\int_{R>0}\frac{\psi^2}{R^4}RdR=\int_{R>0}\frac{\psi^2}{R^3}dR=-\frac 12\left[\frac{\psi^2}{R^2}\right]_0^{+\infty}+\int_{R>0}\frac{\psi\pa_R\psi}{R^3}RdR\\
&\leq& \left(\int_{R>0}\frac{\psi^2}{R^4}RdR\right)^{\frac 12}\left(\int_{R>0}\frac{|\pa_R\psi|^2}{R^2}RdR\right)^{\frac 12},
\eee
and the conclusion follows.\\

\noindent{\bf step 2} Decay of the potential term. We compute
\bee
V_*(R)&=& \mathcal H_0''(\psi_*(R))-\frac{(\C'_0)^2(\psi_*(R))+\C_0\C_0''(\psi_*(R))}{R^2}\\
&=& \frac{\gamma+2}{\gamma-2} \psi_*^{\frac{\gamma+2}{\gamma-2}-1}-\frac 1{R^2}A(\gamma)\frac{\gamma}{\gamma-2}\psi_*^{\frac{\gamma}{\gamma-2}-1}\\
& = &  \frac{\gamma+2}{\gamma-2} \psi_*^{\frac{4}{\gamma-2}}-\frac{\gamma A(\gamma)}{\gamma-2}\frac{\psi_*^{\frac{2}{\gamma-2}}}{R^2}.
\eee
We have at the origin $$|V_*(R)|\leq C R^{\frac{4}{\gamma-2}-2},$$ and near $+\infty$:
\bee
&&V_*(R)=\frac{\gamma+2}{\gamma-2}\left(\frac{A_\infty(\gamma)(1+o(1))}{R^{\gamma-2}}\right)^{\frac{4}{\gamma-2
}}-\frac{A(\gamma)\gamma}{(\gamma-2)R^2}\left(\frac{A_\infty(\gamma)(1+o(1))}{R^{\gamma-2}}\right)^{\frac{2}{\gamma-2
}}\\
& =& \frac{1}{R^4}\left\{\frac{A_\infty(\gamma)^{\frac{2}{\gamma-2}}}{\gamma-2}\left[(\gamma+2)A_\infty(\gamma)^{\frac{2}{\gamma-2}}-\gamma A(\gamma)\right]\right\}=\frac{K}{R^4},
\eee
with $K>0$ given by \eqref{defnumberK}. 
Hence we have the following lower bound for some $c,\delta>0$
$$\int_{R>0}(M_*\psi)\psi RdR\ge c\int_{R>0}\left(\frac{|\pa_R\psi|^2}{R^2}+\frac{\psi^2}{R^4}\right)RdR-\frac 1c\int_{R>0}\frac{\psi^2}{R^2(1+R^{2-\delta})}RdR.$$ \\

\noindent{\bf step 3} Limiting profile. Arguing by contradiction, we suppose there exists a sequence such that $$\left|\begin{array}{l}
\int_{R>0}\left(\frac{|\pa_R\psi|_n^2}{R}+\frac{\psi_n^2}{R^4}\right)RdR=1\\
\int_{R>0} (M_*\psi_n)\psi_nRdR\le \frac 1n\\
\int_{R>0}\psi_n\zeta_*RdR=0.
\end{array}\right.
$$
Hence 
\bee
    \frac 1n&\ge& c\int_{R>0}\left(\frac{|\pa_R\psi_n|^2}{R}+\frac{\psi_n^2}{R^4}\right)RdR-\frac 1c\int_{R>0}\frac{\psi_n^2}{R^2(1+R^{2-\delta})}RdR\\
    &=&c-\frac 1c\int_{R>0}\frac{\psi_n^2}{R^2(1+R^{2-\delta})}RdR,
    \eee
    which implies $$\int_{R>0}\frac{\psi_n^2}{R^2(1+R^{2-\delta})}RdR\gtrsim 1.$$
Passing to a weak limit and using the positivity of the potential at infinity, weak lower semi continuity, Sobolev embedding to recover compactness locally in space and the strong decay \eqref{vneinveonenenoev} and \eqref{funoenioenone} for $\gamma$ close enough to 2 yields 
\be
\label{veoneonnevvneo}
\left|\begin{array}{l}
\int_{R>0}\left(\frac{|\pa_R\psi|^2}{R^2}+\frac{\psi^2}{R^4}\right)RdR\le 1\\
\int_{r>0}\frac{\psi^2}{R^2(1+R^{2-\delta})}RdR\gtrsim 1\\
\int_{R>0}(M_*\psi)\psi R dR\leq 0,
\end{array}\right.
\ee
as well as
\be
\label{neioheinvnbivoeopi}
\int_{R>0} \psi \zeta_*RdR=\lim_{n\to +\infty}\int_{R>0} \psi_n \zeta_*RdR=0.
\ee

\noindent{\bf step 4} Conclusion. The scale invariance $\psi_\l=\frac{1}{\l^{\gamma-2}}\psi\left(\frac{r}{\l}\right)$ of \eqref{degenerateode} and \eqref{nondgeenera} ensures that 
$$\left|\begin{array}{l}
M\zeta_*=0\\
 \zeta_*=(\gamma-2)\psi_*+r\partial_r \psi_*>0.
 \end{array}\right.
 $$ 
Let $$\left|\begin{array}{l} A=-\frac1r\pa_r+W\\A^*=\frac 1r\pa_r+W\end{array}\right.,\ \ W=\frac{1}{r}\frac{\pa_r\zeta^*}{\zeta^*},$$
then
\bee
A^*A \psi& = &\left(\frac 1r\pa_r+W\right)\left(-\frac1r\pa_r\psi+W\psi\right)\\
&=&-\frac{1}{r}\pa_r\left(\frac 1r\pa_r\psi\right)+\frac{\pa_r W}r\psi+\frac{W}{r}\pa_r\psi-\frac{W}{r}\pa_r\psi+W^2\psi\\
& =& -\frac{1}{r}\pa_r\left(\frac 1r\pa_r\psi\right)+\psi\left[\frac 1r\pa_r\left(\frac{1}{r}\frac{\pa_r\zeta^*}{\zeta^*}\right)+\left(\frac{1}{r}\frac{\pa_r\zeta^*}{\zeta^*}\right)^2\right]\\
& = &  -\frac{1}{r}\pa_r\left(\frac 1r\pa_r\psi\right)+\psi\left[\frac{1}{\zeta_*}\frac 1r\pa_r\left(\frac{1}{r}\pa_r\zeta^*\right)\right]=M\psi,
\eee
and hence
\be
\label{factorizationgona}
M=A^*A.
\ee
Hence \eqref{veoneonnevvneo} forces $\psi=c\zeta_*$, $c\neq 0$, a contradiction to \eqref{neioheinvnbivoeopi}.
\end{proof}


\subsection{$\psi_\nu$ profile}


The scaling invariance \eqref{vniovneivnoneoiscalign} of \eqref{homvortedeuqation} ensures that for {\rm any} $\nu\in \mathcal C^2(\Bbb R,\Bbb R^*_+)$, 
 \be
\label{cneonveoneonvmemnvemeve}
\psi_\nu(r,z)=\frac{1}{\nu^{\gamma-2}(z)}\psi_*\left(\frac r{\nu(z)}\right)
\ee
is a solution to the degenerate flow \eqref{homvortedeuqation} for the homogeneous functionals:
\be
\label{vneoinonnrinrbno}\left|\begin{array}{l}
\mathcal C_0(\psi)=\pm \frac{\gamma-2}{\gamma-1} \sqrt{A(\gamma)}\psi^{\frac{\gamma-1}{\gamma-2}}, \\
 \mathcal H_0'(\psi)=\psi^{\frac{\gamma+2}{\gamma-2}}, \\
 \end{array}\right.
 \ee

We shall from now on assume that $\nu$ belongs to the class of admissible functions:

\begin{definition}[Admissible potentials functions] Let $(\eta,c_0,(C_i)_{0\le i\le 2})\in \Bbb (R_+^*)^5$. We say that $\nu\in \mathcal N^\eta(c_0,(C_i)_{0\le i\le 2})$ if\\
(i) $\nu\in \mathcal C^2(\Bbb R,\Bbb R_*^+)$;\\
(ii) 
\be
\label{neovnonvievvnkvnoneii}
\forall z\in \Bbb R, \ \ c_0\la z\ra^{\eta}\le \nu(z)\le C_0 \la z\ra^{\eta};
\ee
(iii) for $k=1,2$:
\be
\label{estoatmgnoi}
\frac{1}{\nu}\left|\frac{d^k\nu}{dz^k}\right|\le \frac{C_{k}}{\la z\ra^{k}}.
\ee
\end{definition}


\subsection{The linearized operator}  


We set up the analysis of the linearized operator close to $\psi_\nu$ given by \eqref{cneonveoneonvmemnvemeve}. We systematically use in the sequel the notation $$R=\frac{r}{\nu(z)}.$$

\noindent{\em Linearized operator.} We define
$$\left|\begin{array}{l}
L_a\psi= M\psi-\frac{a^2}{r^2}\pa_z^2\psi\\
M\psi=-\frac{1}{r}\pa_r\left(\frac{1}{r}\pa_r\psi\right)+  \left[\mathcal H''_0-\frac{(\C_0')^2+\C_0\C_0''}{r^2}\right](\psi_\nu)\psi
\end{array}\right.
$$

\noindent{\em Hilbert space and weight}. We let $H_a$ be the closure of $$\mathcal C_{{\rm cyl}}=\{r^2\Psi, \Psi\in \mathcal D_{{\rm cyl}} (\Bbb R^5)\},$$ (where $\mathcal D_{{\rm cyl}}$ refers to test functions with cylindrical symmetry) for the norm induced by the scalar product $$
\la u,v\ra_{H_a}=\int_{r>0}\int_{z\in \Bbb R}\frac{\pa_ru\pa_rv+a^2\pa_zu\pa_zv}{r^2}rdrdz.
$$

\noindent{\em Renormalized kernel}. Using \eqref{neovnonvievvnkvnoneii}, we pick $p$ integer large enough such that
\be
\label{defhshth}
h(r,z)=\frac{1}{\nu^p(z)}\zeta_*(R)\in H_a.
\ee

\noindent{\em The closed subset $V$}. Given $z_0\in \Bbb R$, we define the linear form
$$T_{z_0} \psi=\int_{r>0} \psi(r,z_0)h(r,z_0)rdr.$$ We claim that 
\be
\label{neoineonvoienoiev}
V=\cap_{z\in \Bbb R}{\rm Ker}T_{z} \ \ \mbox{is a closed subset of $H_a$}.
\ee 
\noindent{ Proof of \eqref{neoineonvoienoiev}}. For $\psi\in \mathcal D_{\rm cyl}(\Bbb R^5)$,
\bee
\nonumber
& &\psi(r,z_0)^2=-\int_{z_0}^{+\infty}\pa_{z}\left(\psi(r,z)^2\right)dz\lesssim  \left(\int_{z\in \Bbb R}(\pa_z\psi)^2dz\right)^{\frac 12}\left(\int_{z\in \Bbb R}\psi^2dz\right)^{\frac 12},\eee
and hence
$$|\psi(r,z_0)|\lesssim \left[\left(\int_{z\in \Bbb R}(\pa_z\psi)^2dz\right)^{\frac 12}+\left(\int_{z\in \Bbb R}\psi^2dz\right)^{\frac 12}\right].$$
 From which we deduce from the fast decay of $\xi_*$ in $R$:
\bee
|T_{z_0}(\psi)|&\leq&C(z_0)\int_{r>0} \left[\left(\int_{z\in \Bbb R}(\pa_z\psi)^2dz\right)^{\frac 12}+\left(\int_{z\in \Bbb R}\psi^2dz\right)^{\frac 12}\right]h(r,z_0)rdr\\
&\leq& C(z_0)\left(\int_{r>0}\int_{z\in\mathbb{R}}(\pa_z\psi)^2rdrdz\right)^{\frac 12}\left(\int_{r>0}h^2(r,z_0)rdr\right)^{\frac 12}\\
&+& C(z_0)\left(\int_{r>0}\int_{z\in\mathbb{R}}\frac{\psi^2}{r^4}rdrdz\right)^{\frac 12}\left(\int_{r>0}r^4h^2(r,z_0)rdr\right)^{\frac 12}\\
&\leq & C(z_0)\|\psi\|_{H_a},
\eee
and hence $T_{z_0}$ uniquely extends as a continuous linear form on $H_a$, and \eqref{neoineonvoienoiev} follows.

\subsection{Resolvent of $L_a$ on $V$}

We are now in position to invert $L_a$ on $V$ \`a la Lax Milgram.

\begin{lemma}[Resolvent of $L_a$ on $V$]
\label{vneokvneneonvie}
Let 
\be
\label{assumptitoinf}
r^2f\in L^2\left(rdrdz\right),
\ee 
then there exists a unique $\phi\in V$ such that 
\be
\label{estaimteV}
\forall v\in V, \ \ \int_{r>0}\int_{z\in \Bbb R}(L_a\phi)vrdrdz=\int_{r>0}\int_{z\in \Bbb R} fvrdrdz.
\ee
\noindent{\em Estimates}. 
\be
\label{estiaotinetpsibis}
\int_{r>0}\int_{z\in \Bbb R} \left[\frac{|\pa_r\phi|^2}{r^2} +\frac{a^2}{r^2}(\pa_z\phi)^2\right]rdrdz\lesssim \int_{r>0}\int_{z\in \Bbb R} r^4f^2rdrdz.
\ee
\noindent{\em Equation}. Let 
\be
\label{defdfz}
D_f(z)=\frac{\int_{r>0}\left[-\frac{a^2}{r^2}\phi\pa_z^2h-fh\right]rdr}{\int_{r>0}h^2(r,z)r dr},
\ee 
then in the sense of distributions:
 \be
 \label{eqagheingeio}
 L_a\phi=f+D_f(z)h.
 \ee
\end{lemma}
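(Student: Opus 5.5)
We apply Lax--Milgram on the closed subspace $V$ of $H_a$ with the bilinear form
$$B(\phi,v)=\int_{r>0}\int_{z\in\Bbb R}\frac{\pa_r\phi\pa_rv+a^2\pa_z\phi\pa_zv}{r^2}\,rdrdz+\int_{r>0}\int_{z\in\Bbb R}V_\nu(r,z)\phi v\,rdrdz,$$
where $V_\nu=\left[\mathcal H_0''-\frac{(\C_0')^2+\C_0\C_0''}{r^2}\right](\psi_\nu)$. For smooth compactly supported $\phi$ this is $\int (L_a\phi)v\,rdrdz$ after integration by parts in $r$ and $z$.

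\textbf{Continuity of $B$.} By scaling, $V_\nu(r,z)=\nu^{-4}(z)V_*(R)$. Step 2 of the proof of Lemma \ref{coercm} gives $|V_*(R)|\lesssim R^{-4}$ globally, since $V_*$ is bounded near the origin in $R^{-4}$ units by $R^{\frac{4}{\gamma-2}-2}\le R^{-4}$ there, and equals $K/R^4$ at infinity. Hence $|V_\nu|\lesssim r^{-4}$. Continuity then follows from Hardy's inequality \eqref{Hardyenienige} applied slice by slice in $z$, which controls $\int\psi^2 r^{-4}\,rdrdz$ by the $H_a$ norm.

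\textbf{Coercivity of $B$ on $V$.} Fix $z$ and rescale $r=\nu(z)R$. A direct computation shows that the $r$-part of the form transforms as $\nu^{-2}\int(M_*\tilde\psi)\tilde\psi\,RdR$ with $\tilde\psi(R)=\psi(\nu R,z)$, and the $\dot H$ norm transforms the same way. The constraint $T_z\psi=0$ reads $\int\tilde\psi\,\zeta_*\,RdR=0$, because $h(\cdot,z)$ is a multiple of $\zeta_*(R)$. Lemma \ref{coercm} (valid for $\gamma$ close to $2$, which we assume) then gives, for each fixed $z$,
$$\int (M\psi)\psi\,rdr\ge c_*\int\left(\frac{|\pa_r\psi|^2}{r^2}+\frac{\psi^2}{r^4}\right)rdr,$$
with $c_*$ independent of $z$ by scale invariance. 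Integrating in $z$ and adding the nonnegative $a^2|\pa_z\psi|^2/r^2$ term, we get $B(\psi,\psi)\ge\min(c_*,1)\|\psi\|_{H_a}^2$ on $V$. The main technical point is to justify this on all of $V$ rather than only on smooth functions. For this we need that $V\cap\mathcal C_{\rm cyl}$, or a suitable projection of test functions onto the constraint set, is dense in $V$. We handle it by applying the slice-wise inequality directly to elements of $H_a$: a.e.\ slice lies in the radial energy space, and Lemma \ref{coercm} extends by density in $R$ alone.

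\textbf{Existence, uniqueness and the estimate.} The linear form $v\mapsto\int fv\,rdrdz$ is bounded on $H_a$ by Cauchy--Schwarz:
$$\left|\int fv\right|\le\|r^2f\|_{L^2}\left\|\frac{v}{r^2}\right\|_{L^2}\lesssim\|r^2f\|_{L^2}\|v\|_{H_a},$$
again using Hardy's inequality. Lax--Milgram therefore yields a unique $\phi\in V$ satisfying \eqref{estaimteV}. Testing with $v=\phi$ gives \eqref{estiaotinetpsibis}.

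\textbf{Equation \eqref{eqagheingeio}.} The functional $\ell=L_a\phi-f$ is a distribution that vanishes on $V$. Since $V$ is cut out by the family of constraints $T_z$, we expect $\ell$ to be of the form $D(z)h$ for some function $D$. To see this, take a general test function $v$ and decompose it slice by slice as
$$v=\Big(v-\frac{T_zv}{\|h(\cdot,z)\|^2_{L^2(rdr)}}h\Big)+\frac{T_zv}{\|h(\cdot,z)\|^2_{L^2(rdr)}}h.$$
The first piece lies in $V$, and it is admissible as a test function because $h\in H_a$ is smooth with the decay given by the choice of $p$ in \eqref{defhshth}. Consequently
$$\langle\ell,v\rangle=\int\frac{T_zv}{\|h\|^2}\,\langle\ell(\cdot,z),h\rangle\,dz,$$
which is exactly $D(z)h$ paired with $v$. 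To identify $D$, note that $M h=\nu^{-p}(M\zeta_*(r/\nu))=0$ slice-wise, since $\zeta_*$ is the scaling kernel. Therefore $\langle L_a\phi,h\rangle_z=-\frac{a^2}{r^2}\langle\phi,\pa_z^2h\rangle$ after integrating by parts in $r$ and using the symmetry of $M$. This produces the numerator of $D_f$ in \eqref{defdfz}; the sign is fixed by matching $\langle\ell,h\rangle_z=D\|h\|^2$.
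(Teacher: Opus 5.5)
Your route is the paper's: Lax--Milgram (the paper uses Riesz, since the form is symmetric) on the closed subspace $V$; coercivity by rescaling each $z$-slice to Lemma \ref{coercm}, with the constraint killing the penalty term; boundedness of $v\mapsto\int fv$ by Hardy; testing with $v=\phi$ to get \eqref{estiaotinetpsibis}; and the splitting $v=(v-\ell_v h)+\ell_v h$ to produce a Lagrange multiplier. Existence, uniqueness and the energy estimate are fine as you wrote them. Your bound $|V_\nu|\lesssim r^{-4}$ for continuity and your density remark are more careful than the paper.

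The gap is in identifying the multiplier. On a fixed slice, symmetry of $M$ lets you move the $r$-derivatives onto $h$, which gives $\int (M\phi)h\,rdr=\int\phi\,(Mh)\,rdr=0$. You cannot move $\pa_z^2$ inside a fixed-$z$ integral, so your identity $\langle L_a\phi,h\rangle_z=-a^2\langle r^{-2}\phi,\pa_z^2h\rangle_z$ is unjustified. What you actually get is $\int (L_a\phi)h\,rdr=-a^2\int r^{-2}\pa_z^2\phi\,h\,rdr$. Pairing $L_a\phi-f=D(z)h$ with $h$ slice by slice determines $D$, so your decomposition argument proves \eqref{eqagheingeio} with $D(z)=\int[-a^2r^{-2}\pa_z^2\phi\,h-fh]\,rdr\big/\int h^2\,rdr$.

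This differs from \eqref{defdfz} by $-a^2\pa_z\int r^{-2}(\pa_z\phi\,h-\phi\,\pa_zh)\,rdr\big/\int h^2\,rdr$, and nothing makes that vanish. Membership in $V$ kills $\int\phi h\,rdr$, whereas the $z$-part of $L_a$ carries the weight $r^{-2}$. In the model case $\nu\equiv 1$, $h$ is $z$-independent, so \eqref{defdfz} reduces to $-\int fh\,rdr/\int h^2\,rdr$. The true multiplier also contains $-a^2\pa_z^2\int r^{-2}\phi h\,rdr/\int h^2\,rdr$, which is nonzero for generic $f$ because $r^{-2}h$ is not a multiple of $h$.

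The paper's Step 2 makes the same move. It replaces $\int\int(L_a\phi)\,\ell(z)h\,rdrdz$ by $\int\int\ell(z)\,\phi\,(L_ah)\,rdrdz$ with a $z$-dependent $\ell$, which is invalid because $\pa_z^2$ also falls on $\ell$. So the defect lies in formula \eqref{defdfz} itself, not only in your write-up.

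The correct statement has $\pa_z^2\phi\,h$ in place of $\phi\,\pa_z^2h$. For $\phi$ only in $H_a$, read it distributionally in $z$ as $\pa_z^2\int r^{-2}\phi h\,rdr-2\pa_z\int r^{-2}\phi\,\pa_zh\,rdr+\int r^{-2}\phi\,\pa_z^2h\,rdr$. This correction is not cosmetic: the later bounds on $D_1$ in Section \ref{sectionresolventtwo} use only $|\Psi|$ and $|L_ah|$.
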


\begin{proof}[Proof of Lemma \ref{vneokvneneonvie}] This follows from a Lax-Milgram type argument.\\

\noindent{\bf step 1} Coercivity on $V$. 
 Let
\begin{equation}
\label{defnintin}
\nonumber V_0(r,z)=\left(\mathcal H_0''-\frac{(\C'_0)^2+\C_0\C_0''}{r^2}\right)(\psi_\nu(r,z))= \frac 1{\nu^4(z)}V_*(R),
\end{equation}
and define $$\phi(r,z)=\phit(R,z), \ \ R=\frac{r}{\nu},$$ so that
$$M\phi(r,z)=\frac{1}{\nu^4}M_*\phit(R,z).$$ Viewing $z$ as a parameter we compute
\bee
&&\int_{r>0}(M\phi)\phi rdr=\int_{R>0}\frac{\nu^2}{\nu^4}(M_*\phit(R,z))\phit(R,z)RdR\\
&\ge&\frac{1}{\nu^2}\left\{  c_* \int_{R>0}\left(\frac{|\pa_R\phit|^2}{R^2}+\frac{\phit^2}{R^4}\right)RdR-\frac1{c_* }\left(\int_{R>0} \phit \xi_*(R)RdR\right)^2\right\}\\
& = & c_* \int_{r>0}\left(\frac{|\pa_r\phi|^2}{r^2}+\frac{\phi^2}{r^4}\right)rdr-\frac1{c_* \nu^2}\left(\nu^{p-2}(z)\underbrace{\int_{r>0} \phi(r,z) h(r,z)rdr}_{=0 \text{ in }V}\right)^2.
\eee
This yields the coercivity property: $\forall \phi\in V,$
\bea\label{cnekovneonveonve}
\int_{r>0}\int_{z\in \Bbb R} (L_a\phi)\phi rdrdz
\nonumber &\ge&   c_* \int_{z\in \Bbb R}\int_{r>0}\left(\frac{|\pa_r\phi|^2}{r^2}+\frac{\phi^2}{r^4}\right)rdrdz\\
&+&a^2\int_{r>0}\int_{z\in \Bbb R}\frac{|\pa_z\phi|^2}{r^2}rdrdz.
\eea
We conclude from \eqref{cnekovneonveonve} that the scalar product $$\la \phi,\phit\ra_V=\int_{r>0}\int_{z\in \Bbb R}(L_a\phi)\phit rdrdz$$ induces on $V$ a norm equivalent to $\|\cdot\|_{H_a}$. Moreover from \eqref{assumptitoinf}, 
\bee
& & \left|\int_{r>0}\int_{z\in \Bbb R}f(r,z)\phi(r,z)rdrdz\right|\\
&\lesssim &\left(\int_{r>0}\int_{z\in \Bbb R}r^4f^2rdrdz\right)^{\frac 12}\left(\int_{z>0}\int_{r>0}\frac{\phi^2(r,z)}{r^4}rdrdz\right)^{\frac 12}\\
&\leq &  \|r^2f\|_{L^2(rdrdz)}\|\psi\|_{H_a},
\eee
where we used the Hardy inequality \eqref{Hardyenienige} in the last step, and \eqref{estaimteV} follows from Riesz's representation Theorem in the Hilbert space $V$. Applying this with $v=\phi$ yields \eqref{estiaotinetpsibis}.\\
 
 \noindent{\bf step 2} Equation satisfied by $\phi$. From the decay of $\zeta_*$ we have
 $$\int_{r>0}h^2(r,z)rdr=\frac{\nu^2(z)}{\nu^{2p}(z)}\int_{R>0}\xi_*^2(R)RdR=c(z)\in (0,+\infty).$$ Moreover $h\in H_a$ and hence given $\phi \in H_a$, we conclude that $$ \left|\begin{array}{l}
v=\phi-\ell_\phi(z)h\in V\\
 \ell_\phi(z)=\frac{\int_{r>0}\phi(r,z)h(r,z)rdr}{\int_{r>0}h^2(r,z)r dr}
 \end{array}\right.,
 $$
 Hence for a solution $\phi$ of \eqref{estaimteV} we compute
 \[\int_{r>0}\int_{z\in \Bbb R}(L_a\phi)v rdrdz=\int_{r>0}\int_{z\in \Bbb R} f v rdrdz,\]
 thus
 \[
 \int_{r>0}\int_{z\in \Bbb R}(L_a\phi)\left[\phi-\ell_\phi h\right] rdrdz=\int_{r>0}\int_{z\in \Bbb R} f \left[\phi-\ell_\phi h\right] rdrdz,\]
 which gives \[\int_{r>0}\int_{z\in \Bbb R}(L_a\phi)\phi rdrdz=\int_{r>0}\int_{z\in \Bbb R} f \phi rdrdz+\int_{r>0}\int_{z\in \Bbb R} \ell_\phi(z)\left[\phi(L_ah)-fh\right]rdrdz .
 \]
 Let $$C_f(z)=\int_{r>0}\left[\phi(L_ah)-fh\right]rdr,$$ then we rewrite
\bee
&& \int_{r>0}\int_{z\in \Bbb R} \ell_\phi(z)\left[\phi(L_ah)-fh\right]rdrdz=\int_{z\in \Bbb R} \ell_\phi(z)C_f(z)dz\\
& =& \int_{z>0}\frac{\int_{r>0}\phi(r,z)h(r,z)rdr}{\int_{\rho>0}h^2(\rho,z)\rho d\rho}C_f(z)dz=  \int_{r>0}\int_{z>0}\phi(r,z)\left[\frac{h(r,z)C_f(z)}{\int_{\rho>0}h^2(\rho,z)\rho d\rho}\right]rdrdz\\
& = & \int_{r>0}\int_{z>0}\phi(r,z)D_f(z)h(r,z)rdrdz,
 \eee
 with $$D_f(z)=\frac{\int_{r>0}\left[\phi(L_ah)-fh\right]rdr}{\int_{r>0}h^2(r,z)r dr}.$$ There remains to compute $$L_ah=L_a\left(\frac{\zeta_*}{\nu^p}\right)=-\frac{a^2}{r^2}\pa_z^2h,$$ and \eqref{eqagheingeio} is proved.
 \end{proof}
 

\section{Pointwise decay for the resolvent}
\label{sectionresolventtwo}


This section is devoted to the derivation of pointwise decay estimates for the resolvent of Lemma \ref{vneokvneneonvie} which are essential to close the full non linear problem. We will use the growth \eqref{funoenioenone} which after conjugation allows us to work with the high dimensional Laplace operator.  A function $\Psi(r,z)$ will be canonically viewed as a cylindrical function in $\Bbb R^{D+1}$ through the decomposition $$\left|\begin{array}{l}
x=x_\perp+z\ve_z\\
x_\perp\in \Bbb R^D\\
 |x_\perp|=r.
 \end{array}\right.
 $$
 

\subsection{Pointwise decay}


Pick a small loss $0<\delta_*\ll 1$. We introduce the norm
\be
\label{vneionveoiveovei}
\|f\|_{\alpha}=\sup_{r>0}\left[\la r\ra^\alpha\left(\int_{z>0}\la z\ra^{3-\delta_*} f^2(r,z)dz\right)^{\frac12}\right]+\|\la r\ra^\alpha f\|_{L^\infty}.
\ee

Let  $E_{\alpha}$ be the Banach obtained by the closure of $\matchal D_{\rm cyl}(\Bbb R^2)$ for the norm $$\|\Psi\|_{E_{\alpha}}=\|\la r\ra^\alpha\la z\ra^{\frac 32-\delta_*}\pa_r\Psi\|_{L^2(\Bbb R^5)}+a\|\la r\ra^\alpha\la z\ra^{\frac 32-\delta_*}\pa_z\Psi\|_{L^2(\Bbb R^5)}+a^{\frac 12}\left\|\la z\ra^{\frac{1}{4}}\frac{\la r\ra^{\alpha+2}}{r^2}\psi\right\|_{L^\infty}.$$  Let $$\left|\begin{array}{l}
L_a\psi=f+D_f h\\
\psi=r^2\Psi, \ \ \Psi\equiv \mathcal L_a^{-1}f
\end{array}\right.,
$$ 
be the resolvent map constructed in Lemma \ref{vneokvneneonvie} which solves \eqref{eqagheingeio}. 
\begin{proposition}[Pointwise decay]
\label{proppointizedfjifw}
Consider
$$\left(c_0,(C_i)_{0\le i\le 2}\right)\in (\Bbb R_*^+)^4.$$
Then there exists $m^*\in \mathbb{N}$, such that for $\gamma=2+\frac 1m$, $m\in \Bbb N, m\geq m^*$ there exists a universal constant $\alpha(\gamma)$ such that for $1\ll \alpha\leq \alpha(\gamma)$ there exists a universal constant $K=K(c_0,C_0,\gamma)$ such that for all $0<\eta<\eta^*(\gamma,\delta_*)$ small enough and for all $\nu\in \mathcal N^\eta(c_0,(C_i)_{0\le i\le 2})$ and all $0<a<a^*((c_0,(C_i)_{0\le i\le 2}))$ small enough we have
\be
\label{continieou}
\|\mathcal L_a^{-1}f\|_{E_{\alpha-6}}\leq K\|f\|_{\alpha}.
\ee
\end{proposition}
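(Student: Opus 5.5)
We prove \eqref{continieou} by upgrading the $H_a$ energy bound of Lemma \ref{vneokvneneonvie} to weighted bounds, using the explicit structure of $L_a$ near the homogeneous vortex. The approach has three steps: weighted energy estimates, control of the correction $D_f h$, and a pointwise bound via conjugation to a high dimensional Laplacian. Throughout, $\psi=r^2\Psi$ is the solution of $L_a\psi=f+D_f h$. Writing $\psi=r^2\Psi$, one has $-\frac1r\pa_r(\frac1r\pa_r\psi)=-(\pa_r^2\Psi+\frac3r\pa_r\Psi)=-\Delta_{\Bbb R^4}\Psi$. After further conjugating by a power of $r$ adapted to the large positive tail $K/R^4$ of $V_*$ (recall $K(\gamma)\to+\infty$ by \eqref{fneionfenvoinoe}), $L_a$ becomes $-\Delta_{\Bbb R^D}-a^2\pa_z^2$ plus a potential. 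This potential is nonnegative for $R\gg1$ and compactly supported in $R$ up to decaying errors, with $D=D(\gamma)\to\infty$ as $m\to\infty$.

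\textbf{Step 1: weighted energy estimates.} We multiply the equation by $\la r\ra^{2\beta}\la z\ra^{3-2\delta_*}\psi/r^{4}$, suitably localized, and integrate. The commutator terms from the weights are of two kinds. The $r$-weight commutator is $O(\beta^2/r^2)$ relative to the Laplacian, so it is absorbed by the potential tail $K/R^4\cdot \nu^{-4}$ and by the Hardy inequality \eqref{Hardyenienige} in dimension $D$, provided $\beta\le\alpha(\gamma)$ with $\alpha(\gamma)^2\lesssim K(\gamma)$. This is where $m\ge m^*$ enters. The $z$-weight commutator is of size $a^2/\la z\ra^2$ and is absorbed for $a$ small. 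The coercivity of Lemma \ref{coercm}, rescaled on each slice as in Step 1 of Lemma \ref{vneokvneneonvie}, controls the region $R\lesssim1$ modulo the projection onto $h$. That projection vanishes since $\psi\in V$, but the weight destroys orthogonality. We handle this by splitting $\psi=\psi_{\perp}+\ell(z)h$ with $\ell$ computed from the weighted product, and estimating $\ell$ using the fast decay $\zeta_*\sim R^{-(\gamma-2+|\l_-|)}$. The admissibility bounds \eqref{estoatmgnoi} make $\pa_z$ derivatives of $R=r/\nu(z)$ cost $\la z\ra^{-1}$, and $\nu\sim\la z\ra^\eta$ with $\eta$ small converts $R$-weights into $r$-weights at a loss of $\la z\ra^{C\eta}$. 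This is absorbed into $\delta_*$ by taking $\eta<\eta^*(\gamma,\delta_*)$.

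\textbf{Step 2: the correction $D_f$.} From \eqref{defdfz}, $D_f$ involves $\int fh\,rdr$ and $a^2\int\phi\,\pa_z^2h\,r^{-1}dr$. The first term is bounded by $\|f\|_\alpha$ times the decay of $h$. In the second, $\pa_z^2 h=O(\la z\ra^{-2}h)$ with a further factor $a^2$, so it is perturbative and is controlled by the energy bound \eqref{estiaotinetpsibis} and Step 1. This shows $D_f h$ lies in the same weighted class as $f$, with a small loss of weight. That loss, together with the losses in Steps 1 and 3, accounts for the shift $\alpha\to\alpha-6$.

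\textbf{Step 3: pointwise bound.} We still need the $L^\infty$ term $a^{1/2}\|\la z\ra^{1/4}\la r\ra^{\alpha+2}r^{-2}\psi\|_{L^\infty}$. In the conjugated $D$-dimensional picture we rescale $z=a\tilde z$ to obtain a uniformly elliptic operator on $\Bbb R^{D+1}$. Local elliptic regularity, via the Green function of $-\Delta$ with positive potential at infinity, bounds $\Psi$ pointwise by local $L^2$ norms of $\Psi$ and of the source. The rescaling produces the factor $a^{1/2}$ from the Jacobian in $z$, and a one-dimensional Sobolev bound in $z$ with the weight $\la z\ra^{3/2-\delta_*}$ gives the $\la z\ra^{1/4}$ gain. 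Near $r=0$, smoothness of $\Psi$ as a function on $\Bbb R^4$ handles the $r^{-2}$ factor.

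The main obstacle is Step 1. Closing weighted coercivity with a large power $\alpha$ of $\la r\ra$ while keeping the orthogonality defect and the $z$-dependent scaling $\nu(z)$ under control is delicate, and the constraint $\alpha\le\alpha(\gamma)$ with $m$ large is exactly what makes the commutators subordinate to $K/R^4$. I would first carry out the estimate with $\nu$ constant, where $z$ decouples, and then treat the $\nu'$ and $\nu''$ terms as perturbations of size $\eta$.
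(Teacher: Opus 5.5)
The gap is in Step~1, at your claim that the $z$-weight commutator ``is of size $a^2/\la z\ra^2$ and is absorbed for $a$ small.'' Multiply \eqref{vneoneonenvoi} by $m\Psi$ with your convex weight $m\sim\la r\ra^{2\beta}\la z\ra^{s}$, $s=3-2\delta_*$. This produces the bad-sign term $-\frac{a^2}{2}\int(\pa_z^2m)\Psi^2\,dx\approx-\frac{a^2}{2}s(s-1)\int m\la z\ra^{-2}\Psi^2\,dx$, and the smallness of $a$ does not help with it. The only coercive term that sees the $z$ direction is $a^2\int m(\pa_z\Psi)^2$, which carries the same $a^2$; in $Z=z/a$ the operator is isotropic and the commutator is of Hardy-critical size. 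The sharp one-dimensional Hardy inequality only gives the constant $\frac12 s(s-1)\cdot\frac{4}{(s-1)^2}=\frac{2s}{s-1}>1$. The other coercive terms, $\int m(\pa_r\Psi)^2$ via Hardy and $\int mW\Psi^2$ with $W\ge L/r^2$ from \eqref{estitmot}, control only $m\Psi^2/r^2$. That is smaller than $a^2\la z\ra^{-2}m\Psi^2$ as soon as $r\gg\la z\ra/a$. Near $z=0$, any smooth even increasing weight has $\pa_z^2m>0$ somewhere, so you also need $L^2$ control of $\la r\ra^{\beta}\Psi$ itself on $\{|z|\le1\}$, which the energy does not give.

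What is missing is a priori decay of $\Psi$ in $r$, and this is why the paper runs your steps in the opposite order.
\begin{itemize}
\item It first proves the pointwise bound of Lemma~\ref{fristinog} from the unweighted energy only. It conjugates $\Psi=r^\mu\Phi$, uses $W_{\rm out}\ge\mu(\mu+2)/r^2$ for $r\ge r_0$ to apply the maximum principle, and estimates with the Green function of $-\Delta_{\mathbb{R}^{D+1}}$.
\item It then bootstraps in Lemma~\ref{lemmaltwobound}. A $z$-only weight, concave for $|z|\ge1$ so that the commutator has a good sign, with $|z|\le1$ handled by \eqref{potinwonioen}, gives \eqref{fihihbond}.
\item Next, the weight $\la r\ra^{2\alpha-10}$ with the smaller power $\la z\ra^{1-\delta_*}$ gives \eqref{fheiofehehigeog}.
\item Only then is a convex weight $\la z\ra^{3-\frac32\delta_*}$ admissible: its commutator is controlled by radial Sobolev and the $r$-decay already obtained. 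This continues up to \eqref{imporvedlrotwbound}.
\end{itemize}

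A second problem in Step~1 is the compact zone $R\lesssim R_*$. There the $r$-weight commutator has size $\beta^2\la r\ra^{-2}m\Psi^2$ with $\beta\sim\alpha\gg1$, and $W$ has no sign. The coercivity of Lemma~\ref{coercm}, whose constant $c_*$ is not large, cannot absorb this, however you treat the orthogonality defect. The paper never applies coercivity with an $r$-dependent weight. It uses coercivity only for multipliers depending on $z$ alone, which preserve $V$, so no defect arises and your splitting $\psi_\perp+\ell h$ is unnecessary. For $r$-weights it estimates the zone $R\le R_*$ by radial Sobolev and the previously established $z$-weighted bound with a slightly larger $z$-exponent, using $r\lesssim\la z\ra^{\eta}$ there.

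Finally, in Step~3 the $L^\infty$ component of $E_{\alpha-6}$ demands $|\Psi|\lesssim a^{-1/2}\la z\ra^{-1/4}\la r\ra^{-(\alpha-4)}$. This does not follow from an $\la r\ra^{\alpha-6}$-weighted energy plus local regularity, since radial Sobolev only yields $|\Psi|\lesssim r^{-1}\la r\ra^{-(\alpha-6)}$ on slices. The paper gets the needed decay directly from the Green function representation; see \eqref{poitneonninoboundpsi}.
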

The rest of this section is devoted to the proof of Proposition \ref{proppointizedfjifw}.


\subsection{Energy bound}


 We fix once and for all the source $f$ with $\|f\|_{\alpha}<+\infty$. Note that using a standard density argument  we may assume $f\in \mathcal S(\Bbb R^5)$, and then standard elliptic regularity estimates ensure $\Psi\in \mathcal C^\infty(\Bbb R^5)$.
 
 \begin{lemma}[Energy estimate in $\Psi$]
 \label{energyestimatePsi}
 There holds 
 \be
\label{enguybaopor}
\left( \int_{\Bbb R^5} \left[(\pa_r\Psi)^2+a^2(\pa_z\Psi)^2\right]dx\right)^{\frac 12}\lesssim \|f\|_{\alpha}.
\ee
\end{lemma}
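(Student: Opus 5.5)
The estimate will follow directly from the Lax--Milgram energy bound \eqref{estiaotinetpsibis} of Lemma \ref{vneokvneneonvie}, once the $\psi$-norms there are translated into $\Psi$-norms on $\Bbb R^5$ and the source norm $\|r^2f\|_{L^2(rdrdz)}$ is controlled by $\|f\|_\alpha$. The plan has three steps.

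\textbf{Step 1: translate the energy norm into $\Psi$.} With $\psi=r^2\Psi$ we have $\pa_r\psi=r^2\pa_r\Psi+2r\Psi$ and $\pa_z\psi=r^2\pa_z\Psi$. The volume element of $\Bbb R^5$ in cylindrical coordinates is $dx=c\,r^3drdz$. Hence
$$\int\frac{a^2(\pa_z\psi)^2}{r^2}rdrdz=a^2\int (\pa_z\Psi)^2r^3drdz\simeq a^2\int_{\Bbb R^5}(\pa_z\Psi)^2dx.$$
For the radial derivative we use
$$r^2(\pa_r\Psi)^2\le 2\frac{(\pa_r\psi)^2}{r^2}+8\frac{\psi^2}{r^4},$$
and then control the second term by the Hardy inequality \eqref{Hardyenienige}, applied for each fixed $z$. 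This gives
$$\int_{\Bbb R^5}(\pa_r\Psi)^2dx\lesssim\int\frac{(\pa_r\psi)^2}{r^2}rdrdz.$$
Combining the two, the left-hand side of \eqref{enguybaopor} is bounded by the left-hand side of \eqref{estiaotinetpsibis}, hence by $\|r^2f\|_{L^2(rdrdz)}$.

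\textbf{Step 2: bound the source.} We must show
$$\int r^5f^2\,drdz\lesssim\|f\|_\alpha^2.$$
By the first part of the norm \eqref{vneionveoiveovei}, for every $r$,
$$\int_{\Bbb R} f^2(r,z)\,dz\lesssim\int_{\Bbb R}\langle z\rangle^{3-\delta_*}f^2\,dz\le \|f\|_\alpha^2\langle r\rangle^{-2\alpha}.$$
Here we need evenness in $z$, or the same weighted norm taken on $z<0$; I would assume the norm is understood over all of $\Bbb R$, or use the symmetry of the setting. Integrating in $r$ then gives
$$\int r^5f^2\,drdz\lesssim\|f\|_\alpha^2\int_0^\infty r^5\langle r\rangle^{-2\alpha}dr\lesssim\|f\|_\alpha^2,$$
since $\alpha\gg1$, in particular $2\alpha>6$.

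\textbf{Step 3: smoothness of $\Psi$.} The integrations by parts hidden in Hardy's inequality require $\psi/r\to0$ at $r=0$ and decay at infinity. This holds for the smooth solution $\Psi\in\mathcal C^\infty(\Bbb R^5)$ obtained by density, together with $\psi\in H_a$. Nothing here is delicate.

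The main (mild) obstacle is the bookkeeping of Step 1: the cross term $2r\Psi$ in $\pa_r\psi$ must be absorbed through Hardy rather than by expanding the square. One also has to check that the implicit constant in Lemma \ref{vneokvneneonvie} depends only on $c_*(\gamma)$, which it does, since the coercivity \eqref{cnekovneonveonve} is uniform in $\nu$ and $a$.
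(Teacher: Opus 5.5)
Your proof is correct and follows the paper's argument. You apply the Lax--Milgram bound \eqref{estiaotinetpsibis}, check $\|r^2f\|_{L^2(rdrdz)}\lesssim\|f\|_\alpha$ for $\alpha$ large, and convert the $\psi$-energy into the $\Psi$-energy on $\Bbb R^5$. The only difference is in that last conversion: the paper gets it as an exact identity from $-\frac1r\pa_r\left(\frac1r\pa_r(r^2\Psi)\right)=r^2\left(-\pa_r^2-\frac3r\pa_r\right)\Psi$ and an integration by parts, so the cross term from $2r\Psi$ actually cancels when the square is expanded. Your pointwise inequality plus the Hardy bound \eqref{Hardyenienige} reaches the same conclusion at the cost of a harmless constant, so absorbing that term via Hardy is sufficient but not required, contrary to your closing remark.
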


\begin{proof}[Proof of Lemma \ref{energyestimatePsi}] Observe that
$$\int_{r>0,z\in \Bbb R}r^2f^2r^3drdz\lesssim \|f\|_{\alpha}^2\int_{r>0}\frac{r^3dr}{\la r\ra^{2\alpha}}\lesssim \|f\|_{\alpha}^2,
$$
for $\alpha$ large enough, so we may apply Lemma  \ref{vneokvneneonvie} and observe that $\psi=r^2\Psi$ satisfies from \eqref{eqagheingeio}
\be
 \label{vneoneonenvoi}
 \left|\begin{array}{l}
L_a\psi=\mathcal L_a\Psi\equiv \left(-\pa_r^2-\frac{3}{r}\pa_r-a^2\pa_z^2+W\right)\Psi=f+D_f(z)h\\
W(r,z)=r^2V_0(r,z)=\frac1{\nu^2}W_*(R)\\
W_*(R)=R^2V_*(R)
\end{array}\right.,
\ee
 Moreover 
 \bee
 &&\int_{r>0,z\in \Bbb R}\left[-\frac{1}{r}\pa_r\left(\frac{1}{r}\pa_r\psi\right)-\frac{a^2}{r^2}\pa_z^2\psi\right]\psi rdrdz\\
 & = & \int_{r>0,z\in \Bbb R}\left[-\pa_r^2\Psi-\frac{3}{r}\pa_r\Psi-a^2\pa_z^2\Psi\right]\Psi r^3drdz= \int_{\Bbb R^5} \left[(\pa_r\Psi)^2+a^2(\pa_z\Psi)^2\right]dx,
 \eee
and hence \eqref{estiaotinetpsibis}  ensures the  a priori bound \eqref{enguybaopor}.
\end{proof}

\subsection{$L^\infty$ bound}

We claim the following rough $L^\infty$ bound. 

\begin{lemma}[Pointwise bounds]
\label{lemmalinftybound}
Under the assumptions of Proposition \ref{proppointizedfjifw}, we have $\Psi\in \mathcal C_0(\Bbb R^5)$ with
\be
\label{vneovneovneoneonvonbi}
\|\Psi\|_{L^\infty}\lesssim \frac{\|f\|_{\alpha}}{a^{\frac 12}}.
\ee 
\end{lemma}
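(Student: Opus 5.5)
The plan is to derive the $L^\infty$ bound from the energy estimate of Lemma \ref{energyestimatePsi} together with the equation \eqref{vneoneonenvoi}, by first rescaling the anisotropy away. We set $z=a\tilde z$ and $\tilde\Psi(x_\perp,\tilde z)=\Psi(x_\perp,a\tilde z)$. Then $-\pa_r^2-\frac3r\pa_r-a^2\pa_z^2$ becomes the flat Laplacian $-\Delta_{\Bbb R^5}$ acting on $\tilde\Psi$. The energy bound \eqref{enguybaopor} turns into
$$\|\nabla\tilde\Psi\|_{L^2(\Bbb R^5)}\lesssim a^{-\frac12}\|f\|_\alpha,$$
since $dz=a\,d\tilde z$. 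This is exactly where the factor $a^{-1/2}$ in \eqref{vneovneovneoneonvonbi} should come from. The rescaled equation reads
$$-\Delta\tilde\Psi=\tilde F,\qquad \tilde F=-W\tilde\Psi+\tilde f+D_f h.$$

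\textbf{Step 1 (sign of $W$).} Recall that $W=\nu^{-2}W_*(R)$ with $W_*=R^2V_*$. The negative part of $W_*$ is localized near the origin, where $|V_*|\lesssim R^{4/(\gamma-2)-2}$ makes $W_*$ very flat, and in a bounded region in $R$. Near infinity, $W_*\sim K/R^2>0$. I would split $W=W_+-W_-$ and keep $W_+\tilde\Psi$ on the left, where it helps in a maximum-principle or Moser argument. Only $W_-$, which is bounded and compactly supported in $R$, is treated as a source. Since $\nu\ge c_0$, this gives $|W_-|\lesssim 1$ uniformly.

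\textbf{Step 2 (local $L^\infty$ via De Giorgi/Moser or the Newton kernel).} On each unit ball $B$ of $\Bbb R^5$, local elliptic estimates give
$$\|\tilde\Psi\|_{L^\infty(B)}\lesssim \|\tilde\Psi\|_{L^{10/3}(2B)}+\|\tilde F\|_{L^q(2B)},\quad q>\tfrac52.$$
The global Sobolev embedding $\dot H^1(\Bbb R^5)\subset L^{10/3}$ controls the first term by $a^{-1/2}\|f\|_\alpha$. For the sources:
\begin{itemize}
\item $\tilde f$ is bounded pointwise by $\|f\|_\alpha$.
\item $W_-\tilde\Psi$ is again controlled in $L^{10/3}$; one Moser iteration, or a bootstrap $L^{10/3}\to L^{p}\to L^\infty$ via Hölder, closes it.
\item $D_fh$ needs control of $D_f(z)$ in \eqref{defdfz}. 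Its $f$-part is bounded by $\|f\|_\alpha$ using the fast decay of $\zeta_*$. Its $\phi$-part carries $a^2$ times $\int \phi\,\pa_z^2 h$. This is estimated by Cauchy--Schwarz against the Hardy-type quantity $\int\phi^2r^{-4}$ from \eqref{estiaotinetpsibis}, together with the admissibility bounds \eqref{estoatmgnoi} on $\nu$. It is therefore at most $\lesssim a^2\|f\|_\alpha\ll a^{-1/2}\|f\|_\alpha$, locally in $\tilde z$ after accounting for the $a^{1/2}$ from rescaling.
\end{itemize}
Since these bounds are uniform over balls, we get $\|\tilde\Psi\|_{L^\infty}\lesssim a^{-1/2}\|f\|_\alpha$, and hence the same bound for $\Psi$.

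\textbf{Step 3 (continuity and decay).} For $f\in\mathcal S$, $\Psi$ is smooth, and $\tilde\Psi\in L^{10/3}$ with uniform local estimates. Hence the sup over $B(x,1)$ tends to $0$ as $|x|\to\infty$, which gives $\Psi\in\mathcal C_0$. A density argument then extends this to general $f$, since the bound is linear.

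The main obstacle will be the $D_f h$ term. It involves $\phi$ only through a $z$-dependent coefficient, and its control requires a pointwise-in-$z$ estimate of $\int\phi\,\pa_z^2h\,r^{-2}rdr$, whereas the energy bound only gives an $L^2$ estimate in $z$. I would first try bounding $D_f$ in $L^2_z$, which is enough for $D_fh\in L^2(\Bbb R^5)$ locally after rescaling. I would then use $q=2$ together with $W^{2,2}\subset L^\infty$ in five dimensions. If that fails I would instead use $L^{10/3}$-type integrability. If needed, I would upgrade to a local $L^\infty_z$ bound by a one-dimensional Sobolev argument using $a\pa_z\phi\in L^2$, at the cost of a further $a^{-1/2}$ that is absorbed by the $a^2$ prefactor.
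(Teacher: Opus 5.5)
Your route is sound and genuinely different from the paper's. Both arguments start from the same rescaling $Z=z/a$ and the bound $\|\nabla\tilde\Psi\|_{L^2(\mathbb R^5)}\lesssim a^{-1/2}\|f\|_\alpha$. The paper then works globally. It shows that every piece of $\tilde F=\tilde f+\widetilde{D_fh}-\tilde W\tilde\Psi$ lies in $L^{10/3}(\mathbb R^5)$ with norm $\lesssim a^{-1/2}\|f\|_\alpha$; for $D_fh$ this is \eqref{fmeognneineo}. It concludes by Calder\'on--Zygmund and $W^{2,10/3}(\mathbb R^5)\subset\mathcal C_0$, which holds since $2\cdot\frac{10}{3}>5$. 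You instead use the local maximum principle on unit balls, which only needs uniformly local $L^q$ control of the sources, $q>\frac52$. This lets you treat $\tilde f$ and the $f$-part $D_2$ pointwise: $|D_2|\lesssim\|f\|_\alpha\,\zeta_*(R)$ follows at once from $|f|\le\langle r\rangle^{-\alpha}\|f\|_\alpha$ and $\int h^2rdr=c_*\nu^{2-2p}$. The paper, by contrast, has to prove global integrability of each source. Your Step 1 is superfluous. Since $|W|\lesssim\langle r\rangle^{-2}$ is bounded, $W\tilde\Psi\in L^{10/3}$ with $\frac{10}{3}>\frac52$ can simply go on the right-hand side, and local boundedness estimates need no sign on the zeroth-order term.

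The step that does not work as written is the $\phi$-part $D_1$ of $D_fh$. Cauchy--Schwarz against $\int\phi^2r^{-4}rdr$ only gives an $L^2_z$ bound on its coefficient. That yields only $\widetilde{D_1}\in L^2_{\mathrm{loc}}$, which is below the threshold $q>\frac52$. Your first fallback is false: $W^{2,2}(\mathbb R^5)\not\subset L^\infty$, since $2\cdot 2<5$. The $L^{10/3}$ option is the one to use, and it is exactly the paper's estimate. H\"older in $r$ gives $|D_1|\lesssim a^2\nu^{-16/5}\langle R\rangle^{-|\lambda_-|}\bigl(\int|\Psi|^{10/3}r^3dr\bigr)^{3/10}$, hence $\|\widetilde{D_1}\|_{L^{10/3}(\mathbb R^5)}\lesssim a^2\|\tilde\Psi\|_{L^{10/3}}\lesssim a^{3/2}\|f\|_\alpha$. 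Your one-dimensional Sobolev fallback also works, but not with the Hardy weight you named. Differentiating $\int\phi^2r^{-4}rdr$ in $z$ requires $\partial_z\Psi/r\in L^2$, which the energy does not control. Use instead $u(z)=\int\Psi^2\langle r\rangle^{-2}r^3dr$, which is admissible because $\partial_z^2h=O(r^2)$ at $r=0$. Then $\sup_z u\lesssim\|\Psi/r\|_{L^2}\|\partial_z\Psi\|_{L^2}\lesssim a^{-1}\|f\|_\alpha^2$, and hence $|D_1|\lesssim a^{3/2}\|f\|_\alpha$ pointwise.
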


 \begin{proof}[Proof of Lemma \ref{lemmalinftybound}] Let $$\left|\begin{array}{l}
F(r,z)=f+D_fh-W\Psi=\tilde{F}(r,Z)\\
\Psi(r,z)=\Psit(r,Z)\\
W(r,z)=\tilde{W}(r,Z)
\end{array}\right.,
$$
then  \eqref{vneoneonenvoi} yields $$-\Delta_{\Bbb R^5}\Psit=\tilde{F},$$ with the a priori bound from \eqref{estiaotinetpsibis}
$$\left(\int_{\Bbb R^5}\left[(\pa_r\Psi)^2+a^2(\pa_z\Psi)^2\right]dx\right)^{\frac12}\lesssim \|rf\|_{L^2(\Bbb R^5)}\lesssim \|f\|_{\alpha}.$$ 
We compute
 \bea
 \label{estpeoneitneil}
\nonumber  &&\|\nabla \Psit\|^2_{L^2(\Bbb R^5)}=\int_{r>0,Z\in \Bbb R}\left( -\pa_r^3\Psit-\frac{3}r\pa_r\Psit-\pa_Z^2\Psit\right)\Psit(r,Z)r^3drdZ\\
\nonumber  &=& \frac{1}{a}\int_{r>0,Z\in \Bbb R}\left( -\pa_r^3\Psi-\frac{3}r\pa_r\Psi-a^2\pa_z^2\Psi\right)(r,z)\Psi(r,z)r^3drdz\\
\nonumber  & = &  \frac{1}{a}\int_{r>0,z\in \Bbb R}\left[(\pa_r\Psi)^2+a^2(\pa_z\Psi)^2\right]r^2drdz,
 \eea
 and hence $$\|\nabla \Psit\|_{L^2(\Bbb R^5)}\lesssim  \frac{\|f\|_{\alpha}}{a^{\frac 12}}.$$
 The potential term satisfies the bound
$$|W(r,z)|=\frac{1}{\nu^2}W_*(R)\lesssim \frac{1}{\nu^2\left(1+\frac{r^2}{\nu^2}\right)}\lesssim \frac{\nu^2}{\nu^2+r^2}\lesssim \frac{1}{\la r\ra^2},$$
and hence using the 4 dimensional Hardy inequality
$$\|r\tilde{W}\Psit\|_{L^2(\Bbb R^5)}\lesssim \left\|\frac{\Psit}{r}\right\|_{L^2(\Bbb R^5)}\lesssim \|\pa_r\Psit\|_{L^2(\Bbb R^5)}\lesssim \frac{\|f\|_{\alpha}}{a^{\frac 12}},$$ and hence 
\be
\label{venoenneoieonviv}
\|\tilde{W}\Psit\|_{L^{\frac{10}{3}}(\Bbb R^5)}+\|r\tilde{W}\Psit\|_{L^2(\Bbb R^5)}\lesssim \|\nabla \Psit\|_{L^{2}(\Bbb R^5)}\lesssim  \frac{\|f\|_{\alpha}}{a^{\frac 12}}.
 \ee
 We now observe 
 \bee
 \int_{r>0,z\in \Bbb R}|f|^{\frac{10}{3}}r^3drdz&\lesssim& \int_{r>0,z\in \Bbb R}\left(\frac{\|f\|_{\alpha}}{\la r\ra^\alpha}\right)^{\frac{4}{3}}|f(r,z)|^2r^3drdz\\
    &\lesssim& \|f\|_{\alpha}^{\frac{10}{3}}\int_{r>0}\frac{r^3dr}{\la r\ra^{\frac{10\alpha}{3}}}\lesssim \|f\|_{\alpha}^{\frac{10}{3}},
  \eee
 and rescaling in $z$ yields
 $$\|\tilde{f}\|_{L^{\frac{10}{3}}(\Bbb R^5)}=\frac{\|f\|_{L^{\frac{10}{3}}(\Bbb R^5)}}{a^{\frac 3{10}}}\lesssim \frac{\|f\|_{\alpha}}{a^{\frac 12}},$$
 We now claim the bound 
 \be
 \label{fmeognneineo}
 \|r\tilde{D_fh}\|_{L^2(\Bbb R^5)}+\|\tilde{D_fh}\|_{L^{\frac{10}{3}}(\Bbb R^5)}\lesssim \frac{\|r^2f\|_{L^2(\Bbb R^2)}+\|f\|_{L^{\frac{10}{3}}(\Bbb R^5)}}{a^{\frac 12}}.
 \ee
Assume \eqref{fmeognneineo}, then the Sobolev embedding $\dot{H}^1(\Bbb R^5)\subset L^{\frac{10}{3}}(\Bbb R^5)$ implies:
 $$\|\Psit\|_{L^{\frac{10}{3}}}+\|\Delta \Psit\|_{L^{\frac{10}{3}}}\lesssim \frac{\|f\|_{\alpha}}{a^{\frac 12}}.$$ Since $2>\frac{5}{\frac{10}{3}}$, the Sobolev embedding $W^{2,\frac{10}{3}}(\Bbb R^5)\subset \matchal (C_0(\Bbb R^5),\|\cdot\|_{L^\infty})$ yields $$\|\Psi\|_{L^\infty}=\|\Psit\|_{L^\infty}\lesssim \frac{\|rf\|_{L^2(\Bbb R^5)}+\|f\|_{L^{\frac{10}{3}}(\Bbb R^5)}}{a^{\frac 12}},$$ and \eqref{vneovneovneoneonvonbi} is proved.\\

 \noindent{\em Proof of \eqref{fmeognneineo}}. We recall \eqref{defdfz} and split
 \be
 \label{veniovneoneoiv}
 \left|\begin{array}{l}
D_f(z)h=D_1-D_2\\
D_1=\frac{\int_{r>0}\left[\psi(L_ah)\right]rdr}{\int_{r>0}h^2(r,z)r dr}h\\
D_2=\frac{\int_{r>0}\left[fh\right]rdr}{\int_{r>0}h^2(r,z)r dr}h
\end{array}\right..
\ee
\noindent\underline{Bound for $D_2$}. Recall from \eqref{defhshth} that 
$h(r,z)=\frac{1}{\nu^p(z)}\zeta_*(R).$ We compute
\be
\label{veniovnenveinvoe}
\left|\begin{array}{l}
\int_{r>0}h^2(r,z)r dr=\frac{\nu^2}{\nu^{2p}}\int_{R>0}\zeta_*^2(R)RdR=\frac{c_*}{\nu^{2p-2}}\\
\int_{r>0}\frac{h^2(r,z)}{r^3} dr=\frac{1}{\nu^{2p+2}}\int_{R>0}\frac{(\zeta_*)^2}{R^3}dR=\frac{c_*}{\nu^{2p+2}}
\end{array}\right.,
\ee 
and estimate
$$\left|\int_{r>0}\left[fh\right]rdr\right|\lesssim \left(\int_{r>0}f^2r^4 rdr\right)^{\frac12}\left(\int_{r>0}\frac{h^2rdr}{r^4}\right)^{\frac 12}\lesssim \frac{1}{\nu^{p+1}}\left(\int_{r>0}r^2f^2r^3dr\right)^{\frac12},$$ which yields
\bea
\label{vdjkvbndkndlknvdlnv}
|D_2(r,z)|&\lesssim& \frac{1}{\nu^p\la R\ra^{\gamma-2+|\l_-|}}\frac{\nu^{2p-2}}{\nu^{p+1}}\left(\int_{r>0}r^2f^2r^3dr\right)^{\frac12}\nonumber \\
&\lesssim& \frac{1}{\nu^3\la R\ra^{\gamma-2+|\l_-|}}\left(\int_{r>0}f^2r^2r^3dr\right)^{\frac12},
\eea
and thus
\bee
&&\int_{r,z}|D_2(r,z)|^{\frac{10}{3}}r^3drdz\lesssim \int_{r,z}\frac{r^3drdz}{\nu^{10}\la R\ra^{10|\l_-|}}\left(\int_{u>0}f^2(u,z)u^2u^3du\right)^{\frac{5}{3}}\\
&\lesssim & \|f\|^{\frac 43}_{\alpha}\int_{z\in \Bbb R}dz\left(\int_{u>0}u^2f^2(u,z)u^3du\right)\left(\int_{u>0}\frac{u^5du}{\la u\ra^{2\alpha}}\right)^{\frac 23}\left(\int_{r\le \nu}\frac{r^3dr}{\nu^{10}\la R\ra^{10}}\right)\\
& \lesssim & \|f\|_{\alpha}^{\frac{10}{3}}\int_{u>0}\frac{u^5du}{\la u\ra^{2\alpha}}\lesssim \|f\|_{\alpha}^{\frac{10}{3}},
\eee
 and similarly
\bee
&&\int_{r,z}|D_2(r,z)|^{2}r^2r^3drdz\lesssim \int_{r,z}\frac{r^2r^3drdz}{\nu^6\la R\ra^{2|\l_-|}}\int_{u>0}u^2f^2(u,z)u^3du\\
& \lesssim & \int_{u>0,z\in \Bbb R}f^2(u,z)u^5dudz\int_{R>0}\frac{R^5dR}{\la R\ra^{2|\l_-|}}\lesssim \|f\|_{\alpha}^2\int_{u>0}\frac{du}{\la u\ra^{2\alpha-5}}\lesssim \|f\|_{\alpha}^2.
\eee 
Hence re-scaling in $z$
$$\|r\tilde{D}_2\|_{L^2(\Bbb R^5)}+\|\tilde{D_2}\|_{L^{\frac {10}{3}}(\Bbb R^5)}\lesssim \frac{\|f\|_{\alpha}}{a^{\frac 12}}+\frac{\|f\|_{\alpha}}{a^{\frac3{10}}}\lesssim \frac{\|f\|_{\alpha}}{a^{\frac 12}}.$$
\noindent\underline{Bound for $D_1$}. We estimate using \eqref{estoatmgnoi}
$$|L_ah|\lesssim \frac{a^2}{\nu^{p+4}\la R\ra^{|\l_-|}},$$
 and hence
\be
\label{vnovnnvenevbvbbvk}
|D_1|\lesssim \frac{\zeta_*}{\nu^p}\frac{\int_{r'>0}|\Psi|\frac{(r')^3dr'}{\nu^{p+4}\la R'\ra^{|\l_-|}}}{\frac{1}{\nu^{2p-2}}}\lesssim \frac{1}{\nu^6\la R\ra^{|\l_-|}}\int_{r'>0}\frac{|\Psi|}{\la R'\ra^{|\l_-|}}(r')^3dr'.
\ee
This implies
\bee
|D_1|&\lesssim &  \frac{1}{\nu^6\la R\ra^{|\l_-|}}\left(\int_{r>0}\Psi^{\frac{10}{3}}r^3dr\right)^{\frac{3}{10}}\left(\int_{r>0}\frac{r^3}{\la R\ra^{|\l_-|}}dr\right)^{\frac{7}{10}} \\
&\lesssim & \frac{1}{\la R\ra^{|\l_-|}\nu^{\frac{16}5}}\left(\int_{r>0}\Psi^{\frac{10}{3}}r^3dr\right)^{\frac{3}{10}},
 \eee
 and hence 
 \bee
 \int_{Z>0,z\in \Bbb R}|\tilde{D_1}|^{\frac{10}{3}}r^3drdZ&\lesssim& \int_{Z\in \Bbb R,r'>0}\Psit^{\frac{10}{3}}(r',Z)(r')^3dr'dZ\left(\int_{r\le \nu}\frac{r^3dr}{\nu^{\frac{32}{3}}\la R\ra^{|\l_-|}}\right)\\
 &\lesssim& \|\Psit\|_{L^{\frac{10}{3}}(\Bbb R^5)}^{\frac{10}{3}},\eee
 thus \[\|\tilde{D_1}\|_{L^{\frac{10}{3}}(\Bbb R^5)}\lesssim \|\nabla \Psit\|_{L^2}\lesssim \frac{\|f\|_{\alpha}}{a^{\frac 12}}.\]
 Similarly
 \be
 \label{estiaofnrdiune}
 r|D_1|\lesssim \frac{\nu}{\nu^6\la R\ra^{|\l_-|}}\left(\int \frac{\Psi^2}{r^2}r^3dr\right)^{\frac 12}\left(\int_{r>0}\frac{r^2r^3dr}{\la R\ra^{|\l_-|}}\right)^{\frac 12}\lesssim \frac{1}{\nu^2\la R\ra^{|\l_-|}}\left(\int \frac{\Psi^2}{r^2}r^3dr\right)^{\frac 12},
 \ee 
 and hence
\bee
\|rD_1\|_{L^2(\Bbb R^5)}^2&\lesssim& \int_{z>0}\left(\int_{r'>0} \frac{\Psi^2(r',z)}{(r')^2}(r')^3dr\right)\int_{r>0}\frac{r^3dr}{\nu^4\la R\ra^{|\l_-|}}dz\lesssim \left\|\frac{\Psi}{r}\right\|^2_{L^2(\Bbb R^5)}\\
&\lesssim & \|\pa_r\Psi\|^2_{L^2(\Bbb R^5)}\lesssim \|f\|^2_{\alpha}.
\eee 
which after rescaling in $z$ concludes the proof of \eqref{fmeognneineo}.
 \end{proof}

 \subsection{From the $L^2$ bound to pointwise decay}
We prove pointwise decay in $z$ in the {\em non compact zone} $R\lesssim 1$ assuming weighted $L^2$ decay. 
\begin{lemma}[$L^2$ decay implies pointwise decay in $z$ for $R\lesssim 1$]
\label{leammapotinsno}
Under the assumptions of Proposition \ref{proppointizedfjifw}, let $0\le \theta \le 1,$ and $$J_F(x)=\int_{\Bbb R^5}\frac{F(x')}{a|X-X'|^3}dx'.$$ Then for all $R_0\ge 1$, there exists $C_{R_0}$ such that 
\bea
\label{estiinidoioeiog}
&&|J_F(x)|{\bf 1}_{R\le R_0,|z|\ge 1}\\
\nonumber &\leq & C_{R_0}\left[\frac{\|\la z\ra^{\frac{\theta}{2}} F\|_{L^2(\Bbb R^5)}}{a^{\frac 12}\la z\ra^{\frac{\theta}{2}}}+ \int_{|x_\perp-x'_\perp|\le 1, \frac{|z|}{2}\le |z|\le\frac{3|z|}{2}}\frac{|F(x')|}{a|X-X'|^3}dx'\right].
\eea
\end{lemma}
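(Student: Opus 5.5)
The plan is to split the integral defining $J_F$ into a near zone, which is kept as it stands as the second term of \eqref{estiinidoioeiog}, and a far zone, which is estimated by Cauchy--Schwarz against the weighted $L^2$ norm. Write $X=(x_\perp,Z)$ with $Z=z/a$, so that
$$|X-X'|^2=|x_\perp-x'_\perp|^2+\frac{(z-z')^2}{a^2}, \qquad dx'=a\,dX'.$$
Fix $x$ with $R\le R_0$ and $|z|\ge 1$. Define the near zone
$$\mathcal N_x=\Big\{x':\ |x_\perp-x'_\perp|\le 1,\ \tfrac{|z|}{2}\le |z'|\le \tfrac{3|z|}{2}\Big\},$$
which is the region of the second term in \eqref{estiinidoioeiog}. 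The contribution of $\mathcal N_x$ to $J_F$ is bounded directly by that term, so only the complement $\mathcal N_x^c$ has to be treated.

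On $\mathcal N_x^c$ I will apply Cauchy--Schwarz:
$$\int_{\mathcal N_x^c}\frac{|F(x')|}{a|X-X'|^3}dx'\le \|\langle z\rangle^{\frac\theta2}F\|_{L^2(\Bbb R^5)}\left(\int_{\mathcal N_x^c}\frac{\langle z'\rangle^{-\theta}}{a^2|X-X'|^6}dx'\right)^{\frac12}.$$
It then suffices to show that
$$I(x)=\int_{\mathcal N_x^c}\frac{\langle z'\rangle^{-\theta}}{a^2|X-X'|^6}\,dx'\lesssim \frac{1}{a\langle z\rangle^{\theta}} .$$
After the change of variables this reads $I=a^{-1}\int \langle aZ'\rangle^{-\theta}|X-X'|^{-6}dX'$. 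The key fact is that in $\Bbb R^5$ the kernel $|Y|^{-6}$ is integrable away from the origin, with
$$\int_{|Y|\ge\rho}|Y|^{-6}\,dY\sim \rho^{-1}.$$

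The complement $\mathcal N_x^c$ is covered by two regions.

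\emph{Region (a): $|z'-z|\ge |z|/2$.} This contains all $z'$ with $|z'|\notin[|z|/2,3|z|/2]$, including $z'$ of the opposite sign. Here I bound $\langle z'\rangle^{-\theta}\le 1$ and use $|X-X'|\ge |z|/(2a)$, which gives
$$I_{(a)}\lesssim a^{-1}\cdot\frac{a}{|z|}\lesssim \frac{1}{a\langle z\rangle^{\theta}},$$
since $|z|\ge1$, $\theta\le 1$ and $a\le1$.

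\emph{Region (b): $|z'-z|<|z|/2$ but $|x_\perp-x'_\perp|>1$.} Here $\langle z'\rangle\sim\langle z\rangle$, so the weight contributes $\langle z\rangle^{-\theta}$. Moreover $|X-X'|\ge 1$, so the kernel integral is bounded by a universal constant. This gives $I_{(b)}\lesssim a^{-1}\langle z\rangle^{-\theta}$.

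The two regions cover $\mathcal N_x^c$: if $|z'-z|<|z|/2$ then $|z'|\in(|z|/2,3|z|/2)$, so a point outside $\mathcal N_x$ must have $|x_\perp-x'_\perp|>1$. Combining the bounds on $I_{(a)}$ and $I_{(b)}$ yields \eqref{estiinidoioeiog}; the constant I expect is in fact uniform, so $C_{R_0}$ only needs to dominate it. The restriction $R\le R_0$ is then not really used, beyond keeping $x$ in a region where the near-zone term is later controllable.

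The step I expect to be most delicate is region (a). There the weight $\langle z'\rangle^{-\theta}$ gives no decay, for instance near $z'=0$, and the factor $\langle z\rangle^{-\theta}$ has to come entirely from the separation $|z|/(2a)$ in the rescaled variable. This is exactly where the restriction $\theta\le1$ is needed, together with the dimension count: the exponent $6$ exceeds $5$ by exactly one, giving one power of decay.
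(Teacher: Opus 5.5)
Your proof is correct. It is organized differently from the paper's, and it is simpler.

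\textbf{The paper's route.} The paper splits the convolution according to the size of $|X'|$ relative to $|X|$, using three zones:
\begin{itemize}
\item $|X'|\ge 2|X|$;
\item $|X'|\le 2|X|$ with $|X-X'|\ge |X|/4$;
\item $|X-X'|\le |X|/4$.
\end{itemize}
In the first two zones Cauchy--Schwarz gives $\|\langle z\rangle^{\theta/2}F\|_{L^2}/|aX|^{1/2}\le \|\langle z\rangle^{\theta/2}F\|_{L^2}/|z|^{1/2}$. In the third zone the paper must show that $|z'|\sim|z|$, which requires $|Z|\ge r$. This is the only place where $R\le R_0$ enters. If $|Z|\le r$, then $|z|\le ar\le aR_0C_0\langle z\rangle^{\eta}$, which is impossible for $|z|\ge 1$ and $a$ small. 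Only after this does the paper split $|x_\perp-x'_\perp|\gtrless 1$. The part with $|x_\perp-x'_\perp|\ge 1$ is essentially your region (b).

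\textbf{Your route.} You decompose directly according to the anisotropic near zone in the statement.
\begin{itemize}
\item In region (a), separation in $z'$ alone gives $|X-X'|\ge |z|/(2a)$. The tail bound $\int_{|Y|\ge\rho}|Y|^{-6}\,dY\lesssim\rho^{-1}$ in $\mathbb{R}^5$ then yields $I_{(a)}\lesssim |z|^{-1}$, which is better than needed.
\item In region (b), the comparability $\langle z'\rangle\sim\langle z\rangle$ is automatic.
\item On all of $\mathcal N_x^c$ you have $|X-X'|\gtrsim 1$, so the non-square-integrable singularity of the kernel is correctly confined to the near-zone term.
\end{itemize}

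\textbf{What each approach buys.} Your argument is shorter and gives a universal constant. It uses neither $R\le R_0$, nor the admissibility bound $\nu\le C_0\langle z\rangle^{\eta}$, nor smallness of $a$ beyond $a\le 1$. The paper's $|X|$-based zoning is the template it reuses in Lemma~\ref{neoieonvndklnvdv}, where decay in $|X|$ is genuinely needed. For the present statement, however, $|aX|=ar+|z|\sim|z|$ when $R\le R_0$, so that extra structure buys nothing here and only adds the geometric step that requires $R\le R_0$.
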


\begin{proof} We split the convolution in suitable zones. Observe that $|aX|\ge |z|\ge 1.$\\

\noindent\underline{$|X'|\ge 2|X|$}. Then we have
\bee
|J_F^{(1)}(x)|\lesssim \frac1{\sqrt{a}}\left(\int_{\Bbb R^5}\la z'\ra^\theta F^2dx'\right)^{\frac 12}\left(\int_{|X'|\ge 2|X|}\frac{dX'}{|X'|^6\la z'\ra^{\theta}}\right)^{\frac 12}\lesssim\frac{ \|\la z\ra^{\frac{\theta}{2}}F\|_{L^2(\Bbb R^5)}}{|aX|^{\frac 12}}.
\eee

\noindent\underline{$|X-X'|\le\frac{|X|}{4}$, $|X'|\le 2|X|$}. Then we have
\bee
|J_F^{(2)}(x)|&\lesssim& \frac1{\sqrt{a}|X|^3}\left(\int_{\Bbb R^5}\la z'\ra^\theta F^2dx'\right)^{\frac 12}\left(\int_{|X'|\le 2|X|}\frac{dX'}{\la z'\ra^{\theta}}\right)^{\frac 12}\lesssim\frac{ \|\la z\ra^{\frac{\theta}{2}} F\|_{L^2(\Bbb R^5)}}{|aX|^{\frac 12}}.
\eee

\noindent\underline{$|X-X'|\le\frac{|X|}{4}$}. If $|Z|\le r$, then $|z|\le ar\le aR_0\nu(z)\le aR_0C_\gamma\la z\ra^\eta$ implies $|z|\ll1$, and hence this case is absent. Hence $|Z|\ge r$ implies  $Z\le |X|=r+Z\le 2Z$ thus $$|Z-Z'|\le\frac{|X|}{4}\le \frac{Z}{2}\Rightarrow \frac{Z}{2}\le |Z'|\le \frac{3Z}{2}\Rightarrow \frac{z}{2}\le |z'|\leq \frac{3z}2.$$ 
If $|x_\perp-x'_\perp|\ge 1$, we estimate:
\bee
&& |J_F^{(3)}(x)|\\ &\lesssim &\frac{1}{a\la z\ra^\frac{\theta}{2}}\left(\int_{\Bbb R^5}\la z'\ra^\theta F^2dx'\right)^{\frac 12}\left(\int_{|x_\perp-x'_\perp|\ge 1}dx'_\perp\int_{Z'\in \Bbb R}\frac{adZ'}{(|x_\perp-x_\perp'|^2+(Z-Z')^2)^{3}}\right)^{\frac 12},\\
\eee
thus
\[ |J_F^{(3)}(x)|\lesssim  \frac{1}{\sqrt{a}\la z\ra^\frac{\theta}{2}}\left(\int_{|x_\perp-x'_\perp|\ge 1}\frac{dx'_\perp}{|x_\perp-x'_\perp|^5}\right)^{\frac 12}\lesssim \frac{1}{\sqrt{a}\la z\ra^\frac{\theta}{2}}.
\]
The collection of the above bounds yields \eqref{estiinidoioeiog}
\end{proof}
We now show decay in $R$ in higher dimension and in the far away zone $R\gtrsim R_*$. This higher dimensional decay will come later on by trading the $\frac{K}{r^2}$ decay of the potential $W$ with $K\gg 1$ on $\mathbb{R}^3$ to get a higher dimensional Laplacian.
\begin{lemma}[$L^2$ bound to improved decay]
\label{neoieonvndklnvdv}
Under the assumptions of Proposition \ref{proppointizedfjifw}, consider 
\be
\label{asutmoptno}
\left|\begin{array}{l}
D\in \Bbb N_*, \ \ \eta<\frac{1}{D}  \\
\mu=\frac{D-4}{2} , \ \ \alpha\leq \mu \\
0\le \theta\le 1\\
R_*>1
\end{array}\right.
\ee
and define $$ I_G(x)=\int_{x'\in \Bbb R^{D+1}}\frac{|G(r',z')|}{\la r' \ra^\mu}\frac{dx'}{a|X-X'|^{D-1}},
$$ 
then the following estimates hold.\\

\noindent\underline{Case compact support}: if ${\rm Supp }G\subset \{R\le R_*\}$, then there exist universal constants $K_\nu\gg 1$ and $C_*=C_*(\mu,\nu,\theta,R_*)$ such that 
\be
\label{fondnanteta}
\la r\ra ^\mu I_G(x){\bf 1}_{R\ge K_\nu R_*}\leq \frac{C_*\|\la z\ra^{\frac{\theta}{2}} G\|_{L^2(\Bbb R^5)}}{a^{\frac 12}}\left[\frac{1}{|X|^{\frac{D}2}}+\frac{1}{\la z\ra^{\frac{\theta}{2}}\la r\ra^{\frac 12}R^{\frac{D}{2}}}\right].
\ee
\noindent\underline{Well localized source}: more generally, for $r\ge 1$ we have
\be
\label{fondnantetabis}
\la r\ra ^\mu I_G(x)\leq C_*\|G\|_{\alpha}\left[\frac{\la r\ra^\mu}{|X|^{\alpha+\mu-2}}+\frac{1}{\la z\ra^{\frac 12}\la r\ra^{\alpha-\frac 32}}\right].
\ee
\end{lemma}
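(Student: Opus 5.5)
The approach mirrors Lemma \ref{leammapotinsno}: split the convolution $I_G$ into zones of $X'$ relative to $X$, and use weighted Cauchy--Schwarz against the $\langle z\rangle^{\theta}$-weighted $L^2$ norm of $G$, or against the pointwise bounds encoded in $\|G\|_\alpha$. Throughout we write $X=(x_\perp,Z)$ with $Z=z/a$, so $dx'=a\,dX'$ and the kernel $a^{-1}|X-X'|^{1-D}$ is the Newtonian kernel of $\mathbb R^{D+1}$ in rescaled variables. The factor $a^{-1/2}$ arises from $\|G\|_{L^2(dx')}=a^{1/2}\|\tilde G\|_{L^2(dX')}$. The weight $\langle r'\rangle^{-\mu}$ together with the $\mathbb R^{D+1}$ measure $r'^{D-1}dr'$ effectively reduces the $L^2$ computation to the $\mathbb R^5$ measure, since $r'^{D-1}\langle r'\rangle^{-2\mu}=r'^{D-1}\langle r'\rangle^{4-D}\sim r'^3$ for $r'\ge1$.

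\emph{Compact support case.} On $\mathrm{Supp}\,G$ we have $r'\le R_*\nu(z')\lesssim R_*\langle z'\rangle^\eta$, whereas at $x$ we have $r\ge K_\nu R_*\nu(z)$. Because $\eta<1/D$ and $\nu$ is comparable to $\langle z\rangle^\eta$ with constants $c_0,C_0$, choosing $K_\nu\gg1$ (depending on $c_0,C_0$) gives $|x_\perp-x'_\perp|\ge r/2$ whenever $|z'|\lesssim |z|$. In that region we bound $|X-X'|^{1-D}\le (r/2)^{1-D}$ and apply Cauchy--Schwarz in $x'$. The $x'$-measure of the support slab contributes $(R_*\nu(z'))^{D}$ per unit $z'$, and the $z'$ integral is absorbed by the $\langle z'\rangle^{-\theta}$ weight together with the restriction to the slab. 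This should produce the second term $\langle z\rangle^{-\theta/2}\langle r\rangle^{-1/2}R^{-D/2}$, after multiplying by $\langle r\rangle^\mu$ and using $\mu-(D-1)+\tfrac12\cdot(\cdots)$ bookkeeping. Points $x'$ with $|X'|\ge 2|X|$ or with $|z'|\gg|z|$ are handled exactly as in the zone $|X'|\ge2|X|$ of Lemma \ref{leammapotinsno}: the kernel is bounded by $|X'|^{1-D}$ and $\int_{|X'|\ge 2|X|}|X'|^{2-2D}r'^{D-1}\langle r'\rangle^{-2\mu}dX'\lesssim |X|^{-D}$, which yields the $|X|^{-D/2}$ term.

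\emph{Well localized source.} Here we use $|G(r',z')|\le \|G\|_\alpha\langle r'\rangle^{-\alpha}$ together with the $z$-weighted $L^2$ bound in $\|\cdot\|_\alpha$. The integrand is then dominated by $\|G\|_\alpha\langle r'\rangle^{-\alpha-\mu}|X-X'|^{1-D}$, which is integrable in $\mathbb R^{D+1}$ near infinity since $\alpha+\mu>2$. We split into three zones.
\begin{itemize}
\item[(a)] $|X-X'|\ge |X|/2$: a standard potential estimate gives $\int \langle r'\rangle^{-\alpha-\mu}\min(|X|,|X'|)^{\ldots}$, which decays like $|X|^{2-\alpha-\mu}$. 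After multiplying by $\langle r\rangle^\mu$ this is the first term of \eqref{fondnantetabis}. Convergence of the $r'$ integral near the origin uses $D-1-\alpha-\mu>-1$, where the constraint $\alpha\le\mu$ enters.
\item[(b)] $|X-X'|\le |X|/2$ with $r'\sim r$: here $\langle r'\rangle^{-\alpha-\mu}\sim\langle r\rangle^{-\alpha-\mu}$, and the local integral of $|X-X'|^{1-D}$ is taken only over $|x_\perp-x'_\perp|\lesssim r$ while $Z'$ ranges freely. This region must be handled by Cauchy--Schwarz in $z'$ using the $\langle z\rangle^{3-\delta_*}$-weighted $L^2_z$ part of $\|G\|_\alpha$. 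The resulting gain of $\langle z\rangle^{-1/2}$ is accompanied by the loss $\langle r\rangle^{3/2}$, giving $\langle z\rangle^{-1/2}\langle r\rangle^{-(\alpha-3/2)}$.
\item[(c)] Nearby singular region: the kernel singularity $|X-X'|^{1-D}$ is integrable in $\mathbb R^{D+1}$, and the $L^\infty$ part of the norm controls it.
\end{itemize}

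The main obstacle is zone (b) in the second case. Since $a\ll1$ stretches $Z$, the convolution in $Z'$ is long, and the decay in $z$ must be extracted purely from the weighted $L^2_z$ norm. Tracking the powers of $a$ correctly and checking that no loss in $a$ remains (the $a^{-1}$ in the kernel against $dZ'=dz'/a$) is the delicate bookkeeping. The same mechanism sets the exponent $\tfrac12$ in $\langle r\rangle^{-1/2}$ in \eqref{fondnanteta}.
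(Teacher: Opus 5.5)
Your architecture (zone splitting plus weighted Cauchy--Schwarz) is the paper's, but the decisive estimates fail as you set them up.

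\textbf{Compact-support case.} Bounding $|X-X'|^{1-D}\le (r/2)^{1-D}$ on all of $\{|z'|\lesssim |z|\}$ leaves, after Cauchy--Schwarz, the factor $\big(\int_{|z'|\lesssim |z|}\langle z'\rangle^{-\theta}(R_*\nu(z'))^{D}\,dz'\big)^{1/2}$. This is not absorbed, since $\theta\le 1$. With the prefactor $a^{-1}$ you get $\langle r\rangle^{\mu}I_G\lesssim a^{-1}\langle z\rangle^{\frac{1-\theta}{2}}r^{-1}R^{-\frac D2}\|\langle z\rangle^{\frac\theta2}G\|_{L^2}$ for $|z|\gtrsim1$. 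That exceeds the second term of \eqref{fondnanteta} by a factor of order $(|Z|/r)^{1/2}$, up to a logarithm if $\theta=1$. This is unbounded, e.g.\ for $|z|\approx 1$ and $a\to 0$, where the first term is negligible.

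The missing idea concerns the zone $|z'|\approx|z|$, which is the paper's zone $|X-X'|\le |X|/4$, $|Z|\ge r$; there $|x_\perp-x'_\perp|\le r/2$ is ruled out by the choice of $K_\nu$. In that zone you must keep the $Z'$-decay of the kernel inside Cauchy--Schwarz: $\int_{\mathbb R}\frac{a\,dZ'}{(\rho^2+(Z-Z')^2)^{D-1}}\lesssim a\rho^{3-2D}$ with $\rho=|x_\perp-x'_\perp|\ge r/2$. The effective $z'$-length is then $a\rho$ rather than $|z|$. One obtains $a^{-1}a^{\frac12}\langle z\rangle^{-\frac\theta2}\nu^{\frac D2}r^{\frac32-D}$, which is exactly the second term after multiplying by $\langle r\rangle^\mu$. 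This integration, not the rescaling of $\|G\|_{L^2}$ by itself, is where $a^{-1/2}$ comes from.

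Where $|X-X'|\gtrsim|X|$, your integral $\int_{|X'|\ge 2|X|}|X'|^{2-2D}r'^{D-1}\langle r'\rangle^{-2\mu}dX'$ has two problems. First, it uses the wrong weight: the dual Cauchy--Schwarz weight is $r'^{2D-5}\langle r'\rangle^{-2\mu}\approx r'^{D-1}$ for $r'\ge1$, not $r'^{3}$ as in your reduction to the $\mathbb R^5$ measure. Second, it ignores the support. Without the support restriction the integral is only $O(|X|^{3-D})$, which after the factor $\langle r\rangle^\mu$ gives $|X|^{-1/2}$, not $|X|^{-D/2}$. The paper uses the thin support $r'\le R_*\nu(z')\lesssim\langle z'\rangle^{\eta}$, so that $|z'|\gtrsim a|X|$ when $|X'|\ge 2|X|$. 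This yields $a^{-1/2}|X|^{-(D-\frac32-\frac D2\eta)}$. It is here, not in the choice of $K_\nu$, that $\eta<1/D$ is needed, to get $\langle r\rangle^{\mu}|X|^{-(D-\frac32-\frac D2\eta)}\le|X|^{-\frac D2}$.

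\textbf{Well-localized case.} Your zones (a)--(c) do not cover $\{|X-X'|\le |X|/2,\ r'\not\approx r\}$. This set is large when $|Z|\gg r$, since $r'$ can then be anywhere in $[0,|Z|/2]$. Moreover (c) cannot be done with the $L^\infty$ part of $\|G\|_\alpha$. On the unit ball $|X-X'|\le1$ alone, that bound only gives $\langle r\rangle^\mu I_G\lesssim\|G\|_\alpha\langle r\rangle^{-\alpha}$, which does not decay as $|z|\to\infty$ at fixed $r$, while the right side of \eqref{fondnantetabis} does.

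The paper splits $|X-X'|\le|X|/4$ into two cases:
\begin{enumerate}
\item $|Z|\le r$: then $r'\in[r/2,3r/2]$ is forced, and the $L^\infty$ bound gives $r^{2-\alpha-\mu}\approx|X|^{2-\alpha-\mu}$.
\item $|Z|\ge r$: then $|z'|\in[|z|/2,3|z|/2]$ is forced, and Cauchy--Schwarz in $z'$ is applied at every $x'_\perp$, singular part included. The resulting kernel $|x_\perp-x'_\perp|^{\frac32-D}$ is still integrable on $\mathbb R^D$. The $x'_\perp$-integral is then split into $|x'_\perp|\ge 2r$, $\{|x'_\perp|\le 2r,\ |x_\perp-x'_\perp|\ge r/2\}$ and $|x_\perp-x'_\perp|\le r/2$, giving $r^{\frac32-\alpha-\mu}$.
\end{enumerate}

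\textbf{The $a$-bookkeeping.} This cannot come out loss-free. The Cauchy--Schwarz in $z'$ costs $a^{-1}\cdot a^{1/2}$. The paper's own intermediate bound in this zone is $a^{-1/2}\langle z\rangle^{-1/2}\|G\|_\alpha\int\frac{dx'_\perp}{\langle r'\rangle^{\alpha+\mu}|x_\perp-x'_\perp|^{D-3/2}}$, and the $a^{-1/2}$ is dropped afterwards. It cannot be removed. Take $G=\langle r'\rangle^{-\alpha}\mathbf 1_{|z'-z_0|\le\ell}$ with $\ell=\langle z_0\rangle^{-3}$ and $a\le\ell$. Then $\|G\|_\alpha\lesssim1$, yet $I_G\gtrsim1$ at $(r,z)=(1,z_0)$, whereas the right side of \eqref{fondnantetabis} is $O(\langle z_0\rangle^{-1/2})$. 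So the second term of \eqref{fondnantetabis} should carry $a^{-1/2}$. This is harmless, because every later use of \eqref{potinwonioen} already carries an overall $a^{-1/2}$.
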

\begin{proof}[Proof of Lemma \ref{neoieonvndklnvdv}]  We will systematically use the weighted Cauchy Schwarz:
\bee
&&\int_{r'>0}|G(r')F(r')|(r')^{D-1}dr'\\
&\lesssim & \left(\int G^2\frac{(r')^{D-1}}{(r')^{D-4} }dr'\right)^{\frac 12}\left(\int_{r'>0}F^2(r')^{D-4}(r')^{D-1}dr'\right)^{\frac 12}\\
&\lesssim & \|G\|_{L^2(r^3dr)}\left(\int_{r'>0}(r')^{D-4}F^2(r')^{D-1}dr'\right)^{\frac 12}.
\eee

\noindent{\bf step 1} Case ${\rm Supp G}\subset\{R\le R_*\}$.
We split cases. We recall that $R\gtrsim R_*$ implies $|X|\ge r\ge c_0 R_*\gtrsim 1$.
  \noindent\underline {$|X'|\ge 2|X|$}. In this region we have $$r'+|Z'|\ge 2(r+|Z|)\Rightarrow ar'+|z'|\ge 2|aX|\ge 2ac_0R^*,$$ and $R'\le R_*\Rightarrow r'\le R_* \nu(z')\le C_0 R_* \la z'\ra^{\eta}$ implies $$\left|\begin{array}{l}
  aC_0 R_* \la z'\ra^\eta+|z'|\ge 2 |aX|\Rightarrow |z'|\gtrsim  |aX|\\
  |aX-aX'|\gtrsim |aX'|\gtrsim \la z'\ra
  \end{array}\right..
  $$
  We therefore estimate the corresponding contribution
  \bee
  |I^{(1)}_G(x)|&\lesssim & \int_{|X'|\ge 2|X|}\frac{|G|{\bf 1}_{R'\le R_*}}{a|X-X'|^{D-1}}\frac{dx'}{\la r'\ra^\mu}\\
 &= &\frac{a^{D-1}}{a}\int_{|X'|\ge 2|X|}\frac{G{\bf 1}_{R'\le R_*}}{\la r'\ra^\mu|aX-aX'|^{D-1}}dx'\\
  & \lesssim &   a^{D-2}\left(\int_{\Bbb R^5}G^2dx\right)^{\frac 12}\left(\int_{|z'|\gtrsim |aX|}\left[\int_{r'\lesssim \la z'\ra^{\eta}}\frac{(r')^{D-1}(r')^{D-4}}{\la r'\ra^{2\mu}}dr'\right]\frac{dz'}{|z'|^{2(D-1)}}\right)^{\frac 12}\\
  &\lesssim & \frac{a^{D-2}\|G\|_{L^2(\Bbb R^5)}}{(a|X|)^{D-\frac 32-\frac{D}{2}\eta}}\lesssim \frac{\|G\|_{L^2(\Bbb R^5)}}{a^{\frac 12}|X|^{D-\frac 32-\frac{D}{2}\eta}},
 \eee
   \noindent\underline {$|X'|\le 2|X|$, $|X-X'|\ge \frac{|X|}{4}$}. In this region we estimate
   \bee
 |I^{(2)}_G(x)|&\lesssim & \frac{1}{a|X|^{D-1}}\int_{|X'|\le 2|X|}|G|{\bf 1}_{R\le R_*}\frac{dx}{\la r'\ra^\mu}\\
 & \lesssim & \frac{1}{a|X|^{D-1}}\left(\int_{\Bbb R^5}G^2dx\right)^{\frac 12}\left(\int_{|X'|\le 2|X|,r'\le R_*\nu'}\frac{a(r')^{D-4}dX'}{\la r'\ra^{2\mu}}\right)^{\frac 12}\\
 & \lesssim  & \frac{\|G\|_{L^2(\Bbb R^5)}}{a^{\frac 12}|X|^{D-1}}\left(\int_{|Z'|\le 2|X|}dZ'\int_{r'\le R_*\nu'}\frac{(r')^{D-1}(r')^{D-4}}{(r')^{2\mu}}dr'\right)^{\frac 12}\\
 &\lesssim & \frac{\|G\|_{L^2(\Bbb R^5)}}{a^{\frac 12}|X|^{D-\frac 32-\frac{D}{2}\eta}}.
 \eee
\noindent\underline {$|X'-X|\le \frac{|X|}{4}$}.  We distinguish cases.\\
\noindent{\em The Case of $Z\le r$}. Then $r\le |X|=r+|Z|\le 2r$  and hence $r\gtrsim |X|\gtrsim 1$ and $$
\left|\begin{array}{l}
|x_\perp-x_\perp'|\le\frac{|X|}{4}\le \frac{r}{2}\\
|Z'|\le |Z|+\frac{|X|}{2}\leq 4 r
\end{array}\right.
\Rightarrow \frac{r}{2}\le |x'_\perp|\le \frac{3r}{2},
$$ 
thus $$Z'=\frac{z'}{a}\lesssim r'\leq R_*\nu'\leq R_* C_0\la z'\ra^\eta\Rightarrow |z'|+r'+r+|z|\leq 1,$$ for $|a|<a_*(C_0,R_*)$ small enough, which implies $$R=\frac{r}{\nu}\le C_0\le K_\nu R_*,$$ for $K_\nu$ chosen large enough, and hence this case is absent.\\

\noindent{\em The case of $Z\ge r$}.
 Then $Z\le |X|=r+Z\le 2Z$ and hence $$|Z-Z'|\le\frac{|X|}{4}\le \frac{Z}{2}\Rightarrow \frac{Z}{2}\le |Z'|\le \frac{3Z}{2}\Rightarrow \frac{z}{2}\le |z'|\leq \frac{3z}{2}\Rightarrow c\nu'\le \nu\le C\nu',$$ where we used $$\frac{|\pa_z\nu|}{\nu}\leq \frac{C_1}{|z|},$$
and thus
\bee
& & |I^{(3)}_G(x)|\\
&\lesssim & \frac{\left(\int_{\Bbb R^5}\la z\ra^{\theta}G_2^2dx\right)^{\frac 12}}{a\la z\ra^{\frac{\theta}{2}}}\left(\int_{x'_\perp} \frac{(r')^{D-4}dx'_\perp}{\la r'\ra^{2\mu}}\int_{Z'\in \Bbb R}\frac{adZ'}{\left(|x_\perp'-x_\perp|^2+(Z-Z')^2\right)^{D-1}}\right)^{\frac 12}\\
&\lesssim &  \frac{\|\la z\ra^{\frac{\theta}{2}} G\|_{L^2(\Bbb R^5)}}{a^{\frac 12}\la z\ra^{\frac{\theta}{2}}}\left(\int_{r'\lesssim \nu}\frac{(r')^{D-4}dx'_\perp}{\la r'\ra^{2\mu}|x_\perp'-x_\perp|^{2D-3}}\right)^{\frac 12}.
\eee
We now observe that $$|x_\perp-x_\perp'|\le \frac{r}{2}\Rightarrow  \frac{r}{2}\le r'\le \frac{3r}{2}\Rightarrow R=\frac{r}{\nu}\leq \frac{2r'}{\nu}\leq\frac{2r'}{c'\nu'}<C_\gamma R_*$$  hence contradicting $R\ge K_\nu R_*,$ provided $K_\nu$ has been chosen large enough. We therefore estimate using $2\mu=D-4$
\bee
|I^{(3)}_G(x)|&\lesssim & \frac{\|\la z\ra^{\frac{\theta}{2}} G\|_{L^2(\Bbb R^5)}}{a^{\frac 12}\la z\ra^{\frac{\theta}{2}}}\left(\frac1{r^{2D-3}}\int_{r'\lesssim \nu}(r')^{D-1}dr'\right)^{\frac 12}\\
&\lesssim & \frac{\|\la z\ra^{\frac{\theta}{2}} G\|_{L^2(\Bbb R^5)}}{a^{\frac 12}\la z\ra^{\frac{\theta}{2}}}\left(\frac{\nu^{D}}{\la r\ra^{2D-3}}\right)^{\frac 12}\lesssim\frac{\|\la z\ra^{\frac{\theta}{2}} G\|_{L^2(\Bbb R^5)}}{a^{\frac 12}\la z\ra^{\frac{\theta}{2}}\la r\ra^\mu}\left(\frac{\nu^{D}\la r\ra^{D-4}}{\la r\ra^{2D-3}}\right)^{\frac 12}\\
&\lesssim & \frac{\|\la z\ra^{\frac{\theta}{2}} G\|_{L^2(\Bbb R^5)}}{a^{\frac 12}\la z\ra^{\frac{\theta}{2}}\la r\ra^\mu}\frac{1}{\la r\ra^{\frac 12}R^{\frac{D}{2}}}.
\eee
The collection of above bounds yields using $|X|=r+\frac{|z|}{a}\ge \la r\ra$:
\bee
&&\la r\ra^\mu\frac{|I_G(x)|}{\|\la z\ra^{\frac{\theta}{2}} G\|_{L^2(\Bbb R^5)}}\lesssim \frac{\la r\ra^\mu}{a^{\frac 12}}\left[\frac{1}{|X|^{D-\frac 32-\frac{D}{2}\eta}}+\frac{1}{{\la z\ra^{\frac{\theta}{2}}}\la r\ra^{\mu+\frac 12}R^{\frac{D}{2}}}\right]\\
&\lesssim& \frac{1}{a^{\frac 12}}\left[\frac{1}{|X|^{D-\frac 32-\frac{D}{2}\eta-\frac{D-4}{2}}}+\frac{1}{{\la z\ra^{\frac{\theta}{2}}}\la r\ra^{\frac 12}R^{\frac{D}{2}}}\right] \lesssim  \frac{1}{a^{\frac 12}}\left[\frac{1}{|X|^{\frac{D}2}}+\frac{1}{\la z\ra^{\frac{\theta}{2}}\la r\ra^{\frac 12}R^{\frac{D}{2}}}\right],
\eee
for $\eta<\frac{1}{D}$, and \eqref{fondnanteta} is proved.\\

\noindent{\bf step 2} Estimate for a well localized source.\\

\noindent\underline{The case $|X'|\ge 2|X|$.} If $r'\ge |Z'|$, then $2r'\ge r'+|Z'|=|X'|\ge r'$ and hence  $|X-X'|\gtrsim |X'|\sim r'$ so that we have
\bee
&&|I^{(1)}_G(x)|\lesssim\|G\|_{\alpha} \int_{|X'|\ge 2|X|}\frac{r'^{D-1}dr'dz'}{a|X-X'|^{D-1}\la r'\ra^{\mu+\alpha}}\\
&\lesssim &\|G\|_{\alpha}\int_{r'\gtrsim |X|} \left(\int_{|Z'|\le r'}dZ'\right)\frac{dr'}{(r')^{\mu+\alpha}} \lesssim  \|G\|_{\alpha} \int_{r'\gtrsim |X|}\frac{dr'}{(r')^{\mu+\alpha-1}}\lesssim \frac{\|G\|_{\alpha}}{|X|^{\mu+\alpha-2}}.
\eee
If $|Z'|\ge r'$, then $2|Z'|\ge |X'|\ge |Z'|$ and hence  $|X-X'|\gtrsim |X'|\sim |Z'|$ and since $\alpha\leq \mu$ (ensuring the integral in $r'$ diverges) we estimate
\bee
|I^{(1)}_G(x)|&\lesssim&\|G\|_{\alpha} \int_{|Z'|\gtrsim |X|}\int_{r'\le |Z'|}\frac{r'^{D-1}dr'dz'}{a|Z'|^{D-1}\la r'\ra^{\mu+\alpha}}\\
& \lesssim & \|G\|_{\alpha} \int_{|Z'|\gtrsim |X|}\frac{dZ'}{|Z'|^{\mu+\alpha-1}}\lesssim  \frac{\|G\|_{\alpha}}{|X|^{\mu+\alpha-2}}.
\eee
\noindent\underline{The case $|X'|\le 2|X|$, $|X-X'|\ge \frac{|X|}{4}$}. If $r'\ge |Z'|$, then since $r'\le r'+|Z'|=|X'|\le 2|X|$ we have
\bee
|I^{(2)}_G(x)|&\lesssim&\frac{\|G\|_{\alpha}}{|X|^{D-1}} \int_{r'\le 2|X|}\int_{|Z'|\le r'}\frac{r'^{D-1}dr'dz'}{a\la r'\ra^{\mu+\alpha}}\\
& \lesssim & \frac{\|G\|_{\alpha}}{|X|^{D-1}} \int_{r'\le 2|X|}(r')^{D-\mu-\alpha}dr'\lesssim \frac{\|G\|_{\alpha}}{|X|^{\mu+\alpha-2}}.
\eee
If $r'\le |Z'|$, then since $|Z'|\le r'+|Z'|=|X'|\le 2|X|$ we get
\bee
|I^{(2)}_G(x)|&\lesssim&\frac{\|G\|_{\alpha}}{|X|^{D-1}} \int_{|Z'|\le 2|X|}\int_{r'\le |Z'|}\frac{r'^{D-1}dr'dz'}{a\la r'\ra^{\mu+\alpha}}\\
& \lesssim & \frac{\|G\|_{\alpha}}{|X|^{D-1}} \int_{|Z'|\le 2|X|}(r')^{D-\mu-\alpha}dZ'\lesssim \frac{\|G\|_{\alpha}}{|X|^{\mu+\alpha-2}}.
\eee
\noindent\underline{The case $|X'|\le 2|X|$, $|X-X'|\le \frac{|X|}{4}$}.  We distinguish two cases.
\noindent{\em The case $|Z|\le r$}. In this region we have $r\le |X|=r+|Z|\le 2r$ and hence $r\ge\frac{|X|}{2}\gtrsim 1$  thus $$
\left|\begin{array}{l}
|x_\perp-x_\perp'|\le\frac{|X|}{4}\le \frac{r}{2}\\
|Z'|\le |Z|+\frac{|X|}{2}\leq 4 r
\end{array}\right.
\Rightarrow \frac{r}{2}\le |x'_\perp|\le \frac{3r}{2}.
$$ 
We then estimate
\bee
|I^{(3)}_G(x)|&\lesssim&\|G\|_{\alpha} \int_{|X-X'|\le \frac{|X|}{4}}\frac{dx_\perp'dz'}{a|X-X'|^{D-1}\la r'\ra^{\mu+\alpha}}\\
&\lesssim&\|G\|_{\alpha}\int_{|x_\perp-x_\perp'|\le \frac{r}{2}}\frac{dx_\perp'}{\la r'\ra^{\mu+\alpha}}\int_{Z'\in \Bbb R}\frac{dZ'}{\left(|x_\perp-x'_\perp|^2+(Z-Z')^2\right)^{\frac{D-1}{2}}}\\
& \lesssim & \|G\|_{\alpha}\int_{|x_\perp-x_\perp'|\le \frac{r}{2}}\frac{dx_\perp'}{\la r'\ra^{\mu+\alpha}|x_\perp'-x_\perp|^{D-2}}\\
& \lesssim &  \frac{\|G\|_{\alpha}}{r^{\alpha+\mu}}\int_{\frac{r}{2}\le r'\le 2r}\frac{(r')^{D-1}dr'}{(r')^{D-2}}\lesssim \frac{\|G\|_{\alpha}}{r^{\alpha+\mu-2}}\lesssim \frac{\|G\|_{\alpha}}{|X|^{\alpha+\mu-2}}.
\eee
\noindent{\em The case $|Z|\ge r$}. In this region we have $|Z|\le |X|=r+|Z|\le 2|Z|$ and hence $$|Z-Z'|\le\frac{|X|}{4}\le \frac{|Z|}{2}\Rightarrow \frac{|Z|}{2}\le |Z'|\le \frac{3|Z|}{2}\Rightarrow \frac{|z|}{2}\le |z'|\le \frac{3|z|}{2},$$
and thus we have the contribution
\bee
|I^{(4)}_G(x)|
&\lesssim \frac 1{a\la z\ra^{\frac 12}}\int_{x_\perp'\in \Bbb R^D}\frac{dx_\perp'}{\la r'\ra^\mu}\left(\int_{z'\in \Bbb R}\la z'\ra G^2(r',z')dz'\right)^{\frac 12}\\
&\times\left(\int_{z'\in \Bbb R}\frac{dz'}{(|x_\perp-x_\perp'|^2+(Z-Z')^2)^{D-1}}\right)^{\frac 12},
\eee
which gives
\[|I^{(4)}_G(x)|\lesssim  \frac{\|G\|_{\alpha}}{a^{\frac 12}\la z\ra^{\frac 12}} \int_{x_\perp'\in \Bbb R^D}\frac{dx_\perp'}{\la r'\ra^{\alpha+\mu}|x_\perp-x_\perp'|^{D-\frac 32}}.
\]
Next we evaluate the various contributions:
\bee
\int_{|x_\perp'|\ge 2r}\frac{dx'_\perp}{\la r'\ra^{\mu+\alpha}|x_\perp-x_\perp'|^{D-\frac 32}}\lesssim \int_{r'\ge 2r}\frac{(r')^{D-1}dr'}{(r')^{\mu+\alpha+D-\frac 32}}\lesssim \frac{1}{r^{\mu+\alpha-\frac 32}},
\eee
\bee
\int_{|x_\perp-x'_\perp|\ge\frac{r}{2}, |x'_\perp|\le 2r}\frac{dx'_\perp}{\la r'\ra^{\mu+\alpha}|x_\perp-x_\perp'|^{D-\frac 32}}\lesssim \frac{1}{r^{D-\frac 32}}\int_{r'\le 2r}\frac{(r')^{D-1}}{(r')^{\mu+\alpha}}dr'\lesssim \frac{1}{r^{\mu+\alpha-\frac32}},
\eee
and 
\bee
\int_{|x_\perp-x'_\perp|\le\frac{r}{2}}\frac{dx'_\perp}{\la r'\ra^{\mu+\alpha}|x_\perp-x_\perp'|^{D-\frac 32}}\lesssim \frac{1}{\la r\ra^{\mu+\alpha}}\int_{u\le \frac{r}{2}}\frac{u^{D-1}du}{u^{D-\frac 32}}\lesssim \frac{1}{r^{\mu+\alpha-\frac 32}},
\eee
and hence the bound
$$|I^{(4)}_G(x)|\lesssim \frac{\|G\|_{\alpha}}{\la z\ra^{\frac 12}\la r\ra^{\mu+\alpha-\frac 32}}.$$

\noindent\underline{Conclusion}. The collection of above bounds yields 
$$ |I_G|\lesssim \|G\|_{\alpha}\left[\frac{1}{|X|^{\alpha+\mu-2}}+\frac{1}{\la z\ra^{\frac 12}\la r\ra^{\mu+\alpha-\frac 32}}\right],
$$
and \eqref{fondnantetabis} follows.
 \end{proof}

 \subsection{Pointwise bound}
 We can now turn to the derivation of a pointwise bound for $\Psi$.

\begin{lemma}[Pointwise bound]
\label{fristinog}
Under the assumptions of Lemma \ref{neoieonvndklnvdv}, consider $0\le \theta\le 1$, and let  $K_\nu=K_{c_0,C_0}$ denote some large enough universal constant,
 then there exists $m^*\in \mathbb{N}$, such that for $\gamma=2+\frac 1m$, $m\in \Bbb N, m\geq m^*$ there exists a universal constant $\alpha(\gamma)$ such that for $1\ll \alpha\leq \alpha(\gamma)$  and $0<\eta<\eta^*(\gamma)$ sufficiently small such that the following holds.\\
 
 \noindent\underline{For $R\le K_\nu R_*$ and $|z|\ge 1$}, we have\\
 \be
 \label{cneiovnenoienevn}
 |\Psi(x)|\lesssim \frac{\|\la z\ra^\frac{\theta}{2} \pa_r\Psi\|_{L^2(\Bbb R^5)}+\|f\|_{\alpha}}{a^{\frac 12}\la z\ra^{\frac{\theta}{2}}},
 \ee
 
 \noindent\underline{and for $R\ge K_\nu R_*$},
\bea
\label{potinwonioen}
\nonumber |\Psi(x)|&\lesssim&  \frac{\|\la z\ra^\theta \pa_r\Psi\|_{L^2(\Bbb R^5)}+\|f\|_{\alpha}}{a^{\frac 12}}\left[\frac{1}{|X|^{\frac{D}2}}+\frac{1}{\la z\ra^{\frac{\theta}{2}}\la r\ra^{\frac 12}R^{\frac{D}{2}}}\right]\\
 &+&\|f\|_{\alpha}\left[\frac{\la r\ra^\mu}{|X|^{\alpha+\mu-2}}+\frac{1}{\la z\ra^{\frac 12}\la r\ra^{\alpha-\frac 32}}\right].
\eea
\end{lemma}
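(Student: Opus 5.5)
We write the equation \eqref{vneoneonenvoi} in the rescaled variables $Z=z/a$ as a Poisson equation and invert it by the Newton kernel, splitting the source according to the two regimes in $R$.

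In the region $R\le K_\nu R_*$ we view $\Psit$ as a function on $\Bbb R^5$ and write $-\Delta_{\Bbb R^5}\Psit=\tilde F$ with $\tilde F=\tilde f+\widetilde{D_fh}-\tilde W\Psit$. We represent $\Psit=c\int \tilde F(X')|X-X'|^{-3}dX'$. In the original variables $dX'=dx'/a$, so this is exactly $J_F$ of Lemma \ref{leammapotinsno}. We then apply \eqref{estiinidoioeiog} with $R_0=K_\nu R_*$.

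The global weighted $L^2$ term is handled term by term.
\begin{itemize}
\item For the source $f$, the $z$-weighted part of $\|f\|_\alpha$ gives $\|\la z\ra^{\theta/2}f\|_{L^2}\lesssim\|f\|_\alpha$ once $\alpha$ is large.
\item For the potential term, $|W|\lesssim \la r\ra^{-2}$ and the weighted Hardy inequality in $\Bbb R^4_\perp$ give $\|\la z\ra^{\theta/2}W\Psi\|_{L^2}\lesssim \|\la z\ra^{\theta/2}\pa_r\Psi\|_{L^2}$.
\item For the projection term, we bound $D_fh$ as in \eqref{vdjkvbndkndlknvdlnv} and \eqref{estiaofnrdiune}, inserting the weight $\la z\ra^{\theta}$ pointwise in $z$, since $D_f$ is defined slice by slice in $z$.
\end{itemize}
The local near-diagonal piece of \eqref{estiinidoioeiog} is controlled by the $L^\infty$ parts of the sources:
\begin{itemize}
\item $|f|\le \|f\|_\alpha$.
\item $|W\Psi|\lesssim \|\Psi\|_{L^\infty}$, bounded by Lemma \ref{lemmalinftybound}, combined with a bootstrap on the sup over $|z'|\sim|z|$.
\item $|D_fh|$ is bounded pointwise by the slice norms.
\end{itemize}
Integrating over $|x_\perp-x'_\perp|\le1$ against the kernel, the integral in $Z'$ converges and leaves a factor $a^{-1}\cdot a$, which yields the bound with decay $\la z\ra^{-\theta/2}$. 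This gives \eqref{cneiovnenoienevn}.

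For $R\ge K_\nu R_*$ the main point is the conjugation. By \eqref{funoenioenone}, for $m\ge m^*$ the potential satisfies $W\simeq K/r^2$ with $K\gg1$ in the far region $R\gg1$. We choose $D$ with $(D-4)^2/4\le K$, i.e.\ $\mu=\frac{D-4}{2}$, so that $r^{-\mu}$ solves the radial part up to a positive remainder. We write $\Psi=\la r\ra^{-\mu}\Phi$ and $-\pa_r^2-\frac3r\pa_r+\frac{K}{r^2}=r^{-\mu}\bigl(-\pa_r^2-\frac{D-1}{r}\pa_r+\frac{K-\mu^2-2\mu}{r^2}\bigr)r^{\mu}$; choosing $\mu$ appropriately, the last coefficient is nonnegative. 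Then, on $\Bbb R^{D+1}$,
\[
-\Delta_{\Bbb R^{D+1}}\Phi\le \la r\ra^{\mu}\Bigl(|f|+|D_fh|+|W-\tfrac{K}{r^2}|\,|\Psi|\Bigr)+\text{cutoff terms}.
\]
To make this rigorous we localize with a cutoff $\chi(R/K_\nu R_*)$. The commutator terms and the error $W-K/r^2$ are supported in, or decay fast from, $R\le K_\nu R_*$. We use the maximum principle comparison with the Newton kernel of $\Bbb R^{D+1}$, which in the original variables is $a^{-1}|X-X'|^{-(D-1)}$, and this produces $\Psi\lesssim \la r\ra^{-\mu}I_G$.

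The compactly supported part of the source, namely the commutators, $W\Psi$ near the core, and $D_fh$, which decays like $\la R\ra^{-|\l_-|}$ and so is effectively localized, goes into \eqref{fondnanteta}. Its $L^2$ norm is controlled by $\|\la z\ra^{\theta}\pa_r\Psi\|_{L^2}+\|f\|_\alpha$ via Hardy as above. The source $f$ goes into \eqref{fondnantetabis}, with $\alpha\le\mu$ enforced by $\alpha\le\alpha(\gamma)$. Summing the two contributions gives \eqref{potinwonioen}; the condition $\eta<1/D$ is where $\eta^*(\gamma)$ enters.

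The expected main obstacle is justifying the comparison and maximum principle for the conjugated operator, which needs a sign on the effective potential and control of the boundary behaviour at infinity. We would handle the latter with Lemma \ref{lemmalinftybound} together with $\Psi\in\mathcal C_0$, and the former using the lower bound $W_*\ge (K-o(1))R^{-2}$ for $R\ge K_\nu R_*$, with $K_\nu$ large.
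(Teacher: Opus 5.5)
The proof of \eqref{cneiovnenoienevn} has a genuine gap in the near-diagonal piece of \eqref{estiinidoioeiog}. Bounding the source there by $L^\infty$ quantities cannot give any decay in $z$. The $L^\infty$ part of $\|f\|_\alpha$ is $\|\la r\ra^\alpha f\|_{L^\infty}$, which carries no $z$-weight, and Lemma \ref{lemmalinftybound} gives $\|\Psi\|_{L^\infty}\lesssim a^{-1/2}\|f\|_\alpha$ uniformly in $z$. Since $\int_{\mathbb{R}}\frac{dz'}{a|X-X'|^3}=\frac{2}{|x_\perp-x'_\perp|^2}$ is integrable over the unit ball of $\mathbb{R}^4$, that piece is simply $O(\sup|F|)=O(\|f\|_\alpha+\sup|\Psi|)$ over the local region, with no gain in $z$. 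For $\theta>0$ and $|z|\gg a^{-1/\theta}$ this is strictly weaker than the claimed $a^{-1/2}\la z\ra^{-\theta/2}$.

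The bootstrap on $\sup_{|z'|\sim|z|}|\Psi|$ does not close either. The constant in front of $\sup|\Psi|$ is of order one, since $W$ is of size one in the core and so is the kernel integral. The window in $z'$ also widens at each iteration, and the $f$-contribution has no decay to begin with.

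The missing idea is to work slice by slice in $z'$ with $z$-weighted $L^2$ information, and then apply Cauchy--Schwarz in $z'$ against the kernel:
\begin{itemize}
\item for $f$, use the first part of $\|f\|_\alpha$, which gives $\int\la z'\ra f^2(r',z')dz'\le\|f\|_\alpha^2\la r'\ra^{-2\alpha}$;
\item for the potential term, use the slice radial Sobolev bound $\la z\ra^{\theta/2}|\Psi(r,z)|\lesssim r^{-1}\bigl(\int\la z\ra^{\theta}(\pa_r\Psi)^2r^3dr\bigr)^{1/2}$;
\item for $D_fh$, use the slice bounds \eqref{vnovnnvenevbvbbvk} and \eqref{vdjkvbndkndlknvdlnv}.
\end{itemize}
Since $\bigl(\int\frac{dz'}{|X-X'|^{6}}\bigr)^{1/2}\lesssim a^{1/2}|x_\perp-x'_\perp|^{-5/2}$, which remains integrable on the unit ball of $\mathbb{R}^4$ (even against $1/r'$), the $\frac1a$ of the kernel combines with it to give exactly $\frac{\|\la z\ra^{\theta/2}\pa_r\Psi\|_{L^2}+\|f\|_\alpha}{a^{1/2}\la z\ra^{\theta/2}}$. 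This is where the factor $a^{-1/2}$ in \eqref{cneiovnenoienevn} comes from.

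For $R\ge K_\nu R_*$ your strategy is the paper's: trade the $L/r^2$ tail of $W$ for the Laplacian of $\mathbb{R}^{D+1}$, apply the maximum principle after a cutoff, then use Lemma \ref{neoieonvndklnvdv}. However, the conjugation is written backwards. With $\Psi=\la r\ra^{-\mu}\Phi$, the identity $-\pa_r^2-\frac3r\pa_r+\frac{K}{r^2}=r^{-\mu}(\cdots)r^{\mu}$ is false: it produces the drift $\frac{4\mu+3}{r}$ instead of $\frac3r$. The correct substitution is $\Psi=r^{\mu}\Phi$, for which
\[
-\pa_r^2-\frac3r\pa_r=r^{\mu}\Bigl(-\pa_r^2-\frac{D-1}{r}\pa_r-\frac{\mu(\mu+2)}{r^2}\Bigr)r^{-\mu},\qquad D-1=2\mu+3.
\]
The maximum principle is then applied to $\chi\Psi/r^{\mu}$, with source $\chi F/r^{\mu}$ plus commutators. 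It yields $|\Psi|\lesssim\la r\ra^{\mu}I_F$, which is the quantity controlled by \eqref{fondnanteta} and \eqref{fondnantetabis}, not $\la r\ra^{-\mu}I_G$.

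Also, do not move $W-K/r^2$ to the right-hand side, since it is not compactly supported. Instead keep a nonnegative $W_{\rm out}\ge L/(1+R^2)$ on the left and move only $W_{\rm in}$, which is supported in $R\le R_*$. Then \eqref{fondnanteta} applies as stated.
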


\begin{proof}[Proof of Lemma \ref{fristinog}] This will follow from the combination of the energy bound \eqref{enguybaopor} and the pointwise bound \eqref{fondnanteta}.\\

\noindent{\bf step 1} Explicit resolvent. We solve explicitly the linear problem for $D\in \Bbb N^*$: $$\left(-\pa_r^2-\frac{D-1}{r}\Psi-a^2\pa_z^2\right)\Psi=f.$$
Let  
\be
\label{neoitnoanintoa}
\left|\begin{array}{l}Z=\frac{z}{a}\\
\psi(r,z)=r^2\Psi=\psit(r,Z)\\
\Psi(r,z)=\Psit(r,Z)\\ f(r,z)=\tilde{f}(r,Z)\\
X=x_\perp+Z\ve_z\\
 \end{array}\right.,
 \ee 
 then for $\Psi\in \dot H^1(\Bbb R^{D+1})$ we have
 \bea
 \label{ceniovneovnenvoe}
\nonumber  &&\left(-\pa_r^2-\frac{D-1}{r}\Psi-a^2\pa_z^2\right)\Psi=f\Leftrightarrow -\Delta_{\Bbb R^{D+1}}\Psit=\tilde{f}\\
 \nonumber & \Leftrightarrow& \Psit(X)=-c\int_{X'\in \Bbb R^{D+1}}\frac{f(X')}{|X-X'|^{D-1}}dX'\\
 &\Leftrightarrow & \Psi(x)=c\int_{x\in \Bbb R^{D+1}}\frac{f(x)}{a|X-X'|^{D-1}}dx.
 \eea
\noindent{\bf step 2} Splitting of the flow. The potential $W(r,z)=\frac{W_*(R)}{\nu^2}$ satisfies 
\be
\label{estitmot}
W(r,z)\ge \frac{1}{\nu^2}\frac{L}{R^2}=\frac{L}{r^2}\ \ \mbox{for}\ \ R\ge R_*,
\ee 
for some $R_*(\gamma)$ large enough and $L=L(\gamma)\to +\infty$ as $\gamma \downarrow 2$, this decay of the potential will be key in getting the desired fast decay in $r$ quantified by $\alpha$. We therefore rewrite the flow \eqref{vneoneonenvoi} as
\be
\label{esuibtibg}
\left|\begin{array}{l}
\left[-\pa_r^2-\frac{3}{r}\pa_r+W_{\rm out}-a^2\pa_z^2\right]\Psi=f+D_fh+W_{\rm in}\Psi\\
\frac{L}{1+R^2}\le W_{\rm out}\leq\frac{C_\gamma}{1+R^2}\\
|W_{\rm in }|\lesssim \frac{{\bf 1}_{R\le R_*}}{\nu^2}
\end{array}\right..
\ee
Recall that $f\in \mathcal S$ ensures from standard elliptic regularity that $\Psi\in \mathcal C^\infty.$\\

\noindent{\bf step 3} Bound for $R\le K_\nu R_*$, $|z|\ge 1$. Since $W_{\rm out}\ge 0$ and $\Psi\in \mathcal C_0(\Bbb R^5)$, a standard application of the maximum principle and \eqref{esuibtibg} ensures
$$|\Psi(x)|\lesssim\int_{\Bbb R^5}\frac{|F|}{a|X-X'|^3}dx', \ \ F=f+D_fh+W_{\rm in}\Psi.$$ We invoque \eqref{estiinidoioeiog} with $R_0=K_\nu R_*$: 
$$|J_F(x)|{\bf 1}_{R\le K_\nu R_*,|z|\ge 1}\leq  C_{R_0}\left[\frac{\|\la z\ra^\theta F\|_{L^2(\Bbb R^5)}}{a^{\frac 12}\la z\ra^{\frac{\theta}{2}}}+ \int_{|x_\perp-x'_\perp|\le 1, \frac{|z|}{2}\le |z|\le\frac{3|z|}{2}}\frac{|F(x')|}{a|X-X'|^3}dx'\right],
$$
and now estimate the various contributions of $F$.\\

\noindent\underline{Source term}. Since $2\theta \le 1$ we have
$$\|\la z\ra^{\frac{\theta}{2}} f\|_{L^2(\Bbb R^5)}^2\lesssim \|f\|_{\alpha}^2\int_{r>0}\frac{r^3dr}{\la r\ra^{2\alpha}}\lesssim \|f\|_{\alpha}^2,$$
and 
\bee
&&\int_{|x_\perp-x'_\perp|\le 1, \frac{|z|}{2}\le |z'|\le\frac{3|z|}{2}}\frac{|f(x')|}{a|X-X'|^3}dx'\\
&\lesssim & \frac{\|f\|_{\alpha}}{a\la z\ra^{\frac 12}}\int_{|x_\perp-x'_\perp|\le 1}\frac{dx'_\perp}{\la r'\ra^\alpha}\left(\int_{Z'\in \Bbb R}\frac{a\la Z-Z'\ra dZ'}{(|x_\perp-x_\perp'|^2+(Z-Z')^2)^{3}}\right)^{\frac 12}\\
& \lesssim & \frac{\|f\|_{\alpha}}{a^{\frac 12}\la z\ra^{\frac 12}}\int_{|x_\perp-x'_\perp|\le 1}\frac{1}{\la r'\ra^\alpha}\left(\frac{1}{|x_\perp-x_\perp'|^{\frac 52}}+\frac{1}{|x_\perp-x_\perp'|^2}\right)dx'_\perp\lesssim  \|f\|_{\alpha},
\eee
where we recall that the last integral is over $\mathbb{R}^4$.\\

\noindent\underline{Potential term}. We use $|W_{\rm in }|\lesssim \frac{{\bf 1}_{r\le R_*\nu}}{\nu^2}\lesssim \frac{1}{\la r\ra^2}\lesssim \frac{1}{\la r\ra}$ and the four dimensional Hardy inequality to estimate
$$\|\la z\ra^{\frac{\theta}{2}} W_{\rm in}\Psi\|_{L^2(\Bbb R^5)}^2\lesssim \|\la z\ra^{\frac{\theta}{2}}\frac{\Psi}{r}\|_{L^2(\Bbb R^5)}^2\lesssim \|\la z\ra^{\frac{\theta}{2}}\pa_r\Psi\|_{L^2(\Bbb R^5)}^2.$$ We now use the radial Sobolev embedding 
$$\la z\ra^{\frac{\theta}{2}}|\Psi(r,z)|\lesssim \frac{1}{r}\left(\int_{r>0}\la z\ra^{\theta}(\pa_r\Psi)^2r^3dr\right)^{\frac 12},$$ to estimate
\bee
&&\int_{|x_\perp-x'_\perp|\le 1, \frac{|z|}{2}\le |z'|\le\frac{3|z|}{2}}\frac{|W_{\rm in}\Psi|}{a|X-X'|^3}dx'\\
&\lesssim &\frac{1}{a\la z\ra^{\frac{\theta}{2}}}\int_{|x_\perp-x'_\perp|\le 1, \frac{|z|}{2}\le |z'|\le\frac{3|z|}{2}}\frac{dx_\perp'dz'}{r'\la r'\ra^2|X-X'|^3}\left(\int_{u>0}\la z'\ra^{\theta}(\pa_r\Psi(u,z'))^2u^3du\right)^{\frac 12}\\
& \lesssim & \frac{1}{a\la z\ra^{\frac{\theta}{2}}}\int_{|x_\perp-x'_\perp|\le 1}\frac{dx_\perp'}{r'\la r'\ra^2}\left(\int_{z'\in \Bbb R}\int_{u>0}\la z'\ra^{\theta}(\pa_r\Psi(u,z'))^2u^3dudz'\right)^{\frac 12}\\
& & \times\left(\int_{z'\in \Bbb R}\frac{dz'}{(|x_\perp-x_\perp'|^2+(Z-Z')^2)^3}\right)^{\frac 12}\\
& \lesssim &  \frac{\|\la z\ra^{\frac{\theta}{2}}\pa_r\Psi\|_{L^2(\Bbb R^5)}}{a^{\frac 12}\la z\ra^{\frac{\theta}{2}}}\int_{|x_\perp-x'_\perp|\le 1}\frac{dx_\perp'}{r'\la r'\ra^2|x_\perp-x'_\perp|^{\frac 52}}\lesssim \frac{\|\la z\ra^{\frac{\theta}{2}}\pa_r\Psi\|_{L^2(\Bbb R^5)}}{a^{\frac 12}\la z\ra^{\frac{\theta}{2}}}.
\eee
\noindent\underline{$D_fh$ term}. We recall \eqref{vnovnnvenevbvbbvk} which implies
\bee
|D_1|&\lesssim& \frac{1}{\nu^6\la R\ra^{|\l_-|}}\int_{r'>0}\frac{|\Psi|}{\la R'\ra^{|\l_-|}}(r')^3dr'\\
&\lesssim& \frac{1}{\nu^6\la R\ra^{|\l_-|}}\left(\int_{r'>0}\frac{\Psi^2}{(r')^2}(r')^3dr'\right)^{\frac 12}\left(\int_{r'<\nu}\frac{(r')^3(r')^2}{\la R'\ra^{2|\l_-|}}dr'\right)^{\frac 12}\\
&\lesssim &\frac{1}{\nu^3\la R\ra^{|\l_-|}}\left(\int_{r'>0}\frac{\Psi^2}{(r')^2}(r')^3dr'\right)^{\frac 12},
\eee
and hence
\bea
\label{estinfiosfnoe}
\|\la z\ra^{\frac{\theta}{2}}  D_1\|^2_{L^2(\Bbb R^5)}&\lesssim & \int_{z>0}\int_{r>0}\la z\ra^{\theta}\frac{r^3dr}{\nu^6\la R\ra^{|\l_-|}}\left(\int_{r'>0}\frac{\Psi^2(r',z)}{(r')^2}(r')^3dr'\right)dz \nonumber \\
&\lesssim &\|\la z\ra^{\frac{\theta}{2}}\pa_r\Psi\|_{L^2(\Bbb R^5)}^2.
\eea
Moreover as above:
\bee
&&\int_{|x_\perp-x'_\perp|\le 1, \frac{|z|}{2}\le |z'|\le\frac{3|z|}{2}}\frac{|D_1|}{a|X-X'|^3}dx'\\
&\lesssim &\frac{1}{a\la z\ra^{\frac{\theta}{2}}}\int_{|x_\perp-x'_\perp|\le 1, \frac{|z|}{2}\le |z'|\le\frac{3|z|}{2}}\frac{dx_\perp'dz'}{\la r'\ra^3|X-X'|^3}\left(\int_{u>0}\la z'\ra^{\theta}(\pa_r\Psi(u,z'))^2u^3du\right)^{\frac 12}\\
& \lesssim &  \frac{\|\la z\ra^{\frac{\theta}{2}}\pa_r\Psi\|_{L^2(\Bbb R^5)}}{a^{\frac 12}\la z\ra^{\frac{\theta}{2}}}.
\eee
We now recall \eqref{vdjkvbndkndlknvdlnv}:
$$|D_2(r,z)|\lesssim \frac{1}{\nu^3\la R\ra^{\gamma-2+|\l_-|}}\left(\int_{r>0}f^2r^2r^3dr\right)^{\frac12}
$$
which implies using $\theta\le 1$:
\bea
\label{venoivneinveineonveni}
\nonumber \|\la z\ra^{\frac{\theta}{2}}  D_2\|^2_{L^2(\Bbb R^5)} & \lesssim & \int_{r>0,z\in \Bbb R}\frac{r^3drdz}{\nu^6\la R\ra^{2|\l_-|}}\int_{u>0}\la z\ra^{\theta}f^2u^2u^3du\\
\nonumber& \lesssim & \int_{u>0}\int_{z\in \Bbb \R}\la z\ra^{\theta}f^2u^2u^3\left[\int_{r>0}\frac{r^3dr}{\nu^6\la R\ra^{2|\l_-|}}\right]dudz\lesssim \|f\|_{\alpha}^2\int_{u>0}\frac{u^5}{\la u\ra^{2\alpha}}du\\
&\lesssim& \|f\|^2_{\alpha},
\eea
  which gives
 \bee
    & &\int_{|x_\perp-x'_\perp|\le 1, \frac{|z|}{2}\le |z'|\le\frac{3|z|}{2}}\frac{|D_2|}{a|X-X'|^3}dx'\\
& \lesssim &  \frac{1}{a\la z\ra^{\frac 12}}\int_{x'_\perp\in\Bbb R^4}\frac{dx_\perp'}{\la r'\ra^3}\left(\int_{z'\in \Bbb R}\int_{u>0}\la z'\ra f^2(u,z')u^2u^3du\right)^{\frac 12}\\
& &\times \left(\int_{Z'\in \Bbb R}\frac{adZ'}{(|x_\perp-x_\perp'|^2+(Z-Z')^2)^{3}}\right)^{\frac 12},
\eee
thus
\[
\int_{|x_\perp-x'_\perp|\le 1, \frac{|z|}{2}\le |z'|\le\frac{3|z|}{2}}\frac{|D_2|}{a|X-X'|^3}dx' \lesssim  \frac{\|f\|_{\alpha}}{\la z\ra^{\frac 12}}\int_{x'_\perp\in\Bbb R^4}\frac{dx_\perp'}{\la r'\ra^3|x_\perp-x_\perp'|^{\frac 52}}\lesssim \frac{\|f\|_{\alpha}}{\la z\ra^{\frac 12}}.
\]
\noindent\underline{Conclusion}. The collection of above bounds concludes the proof of \eqref{cneiovnenoienevn}.\\
 
\noindent{\bf step 4} Explicit upper bound\footnote{It is here that we will use in a crucial way the $\frac{L}{r^2}$ with $L\gg1$ tail of the potential $W$ to leverage fast anisotropic decay in the $r$ variable} for $R\ge K_\nu R_*$.  Let $\Psi=r^\mu\Phi$, then we have
\bee
&&\left[-\pa_r^2-\frac{3}{r}\pa_r+W_{\rm out}-a^2\pa_z^2\right]\Psi=f+D_fh+W_{\rm in}\Psi\\
&\Leftrightarrow& \left[-\pa_r^2-\frac{2\mu+ 3}{r}\pa_r-\frac{\mu(\mu+2)}{r^2}-a^2\pa_z^2+W_{\rm out}\right]\Phi=\frac{f+D_fh+W_{\rm in}\Psi}{r^\mu}\\&\Leftrightarrow& \left[-\pa_r^2-\frac{D-1}{r}\pa_r-\frac{\mu(\mu+2)}{r^2}-a^2\pa_z^2+W_{\rm out}\right]\Phi=\frac{f+D_fh+W_{\rm in}\Psi}{r^\mu}.
\eee
Let a cut off function 
\be
\label{vneiveneniovn}
\chi(r)=\left|\begin{array}{l} 0 \ \ \mbox{for}\ \ r\le r_0\\ 1\ \ \mbox{for}\ \ r\ge \frac{r_0}2
\end{array}\right.,
\ee
and commute with the equation to get 
\bee
&&\left[-\pa_r^2-\frac{D-1}{r}\pa_r-\frac{\mu(\mu+2)}{r^2}-a^2\pa_z^2+W_{\rm out}\right](\chi \Phi)\\
&=& \chi\frac{f+D_fh+W_{\rm in}\Psi}{r^\mu}-(\Delta_{\Bbb R^D}\chi)\Psi-2\pa_r\chi\pa_r\Psi\equiv \frac{F}{\la r\ra ^{\mu}},
\eee with $$\Phi=\frac{\Psi}{r^\mu}\to 0\ \ \mbox{as}\ \ (r,z)\to +\infty,$$ and $\Phi\in \mathcal C^\infty(\Bbb R^{D+1})$. We now observe using \eqref{esuibtibg}:
\bee
 W_{\rm out}-\frac{\mu(\mu+2)}{r^2}& \ge & \frac{L}{1+R^2}-\frac{\mu(\mu+2)}{r^2}=\frac{L\nu^2}{r^2+\nu^2}-\frac{\mu(\mu+2)}{r^2}\\
& = & \frac{\left[L\nu^2-\mu( \mu+2)\right]r^2-\mu(\mu+2)\nu^2}{r^2(r^2+\nu^2)}.
\eee
We may assume $L=L(\gamma)$ is large enough so that $$\mu(\mu+2)\le \frac{c^2_0L}{2} \le \frac{L\nu^2}{2},$$ 
and hence $$r^2>\frac{\mu(\mu+2)\nu^2}{\frac{L\nu^2}{2}}=\frac{2\mu(\mu+2)}{L}\Rightarrow W_{\rm out}-\frac{\mu(\mu+2)}{r^2}\ge 0.$$ We therefore choose $r_0=\sqrt{\frac{20\mu(\mu+2)}{L}}$ in  \eqref{vneiveneniovn} and conclude using the maximum principle and \eqref{ceniovneovnenvoe}:
\bee
\forall r>0, \ \ \chi|\Phi|&=&\frac{\chi|\Psi|}{r^\mu}\lesssim \int_{x\in \Bbb R^{D+1}}\frac{|F(x)|}{a\la r'\ra^\mu|X-X'|^{D-1}}dx\\
& \Rightarrow & \forall r\ge 2r_0, \ \ |\Psi(x)|\le \la r\ra^\mu \int_{x\in \Bbb R^{D+1}}\frac{|F(x)|}{a\la r'\ra^\mu|X-X'|^{D-1}}dx\equiv \mathcal I(x),
\eee
Now we will estimate all terms in the above right hand side.\\

\noindent\underline{Source term}. For the main source term, we have $$\left\|\frac{\la r\ra^{\mu}}{r^\mu}\chi f\right\|_{\alpha}\lesssim \|f\|_{\alpha},$$ and hence from \eqref{fondnantetabis} the corresponding contribution for $R\ge K_\nu R_*$:

$$|\mathcal I^{(1)}(x)|\lesssim \|f\|_{\alpha}\left[\frac{\la r\ra^\mu}{|X|^{\alpha+\mu-2}}+\frac{1}{\la z\ra^{\beta}\la r\ra^{\alpha-\frac 32}}\right].
$$

\noindent\underline{Localization term}. $\frac{r_0}2\le r\le 2r_0\Rightarrow R\le R_*$ provided $R_*(\gamma)$ has been chosen large enough, and we apply Lemma \ref{neoieonvndklnvdv} to estimate the corresponding contribution:
\bee
|\mathcal I^{(2)}(x)|
&\lesssim &  \frac{\|\la z\ra^{\frac{\theta}{2}} |(\Delta_{\Bbb R^D}\chi)\Psi+2\pa_r\chi\pa_r\Psi|\|_{L^2(\Bbb R^5)}}{a^{\frac 12}}\left[\frac{1}{|X|^{\frac{D}2}}+\frac{1}{\la z\ra^{\frac{\theta}{2}}\la r\ra^{\frac 12}R^{\frac{D}{2}}}\right]\\
& \lesssim &  \frac{\|\la z\ra^{\frac{\theta}{2}} \pa_r\Psi\|_{L^2(\Bbb R^5)}}{a^{\frac 12}}\left[\frac{1}{|X|^{\frac{D}2}}+\frac{1}{\la z\ra^{\frac{\theta}{2}}\la r\ra^{\frac 12}R^{\frac{D}{2}}}\right],
\eee

\noindent\underline{Potential term}. Observe that $$r\le 2( \nu^2+r^2)\Rightarrow |W_{\rm in }|\lesssim \frac{1}{\nu^2\left(1+\frac{r^2}{\nu^2}\right)}\leq \frac{2}{r}, $$ and since $W_{\rm in}$ is localized in $R\le R^*$ by definition, we apply Lemma \ref{neoieonvndklnvdv} to estimate the corresponding contribution:
\bee
|\mathcal I^{(3)}(x)|&\lesssim  \frac{\|\la z\ra^{\frac{\theta}{2}}  W_{\rm in }\Psi\|_{L^2(\Bbb R^5)}}{a^{\frac 12}}\left[\frac{1}{|X|^{\frac{D}2}}+\frac{1}{\la z\ra^{\frac{\theta}{2}}\la r\ra^{\frac 12}R^{\frac{D}{2}}}\right]\\
& \lesssim  \frac{\|\la z\ra^\theta \frac{\Psi}{r}\|_{L^2(\Bbb R^5)}}{a^{\frac 12}}\left[\frac{1}{|X|^{\frac{D}2}}+\frac{1}{\la z\ra^{\frac{\theta}{2}}\la r\ra^{\frac 12}R^{\frac{D}{2}}}\right]\\
&\lesssim  \frac{\|\la z\ra^{\frac{\theta}{2}} \pa_r\Psi\|_{L^2(\Bbb R^5)}}{a^{\frac 12}}\left[\frac{1}{|X|^{\frac{D}2}}+\frac{1}{\la z\ra^{\frac{\theta}{2}}\la r\ra^{\frac 12}R^{\frac{D}{2}}}\right].
\eee 
\noindent\underline{$D_fh$ term}. Since $h$ is supported in $R\le R_*$, we may apply Lemma \ref{neoieonvndklnvdv}. We recall \eqref{estinfiosfnoe} and \eqref{venoivneinveineonveni}:
$$\|\la z\ra^{\frac{\theta}{2}}  D_1\|_{L^2(\Bbb R^5)}+\|\la z\ra^{\frac{\theta}{2}}  D_2\|_{L^2(\Bbb R^5)}\lesssim \|\la z\ra^{\frac{\theta}{2}}\pa_r\Psi\|_{L^2(\Bbb R^5)}+\|f\|_{\alpha}.$$
 Hence the estimate of the corresponding contribution
$$|\mathcal I^{(4)}(x)|\lesssim \frac{\|\la z\ra^{\frac{\theta}{2}}\pa_r\Psi\|_{L^2(\Bbb R^5)}+\|f\|_{\alpha}}{a^{\frac 12}}\left[\frac{1}{|X|^{\frac{D}2}}+\frac{1}{\la z\ra^{\frac{\theta}{2}}\la r\ra^{\frac 12}R^{\frac{D}{2}}}\right].$$
The collection of the above bounds concludes the proof of \eqref{potinwonioen}.
\end{proof}

\subsection{Weighted $L^2$ bound}
We are now in position to improve the energy bound to a weighted $L^2$ bound.

\begin{lemma}[Weighted $L^2$ bound]
\label{lemmaltwobound}
Under the assumptions of Proposition \ref{proppointizedfjifw}, there holds
\be
\label{imporvedlrotwbound}
\left(\int_{\Bbb R^5}\la r\ra^{2(\alpha-6)}\la z\ra^{3-2\delta_*}\left[(\pa_r\Psi)^2+a^2(\pa_z\Psi)^2\right]dx\right)^{\frac 12}\lesssim\|f\|_{\alpha}.
\ee

\end{lemma}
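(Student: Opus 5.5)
We prove \eqref{imporvedlrotwbound} by a weighted energy estimate on \eqref{vneoneonenvoi}. The weight is $\omega(r,z)=\la r\ra^{2(\alpha-6)}\la z\ra^{3-2\delta_*}$. The pointwise bounds of Lemma \ref{fristinog} are used to control the error terms created by commuting the weight with the operator. Since $f\in\mathcal S$ and $\Psi\in\mathcal C^\infty$, all integrations by parts are justified once we truncate $\omega$ at large $|x|$ and let the truncation go to infinity.

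\textbf{Step 1: weighted identity.} Multiply $\mathcal L_a\Psi=f+D_fh$ by $\omega\Psi$ and integrate over $\Bbb R^5$. This gives
\begin{align*}
\int\omega\left[(\pa_r\Psi)^2+a^2(\pa_z\Psi)^2\right]dx+\int \omega W\Psi^2dx&=\int \omega(f+D_fh)\Psi\, dx\\
&\quad-\int\Psi\left[\pa_r\omega\,\pa_r\Psi+a^2\pa_z\omega\,\pa_z\Psi\right]dx.
\end{align*}
The weight satisfies $|\pa_r\omega|\lesssim\omega/\la r\ra$ and $|\pa_z\omega|\lesssim\omega/\la z\ra$. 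Cauchy--Schwarz therefore bounds the commutator terms by
\[
\tfrac14\int\omega\left[(\pa_r\Psi)^2+a^2(\pa_z\Psi)^2\right]dx+C\int\omega\left[\frac{1}{\la r\ra^2}+\frac{a^2}{\la z\ra^2}\right]\Psi^2dx .
\]
The potential is not sign definite. We split $W=W_{\rm out}+W_{\rm in}$ as in \eqref{esuibtibg}. Since $W_{\rm out}\ge0$, that part can be dropped, and only $\int\omega|W_{\rm in}|\Psi^2$, which is supported in $R\le R_*$, remains. The source terms are handled with Cauchy--Schwarz and Hardy. For $f$, the quantity $\int \omega f^2\la r\ra^2$ is finite and $\lesssim\|f\|_\alpha^2$ thanks to the $\la z\ra^{3-\delta_*}$ integrability built into $\|f\|_\alpha$ and the loss of six powers of $r$. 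For $D_fh$ we rerun the computations \eqref{estinfiosfnoe}, \eqref{venoivneinveineonveni} with weight $\la z\ra^{3-2\delta_*}$ instead of $\la z\ra^\theta$. The factor $\la R\ra^{-|\l_-|}$ absorbs any power of $\la r\ra$, and $\nu\lesssim\la z\ra^\eta$ costs only $\la z\ra^{C\eta}$, which is harmless for $\eta<\eta^*(\delta_*)$.

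\textbf{Step 2: the zeroth-order terms.} Everything is now reduced to bounding
\[
\int\omega\left[\frac{1}{\la r\ra^2}+|W_{\rm in}|+\frac{a^2}{\la z\ra^2}\right]\Psi^2dx
\]
by $\|f\|_\alpha^2$ plus a small multiple of the left-hand side. We split space into two regions.

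In the region $R\le K_\nu R_*$, we have $r\lesssim \la z\ra^\eta$, so the $r$-weight costs only $\la z\ra^{C\eta}$. The pointwise bound \eqref{cneiovnenoienevn} is used as the inductive tool. It is applied with an exponent $\theta$ such that $\Psi^2\lesssim \la z\ra^{-\theta}$, times a weighted energy already controlled at level $\theta$. Hence the weighted energy at level $\theta$ controls $\int\la z\ra^{\theta'}\Psi^2/\la r\ra^2$ for $\theta'<\theta-1$ in $r$-compact regions. That is not enough directly, so we argue by bootstrap in the $z$-weight.

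The bootstrap starts from the unweighted energy bound \eqref{enguybaopor}. We raise the power of $\la z\ra$ in steps of size less than $1$ up to $3-2\delta_*$. At each step, \eqref{cneiovnenoienevn} with the previous weight gives a pointwise decay $\la z\ra^{-\theta/2}$. This is squared and integrated against $\la z\ra^{\theta+\kappa}$ over a region of $r$-volume $\lesssim\la z\ra^{4\eta}$. Together with the factor $1/\la z\ra$ coming from the $z$-derivative of the weight, the result converges as long as the increment $\kappa$ is small.

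In the far region $R\ge K_\nu R_*$, we use \eqref{potinwonioen} with $D$ large, namely $\mu=\tfrac{D-4}{2}\ge\alpha$. This is allowed because $L(\gamma)\to\infty$ as $m\to\infty$. The estimate yields decay $\la r\ra^{-(\alpha-3/2)}$ and $|X|^{-D/2}$, which makes $\int\omega\Psi^2/\la r\ra^2$ converge with the loss of six powers of $r$. This is where $\alpha-6$ comes from, and the generous loss absorbs the $\la r\ra^\mu/|X|^{\alpha+\mu-2}$ term.

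\textbf{Main obstacle.} The hardest step is the bootstrap in the $z$-weight. It is circular: the pointwise bound at level $\theta$ requires $\|\la z\ra^{\theta/2}\pa_r\Psi\|_{L^2}$, which is exactly the quantity being improved. It also carries the dangerous factor $a^{-1/2}$, which must be absorbed without using smallness of $\Psi$ itself.

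The first approach to try is the following. Close each step at fixed $a$ using the smallness of the increment $\kappa$, and accept $a$-dependent constants; these are permitted because $a<a^*$ is fixed. For the contributions near $|z|\lesssim1$, use the trivial bound from \eqref{vneovneovneoneonvonbi}. The final step lands at exponent $3-2\delta_*$, which the integrability $\int\la z\ra^{3-\delta_*}f^2dz$ in $\|f\|_\alpha$ just allows.
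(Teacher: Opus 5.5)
There is a genuine gap. You treat the attractive part $W_{\rm in}$ of the potential and the zeroth-order commutator terms as errors to be bounded by the pointwise estimates of Lemma \ref{fristinog}. Those estimates carry far too little $z$-decay for the weight $\la z\ra^{3-2\delta_*}$.

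\textbf{Near zone.} On $R\le K_\nu R_*$ you need $\int \la z\ra^{3-2\delta_*+C\eta}\sup_{R\le K_\nu R_*}\Psi^2\,dz<\infty$, i.e.\ roughly $|\Psi|\lesssim \la z\ra^{-2}$. But \eqref{cneiovnenoienevn} holds only for $0\le\theta\le 1$, so it gives at best $|\Psi|\lesssim\la z\ra^{-1/2}$. The bootstrap does not bridge this. If level $\theta$ yields $\Psi^2\lesssim \la z\ra^{-\theta}$, the near-zone error at level $\theta+\kappa$ is $\int\la z\ra^{\kappa+C\eta}dz=\infty$ for every $\kappa\ge 0$. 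Your own remark that level $\theta$ only controls $\theta'<\theta-1$ already says the induction runs backwards. The extra $\la z\ra^{-1}$ you invoke appears only in the $a^2\pa_z\omega$ commutator, not in $\omega |W_{\rm in}|$ nor in the $\pa_r\omega$ commutator.

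\textbf{Far zone.} It fails for the same reason. Estimate \eqref{potinwonioen} contains the $\theta$-independent piece $\|f\|_\alpha\la z\ra^{-1/2}\la r\ra^{-(\alpha-3/2)}$. Squaring it against $\omega\la r\ra^{-2}$ on $r\gtrsim\la z\ra^{\eta}$ gives $\int\la z\ra^{2-2\delta_*-7\eta}dz=\infty$.

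\textbf{Hardy does not help.} It cannot absorb $C\int\omega\Psi^2/\la r\ra^2$ either. For the four-dimensional Laplacian with weight $r^{2k}$, the commutator is $-2k(k+1)\int r^{2k-2}\Psi^2$. The sharp Hardy constant then leaves the coefficient $1-\frac{2k}{k+1}<0$ in front of $\int r^{2k}(\pa_r\Psi)^2$. Without the repulsive tail, the $\la r\ra^{2\alpha-12}$ bound is simply false because of harmonic $r^{-2}$ tails. So $W_{\rm out}$ must not be dropped.

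\textbf{What is missing.} The paper alternates two mechanisms.

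First, take a weight $m(z)$ depending on $z$ only. Then $m\psi\in V$, so testing \eqref{estaimteV} with $v=m\psi$ removes $D_fh$. Lemma \ref{coercm} applied on each slice then gives $\int m\left[(\pa_r\Psi)^2+W\Psi^2\right]\ge c_*\int m(\pa_r\Psi)^2$. The attractive part of $W$ is thus absorbed by coercivity modulo the scaling direction $\zeta_*$, not estimated. The only leftover is $-a^2m''\Psi^2$. It has a good sign for $m$ concave on $|z|\ge1$, which yields the weight $\la z\ra$. For $m=z^{3-\frac32\delta_*}$ it is controlled by slice-wise radial Sobolev together with the $r$-decay already obtained.

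Second, weights in $r$ are added with multipliers $m_0(z)\chi_B(r)$.
\begin{itemize}
\item On $R\ge R_*$, the commutator $\lesssim\alpha^2 m_B/r^2$ is absorbed by $W\ge L/r^2$ from \eqref{estitmot}, with $L(\gamma)\gg\alpha^2$.
\item On $R\le R_*$, the potential and commutator terms are bounded slice by slice, via radial Sobolev, by the \emph{previous} $z$-weighted estimate. The price is $\delta_*$ in the $z$-exponent, which pays for the $\la z\ra^{C\eta}$ loss.
\end{itemize}

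This produces the chain $\la z\ra\to\la r\ra^{2\alpha-10}\la z\ra^{1-\delta_*}\to\la z\ra^{3-\frac32\delta_*}\to\la r\ra^{2\alpha-12}\la z\ra^{3-2\delta_*}$. Pointwise bounds enter only against $z$-integrable weights: on $|z|\le 1$, or against $|\pa_z^2m_B|\lesssim\la z\ra^{-1-\delta_*}$.

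\textbf{Constants.} You cannot accept $a$-dependent constants. The bound must be uniform in $a$, since it feeds Proposition \ref{proppointizedfjifw} and the contraction $aM\ll1$ of Lemma \ref{lemmainversion}.
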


\begin{proof}[Proof of Lemma \ref{lemmaltwobound}] This will follow from the pointwise bound \eqref{potinwonioen} and a $L^2$ multiplier argument.\\

\noindent{\bf step 1} Weight function in $z$. Consider an even multiplier 
\be
\label{vneonvnvoenenvo}
m_A(z)=\left|\begin{array}{l} 1\ \ \mbox{for}\ \ 0\le z\le \frac 12\\ z \ \ \mbox{for}\  \ 1\le z\leq A\\ 4A\ \ \mbox{for}\ \ z\ge 2A
\end{array}\right., \ \ m_A(z)>0,
\ee with the concavity 
\be
\label{concavcionefm}
\forall z\ge 1, \ \ m_A''(z)\le 0.
\ee
Since $\psi\in V$ we have $m_A\psi\in V$ and hence \eqref{estaimteV} yields:
$$\int_{r>0}\int_{z\in \Bbb R}(L_a\phi)m_A(z)\psi rdrdz=\int_{r>0}\int_{z\in \Bbb R} fm_A(z)\psi rdrdz,$$ which after passing to $\psi=r^2\Psi$ and integrating by parts gives
$$
\int_{\Bbb R^5}m_{A}\left[(\pa_r\Psi)^2+a^2(\pa_z\Psi)^2\right]dx+\frac 12\int_{\Bbb R^5}\left(-a^2\pa_z^2m_A+m_{A}W\right)\Psi^2dx=  \int_{\Bbb R^5}fm_A(z)\Psi dx.
$$
From the four dimensional Hardy and the uniform bound $|m_A|\lesssim \la z\ra$ we get for all $\e>0$, 
$$\left|\int_{\Bbb R^5}fm_A(z)\Psi dx\right|\lesssim \|\sqrt{m_A}rf\|_{L^2(\Bbb R^5)}\|\sqrt{m_A}\frac{\Psi}{r}\|_{L^2(\Bbb R^5)}\leq C_\e\|f\|_{\alpha}^2+\e\int_{\Bbb R^5}m_A(\pa_r\Psi)^2dx.$$
We now use in a fundamental way the pointwise bound \eqref{potinwonioen} for $|z|\le 1$. Indeed, for $R=\frac{r}{\nu}\le K_\nu R_*\Rightarrow r\le K_\nu R_*\nu(z)\lesssim1$, and hence the energy bound ensures
$$\int_{\Bbb R^5} a^2{\bf 1}_{|z|\le 1}\Psi^2dx\lesssim a^2\int_{\Bbb R^5}\frac{\Psi^2}{r^2}dx\lesssim a^2\|f\|_{\alpha}^2.$$ In the non compact in $r$ zone $R\ge K_\nu R_*$, \eqref{potinwonioen} with $\theta=0$ and the energy bound ensure the $A$ independent bound:
$$\int_{\Bbb R^5} a^2{\bf 1}_{|z|\le 1}\Psi^2dx\lesssim \|f\|_{\alpha}^2\left[\frac{a^2}{a}\int_{|z|\le 1}dz\int_{r\ge 1}\frac{r^3dr}{r^{2(\alpha-\frac 32)}}\right]\lesssim a\|f\|_{\alpha}^2.$$ Using the convexity \eqref{concavcionefm}, the collection of above bounds ensures the $A$ independent bound:
$$
\int_{\Bbb R^5}m_{A}(z)\left[(\pa_r\Psi)^2+a^2(\pa_z\Psi)^2+W\Psi^2\right]dx\lesssim \|f\|_{\alpha}^2.$$ Since $m_A$ depends only on $z$, Lemma \ref{coercm} with $\psi\in V$ ensures:
\bee & & c_*\int_{\Bbb R^5}m_A(z)\left[(\pa_r\Psi)^2+a^2(\pa_z\Psi)^2\right]dx  \\
&  \leq & \int_{\Bbb R^5} m_A(z)\left[(\pa_r\Psi)^2+a^2(\pa_z\Psi)^2\right]dx+\int_{\Bbb R^5} m_A(z)W\Psi^2 \eee for some universal constant $c_*>0$. We may therefore let $A\to+\infty$ and  conclude using Fatou's Lemma and the energy bound:
\be
\label{fihihbond}
\int_{\Bbb R^5}\la z\ra \left[(\pa_r\Psi)^2+a^2(\pa_z\Psi)^2\right]dx\lesssim \|f\|_{\alpha}^2.
\ee

\noindent{\bf step 2} First improved bound. We claim 
\be
\label{fheiofehehigeog}
\int_{\Bbb R^5}\la r\ra^{2\alpha-10}\la z\ra^{1-\delta_*} \left[(\pa_r\Psi)^2+a^2(\pa_z\Psi)^2\right]dx\lesssim \|f\|_{\alpha}^2.
\ee
For this we consider an even multiplier $$m_0(z)=\left|\begin{array}{l}1\ \ \mbox{for}\ \ z\le \frac 12\\ z^{1-\delta_*}\ \ \mbox{for}\ \ z\ge 1
\end{array}\right.,
$$
and for $B\ge 1$ a radial multiplier: $$\chi_B(r)=\left|\begin{array}{l} 1\ \ \mbox{for}\ \ r\le \frac 12\\ r^{2\alpha-10}\ \ \mbox{for}\ \ 1\le r\le B\\  (2B)^{\alpha-10}\ \ \ \ \mbox{for}\ \ r\ge 2B
\end{array}\right., \ \ \chi'\ge 0,$$ and define $$m_B(r,z)=m_0(z)\chi_B(r),$$ we integrate by parts in \eqref{vneoneonenvoi}  and compute:
\bea
\label{vneoivnevenvenveonenv}
 & & \int_{\Bbb R^5}m_{B}\left[(\pa_r\Psi)^2+a^2(\pa_z\Psi)^2\right]dx \nonumber \\ &+ &\frac 12\int_{\Bbb R^5}\left(-\pa_r^2m_{B}-\frac{3}{r}\pa_rm_{B}-a^2\pa_z^2m_B+m_{B}W\right)\Psi^2 \nonumber \\
 &  = & \int_{\Bbb R^5}(f+D_f(z)h)m_B\Psi dx.
 \eea
 We estimate all terms in the above identity.\\
 
 \noindent\underline{Source term}
\bee
    \left|\int_{\Bbb R^5}fm_B\Psi dx\right| &\lesssim &\left(\int_{\Bbb R^5}\la r\ra^{2\alpha-10}\la r\ra^2\la z\ra^{1-\delta_*}f^2dx\right)^{\frac 12}\left(\int_{\Bbb R^5}m_B\frac{\Psi^2}{\la r\ra^2}dx\right)^{\frac 12}\\
    &\lesssim &\|f\|_\alpha^2+\int_{\Bbb R^5}m_B\frac{\Psi^2}{\la r\ra^2}dx,
\eee
and the second term is treated below with the localization terms.\\

 \noindent\underline{Localization terms}. 
 We estimate for $r\ge 1$:
 $$\frac{1}{m_B}\left(|\pa_r^2m_{B}|+\frac{|\pa_rm_{B}|}r\right)\leq \frac{C\alpha^2}{r^2},$$ and hence for $R\ge R_*$ from \eqref{estitmot} and $\gamma$ close enough to 2: 
 $$m_B W\gg |\pa_r^2m_{B}|+\frac{|\pa_rm_{B}|}r.$$ For $R\le R_*$, we use the radial Sobolev inequality to get:
 $$|\Psi(r,z)|\lesssim \frac{1}{r\la z\ra^{\frac 12}}\left(\int_{u>0}\la z\ra(\pa_r\Psi)^2(u,z)u^3du\right)^{\frac 12},$$ with \eqref{fihihbond} to estimate
for $\eta$ small enough:
 \bee
 &&\int_{z\in \Bbb R}\int_{R\le R_*}m_B\Psi^2dx\\
 &\lesssim & \int_{z\in \Bbb R}\int_{r\le \nu}\frac{\la r\ra^{2\alpha-10}\la z\ra^{1-\delta_*}}{r^2\la z\ra}r^3\left(\int_{u>0}\la z\ra(\pa_r\Psi)^2(u,z)u^3du\right)drdz\\
 &\lesssim &  \int_{z\in \Bbb R}\int_{u>0}\la z\ra^{1+(2\alpha-8)\eta-\delta_*}(\pa_r\Psi)^2(u,z)u^3du\lesssim \|f\|_{\alpha}^2,
 \eee
 for $(2\alpha-8)\eta<\delta$. For the last term we use the bound $|\pa_z^2m_B|\lesssim \frac{1}{\la z\ra^{1+\delta_*}}$ and split into two regions. For $R\ge K_\nu R_*$, we use the pointwise bound \eqref{potinwonioen} with $\theta=0$ which implies the bound:
 $$|\Psi(x)|\lesssim \frac{\|f\|_{\alpha}}{a^{\frac 12}\la r\ra^{\alpha-2}},$$
 and hence the bound:
\bee
&&\int_{\Bbb R^5}a^2\la r\ra^{2\alpha-10}|\pa_z^2m_B|\Psi^2\\
&\lesssim& a^2 \int_{z\in \Bbb R}\int_{r\le R_*\nu}\la r\ra^{2\alpha-10}\frac{\Psi^2}{\la z\ra^{1+\delta_*}}r^3drdz+
\int_{r>0} \int_{z\in \Bbb R}\frac{a^2}{a}\|f\|_{\alpha}^2\frac{\la  r\ra^{2\alpha-10}r^3drdz}{\la r\ra^{2(\alpha-2)}\la z\ra^{1+\delta_*}}\\
& \lesssim & a^2 \int_{z\in \Bbb R}\int_{r>0}\frac{\Psi^2}{r^2}\frac{\la z\ra^{(2\alpha-10)\eta}}{\la z\ra^{1+\delta_*}}r^3drdz+\|f\|_\alpha^2\lesssim \|f\|_\alpha^2,
\eee
for $\eta$ sufficiently small.
\\

 \noindent\underline{$D_f$ terms}. Recall \eqref{estiaofnrdiune} which yields:
  $$r|D_1|\lesssim \frac{1}{\nu^2\la R\ra^{|\l_-|}}\left(\int \frac{\Psi^2}{r^2}r^3dr\right)^{\frac 12}\lesssim \frac{1}{\la R\ra^{|\l_-|}\la z\ra^{\frac 12}}\left(\int_{r>0}\la z\ra(\pa_r\Psi)^2r^3dr\right)^{\frac 12},$$ and hence using \eqref{fihihbond}:
 \bee
&& \left|\int_{\Bbb R^5}D_1m_B\Psi dx\right|\lesssim\left(\int_{\Bbb R^5}\la z\ra^{1-\delta_*}|D_1|^2r^2\la r\ra^{4\alpha-20}dx\right)^{\frac 12}\left(\int_{\Bbb R^5}\frac{\la z\ra^{1-\delta_*}\Psi^2}{r^2}dx\right)^{\frac 12}\\
 & \lesssim & \|f\|_\alpha\left[\int_{z>0}\frac{\la z\ra^{1-\delta_*}dz}{\la z\ra}\int_{u>0}\la z\ra(\pa_r\Psi)^2(u,z)u^3du\left(\int_{r>0}\frac{\la r\ra^{4\alpha-20}r^3dr}{\la R\ra^{|\l_-|}}\right)\right]^{\frac 12}\\
 & \lesssim & \|f\|_\alpha\left(\int_{z>0}\int_{u>0}\frac{\la z\ra(\pa_r\Psi)^2(u,z)}{\la z\ra^{\delta_*-(4\alpha-16)\eta}}u^3dudz\right)^{\frac 12}\lesssim  \|f\|_\alpha^2,
 \eee
 for $\eta$ sufficiently small and $\alpha \ll |\lambda_-|$. Next
 recall \eqref{vdjkvbndkndlknvdlnv}
$$|D_2(r,z)|\lesssim\frac{1}{\nu^3\la R\ra^{\gamma-2+|\l_-|}}\left(\int_{r>0}f^2r^2r^3dr\right)^{\frac12},
$$ which yields:
\bee
&& \left|\int_{\Bbb R^5}D_2m_B\Psi dx\right|\lesssim\left(\int_{\Bbb R^5}\la z\ra^{1-\delta_*}|D_2|^2r^2\la r\ra^{4\alpha-20}dx\right)^{\frac 12}\left(\int_{\Bbb R^5}\frac{\la z\ra^{1-\delta_*}\Psi^2}{r^2}dx\right)^{\frac 12}\\
& \lesssim & \|f\|_{\alpha}\left[\int_{z\in \Bbb R}\int_{u>0}\la z\ra^{1-\delta_*}f^2(u,z)u^5du\left(\int_{r\le \nu}\frac{\la r\ra^{4\alpha}}{\la R\ra^{|\l_-|}}\right)dz\right]^{\frac 12}\lesssim \|f\|_{\alpha}^2,
\eee
again for $\eta$ sufficiently small and $\alpha\ll |\lambda_-|$.\\

\noindent\underline{Conclusion} The collection of above bounds yields:
$$\int_{\Bbb R^5}m_{B}\left[(\pa_r\Psi)^2+a^2(\pa_z\Psi)^2\right]dx\lesssim \|f\|_{\alpha}^2,
$$ with a constant independent of $B$, and hence $B\to+\infty$ and Fatou's lemma yield \eqref{fheiofehehigeog}.\\

\noindent{\bf step 3} Improved decay in $z$. We claim
\be
\label{vneivneeionenenvoe}
\int_{\Bbb R^5}\la z\ra^{3-\frac{3}{2}\delta_*}\left[(\pa_r\Psi)^2+a^2(\pa_z\Psi)^2\right]dx\lesssim \|f\|_{\alpha}^2.
\ee
Indeed, we rerun step 1 with the even multiplier
\be
\label{vneonvnvoenenvobis}
m_A(z)=\left|\begin{array}{l} 1\ \ \mbox{for}\ \ 0\le z\le \frac 12\\ z^{3-\frac{3}{2}\delta_*} \ \ \mbox{for}\  \ 1\le z\leq A\\ (4A)^{2+\frac 14}\ \ \mbox{for}\ \ z\ge 2A
\end{array}\right., \ \ m_A(z)>0,
\ee 
and obtain since the multiplier is $r$ independent:
\bee
&&c_*\int_{\Bbb R^5}m_{A}(z)\left[(\pa_r\Psi)^2+a^2(\pa_z\Psi)^2\right]dx\\
& \leq& \int_{\Bbb R^5}m_{A}\left[(\pa_r\Psi)^2+a^2(\pa_z\Psi)^2\right]dx+\frac 12\int_{\Bbb R^5}\left(-a^2\pa_z^2m_A+m_{A}W\right)\Psi^2dx\\
&=&  \int_{\Bbb R^5}fm_A(z)\Psi dx.
\eee
Using $|m_A''|\lesssim \la z\ra^{1-\frac{3}{2}\delta_*}$, \eqref{fheiofehehigeog} and the radial Sobolev we get:
\bee
|\Psi|(r,z)&\lesssim& \frac{1}{\la z\ra^{\frac{1-\delta_*}{2}}}\left(\int_{u>0}(\pa_r\Psi^2)\la u\ra^{2\alpha-10}\la z\ra^{1-\delta_*}u^3du\right)^{\frac 12}\left(\int_{u>r}\frac{du}{u^3\la u\ra^{2\alpha-10}}\right)^{\frac 12}\\
&\lesssim& \frac{1}{\la z\ra^{\frac{1-\delta_*}{2}}r\la r\ra^{\alpha-5}}\left(\int_{u>0}(\pa_r\Psi^2)\la u\ra^{2\alpha-10}\la z\ra^{1-\delta_*}u^3du\right)^{\frac 12},
\eee
for $\alpha$ large enough and hence we get the uniform in $A$ bound using \eqref{fheiofehehigeog}:
\bee
&&a^2\int_{z\in \Bbb R}\int_{r>0}|\pa_z^2m_A|\Psi^2dx\\
&\lesssim& \int_{z\in \Bbb R}\int_{r>0}\frac{\la z\ra^{1-\frac{3}{2}\delta_*}r^3drdz}{\la z\ra^{1-\delta_*}r^2\la r\ra^{2\alpha-10}}\int_{u>0}(\pa_r\Psi^2)\la u\ra^{2\alpha-10}\la z\ra^{1-\delta_*}u^3du\\
& \lesssim & \int_{z\in \Bbb R}\int_{u>0}(\pa_r\Psi^2)\la u\ra^{2\alpha-10}\la z\ra^{1-\delta_*}u^3dudz\lesssim \|f\|_{\alpha}^2.
\eee
Then the four dimensional Hardy inequality ensures:
\bee
\left|\int_{\Bbb R^5}fm_A(z)\Psi dx\right|&\lesssim &\left(\int_{\Bbb R^5}r^2\la z\ra^{3-\frac{3}{2}\delta_*}f^2dx\right)^{\frac 12}\left(\int_{\Bbb R^5}m_A(z)\frac{\Psi^2}{r^2}dx\right)^{\frac 12}\\
&\le & C_\e\|f\|_{\alpha}^2+\e\int_{\Bbb R^5}m_A(z)(\pa_r\Psi)^2dx.\eee The collection of above bounds yields:
$$\int_{\Bbb R^5}m_{A}(z)\left[(\pa_r\Psi)^2+a^2(\pa_z\Psi)^2\right]dx\lesssim \|f\|_{\alpha}^2,$$ and letting $A\to +\infty$ and using Fatou's lemma yields \eqref{vneivneeionenenvoe}.\\

\noindent{\bf step 4} Proof of \eqref{imporvedlrotwbound}. We now rerun step 2 with the even multiplier $$m_0(z)=\left|\begin{array}{l}1\ \ \mbox{for}\ \ z\le \frac 12\\ z^{3-2\delta_*}\ \ \mbox{for}\ \ z\ge 1
\end{array}\right.,
$$
and for $B\ge 1$ a radial multiplier: $$\chi_B(r)=\left|\begin{array}{l} 1\ \ \mbox{for}\ \ r\le \frac 12\\ r^{2\alpha-12}\ \ \mbox{for}\ \ 1\le r\le B\\  (2B)^{\alpha-12}\ \ \ \ \mbox{for}\ \ r\ge 2B
\end{array}\right., \ \ \chi'\ge 0.$$ Let again $m_B(r,z)=m_0(z)\chi_B(r),$ we estimate all the terms in \eqref{vneoivnevenvenveonenv}.\\
 \noindent\underline{Source term}
\bee
\left|\int_{\Bbb R^5}fm_B\Psi dx\right|&\lesssim& \left(\int_{\Bbb R^5}\la r\ra^{2\alpha-12}\la r\ra^2\la z\ra^{3-2\delta_*}f^2dx\right)^{\frac 12}\left(\int_{\Bbb R^5}m_B\frac{\Psi^2}{\la r\ra^2}dx\right)^{\frac 12}\\
&\lesssim& \|f\|_\alpha^2+\int_{\Bbb R^5}m_B\frac{\Psi^2}{\la r\ra^2}dx,
\eee
again the second term is treated below with the localization terms.\\

 \noindent\underline{Localization terms}. 
 We estimate for $r\ge 1$:
 $$\frac{1}{m_B}\left(|\pa_r^2m_{B}|+\frac{|\pa_rm_{B}|}r\right)\leq \frac{C\alpha^2}{r^2},$$ and hence for $R\ge R_*$ from \eqref{estitmot} and $\gamma$ close enough to 2: 
 $$m_B W\gg |\pa_r^2m_{B}|+\frac{|\pa_rm_{B}|}r.$$ For $R\le R_*$, we use the radial Sobolev inequality to get
 $$|\Psi(r,z)|\lesssim \frac{1}{r\la z\ra^{\frac 32-\frac 34\delta_*}}\left(\int_{u>0}\la z\ra^{3-\frac{3}{2}\delta_*}(\pa_r\Psi)^2(u,z)u^3du\right)^{\frac 12},$$ with \eqref{vneivneeionenenvoe} to estimate
for $\eta$ small enough:
 \bee
 &&\int_{z\in \Bbb R}\int_{R\le R_*}m_B\Psi^2dx\\
 &\lesssim & \int_{z\in \Bbb R}\int_{r\le \nu}\frac{\la r\ra^{2\alpha-12}\la z\ra^{3-2\delta_*}}{r^2\la z\ra^{3-\frac 32\delta_*}}r^3\left(\int_{u>0}\la z\ra^{3-\frac 32\delta_*}(\pa_r\Psi)^2(u,z)u^3du\right)drdz\\
 &\lesssim &  \int_{z\in \Bbb R}\int_{u>0}\la z\ra^{3-\frac 32\delta_*}(\pa_r\Psi)^2(u,z)u^3du\lesssim \|f\|_{\alpha}^2.
 \eee
We now estimate $|\pa_z^2m_B|\lesssim \la z\ra^{1-2\delta_*}\la r\ra^{2\alpha-10}$ and integrate by parts in $r$ to evaluate using \eqref{fheiofehehigeog}:
\bee
&&\int_{\Bbb R^5}a^2\la r\ra^{2\alpha-12}|\pa_z^2m_B|\Psi^2\lesssim\int_{z\in \Bbb R}\int_{r>0}  \la z\ra^{1-2\delta_*}\la r\ra^{2\alpha-12}\Psi^2r^3dr\\
&\lesssim& \int_{z\in \Bbb R}\int_{r>0}\la z\ra^{1-2\delta_*} \la r\ra^{2\alpha-10}(\pa_r\Psi)^2r^3dr\lesssim \|f\|_{\alpha}^2.
\eee

 \noindent\underline{$D_f$ terms}. Recall \eqref{estiaofnrdiune} which yields:
 $$r|D_1|\lesssim \frac{1}{\nu^2\la R\ra^{|\l_-|}}\left(\int \frac{\Psi^2}{r^2}r^3dr\right)^{\frac 12}\lesssim \frac{1}{\la R\ra^{|\l_-|}\la z\ra^{\frac 32-\frac{3}{4}\delta_*}}\left(\int_{r>0}\la z\ra^{3-\frac 32\delta_*}(\pa_r\Psi)^2r^3dr\right)^{\frac 12},$$ and hence using \eqref{vneivneeionenenvoe}:
 \bee
&& \left|\int_{\Bbb R^5}D_1m_B\Psi dx\right|\lesssim\left(\int_{\Bbb R^5}\la z\ra^{3-2\delta_*}|D_1|^2r^2\la r\ra^{4\alpha-28}dx\right)^{\frac 12}\left(\int_{\Bbb R^5}\la z\ra^{3-\frac 32\delta_*}\frac{\Psi^2}{r^2}dx\right)^{\frac 12}\\
 & \lesssim & \|f\|_\alpha\left[\int_{z>0}\frac{\la z\ra^{3-2\delta_*}dz}{\la z\ra^{3-\frac32\delta_*}}\int_{u>0}\la z\ra^{3-\frac 32\delta_*}(\pa_r\Psi)^2(u,z)u^3du\left(\int_{r>0}\frac{\la r\ra^{4\alpha}r^3dr}{\la R\ra^{|\l_-|}}\right)\right]^{\frac 12}\\
 & \lesssim &  \|f\|_\alpha^2,
 \eee
 for $\eta$ small enough and $\alpha\ll |\lambda_-|$. Next recall \eqref{vdjkvbndkndlknvdlnv}
\bee
    |D_2(r,z)|&\lesssim &  \frac{1}{\nu^3\la R\ra^{\gamma-2+|\l_-|}}\left(\int_{r>0}f^2r^2r^3dr\right)^{\frac12}
\\
&\lesssim & \frac{{\bf 1}_{R\le 1}}{\la R\ra^{|\l_-|}\la z\ra^{\frac 32-\frac 12\delta_*}}\left(\int_{r>0}\la z\ra^{3-\delta_*}f^2r^2r^3dr\right)^{\frac12}\eee
which yields using \eqref{vneivneeionenenvoe}:
\bee
&& \left|\int_{\Bbb R^5}D_2m_B\Psi dx\right|\lesssim\left(\int_{\Bbb R^5}|D_2|^2\la z\ra^{3-2\delta_*}r^2\la r\ra^{4\alpha-28}dx\right)^{\frac 12}\left(\int_{\Bbb R^5}\la z\ra^{3-2\delta_*}\frac{\Psi^2}{r^2}dx\right)^{\frac 12}\\
& \lesssim & \|f\|_{\alpha}\left[\int_{z\in \Bbb R}\frac{\la z\ra^{3-2\delta_*}}{\la z\ra^{3-\delta_*}}\int_{u>0}\la z\ra^{3-\delta_*}f^2(u,z)u^5du\left(\int_{r\le \nu}\frac{\la r\ra^{4\alpha}}{\la R\ra^{|\l_-|}}\right)\right]^{\frac 12}\\
& \lesssim & \|f\|_{\alpha}^2,
\eee
again for $\eta$ small enough and $\alpha\ll |\lambda_-|$.\\

\noindent\underline{Conclusion} The collection of above bounds yields
$$\int_{\Bbb R^5}m_{B}\left[(\pa_r\Psi)^2+a^2(\pa_z\Psi)^2\right]dx\lesssim \|f\|_{\alpha}^2,
$$ with a constant independent of $B$, and hence $B\to+\infty$ and Fatou's lemma yield \eqref{imporvedlrotwbound}.
\end{proof}
\subsection{Proof of Proposition \ref{proppointizedfjifw}}

We are now in position to conclude the proof of Proposition \ref{proppointizedfjifw}.\\

\noindent{\bf step 1} Pointwise decay. We claim the pointwise decay :
\be
\label{poitneonninoboundpsi}
|\psi(x)|\leq K \frac{\|f\|_{\alpha}}{a^{\frac 12}}\left[\frac{r^{2}}{\la z\ra^{\frac{1}{2}}\la R\ra^{\frac D2}}+\frac{r^2}{\la r\ra^{\alpha-2}\la z\ra^{\frac 12}}\right].
\ee
We analyse two regions.\\

\noindent\underline{The case of $R\ge K_\nu R_*$}. We use the pointwise bound \eqref{potinwonioen} together with the weighted $L^2$ bound \eqref{imporvedlrotwbound} which imply for $\theta=1$:
$$|\Psi(x)|\lesssim  \frac{\|f\|_{\alpha}}{a^{\frac 12}}\left[\frac{1}{\la r\ra^{\frac 12}\la z\ra^{\frac{1}{2}}R^{\frac D2}}+\frac{1}{\la r\ra^{\alpha-\frac 32}\la z\ra^{\frac 12}}\right],
$$
and hence $$|\psi|=|r^2\Psi|\lesssim  \frac{\|f\|_{\alpha}}{a^{\frac 12}}\left[\frac{\la r\ra^{2}}{\la r\ra^{\frac 12}\la z\ra^{\frac{1}{2}}R^{\frac D2}}+\frac{r^2}{\la r\ra^{\alpha-2}\la z\ra^{\frac 12}}\right].$$
\noindent\underline{The case of $R\le K_\nu R_*$}. If $|z|\le 1$, then $r+|z|\lesssim 1$ and we use the $L^\infty$ bound \eqref{vneovneovneoneonvonbi} which implies in this zone
$$|\psi(x)|\lesssim |\Psi(x)|\lesssim \frac{\|f\|_{\alpha}}{a^{\frac 12}}.$$ For $|z|\ge 1$, we invoke \eqref{cneiovnenoienevn} and \eqref{imporvedlrotwbound} which yield
$$|\psi(x)|\lesssim  |r^2\Psi(x)|\lesssim \frac{\|f\|_{\alpha}}{a^{\frac 12}}\frac{r^2}{\la z\ra^{\frac{1}{2}}}.$$
The collection of the above bounds yields \eqref{poitneonninoboundpsi}.\\

\noindent{\bf step 2} Proof of \eqref{continieou}. From \eqref{imporvedlrotwbound}: $$\|\la r\ra^{\alpha-6}\la z\ra(|\pa_r\Psi|+a|\pa_z\Psi|)\|_{L^2(\Bbb R^5)}\lesssim \|f\|_{\alpha}.$$ From \eqref{poitneonninoboundpsi}, since $\nu\lesssim C_0\la z\ra^\eta$ and provided $D$ has been chosen large enough (i.e $\gamma$ close enough to $2$):
$$a^{\frac 12}|\psi(x)|\lesssim \frac{\|f\|_{\alpha}r^2}{\la z\ra^{\frac{1}{4}}\la r\ra^{\alpha-2}},$$ 
which together with \eqref{imporvedlrotwbound} concludes the proof of \eqref{continieou}  and of Proposition \ref{proppointizedfjifw}.


\section{Constructing $\psi_\nu$}
\label{sectionnonlinone}


This section is devoted to the non linear construction of $\psi_\nu$ for any admissible $\nu$. The main difficulty, as in \cite{MRS20}, is to understand decay in $z$, that is the choice of the $\nu$ function. For $(a,\mathcal C,\mathcal H)=(0,\mathcal C_0,\mathcal H_0)$, $\nu(z)$ is arbitrary. For $0<a\ll1$ and a given pair of deformations $(\mathcal C, \mathcal H)$, the bifurcation equation for $\nu$ is obtained by projecting the flow onto the scaling instability, or equivalently ensuring the vanishing of the additional $D_f$ term. The bifurcation equation will be  solved in section \ref{sectionnonlintwo}.


\subsection{Setting up the non linear problem}


Our target is to solve \eqref{vnbeioneinveneoivbis}:
 $$-\frac 1r\pa_r\left(\frac{1}{r}\pa_r\psi_{\rm tot}\right)-\frac{a^2}{r^2}\pa_z^2\psi_{\rm tot}=\left(\frac{\mathcal C\matchal C'}{r^2}-\mathcal H'\right)(\psi_{\rm tot}).$$
 
 \noindent{\em Explicit choice of $\mathcal C,\mathcal H$ deformations}. We recall \eqref{vneoinonnrinrbno} and specify explicitly\footnote{We make this choice of explicit power non-linearities for $\R_1$ and $\R_2$ for the sake of simplicity and clairity of the exposition, but obviously any small enough deformation of this choice would lead to the same conclustion. This is really a map $(a,\mathcal C,\mathcal H)\to \psi_{\rm tot}$.} 
\be
\label{vneobnveineoeonoenioe}
\left|\begin{array}{l}
\mathcal C\matchal C'(\psi_{\rm tot})=\mathcal C_0\matchal C'_0(\psi_{\rm tot})+a^2\R_1(\psi_{\rm tot})\\
\R_1(\psi_{\rm tot})=\psi_{\rm tot}^{\frac{\gamma}{\gamma-2}+m1}
\end{array}\right.\Rightarrow \mathcal C\matchal C'(\psi)=A(\gamma)\psi_{\rm tot}^{\frac{\gamma}{\gamma-2}}\left(1+\frac{a^2}{A(\gamma)}\psi_{\rm tot}^{m_1}\right),
\ee
and
\be
\label{vneobnveineoeonoenioebis}
\left|\begin{array}{l}\H'(\psi_{\rm tot})=\H_0'(\psi_{\rm tot})+a^2\R_2(\psi_{\rm tot})\\
\R_2(\psi_{\rm tot})=\psi_{\rm tot}^{\frac{\gamma+2}{\gamma-2}+m_2}
\end{array}\right.
\Rightarrow \H'=\psi_{\rm tot}^{\frac{\gamma+2}{\gamma-2}}\left(1+a^2\psi_{\rm tot}^{m_2}\right).
\ee

\noi{\em Linearized flow}. We define $$\left|\begin{array}{l}
\psi_{\rm tot}=\psi_\nu+a^2\psi\\
\psi_\nu(r,z)=\frac{1}{\nu^{\gamma-2}(z)}\psi_*\left(\frac{r}{\nu(z)}\right)
\end{array}\right..
$$
This yields after division by $a^2$ the non linear equation
\be
\label{vneioneioneinveonvie}
\left|\begin{array}{l}
L_a\psi=F(\nu,\psi)\\
F(\nu,\psi)=\frac{1}{r^2}\pa_z^2\psi_\nu+{\rm NL}(\nu,\psi)
\end{array}\right.,
\ee with 
$${\rm NL}(\nu,\psi)=\left[\frac{\R_1(\psi_\nu+a^2\psi)}{r^2}-\R_2(\psi_\nu+a^2\psi)\right]+\frac{\mathcal F_1(\nu,\psi)-\mathcal F_2(\nu,\psi)}{a^2},
$$
and $$\left|\begin{array}{l}
\mathcal F_1(\nu,\psi)=\frac{\C_0\C'_0(\psi_\nu+a^2\psi)-\C_0\C'_0(\psi_\nu)-a^2\left[(\C'_0)^2+\C_0\C_0''\right](\psi_\nu)\psi}{r^2}\\
\mathcal F_2(\nu,\psi)=\H'_0(\psi_\nu+a^2\psi)-\H'_0(\psi_\nu)-a^2\H''(\psi_\nu)\psi
\end{array}\right..
$$
\subsection{Lipschitz regularity for the non linear terms} We study the Lipschitz regularity of the non linear term ${\rm NL}(\phi)$ in $\|\cdot\|_{E_\alpha}$ and $\|\cdot\|_\alpha$ norms.
\begin{lemma}[Lipschitz regularity for the second order non linear terms]
\label{lemmanonlin}
Consider $\gamma=2+\frac{1}{m}$, $m\in \mathbb{N}$ where $m\geq m^*\gg1$ where $m^*$ is given by Proposition \ref{proppointizedfjifw} as well as $\alpha(\gamma),a^*$ and $\eta^*(\gamma,\delta_*)$. Then for $1\ll \alpha\leq \alpha(\gamma)$, $0<a<a^*$, $0<\eta<\eta^*$, $\nu_1,\nu_2\in \mathcal N^{\eta}$ and $$\psi_1,\psi_2\in B_M=\{\Psi\in E_{\alpha},\ \  \|\Psi\|_{E_{\alpha}}\leq M\},$$ the following bound holds
\bea
\label{firsingengeo}
\nonumber &&\left\|\la z\ra^{\frac{1}{20}}\frac{\mathcal F_1(\nu_2,\psi_2)-\mathcal F_1(\nu_1,\psi_1)}{a^2}\right\|_{\alpha+6}+\left\|\la z\ra^{\frac{1}{20}}\frac{\mathcal F_2(\nu_2,\psi_2)-\mathcal F_2(\nu_1,\psi_1)}{a^2}\right\|_{\alpha+6}\\
&\lesssim &  aM\left(\|\Psi_2-\Psi_1\|_{E_{\alpha}}+M\left\|\frac{\nu_2-\nu_1}{\la z\ra^{\frac{1}{6}}}\right\|_{L^\infty}\right).
\eea
\end{lemma}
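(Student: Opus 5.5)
The plan is to write each $\mathcal F_i$ as a second order Taylor remainder and then read off the gains in $a$, in $z$ and in $r$ from the $E_\alpha$ norm. Set $\psi=r^2\Psi$ and, for a smooth nonlinearity $G$, use
\[
G(\psi_\nu+a^2\psi)-G(\psi_\nu)-a^2G'(\psi_\nu)\psi=a^4\psi^2\int_0^1(1-\tau)G''(\psi_\nu+\tau a^2\psi)\,d\tau .
\]
After dividing by $a^2$, each term is $a^2\psi^2$ times $G''$ evaluated on the segment. For $\mathcal F_1$ take $G=\mathcal C_0\mathcal C_0'=A\psi^{1+2m}$, with the extra factor $r^{-2}$; for $\mathcal F_2$ take $G=\mathcal H_0'=\psi^{1+4m}$. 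By \eqref{formulasink} both are polynomials in $\psi$, so $G''$ is a polynomial of degree $2m-1$ (resp.\ $4m-1$). This removes any regularity problem at $\psi=0$, and the only task left is to bound sizes.

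\textbf{Step 1 (pointwise sizes).} From the definition of $E_\alpha$ and $\|\Psi\|_{E_\alpha}\le M$ we have $|a^2\psi|\le a^{3/2}M\,r^2\langle z\rangle^{-1/4}\langle r\rangle^{-\alpha-2}$. From Proposition \ref{propvrotexprofile} and \eqref{neovnonvievvnkvnoneii} we have $\psi_\nu\lesssim r^2\nu^{-\gamma}\langle R\rangle^{-\gamma}\lesssim \min(r^2,r^{2-\gamma})$. Hence for $a$ small the segment stays in a bounded set, with $|\psi_\nu+\tau a^2\psi|\lesssim \min(r^2,r^{2-\gamma})$. This gives
\[
\Bigl|\frac{\mathcal F_2}{a^2}\Bigr|\lesssim a^2\psi^2\,|\psi_\nu|_{\rm eff}^{4m-1},\qquad
\Bigl|\frac{\mathcal F_1}{a^2}\Bigr|\lesssim \frac{a^2\psi^2}{r^2}\,|\psi_\nu|_{\rm eff}^{2m-1}.
\]
Using $a^2\psi^2=a\,(a^{1/2}\psi)^2\lesssim aM^2 r^4\langle z\rangle^{-1/2}\langle r\rangle^{-2\alpha-4}$ gives the factor $aM$ and the decay $\langle z\rangle^{-1/2}$, which absorbs the weight $\langle z\rangle^{1/20}$. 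It also gives $r$-decay $\langle r\rangle^{-2\alpha}$ or better, which beats $\langle r\rangle^{\alpha+6}$ once $\alpha\gg1$. The factor $r^{-2}$ in $\mathcal F_1$ is compensated by the $r^4$ near the origin. This controls the $L^\infty$ part of $\|\cdot\|_{\alpha+6}$. For the $L^2_z$ part with weight $\langle z\rangle^{3-\delta_*+1/10}$, I will not use only the pointwise $\langle z\rangle^{-1/2}$. Instead I bound one factor $\psi$ pointwise and the other through the $L^2$ control of $\langle z\rangle^{3/2-\delta_*}\partial_r\Psi$, converted to pointwise-in-$z$ $L^2_z$ control of $\Psi$ by the radial Sobolev/Hardy estimate in $r$ used in Lemma \ref{lemmaltwobound}.

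\textbf{Step 2 (differences).} For fixed $\nu$, the difference in $\psi$ is linear in $\psi_2-\psi_1$, multiplied by $a^2(|\psi_1|+|\psi_2|)$ and by $G''$ on the segment. Step 1 then gives $aM\|\Psi_2-\Psi_1\|_{E_\alpha}$ directly. For the $\nu$-difference, I differentiate along $\nu_\tau=\nu_1+\tau(\nu_2-\nu_1)$. By scaling, $\partial_\tau\psi_{\nu_\tau}=-\frac{\nu_2-\nu_1}{\nu_\tau}\,\zeta_*(R)\,\nu_\tau^{2-\gamma}$, so it is bounded by $|\nu_2-\nu_1|\,\nu^{-1}\psi_\nu$-type quantities, using the decay of $\zeta_*$ in \eqref{asymptoticexpansionvortex}. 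This derivative multiplies $a^2\psi^2G'''$, which again carries $aM^2$. Since $\nu\gtrsim\langle z\rangle^\eta$, the factor $\nu^{-1}$ is harmless. The weight $\langle z\rangle^{1/6}$ in the statement is then paid for by the $\langle z\rangle^{-1/2}$ coming from $\psi^2$, with room left over for $\langle z\rangle^{1/20}$.

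\textbf{Main obstacle.} The delicate point is the $L^2_z$ component of $\|\cdot\|_{\alpha+6}$ with the heavy weight $\langle z\rangle^{3-\delta_*}$: the pointwise bound alone only gives $\langle z\rangle^{-1/2}$, which is not enough. I would first try pairing one factor in $L^\infty$ with the other in the weighted energy norm, using $\langle z\rangle^{1/4}$ from the $L^\infty$ part and the $\langle z\rangle^{3/2-\delta_*}$ control of $\partial_r\Psi$. The plan is to check whether the exponent budget closes with the small $\nu^{C\eta}$ losses and $\eta\ll\delta_*$. If it does not, I would instead lower the $z$-weight through the surplus $r$-decay, using that in the region $R\lesssim 1$ one has $r\lesssim\langle z\rangle^\eta$.
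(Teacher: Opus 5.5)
Your proposal is correct and follows the paper's proof: Taylor remainder with a bounded coefficient, $|\partial_\nu\psi_\nu|\lesssim\psi_\nu/\nu$ for the $\nu$-difference, the $L^\infty$ component of $E_\alpha$ (which yields exactly $a^2\cdot a^{-1/2}\cdot a^{-1/2}=a$) for the sup part, and, for the weighted $L^2_z$ part, one factor in $L^\infty$ paired with the radial Sobolev bound $\int\langle z\rangle^{3-2\delta_*}\psi^2\,dz\lesssim\|\Psi\|_{E_\alpha}^2\langle r\rangle^{2-2\alpha}$. That first pairing in your ``main obstacle'' is exactly what the paper does, and it closes since $\frac{1}{10}+\frac13-\frac12<-\delta_*$ for $\delta_*$ small; the only cosmetic difference is that you bound $G''$ on the segment via the integer exponents and the envelope $\min(r^2,r^{2-\gamma})$, whereas the paper uses the two-sided comparability $\frac12\psi_\nu\le\psi_\nu+a^2\psi\le\frac32\psi_\nu$ coming from the lower bound on $\Psi_*$.
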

\begin{remark}
    The extra decay quantified by $\la z \ra^{\frac{1}{20}}$ is crucial to close the bifurcation equation in the next section.
\end{remark}

\begin{proof}[Proof of Lemma \ref{lemmanonlin}] We estimate in brute force the non linear terms.\\

\noindent{\bf step 1} Uniform control of the tails. First observe using the lower bound $$\Psi_\nu=\frac{1}{\nu^{\gamma}}\Psi_*(R)\gtrsim \frac{1}{\nu^{\gamma}(1+R^{\gamma})}=\frac{1}{\nu^\gamma+r^{\gamma}},$$ the pointwise bound for $\Phi\in B_{M}$: 
\be
\label{vnoivneinevnie}
\left|\frac{a^2\psi}{\psi_\nu}\right|=\left|\frac{a^2\Psi}{\Psi_\nu}\right|\lesssim \frac{a^2M}{a^{\frac 12}}\frac{\nu^{\gamma}+r^\gamma}{\la z\ra^{\frac{1}{4}}\la r\ra^{\alpha}}\lesssim a^{\frac 32}M\le \frac 12,
\ee 
for $\eta<\eta^*$ small enough and $\gamma$ close enough to 2. In particular we have the global control
\[
\frac{1}{2} \frac{1}{\nu(z)^{\gamma-2}}\psi_*\left(\frac{r}{\nu(z)}\right)\leq \psi(r,z)\leq \frac{3}{2} \frac{1}{\nu(z)^{\gamma-2}}\psi_*\left(\frac{r}{\nu(z)}\right).
\]

\noindent{\bf step 2} Power nonlinearites. Let $$\left|\begin{array}{l}
H(u)=u^\ell, \ \ \ell\gg 1\\
F(\nu,\psi)=H(\psi_\nu+a^2\psi)-H(\psi_\nu)-a^2H'(\psi_{\nu})\psi
\end{array}\right..
$$
and define $f(t)=H(\psi_\nu+a^2t\psi)$, then 
\bee
F(\nu,\psi)&=&f(1)-f(0)-f'(0)=\int_0^1f'(t)dt-f'(0)=-\int_0^1tf''(t)dt\\
& = & -(a^2\psi)^2\int_0^1 H''(\psi_\nu+a^2t\psi)dt,
\eee
and hence
$$\left|\begin{array}{l}
\pa_\nu F(\nu,\psi)=-a^4\psi^2\int_0^1 \pa_\nu \psi_\nu H'''(\psi_\nu+a^2t\psi)dt\\
\pa_\psi F(\nu,\psi)=-a^4\left[2\psi\int_0^1 H''(\psi_\nu+a^2t\psi)dt+\psi^3\int_0^1 a^2tH'''(\psi_\nu+a^2t\psi)dt\right]
\end{array}\right..
$$
We then express
\[F(\nu_2,\psi_2)-F(\nu_1,\psi_1)=F(\nu_2,\psi_2)-F(\nu_2,\psi_1)+F(\nu_2,\psi_1)-F(\nu_1,\psi_1),\]
thus
\bee
F(\nu_2,\psi_2)-F(\nu_1,\psi_1)&= &(\psi_2-\psi_1) \int_0^1\pa_\psi F(\nu_2,\psi_1+h(\psi_2-\psi_1))dh\\ &+ &(\nu_2-\nu_1)\int_0^1\pa_\nu F(\nu_1+h(\nu_2-\nu_1),\psi_1)dh,
\eee
and estimate all terms.\\

\noindent\underline{$\pa_\psi F$ term}. We estimate from \eqref{vnoivneinevnie}:
$$
|\pa_\psi F|\lesssim a^4\left[|\psi||\psi_\nu|^{\ell-2}+a^2|\psi|^3|\psi_\nu|^{\ell-3}\right]\lesssim a^4|\psi||\psi_\nu|^{\ell-2},
$$

\noindent\underline{$\pa_\nu F$ term}. We estimate 
\be
\label{veniovneneoevkdhkhdv}
|\pa_\nu\psi_\nu|=\left|-\frac{1}{\nu^{\gamma-1}}(\gamma-2+R\partial_R)\psi_*(R)\right|\lesssim \frac{\psi_\nu}{\nu}\lesssim \psi_\nu,
\ee
 and hence
$$|\pa_\nu F|\lesssim a^4\psi^2|\pa_\nu \psi_\nu||\psi_\nu|^{\ell-3}\lesssim a^4\psi^2|\psi_\nu|^{\ell-2}.$$

\noindent{\bf step 3} Proof of \eqref{firsingengeo} for $\mathcal F_1$. We apply the estimates of Step 2 with $$\ell= \frac{2(\gamma-1)}{\gamma-2}-1=\frac{\gamma}{\gamma-2},$$ so that
$$\left|\begin{array}{l}
|\pa_\psi F|\lesssim a^4|\psi||\psi_\nu|^{\ell-2}\lesssim a^4|\psi|\left(\frac{1}{\nu^{\gamma-2}\la R\ra^{\gamma-2}}\right)^{\frac{4-\gamma}{\gamma-2}}\lesssim \frac{a^4|\psi|}{\nu^2\la R\ra^2}\lesssim a^4|\psi|\\
|\pa_\nu F|\lesssim a^4\psi^2|\psi_\nu|^{\ell-2}\lesssim \frac{a^4\psi^2}{\nu^2\la R\ra^2}\lesssim a^4\psi^2
\end{array}\right..
$$
Which gives the pointwise bound
\bee
\frac{1}{r^2}|\mathcal F_1(\nu_2,\psi_2)-\matchal F_1(\nu_1,\psi_1)|\lesssim \frac{a^4|\psi_2-\psi_1|(|\psi_2|+|\psi_1|)}{r^2}+\frac{a^4|\nu_2-\nu_1|(|\psi_2|^2+|\psi_1|^2)}{r^2}.
\eee
In the ball $B_M$ we have
\be
\label{venivnoenvevxnvxnv}
\left\|\la z\ra^{\frac{1}{4}}\frac{\la r\ra^{\alpha+2}}{r^2}\psi\right\|_{L^\infty}\lesssim \frac{M}{\sqrt{a}},
\ee 
thus
\bee
&&\frac{\la r\ra^{\alpha+6}}{r^2}|\mathcal F_1(\nu_2,\psi_2)-\mathcal F_1(\nu_1,\psi_1)|\\
&\lesssim& \frac{a^4M}{a^{\frac 12}}\frac{\la r\ra^{\alpha+6}|\psi_1-\psi_2|}{\la z\ra^{\frac{1}{4}}\la r\ra^{\alpha+2}}+\frac{a^4M|\nu_2-\nu_1|\la r\ra^{\alpha+6}(|\psi_2|+|\psi_1|)}{ a^{\frac 12}\la z\ra^{\frac{1}{4}}\la r\ra^{\alpha+2}}\\
&\lesssim& \frac{a^3M}{\la r\ra^{\alpha-4}\la z\ra^{\frac{1}{4}}}\left[a^{\frac 12}\la r\ra^{\alpha}|\psi_1-\psi_2|+a^{\frac 12}|\nu_2-\nu_1|\la r\ra^{\alpha}(|\psi_2|+|\psi_1|)\right],
\eee
which ensures for $\alpha$ large enough:
\bee
& &\left\|\la r\ra^{\alpha+6}\la z\ra^{\frac{1}{20}}|\mathcal F_1(\nu_2,\psi_2)-\mathcal F_1(\nu_1,\psi_1)|\right\|_{L^\infty}\\
&\lesssim & a^3M\Bigg\{a^{\frac 12}\|\la r\ra^{\alpha}(\Psi_2-\Psi_1)\|_{L^\infty}\\
& & + \left\|\frac{\nu_2-\nu_1}{\la z\ra^{\frac{1}{6}}}\right\|_{L^\infty}\left[a^{\frac 12}\|\la z\ra^{\frac 14}\la r\ra^{\alpha}\psi_2\|_{L^\infty}+a^{\frac 12}\|\la z\ra^{\frac 14}\la r\ra^{\alpha}\psi_1\|_{L^\infty}\right]\Bigg\}\\
& \lesssim &  a^3M\left(\|\Psi_2-\Psi_1\|_{E_{\alpha}}+M\left\|\frac{\nu_2-\nu_1}{\la z\ra^{\frac{1}{6}}}\right\|_{L^\infty}\right).
\eee
We now observe from the four dimensional radial Sobolev embedding that 
\bee
|\Psi(r,z)|&=&\left|\int_r^{+\infty}\pa_r\Psi(r,z)dr\right|\lesssim \left(\int_r^{+\infty}r^3\la r\ra^{2\alpha}(\pa_r\Psi)^2dr\right)^{\frac 12}\left(\int_{r}^{+\infty}\frac{dr}{r^3\la r\ra^{2\alpha}}\right)^{\frac 12}\\
&\lesssim &\frac{1}{ r\la r \ra^\alpha}\left(\int_r^{+\infty}r^3\la r\ra^{2\alpha}(\pa_r\Psi)^2dr\right)^{\frac 12},
\eee
 and hence for $\psi\in B_M$:
\be
\label{ekvivnonrirh}
\int_{z\in\Bbb R}\la z\ra^{3-2\delta_*}\psi^2(r,z)dz\lesssim \left(\frac{r}{\la r \ra^\alpha}\right)^2\int_{r>0,z\in \Bbb R}\la r\ra^{2\alpha}\la z \ra^{3-2\delta_*}(\pa_r\Psi)^2r^3drdz\lesssim \frac{\|\Psi\|_{E_\alpha}^2}{\la r\ra^{2\alpha-2}},
\ee
from which we get
\bee
   & & \la r\ra^{\alpha+6}\left(\int_{z>0}\la z\ra^{3-\delta_*+\frac 1{10}} |\mathcal F_1(\nu_2,\psi_2)-\mathcal F_1(\nu_1,\psi_1)|^2dz\right)^{\frac 12}\\  
&\lesssim & \left\{\int_{z>0}\left[\frac{a^3M}{\la r\ra^{\alpha-4}\la z\ra^{\frac{1}{4}}}aux_1\right]^2\la z\ra^{3-\delta_*+\frac 1{10}} dz\right\}^{\frac 12}\lesssim   a^{3+\frac 12}M\left\{\int_{z>0}aux_2\la z\ra^{3-2\delta_*} dz\right\}^{\frac 12}\\
& \lesssim & a^3M\left(\|\Psi_2-\Psi_1\|_{E_{\alpha}}+M\left\|\frac{\nu_2-\nu_1}{\la z\ra^{\frac{1}{6}}}\right\|_{L^\infty}\right),
\eee
with 
\[
aux_1=a^{\frac 12}\la r\ra^{\alpha}|\psi_1-\psi_2|+a^{\frac 12}|\nu_2-\nu_1|\la r\ra^{\alpha}(|\psi_2|+|\psi_1|),
\]
and 
\[
aux_2=\la r\ra^{2(\alpha-1)}|\Psi_1-\Psi_2|^2+\left(\frac{|\nu_2-\nu_1|}{|z|^{\frac 16}}\right)^2\la r\ra^{2(\alpha-1)}(|\psi_2|^2+|\psi_1|^2).
\]
The above estimates imply \eqref{firsingengeo} for $\mathcal F_1$ after division by $a^2$.\\

\noindent{\bf step 4} Proof of \eqref{firsingengeo} for $\mathcal F_2$. We apply the estimates of Step 2 with $$\ell=\frac{\gamma+2}{\gamma-2},$$ 
so that 
$$\left|\begin{array}{l}
|\pa_\psi F|\lesssim a^4|\psi||\psi_\nu|^{\ell-2}\lesssim a^4|\psi|\left(\frac{1}{\nu^{\gamma-2}\la R\ra^{\gamma-2}}\right)^{\frac{6-\gamma}{\gamma-2}}\lesssim \frac{a^4|\psi|}{\nu^4\la R\ra^4}\lesssim a^4|\psi|\\
|\pa_\nu F|\lesssim a^4\psi^2|\psi_\nu|^{\ell-2}\lesssim \frac{a^4\psi^2}{\nu^4\la R\ra^4}\lesssim a^4\psi^2
\end{array}\right.,
$$
Hence the pointwise bound
\bee
|\mathcal F_2(\nu_2,\psi_2)-\mathcal F_2(\nu_1,\psi_1)|\lesssim a^4|\psi_2-\psi_1|(|\psi_2|+|\psi_1|)+a^4|\nu_2-\nu_1|(|\psi_2|^2+|\psi_1|^2),
\eee
which using \eqref{venivnoenvevxnvxnv} yields:
\bee
&&\la r\ra^{\alpha+6}\left|\mathcal F_2(\nu_2,\psi_2)-\mathcal F_2(\nu_1,\psi_1)\right|\\
&\lesssim & \frac{a^4M}{a^{\frac 12}}\frac{\la r\ra^{\alpha+6}|\psi_1-\psi_2|}{\la z\ra^{\frac{1}{4}}\la r\ra^{\alpha}}+\frac{a^4M|\nu_2-\nu_1|\la r\ra^{\alpha+6}(|\psi_2|+|\psi_1|)}{ a^{\frac 12}\la z\ra^{\frac{1}{4}}\la r\ra^{\alpha}}\\
&\lesssim& \frac{a^3M}{\la r\ra^{\alpha-6}\la z\ra^{\frac{1}{4}}}\left[a^{\frac 12}\la r\ra^{\alpha}|\psi_1-\psi_2|+a^{\frac 12}|\nu_2-\nu_1|\la r\ra^{\alpha}(|\psi_2|+|\psi_1|)\right],
\eee
which ensures for $\alpha$ large enough:
\bee
& & \left\|\la r\ra^{\alpha+6}\la z\ra^{\frac{1}{20}}|\mathcal F_2(\nu_2,\psi_2)-\mathcal F_2(\nu_1,\psi_1)|\right\|_{L^\infty}\\
&\lesssim & a^3M\Bigg\{a^{\frac 12}\|\la r\ra^{\alpha}(\psi_2-\psi_1)\|_{L^\infty}+\left\|\frac{\nu_2-\nu_1}{\la z\ra^{\frac{1}{6}}}\right\|_{L^\infty}\left[a^{\frac 12}\|\la z\ra^{\frac 14}\la r\ra^{\alpha}\psi_2\|_{L^\infty}+a^{\frac 12}\|\la z\ra^{\frac 14}\la r\ra^{\alpha}\psi_1\|_{L^\infty}\right]\Bigg\}\\
& \lesssim & a^3M\left(\|\Psi_2-\Psi_1\|_{E_{\alpha}}+M\left\|\frac{\nu_2-\nu_1}{\la z\ra^{\frac{1}{6}}}\right\|_{L^\infty}\right),
\eee
now using \eqref{ekvivnonrirh} we get:\bee
& & \la r\ra^{\alpha+6}\left(\int_{z>0}\la z\ra^{3-\delta_*+\frac1{10}} |\mathcal F_2(\nu_2,\psi_2)-\mathcal F_2(\nu_1,\psi_1)|^2dz\right)^{\frac 12}\\
&\lesssim & \left[\int_{z>0}\left(\frac{a^3M}{\la r\ra^{\alpha-6}\la z\ra^{\frac{1}{4}}}aux_3\right)^2\la z\ra^{3-\delta_*+\frac1{10}} dz\right]^{\frac 12} \lesssim  a^{3+\frac 12}M\left[\int_{z>0}aux_4\la z\ra^{3-2\delta_*} dz\right]^{\frac 12}\\
& \lesssim &  a^3M\left(\|\Psi_2-\Psi_1\|_{E_{\alpha}}+M\left\|\frac{\nu_2-\nu_1}{\la z\ra^{\frac{1}{6}}}\right\|_{L^\infty}\right),
\eee
with 
\[
aux_3=a^{\frac 12}\la r\ra^{\alpha}|\psi_1-\psi_2|+a^{\frac 12}|\nu_2-\nu_1|\la r\ra^{\alpha}(|\psi_2|+|\psi_1|),
\]
and 
\[
aux_4=\la r\ra^{2(\alpha-1)}|\Psi_1-\Psi_2|^2+\left(\frac{|\nu_2-\nu_1|}{|z|^{\frac 12}}\right)^2\la r\ra^{2(\alpha-1)}(|\psi_2|^2+|\psi_1|^2),
\]
thus \eqref{firsingengeo} is proved for $\mathcal F_2$.
\end{proof}

We now turn to the control of $\mathcal R_1,\mathcal R_2$ corrections.

\begin{lemma}[Lipschitz regularity for non linear corrections]
\label{lemmanonlinbis}
Under the assumptions of Lemma \ref{lemmanonlin}, assume moreover 
\be
\label{limitiationpariamters}
\left|\begin{array}{l} \frac{\gamma}{\gamma-2}+m_1>\frac{\gamma+2}{\gamma-2}+m_2\\
\eta(\gamma-2)\left(\frac{\gamma+2}{\gamma-2}+m_2\right)\ge 2
\end{array}\right..
\ee
then we have the bounds
\be
\label{eninnone}
\left\|\frac{\matchal R_1(\psi_{\nu_i}+a^2\psi_{i})}{r^2}\right\|_{\alpha+6}+\|\matchal R_2(\psi_{\nu_i}+a^2\psi_{i}))\|_{\alpha+6}\lesssim 1, \ \ i\in \{1,2\}
\ee
and
\bea
\label{firsingengebis}
\nonumber &\left\|\frac{\mathcal R_1(\psi_{\nu_2}+a^2\psi_{2})-\mathcal R_1(\psi_{\nu_1}+a^2\psi_{1})}{r^2}\right\|_{\alpha+6}+\left\|\mathcal R_2(\psi_{\nu_2}+a^2\psi_{2})-\mathcal R_2(\psi_{\nu_1}+a^2\psi_{1})\right\|_{\alpha+6}\\
&\lesssim   a^2\|\Psi_2-\Psi_1\|_{E_{\alpha}}+\left\|\frac{\nu_2-\nu_1}{\la z\ra^{\frac{\delta_*}{6}}}\right\|_{L^\infty}.
\eea
\end{lemma}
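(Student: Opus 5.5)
The plan is to reduce both \eqref{eninnone} and \eqref{firsingengebis} to pointwise bounds on $\psi_{\rm tot}=\psi_\nu+a^2\psi$, using the comparability established in step 1 of the proof of Lemma \ref{lemmanonlin}:
\[
\tfrac12\psi_\nu\le \psi_{\rm tot}\le \tfrac32\psi_\nu.
\]
We then exploit the high powers $\ell_1=\frac{\gamma}{\gamma-2}+m_1$ and $\ell_2=\frac{\gamma+2}{\gamma-2}+m_2$ together with the self similar decay of $\psi_\nu$. Recall
\[
\psi_\nu\lesssim \frac{1}{\nu^{\gamma-2}\langle R\rangle^{\gamma-2}}=\frac{1}{(\nu+r)^{\gamma-2}}.
\]
Hence
\[
\psi_{\rm tot}^{\ell}\lesssim (\nu+r)^{-(\gamma-2)\ell}\lesssim \langle r\rangle^{-(\gamma-2)\ell/2}\,\langle z\rangle^{-\eta(\gamma-2)\ell/2},
\]
where we used $\nu\gtrsim\langle z\rangle^\eta$ from \eqref{neovnonvievvnkvnoneii}. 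The first assumption of \eqref{limitiationpariamters} gives $\ell_1>\ell_2$, so both exponents are at least $\ell_2$. The second assumption, together with $m_2$ large (so that $(\gamma-2)\ell_2\gg\alpha$), yields simultaneously decay $\langle r\rangle^{-(\alpha+6)}$ in $r$ and decay $\langle z\rangle^{-1}$ in $z$. The $z$-decay makes $\int\langle z\rangle^{3-\delta_*}(\cdot)^2dz$ finite, provided the split of the exponent between $r$ and $z$ is tuned. Concretely, I would write
\[
(\nu+r)^{-(\gamma-2)\ell}\le \langle r\rangle^{-(\alpha+8)}\,\nu^{-(\gamma-2)\ell+\alpha+8}
\]
and use the assumptions to ensure $\eta\big((\gamma-2)\ell-\alpha-8\big)\ge 2$. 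This also needs $\alpha\ll(\gamma-2)m_2$. For $\mathcal R_1/r^2$, the factor $r^{-2}$ near $r=0$ is absorbed by one factor $\psi_{\rm tot}\lesssim r^2\nu^{-\gamma}$ (since $\psi_*\sim R^2$ at the origin). Both the $L^\infty$ part and the weighted $L^2_z$ part of $\|\cdot\|_{\alpha+6}$ then follow, which gives \eqref{eninnone}.

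For the difference estimate \eqref{firsingengebis}, I will argue as in step 2 of Lemma \ref{lemmanonlin}, splitting
\[
H(\psi_{\nu_2}+a^2\psi_2)-H(\psi_{\nu_1}+a^2\psi_1)
\]
into a $\psi$ difference and a $\nu$ difference via the mean value theorem, with $H(u)=u^{\ell}$. Along the interpolating paths the comparability with $\psi_\nu$ persists, since the paths are convex combinations. For the $\psi$ difference, the contribution is
\[
a^2|\psi_2-\psi_1|\,\ell\,\psi_\nu^{\ell-1},
\]
and $|\psi_2-\psi_1|\le r^2|\Psi_2-\Psi_1|$ is controlled in $L^\infty$ by $a^{-1/2}\|\Psi_2-\Psi_1\|_{E_\alpha}$ from the definition of $E_\alpha$. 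Its weighted $L^2_z$ norm is controlled by \eqref{ekvivnonrirh}. The spare factor $\psi_\nu^{\ell-1}$ supplies all the required $r$ and $z$ decay, exactly as above, and the remaining $a^{3/2}\le a^2\cdot a^{-1/2}$ bookkeeping gives a bound $\lesssim a^{3/2}\|\Psi_2-\Psi_1\|_{E_\alpha}$. I expect that with the pointwise $a^{1/2}$ weighting this matches the claimed $a^2$ (possibly after using the $L^2$ energy control rather than $L^\infty$), and if not, a weaker power of $a$ still suffices downstream. For the $\nu$ difference, \eqref{veniovneneoevkdhkhdv} gives $|\partial_\nu\psi_\nu|\lesssim\psi_\nu/\nu$, so this contribution is $\lesssim|\nu_2-\nu_1|\,\nu^{-1}\psi_\nu^{\ell}$. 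Writing
\[
|\nu_2-\nu_1|=\langle z\rangle^{\delta_*/6}\,\frac{|\nu_2-\nu_1|}{\langle z\rangle^{\delta_*/6}},
\]
the growth $\langle z\rangle^{\delta_*/6}$ is absorbed by the surplus $z$-decay of $\psi_\nu^{\ell}$.

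I expect the main obstacle to be the bookkeeping of exponents. The power $(\gamma-2)\ell$ must be split between enough $r$-decay ($\alpha+8$, with $\alpha$ large) and enough $z$-decay, namely more than $2-\delta_*/2$ from $\langle z\rangle^{3-\delta_*}$ plus $\delta_*/6$. It is precisely here that the second inequality in \eqref{limitiationpariamters} is used, possibly after strengthening it slightly by enlarging $m_2$. A second delicate point is the region $r\lesssim\nu$, where $\psi_\nu$ does not decay in $r$. There, however, $\langle r\rangle^{\alpha+6}\lesssim\nu^{\alpha+6}\lesssim\langle z\rangle^{\eta(\alpha+6)}$, which is harmless since $\eta\ll1/\alpha$.
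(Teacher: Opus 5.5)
Your proposal is correct and follows essentially the paper's route: a pointwise bound on $\psi_\nu^{\ell}$ from the large exponent and $\eta(\gamma-2)\ell_2\ge 2$, then a mean value splitting in $\psi$ and $\nu$, the $L^\infty$ component of $E_\alpha$ plus \eqref{ekvivnonrirh}, and absorption of $\langle z\rangle^{\delta_*/6}$ by the surplus $z$-decay. The paper uses the sum form $(\langle z\rangle^{2}+\langle r\rangle^{2/\eta})^{-1}$ instead of your product split, and no strengthening of \eqref{limitiationpariamters} is needed: $\eta(\gamma-2)\ell\ge 2$ together with the assumed smallness $\eta(\alpha+8)<\delta_*/3$ already gives $\eta((\gamma-2)\ell-\alpha-8)>2-\delta_*/3$.

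The factor $a^{3/2}$ you get for the $\psi$-difference is exactly what the paper's own argument produces too. The paper bounds that term by $a^2\|\langle r\rangle^{\alpha}(\Psi_2-\Psi_1)\|_{L^\infty}$, which costs $a^{-1/2}$ against $\|\cdot\|_{E_\alpha}$. This $a^{3/2}$ suffices for the contraction and Lipschitz steps of Lemma \ref{lemmainversion}.
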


\begin{proof}[Proof of Lemma \ref{lemmanonlinbis}] In this proof $\psi$ will denote either $\psi_1$ or $\psi_2$ with $\psi_i\in B_M$.\\

\noindent{\bf step 1} $\mathcal R_1$.
Let $$
\left|\begin{array}{l}
F(\nu,\psi)=(\psi_\nu+a^2\psi)^\ell\\
\ell=\frac{\gamma}{\gamma-2}
\end{array}\right.,
$$
then using \eqref{veniovneneoevkdhkhdv} and \eqref{vnoivneinevnie} we have:
$$\left|\begin{array}{l}
|F(\nu,\psi)|\lesssim \psi_\nu^\ell\\
|\pa_\nu F(\nu,\psi)|=\left|\ell \pa_\nu \psi_\nu (\psi_\nu+a^2\psi)^{\ell-1}\right|\lesssim \psi_\nu^{\ell}\\
|\pa_\psi F(\nu,\psi)|=\left|\ell a^2\psi(\psi_\nu+a^2\psi)^{\ell-1}\right|\lesssim a^2|\psi|\psi_\nu^{\ell-1}\lesssim a^2|\psi_\nu^{\ell}|
\end{array}\right.,
$$
for $r\ge 1$ using \eqref{limitiationpariamters} we get:
\bee
|\psi_\nu^{\ell}|& = &\left|\frac{1}{\nu^{\gamma-2}}\psi_*(R)\right|^{\frac{\gamma}{\gamma-2}+m_1}\lesssim \left|\frac{1}{\nu^{\gamma-2}\la R\ra^{\gamma-2}}\right|^{\frac{\gamma}{\gamma-2}+m_1}\\
& \lesssim & \frac{1}{\nu^{(\gamma-2)(\frac{\gamma}{\gamma-2}+m_1)}+r^{(\gamma-2)(\frac{\gamma}{\gamma-2}+m_1)}}\lesssim \frac{1}{\la z\ra^{2}+\la r\ra^{\frac{2}{\eta}}},
\eee
and for $r\le 1$ using $R\lesssim r$ gives:
$$|\psi_\nu^{\ell}|=\left|\frac{1}{\nu^{\gamma-2}}\psi_*(R)\right|^{\frac{\gamma}{\gamma-2}+m_1}\lesssim\left|\frac{r^2}{\nu^{\gamma-2}}\right|^{\frac{\gamma}{\gamma-2}+m_1}\lesssim \frac{r^2}{\la z\ra^{2}},$$
and hence $$\frac{|\psi_\nu|^\ell}{r^2}\lesssim \frac{1}{\la z\ra^{2}+\la r\ra^{\frac{2}{\eta}}}.$$
\noindent\underline{Estimate for $\mathcal R_1$}. This yields
$$\frac{|\mathcal R_1(\psi_\nu+a^2\psi)|}{r^2}\lesssim |\psi_\nu^{\ell}|\lesssim \frac{1}{\la z\ra^{2}+\la r\ra^{\frac{2}{\eta}}},$$
from which we get
$$\|\la r\ra^{\alpha+6}\mathcal R_1\|_{L^\infty}\lesssim 1,$$ and
\bee
&&\la r\ra^{2\alpha+12}\int_{z>0}\la z\ra^{3-\delta_*}\left(\frac{\mathcal R_1}{r^2}\right)^2dz\lesssim \la r\ra^{2\alpha+12}\int_{z>0}\frac{\la z\ra^{3-\delta_*} dz}{\left(\la z\ra^{2}+\la r\ra^{\frac{2}{\eta}}\right)^2}\\
&\lesssim&  \la r\ra^{2\alpha+12}\int_{z>0}\frac{\la z\ra^{3-\delta_*} dz}{\la z\ra^4+\la r\ra^{\frac{4}{\eta}}} \lesssim 1,
\eee
for $\eta$ small enough and \eqref{eninnone} is proved for $\matchal R_1$.\\

\noindent\underline{Estimate for the difference}. We estimate:
\bee
&&\left|\frac{\mathcal R_1(\psi_{\nu_2}+a^2\psi_{\nu_2})-\matchal R_1(\psi_{\nu_1}+a^2\psi_{\nu_1})}{r^2}\right|\\
&=&\frac{1}{r^2}\bigg|(\psi_2-\psi_1) \int_0^1\pa_\psi F(\nu_2,\psi_1+h(\psi_2-\psi_1))dh\\
&&+(\nu_2-\nu_1)\int_0^1\pa_\nu F(\nu_1+h(\nu_2-\nu_1),\psi_1)dh\bigg|\\
&\lesssim & \frac{|\psi_\nu|^\ell}{r^2}[a^2|\psi_2-\psi_1|+|\nu_2-\nu_1|]\lesssim\frac{a^2|\psi_2-\psi_1|+|\nu_2-\nu_1|}{\la z\ra^{2}+\la r\ra^{\frac{2}{\eta}}}.
\eee
Hence we have
\bee
   & & \|\la r\ra^{\alpha+6}\left[\mathcal R_1(\psi_{\nu_2}+a^2\psi_{2})-\matchal R_1(\psi_{\nu_1}+a^2\psi_{1})\right]\|_{L^\infty}\\ & & \lesssim a^2\|\la r\ra^\alpha(\Psi_2-\Psi_1)\|_{L^\infty}+\left\|\frac{\nu_2-\nu_1}{\la z\ra^{\frac 12}}\right\|_{L^\infty}
\eee
and recalling \eqref{ekvivnonrirh} we get:
\bee
&&\la r\ra^{2\alpha+12}\int_{z\in \Bbb R}\la z\ra^{3-\delta_*}\left|\mathcal R_1(\psi_{\nu_2}+a^2\psi_{2})-\matchal R_1(\psi_{\nu_1}+a^2\psi_{1})\right|^2dz\\
&\lesssim & \la r\ra^{2\alpha+12}\int_{z\in \Bbb R}\la z\ra^{3-\delta_*}\frac{a^4|\psi_2-\psi_1|^2+|\nu_2-\nu_1|^2}{\la z\ra^{4}+\la r\ra^{\frac{4}{\eta}}}dz\\
& \lesssim & a^4\int_{z>0}\la z\ra^{3-2\delta_*}\la r\ra^{2(\alpha-1)}|\Psi_2-\Psi_1|^2dz+\la r\ra^{2\alpha+10}\left\|\frac{\nu_2-\nu_1}{\la z\ra^{\frac{\delta_*}{6}}}\right\|^2_{L^\infty}\int_{z>0}\frac{\la z\ra^{3-\delta_*+\frac{\delta_*}{3}} dz}{\la z\ra^{4}+\la r\ra^{\frac{2}{\eta}}}\\
& \lesssim & a^4\|\Psi_2-\Psi_1\|_{E_\alpha}^2+\left\|\frac{\nu_2-\nu_1}{\la z\ra^{\frac{\delta_*}{6}}}\right\|^2_{L^\infty}.
\eee
This concludes the proof of \eqref{firsingengebis} for $\mathcal R_1$.\\

\noindent{\bf step 2} $\mathcal R_2$.
Let $$
\left|\begin{array}{l}
F(\nu,\psi)=(\psi_\nu+a^2\psi)^\ell\\
\ell=\frac{\gamma+2}{\gamma-2}+m_2
\end{array}\right.
$$
then using \eqref{veniovneneoevkdhkhdv} and \eqref{vnoivneinevnie} we get:
$$\left|\begin{array}{l}
|F(\nu,\psi)|\lesssim \psi_\nu^\ell\\
|\pa_\nu F(\nu,\psi)|=\left|\ell \pa_\nu \psi_\nu (\psi_\nu+a^2\psi)^{\ell-1}\right|\lesssim \psi_\nu^{\ell}\\
|\pa_\psi F(\nu,\psi)|=\left|\ell a^2\psi(\psi_\nu+a^2\psi)^{\ell-1}\right|\lesssim a^2|\psi|\psi_\nu^{\ell-1}\lesssim a^2|\psi_\nu^{\ell}|
\end{array}\right.
$$
and
\bee
& &|\psi_\nu^{\ell}|=\left|\frac{1}{\nu^{\gamma-2}}\psi_*(R)\right|^{\frac{\gamma+2}{\gamma-2}+m_2}\lesssim \left|\frac{1}{\nu^{\gamma-2}\la R\ra^{\gamma-2}}\right|^{\frac{\gamma+2}{\gamma-2}+m_2}\\
& & \lesssim \frac{1}{\nu^{(\gamma-2)(\frac{\gamma+2}{\gamma-2}+m_2)}+r^{(\gamma-2)(\frac{\gamma+2}{\gamma-2}+m_2)}}\lesssim \frac{1}{\la z\ra^{2}+\la r\ra^{\frac{2}{\eta}}}.
\eee
\noindent\underline{Estimate for $\mathcal R_2$}. This yields
$$\frac{|\mathcal R_2(\psi_\nu+a^2\psi_\nu)|}{r^2}\lesssim |\psi_\nu^{\ell}|\lesssim \frac{1}{\la z\ra^{2}+\la r\ra^{\frac{2}{\eta}}},$$
and we argue in verbatim like for $\mathcal R_1$ to derive \eqref{eninnone} for $\matchal R_2$.

\noindent\underline{Estimate for the difference}. We estimate:
\bee
&&\left|\mathcal R_2(\psi_{\nu_2}+a^2\psi_{\nu_2})-\matchal R_2(\psi_{\nu_1}+a^2\psi_{\nu_1})\right|\\
&=&\Bigg|(\psi_2-\psi_1) \int_0^1\pa_\psi F(\nu_2,\psi_1+h(\psi_2-\psi_1))dh\\
&&+(\nu_2-\nu_1)\int_0^1\pa_\nu F(\nu_1+h(\nu_2-\nu_1),\psi_1)dh\Bigg|\\
&\lesssim & |\psi_\nu|^\ell[a^2|\psi_2-\psi_1|+|\nu_2-\nu_1|]\lesssim\frac{a^2|\psi_2-\psi_1|+|\nu_2-\nu_1|}{\la z\ra^{2}+\la r\ra^{\frac{2}{\eta}}},
\eee
and we argue in verbatim like for $\mathcal R_1$ to derive \eqref{firsingengebis} for $\matchal R_2$.
\end{proof}

\subsection{Existence and regularity of the first non linear resolvent map}
We first solve the generalized version of \eqref{vneioneioneinveonvie} 
\be
\label{nvioenveoneiomdvlmd}
\left|\begin{array}{l}\psi=r^2\Psi\\
L_a\psi=F(\psi)+D_{F(\psi)}h\Leftrightarrow\mathcal L_a\Psi=F(\psi)\\
F(\psi)=\frac{1}{r^2}\pa_z^2\psi_\nu+{\rm NL}(\psi),
\end{array}\right.
\ee
 using a contraction mapping argument.  

\begin{lemma}[Inversion \`a la Picard]
\label{lemmainversion}
Under the assumptions of Lemma \ref{lemmanonlinbis}, and provided $M$ is a large enough universal constant, then for all $0<a<a^*$ small enough, the mapping $$\Psi \mapsto \mathcal L_a^{-1}F(\psi)$$ is a contraction mapping on $B_{M}$. Let then $\psi_{{\rm sol},\nu}$ be the unique associated fixed point in $B_M$, then for $\nu_1,\nu_2\in \mathcal N^\eta$:
\be
\label{vneovneionienvoei}
\|\Psi_{{\rm sol},\nu_1}-\Psi_{{\rm sol,\nu_2}}\|_{E_\alpha}\lesssim \left\|\frac{\nu_2-\nu_1}{\la z\ra^{\frac{\delta_*}{6}}}\right\|_{L^\infty}+\left\|\pa_z^2(\Psi_{\nu_2}-\Psi_{\nu_1})\right\|_{\alpha+6}.
\ee
\end{lemma}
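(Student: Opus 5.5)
The plan is a standard Banach fixed point in $B_M\subset E_\alpha$, with Proposition \ref{proppointizedfjifw} as the linear estimate and Lemmas \ref{lemmanonlin} and \ref{lemmanonlinbis} as the nonlinear input. Write $T_\nu(\Psi)=\mathcal L_a^{-1}F(\psi)$. One point needs care first. Proposition \ref{proppointizedfjifw} maps $\|\cdot\|_{\alpha'}$ into $E_{\alpha'-6}$. We therefore apply it with $\alpha'=\alpha+6$, which gives
$$\|T_\nu\Psi\|_{E_\alpha}\le K\|F(\psi)\|_{\alpha+6}.$$
This requires $\alpha+6\le\alpha(\gamma)$, which we arrange by shrinking $\alpha$.

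\textbf{Step 1: stability of $B_M$.} Split $F(\psi)$ into three pieces.
\begin{itemize}
\item The linear source $\frac{1}{r^2}\partial_z^2\psi_\nu=\partial_z^2\Psi_\nu$. Since $\nu\in\mathcal N^\eta$, each $z$-derivative falling on $\nu$ costs a factor $\langle z\rangle^{-1}$. Hence $|\partial_z^2\Psi_\nu|\lesssim\langle z\rangle^{-2}\nu^{-\gamma}\langle R\rangle^{-\gamma}$. The $R$-decay is then improved using the fast decay of $\zeta_*\sim R^{-(\gamma-2+|\lambda_-|)}$, since $\partial_\nu\psi_\nu$ involves $\zeta_*$ and $\partial_\nu^2\psi_\nu$ involves $\Lambda\zeta_*$. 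The $z$-weight in $\|\cdot\|_{\alpha+6}$ is $\langle z\rangle^{3-\delta_*}$, integrated against the square, so $\langle z\rangle^{-2}$ suffices. This gives a bound $C_0(c_0,C_i)$ independent of $M$, provided $\alpha+6\ll|\lambda_-|$ (true for $\gamma$ close to $2$) and $\eta$ is small.
\item The terms $\mathcal R_1/r^2$ and $\mathcal R_2$ are $O(1)$ by \eqref{eninnone}.
\item The terms $(\mathcal F_1-\mathcal F_2)/a^2$ are $O(aM^2)$ by \eqref{firsingengeo}, taking $(\nu_1,\psi_1)=(\nu,0)$, for which $\mathcal F_i=0$.
\end{itemize}
Together,
$$\|T_\nu\Psi\|_{E_\alpha}\le K(C_0+C+CaM^2).$$
Choosing $M=4K(C_0+C)$ and then $a$ small closes the ball.

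\textbf{Step 2: contraction.} For fixed $\nu$ and $\Psi_1,\Psi_2\in B_M$, the source term cancels. Lemma \ref{lemmanonlin} gives a contribution $\lesssim aM\|\Psi_2-\Psi_1\|_{E_\alpha}$. Lemma \ref{lemmanonlinbis} gives $\lesssim a^2\|\Psi_2-\Psi_1\|_{E_\alpha}$. So the Lipschitz constant is $K(CaM+Ca^2)<\frac12$ for $a<a^*$, and the fixed point $\Psi_{{\rm sol},\nu}$ exists and is unique in $B_M$.

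\textbf{Step 3: Lipschitz dependence on $\nu$ (main obstacle).} The difficulty is that $\mathcal L_a^{-1}$ itself depends on $\nu$, through the potential $W$, the weight $h$ and the space $V$. Write
$$\Psi_i=\mathcal L_{a,\nu_i}^{-1}F_{\nu_i}(\psi_i),$$
and decompose
$$\Psi_2-\Psi_1=\mathcal L_{a,\nu_2}^{-1}\big[F_{\nu_2}(\psi_2)-F_{\nu_1}(\psi_1)\big]+\big(\mathcal L_{a,\nu_2}^{-1}-\mathcal L_{a,\nu_1}^{-1}\big)F_{\nu_1}(\psi_1).$$

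The first term is handled by the two Lipschitz lemmas together with the source difference $\|\partial_z^2(\Psi_{\nu_2}-\Psi_{\nu_1})\|_{\alpha+6}$. The resulting $aM\|\Psi_2-\Psi_1\|_{E_\alpha}$ piece is absorbed into the left-hand side.

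For the second term, use the resolvent identity. Set $\Phi=\Psi_1$. It satisfies
$$\mathcal L_{a,\nu_2}\Phi=F_{\nu_1}(\psi_1)+(W_{\nu_2}-W_{\nu_1})\Phi+\big(\text{projection corrections}\big).$$
The projection corrections come from the change of $h$. They live in the span of $h_{\nu_i}$ and are removed by the $D_f$ mechanism of Lemma \ref{vneokvneneonvie}. We therefore project $\Phi$ onto $V_{\nu_2}$ by subtracting $\ell(z)h_{\nu_2}$. Then $\|\ell(z)h_{\nu_2}\|_{E_\alpha}$ and $\|(W_{\nu_2}-W_{\nu_1})\Phi\|_{\alpha+6}$ are each bounded by $M\|(\nu_2-\nu_1)/\langle z\rangle^{\delta_*/6}\|_{L^\infty}$. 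This uses $|\partial_\nu W|\lesssim\nu^{-3}\langle R\rangle^{-2}$ and the fast $R$-decay of $\zeta_*$. The small loss in $z$ is recovered from the pointwise decay of $\Psi$ in $B_M$ together with $\nu\sim\langle z\rangle^\eta$.

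I expect this bookkeeping, making the perturbed projection onto $V$ quantitative in the $E_\alpha$ norm, to be the delicate point. If it fails, the fallback is to redo the energy and pointwise estimates of Section \ref{sectionresolventtwo} directly for the difference equation.
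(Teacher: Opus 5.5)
Your Steps 1 and 2 coincide with the paper's proof. It bounds $\|\partial_z^2\Psi_\nu\|_{\alpha+6}\lesssim 1$ using the fast decay of $\zeta_*$ and the admissibility of $\nu$ (its \eqref{vbdjkvbdjkbvdjbkdv}), and then combines \eqref{continieou} at level $\alpha+6$ with \eqref{firsingengeo}, \eqref{eninnone} and \eqref{firsingengebis} exactly as you do. In Step 3 the paper does something much simpler than you. It writes $\psi_{\mathrm{sol},\nu_2}-\psi_{\mathrm{sol},\nu_1}$ as one and the same $\mathcal L_a^{-1}$ applied to the difference of the right-hand sides, and absorbs the resulting $a^2\|\cdot\|_{E_\alpha}$ and $aM\|\cdot\|_{E_\alpha}$ terms. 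You are right that this treats the resolvent as independent of $\nu$, although $W=\nu^{-2}W_*(r/\nu)$, $h$ and $V$ all depend on $\nu$. The term $(\mathcal L_{a,\nu_2}^{-1}-\mathcal L_{a,\nu_1}^{-1})F_{\nu_1}(\psi_1)$ you isolate is genuinely there, and the paper's argument does not address it. However, your treatment of it is only a sketch, and its key estimate does not hold, so Step 3 is a gap.

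\textbf{The potential-difference bound fails on the $z$-weights.} The claim $\|(W_{\nu_2}-W_{\nu_1})\Psi_1\|_{\alpha+6}\lesssim M\|\langle z\rangle^{-\delta_*/6}(\nu_2-\nu_1)\|_{L^\infty}$ does not follow, for the following reasons.
\begin{itemize}
\item Proposition \ref{proppointizedfjifw} loses $\langle z\rangle^{\delta_*/2}$. Its input norm asks for $\int\langle z\rangle^{3-\delta_*}f^2\,dz$, while $\Psi_1\in B_M$ only controls $\int\langle z\rangle^{3-2\delta_*}\Psi_1^2\,dz$ (cf. \eqref{ekvivnonrirh}).
\item On $R\sim 1$, $|W_{\nu_2}-W_{\nu_1}|$ is of size $|\nu_2-\nu_1|\nu^{-3}$, which gains only $\langle z\rangle^{-3\eta}$. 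Your norm on $\nu_2-\nu_1$ allows growth $\langle z\rangle^{\delta_*/6}$. Closing would require $\eta\gtrsim\delta_*$, which contradicts $\eta<\eta^*(\gamma,\delta_*)$; the proof of Lemma \ref{lemmaltwobound} already uses $(2\alpha-8)\eta<\delta_*$ with $\alpha\gg1$.
\item The pointwise bound $|\Psi_1|\lesssim a^{-1/2}M\langle z\rangle^{-1/4}\langle r\rangle^{-\alpha-2}$ does not repair the weighted $L^2_z$ part. Its square is not integrable against $\langle z\rangle^{3-\delta_*}$, and interpolating with the $L^2_z$ bound still leaves a positive weight deficit. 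In the sup part of $\|\cdot\|_{\alpha+6}$ it gives $a^{-1/2}M$ rather than $M$.
\end{itemize}
This is exactly where your term differs from Lemma \ref{lemmanonlin}. There the $\nu$-variation is quadratic in $\psi$, with an $a^4$ prefactor and an extra $\langle z\rangle^{-1/4}$. Your term is linear in $\Psi_1$ and of order one.

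\textbf{The projection step is not as you describe.}
\begin{itemize}
\item After subtracting $\ell(z)h_{\nu_2}$, the correction $L_{a,\nu_2}(\ell h_{\nu_2})=-\frac{a^2}{r^2}\partial_z^2(\ell h_{\nu_2})$ is not in the span of $h_{\nu_2}$. It involves $\ell''$, hence $\partial_z^2\psi_1$, which $E_\alpha$ does not control; you would have to trade $a^2\partial_z^2\psi_1$ for $r$-derivatives through the equation of $\psi_1$.
\item The multiplier term $D^{(1)}h_{\nu_1}$ of the $\nu_1$-problem is removed by $\mathcal L_{a,\nu_2}^{-1}$ only through its $h_{\nu_2}$-component. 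This leaves $D^{(1)}(h_{\nu_1}-h_{\nu_2})$ as a source to estimate.
\end{itemize}

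Your fallback of redoing Section \ref{sectionresolventtwo} for the difference equation meets the same source $(W_{\nu_2}-W_{\nu_1})\Psi_1$ with the same weight mismatch. To make this route work you would need extra $z$-decay of $\Psi_{\mathrm{sol},\nu_1}$ beyond what $E_\alpha$ records, or a relatively decaying norm on $\nu_2-\nu_1$, together with an explicit treatment of the projection terms. The extra decay is plausible since $\partial_z^2\Psi_\nu=O(\langle z\rangle^{-2})$, for example by running the fixed point with a smaller loss than $\delta_*$.
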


\begin{remark}
\label{remarkregularity} Let us stress that smoothness in $(r,z)$ of the constructed fixed point $\psi_{{\rm sol},\nu}$ follows from standard elliptic regularity estimates.
\end{remark}

\begin{proof}[Proof of Lemma \ref{lemmainversion}] This is a simple consequence of Proposition \ref{proppointizedfjifw}, Lemma \ref{lemmanonlin} and Lemma \ref{lemmanonlinbis}.\\

\noindent{\bf step 1} Inhomogeneous term. We compute:
$$
\left|\begin{array}{l}
\pa_z\Psi_\nu=-\frac{\pa_z\nu}{\nu}\frac{1}{\nu^{\gamma}}(\gamma+R\partial_R)\Psi_*(R)\\
\pa^2_z\Psi_\nu=\frac{1}{\nu^{\gamma}}\left[-\pa_z\left(\frac{\pa_z\nu}{\nu}\right)(\gamma+R\partial_R)+\left(\frac{\pa_z\nu}{\nu}\right)^2\left(\gamma+R\partial_R\right)^2\right]\Psi_*(R)
\end{array}\right.,
$$
and hence obtain the decay $$\left|\la R\ra^k\frac{d^k}{dR^k}(\gamma-2+\Lamdba_R)\Psi_*\right|\lesssim \frac{1}{\la R\ra^{\gamma+|\l_-|}},$$ which together with $\lim_{\gamma\to _2}|\l_-(\gamma)|= +\infty$ and \eqref{estoatmgnoi} ensure the pointwise bound
$$|\pa^2_z\Psi_\nu|\leq \frac{C(c_0,C_1,C_2)}{\la z\ra^2\la R\ra^{\gamma+|\l_-|}}\Rightarrow \|\pa^2_z\Psi_\nu\|_{\alpha+6}\lesssim C(c_0,C_1,C_2,\gamma),$$ 
for $\gamma$ close enough to 2,
 and hence 
 \be
 \label{vbdjkvbdjkbvdjbkdv}
 \|\pa^2_z\Psi_\nu\|_{\alpha+6}\lesssim 1.
 \ee

\noindent{\bf step 2} Strict contraction. We conclude from \eqref{continieou}, \eqref{vbdjkvbdjkbvdjbkdv}, \eqref{firsingengeo} with $\nu_2=\nu_1$, $\psi_1=\psi$, $\psi_2=0$ and \eqref{eninnone} that
$$\|\mathcal L_a^{-1}F\|_{E_{\alpha}}\lesssim \|F\|_{\alpha+6}\lesssim 1+aM^2+a^2M\lesssim 1 $$ and hence $\mathcal L_a^{-1}F\subset B_M$. We moreover conclude from \eqref{continieou}, \eqref{firsingengeo} and \eqref{firsingengebis} with $\nu_1=\nu_2$:
$$\left\|\mathcal L_a^{-1}\left[F(\nu,\psi_2)-F(\nu,\psi_1)\right]\right\|_{E_{\alpha}}\lesssim \left\|F(\nu,\psi_2)-F(\nu,\psi_1)\right\|_{\alpha+6}\lesssim aM\|\psi_2-\psi_1\|_{E_{\alpha}}$$ and the strict contraction property is proved.\\

\noindent{\bf step 3} Lipschitz regularity in $\nu$. Two fixed point solutions satisfy:
\bee
    \psi_{{\rm sol},\nu_2}-\psi_{{\rm sol},\nu_1}&=&\mathcal L_a^{-1}\Bigg\{ \frac{1}{r^2}\pa_z^2(\psi_{\nu_2}-\psi_{\nu_1})+  \frac{\R_1(\psi_{\nu_2}+a^2\psi_{{\rm sol},\nu_2})-\R_1(\psi_{\nu_1}+a^2\psi_{{\rm sol},\nu_1})}{r^2}\\
&-&\left[\R_2(\psi_{\nu_2}+a^2\psi_{{\rm sol},\nu_2})-\R_2(\psi_{\nu_1}+a^2\psi_{{\rm sol},\nu_1})\right]\\
&+& \frac{\mathcal F_1(\nu_2,\psi_{{\rm sol},\nu_2})-\mathcal F_1(\nu_1,\psi_{{\rm sol},\nu_1})}{a^2}-\frac{\mathcal F_2(\nu_2,\psi_{{\rm sol},\nu_2})-\mathcal F_2(\nu_1,\psi_{{\rm sol},\nu_1})}{a^2}\Bigg\}
\eee
and hence the bound:
\bee
\|\Psi_{{\rm sol},\nu_2}-\Psi_{{\rm sol},\nu_1}\|_{E_\alpha}&\lesssim &\left\|\frac{1}{r^2}\pa_z^2(\psi_{\nu_2}-\psi_{\nu_2})\right\|_{\alpha+6}+ a^2\|\psi_{{\rm sol},\nu_2}-\psi_{{\rm sol},\nu_1}\|_{E_{\alpha}}+\left\|\frac{\nu_2-\nu_1}{\la z\ra^{\frac{\delta_*}{6}}}\right\|_{L^\infty}\\
&+&aM\left(\|\Psi_{{\rm sol},\nu_2}-\Psi_{{\rm sol},\nu_1}\|_{E_{\alpha}}+M\left\|\frac{\nu_2-\nu_1}{\la z\ra^{\frac{\delta_*}{6}}}\right\|_{L^\infty}\right)
\eee
which implies \eqref{vneovneionienvoei} for $0<a<a^*$ small enough.
\end{proof}

\section{Solving the bifurcation equation}
\label{sectionnonlintwo}

In this section, we conclude the proof of Theorem \ref{thmmain} by solving the bifurcation equation for $\nu$ 
\be
\label{cneioneineionvoe}
D_{F(\nu,\Psi_{\rm sol,\nu})}h=0.
\ee
We will systematically use the notation 
\be
\beta_1=\frac{\gamma}{\gamma-2}+m_1>\beta_2=\frac{\gamma+2}{\gamma-2}+m_2, \text{ and }\label{defntuilde}
\nut=\frac{1}{\nu^{\gamma-2}}.
\ee


\subsection{Ode formulation of the bifurcation equation}


\begin{lemma}[Ode formulation]
\label{vneoineoneoinioev} 
Let $$\left|\begin{array}{l}
m_0^*=\int_0^{+\infty}\frac{\zeta_*^2}{R}dR>0\\
m_1^*=(\gamma-2)\int_{R>0}\frac{(\psi_*)^{\beta_1+1}}{R^2}RdR>0\\
m_2^*=\left(\gamma-2-\frac{2}{\beta_2+1}\right)\int_{R>0}(\psi_*)^{\beta_2+1}RdR>0\\
d^*_1=\frac{(\gamma-2)m_1^*}{m_0^*}>0\\
d^*_2=\frac{(\gamma-2)m_2^*}{m_0^*}>0,\\
\end{array}\right.
$$ 
then \eqref{cneioneineionvoe} is 
\be
\label{equaitonoineog}
\left|\begin{array}{l}
-\pa_z^2\nut-d_1^*\nut^{\beta_1}+d_2^*\nut^{\beta_2}=\mathcal G_a(\nu)\\
\mathcal G_a(\nu)=\frac{\gamma-2}{m_0^*}\int_{r>0}\left[\frac{a^2}{r^2}\psi_{{\rm sol},\nu}\frac{\pa_z^2h}{h}+{\rm NL}(\nu,\psi_{{\rm sol},\nu})-{\rm NL}(\nu,0)\right]\zeta_*(R)rdr.
\end{array}\right.
\ee
\end{lemma}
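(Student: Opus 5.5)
The bifurcation equation \eqref{cneioneineionvoe} says that $D_F(z)=0$ for every $z$, with $F=F(\nu,\psi_{{\rm sol},\nu})$. By \eqref{defdfz} this is equivalent to
\[
\int_{r>0}\Big[\frac{a^2}{r^2}\psi_{{\rm sol},\nu}\pa_z^2h+F h\Big]r\,dr=0 .
\]
Since $h=\nu^{-p}\zeta_*(R)$ and the factor $\nu^{-p}$ depends only on $z$, we may divide it out. We then split $F=\frac{1}{r^2}\pa_z^2\psi_\nu+{\rm NL}(\nu,0)+\big[{\rm NL}(\nu,\psi_{{\rm sol},\nu})-{\rm NL}(\nu,0)\big]$. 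The bracket and the $a^2\psi_{\rm sol}\pa_z^2h/h$ term make up $\mathcal G_a(\nu)$, after the normalisation by $\frac{\gamma-2}{m_0^*}$ and a change of sign. It remains to compute the two explicit projections
\[
I_0=\int\frac{1}{r^2}\pa_z^2\psi_\nu\,\zeta_*(R)\,r\,dr,\qquad I_1=\int {\rm NL}(\nu,0)\,\zeta_*(R)\,r\,dr,
\]
and to show that, after multiplication by a common $z$-dependent factor, they equal $-\frac{m_0^*}{\gamma-2}\big[-\pa_z^2\nut-d_1^*\nut^{\beta_1}+d_2^*\nut^{\beta_2}\big]$.

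\textbf{Step 1: the linear term.} We write $\psi_\nu=\nut\,\psi_*(r\nut^{1/(\gamma-2)})$ using $\nut=\nu^{2-\gamma}$, and note $\pa_\nu\psi_\nu=-\nu^{1-\gamma}\zeta_*(R)$. Differentiating twice in $z$ gives
\[
\pa_z^2\psi_\nu=-\pa_z\big(\nu_z\nu^{1-\gamma}\zeta_*(R)\big).
\]
Expressing $\nu_z\nu^{1-\gamma}=-\frac{\pa_z\nut}{\gamma-2}$ yields
\[
\pa_z^2\psi_\nu=\frac{\pa_z^2\nut}{\gamma-2}\zeta_*(R)+(\pa_z\nut)\,\pa_z[\zeta_*(R)]/(\gamma-2).
\]
We project against $\zeta_*(R)\,r^{-1}dr=\zeta_*(R)R^{-1}dR$. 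The second piece is $\frac{\pa_z\nut}{\gamma-2}\cdot\frac12\pa_z\int\zeta_*^2(R)R^{-1}dR$, and this vanishes because $\int \zeta_*^2R^{-1}dR=m_0^*$ does not depend on $\nu$. (This is exactly why the measure $r\,dr/r^2$ is scale invariant in $R$; we check that the weight works out, possibly after adjusting the power $p$ in the definition of $h$.) Hence $I_0=\frac{m_0^*}{\gamma-2}\pa_z^2\nut$.

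\textbf{Step 2: the nonlinear source.} By \eqref{vneoinonnrinrbno} we have ${\rm NL}(\nu,0)=\R_1(\psi_\nu)/r^2-\R_2(\psi_\nu)$. We use $\zeta_*(R)=-\nu^{\gamma-1}\pa_\nu\psi_\nu$, equivalently $\zeta_*=(\gamma-2)\nut^{-1}\cdot\nut\pa_{\nut}\psi_\nu$ up to the chain rule. Then
\[
\R_1(\psi_\nu)\zeta_*=\frac{(\gamma-2)}{\beta_1+1}\,\nut\,\pa_{\nut}\big(\psi_\nu^{\beta_1+1}\big)\cdot(\ldots),
\]
so the $r$-integral becomes $\nut\pa_{\nut}$ of $\int\psi_\nu^{\beta_1+1}r^{-1}dr$. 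The change of variables $r=\nu R$ shows $\int \psi_\nu^{\beta_1+1}r^{-1}dr=\nut^{\beta_1+1}\int\psi_*^{\beta_1+1}R^{-1}dR$, so applying $\nut\pa_{\nut}$ just multiplies by $\beta_1+1$. Since $(\beta_1+1)/(\beta_1+1)=1$, this produces $\nut^{\beta_1}m_1^*$ times the normalisation. For $\R_2$ the integral $\int\psi_\nu^{\beta_2+1}r\,dr$ carries an extra $\nu^2=\nut^{-2/(\gamma-2)}$. This changes the Euler factor and accounts for the constant $\gamma-2-\frac{2}{\beta_2+1}$ in $m_2^*$. Dividing through by $\frac{m_0^*}{\gamma-2}$ gives $d_1^*,d_2^*$, and the signs give \eqref{equaitonoineog}.

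\textbf{Positivity of the constants.} We get $m_0^*,m_1^*>0$ from $\zeta_*>0$ and $\psi_*>0$, using Proposition \ref{propvrotexprofile}. For $m_2^*>0$ we need $\beta_2+1>\frac{2}{\gamma-2}$, which holds since $\beta_2\ge\frac{\gamma+2}{\gamma-2}$. Convergence of all the integrals follows from the tail \eqref{asymptoticexpansionvortex} together with the fast decay of $\zeta_*$.

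\textbf{Main obstacle.} The main difficulty is bookkeeping. The powers of $\nu$ produced by $h$, by $r\,dr$ versus $r^{-1}dr$, and by the chain rule $\nu\leftrightarrow\nut$ must match up exactly, so that one common factor of $\nu$ can be removed and the equation becomes autonomous in $\nut$. We would first carry out the computation with a general weight, and then confirm that the $\pa_z\nut\,\pa_z\zeta_*$ cross term integrates to zero.
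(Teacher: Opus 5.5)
Your Step 1 and the $\mathcal R_1$ part of Step 2 are correct. They are a slicker version of the paper's computation. Where the paper expands $\partial_z^2\psi_\nu$ with $(\gamma-2+R\partial_R)$ and integrates by parts in $R$, you use $\partial_z\psi_\nu=\frac{\partial_z\tilde\nu}{\gamma-2}\zeta_*(R)$ and kill the cross term by the $z$-independence of $\int\zeta_*^2(r/\nu)\,\frac{dr}{r}$. You then use $\zeta_*(R)=(\gamma-2)\partial_{\tilde\nu}\psi_\nu$ to pull a $\tilde\nu$-derivative out of the nonlinear projections. Two small points: the unspecified factor in your $\mathcal R_1$ display is $\tilde\nu^{-1}$, and the power $p$ plays no role, since $\nu^{-p}$ simply divides out.

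The gap is in the $\mathcal R_2$ term, which is exactly the bookkeeping you flagged as the main obstacle, and it cannot be closed as you claim. You correctly observe that
\[
\int\psi_\nu^{\beta_2+1}r\,dr=\nu^2\tilde\nu^{\beta_2+1}\int\psi_*^{\beta_2+1}R\,dR=\tilde\nu^{\beta_2+1-\frac{2}{\gamma-2}}\int\psi_*^{\beta_2+1}R\,dR,
\]
and that this is what produces the constant $\gamma-2-\frac{2}{\beta_2+1}$. But the same factor $\nu^2$ also shifts the exponent. Your computation actually gives
\[
\int_{r>0}\mathcal R_2(\psi_\nu)\zeta_*(R)\,r\,dr=\frac{\gamma-2}{\beta_2+1}\,\partial_{\tilde\nu}\Big(\tilde\nu^{\beta_2+1-\frac{2}{\gamma-2}}\Big)\int_{R>0}\psi_*^{\beta_2+1}R\,dR=m_2^*\,\tilde\nu^{\beta_2-\frac{2}{\gamma-2}}.
\]
The other two projections carry the scale-invariant weight $dr/r$ and receive no such shift. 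So your closing sentence does not follow: you cannot keep the $\nu^2$ for the constant and discard it for the power.

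This is really a defect of the statement itself. The paper's proof writes $\nu^2(\psi_*/\nu^{\gamma-2})^{\beta_2}$ after the change of variables $r=\nu R$, then drops the $\nu^2$ in the next line; that is the only way to land on $\tilde\nu^{\beta_2}$. What is actually true, and what your method proves, is
\[
-\partial_z^2\tilde\nu-d_1^*\tilde\nu^{\beta_1}+d_2^*\tilde\nu^{\beta_2-\frac{2}{\gamma-2}}=\mathcal G_a(\nu),
\]
with the same constants. The new exponent is $\frac{\gamma}{\gamma-2}+m_2$. It still lies in $(1,\beta_1)$ by \eqref{limitiationpariamters}, so the structure used in Lemma \ref{lemmaapprsolution} survives, with that exponent and $\eta_\infty$ adjusted. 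You should state and prove this corrected equation rather than assert \eqref{equaitonoineog} verbatim.
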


\begin{proof}[Proof of Lemma \ref{vneoineoneoinioev}] Using \eqref{defdfz} and \eqref{vneioneioneinveonvie}, 
\bee
&&D_{F(\nu,\Psi_{\rm sol,\nu})}h=0\\
\nonumber &\Leftrightarrow & \forall z\in \Bbb R, \ \ \int_{r>0} \left\{\frac{a^2}{r^2}\psi_{{\rm sol},\nu}\pa_z^2h+\left[\frac{1}{r^2}\pa_z^2\psi_\nu+{\rm NL}(\nu,\psi)\right]h\right\}rdr=0\\
\nonumber &\Leftrightarrow & \forall z\in \Bbb R, \ \ \int_{r>0} \left[\frac{a^2}{r^2}\psi_{{\rm sol},\nu}\frac{\pa_z^2h}{h}+\frac{1}{r^2}\pa_z^2\psi_\nu+{\rm NL}(\nu,\psi)\right]\zeta_*(R)rdr=0.
\eee
First recall
 $$
 \pa^2_{z}\psi_{\nu}(r,z)=\frac{1}{\nu^{\gamma-2}}\left[-\pa_z\left(\frac{\pa_z\nu}{\nu}\right)(\gamma-2+R\partial_R)+\left(\frac{\pa_z\nu}{\nu}\right)^2(\gamma-2+R\partial_R)^2\right]\psi_*(R),
 $$
 which yields the contribution:
 \bee
& &\int_{r>0}\frac{\pa_z^2\psi_\nu}{r^2}\zeta_*(R)rdr\\
 & = &\frac{1}{\nu^{\gamma-2}}\int_0^{+\infty}\left[-\pa_z\left(\frac{\pa_z\nu}{\nu}\right)\zeta_*(R)+\left(\frac{\pa_z\nu}{\nu}\right)^2(\gamma-2+R\partial_R)\zeta_*(R)\right]\zeta_*(R)\frac{dR}{R}\\
& = & \frac{1}{\nu^{\gamma-2}}\left[-\pa_z\left(\frac{\pa_z\nu}{\nu}\right)+(\gamma-2)\left(\frac{\pa_z\nu}{\nu}\right)^2\right]\int_0^{+\infty}\frac{\zeta_*^2}{R}dR\\
& = & \frac{m_0^*}{\nu^{\gamma-2}}\left[-\pa_z\left(\frac{\pa_z\nu}{\nu}\right)+(\gamma-2)\left(\frac{\pa_z\nu}{\nu}\right)^2\right].
 \eee
  From \eqref{defntuilde}: $$\left|\begin{array}{l} \frac{\pa_z\nut}{\nut}=-(\gamma-2)\frac{\pa_z\nu}{\nu}\\
\frac{\pa_z^2\nut}{\nut}=-(\gamma-2)\pa_z\left(\frac{\pa_z\nu}{\nu}\right)+\left(-(\gamma-2)\frac{\pa_z\nu}{\nu}\right)^2=(\gamma-2)\left[-\pa_z\left(\frac{\pa_z\nu}{\nu}\right)+(\gamma-2)\left(\frac{\pa_z\nu}{\nu}\right)^2\right]
\end{array}\right.,
$$ and hence \eqref{cneioneineionvoe} admits the equivalent formulation:
\[m_0^*\pa^2_z\nut+\int_{r>0}\left[\frac{a^2}{r^2}\psi_{{\rm sol},\nu}\frac{\pa_z^2h}{h}+{\rm NL}(\nu,\psi_{{\rm sol},\nu})\right]\zeta_*(R)rdr=0,\]
which gives
\bee
   &&  m_0^*\pa^2_z\nut+\int_{r>0}{\rm NL}(\nu,0)\zeta_*(R)rdr\\
&= &-\int_{r>0}\left[\frac{a^2}{r^2}\psi_{{\rm sol},\nu}\frac{\pa_z^2h}{h}+{\rm NL}(\nu,\psi_{{\rm sol},\nu})-{\rm NL}(\nu,0)\right]\zeta_*(R)rdr,
\eee
where by a slight abuse of notation ${\rm NL}(\nu,0)={\rm NL}(\nu,\psi_\nu,a=0)$. We recall \eqref{vneioneioneinveonvie} and compute:
\bee
&&\int_{r>0}{\rm NL}(\nu,0)\zeta_*(R)rdr =  \int_{r>0}\left[\frac{\R_1(\psi_\nu)}{r^2}-\R_2(\psi_\nu)\right]\zeta_*(R)rdr\\
&=& \int_{R>0}\left[\frac{\R_1\left(\frac{\psi_*}{\nu^{\gamma-2}}\right)}{\nu^2R^2}-\R_2\left(\frac{\psi_*}{\nu^{\gamma-2}}\right)\right]\zeta_*(R)\nu^2R dR\\
& = &  \int_{R>0}\left[\frac{1}{R^2}\left(\frac{\psi_*}{\nu^{\gamma-2}}\right)^{\beta_1}-\nu^2\left(\frac{\psi_*}{\nu^{\gamma-2}}\right)^{\beta_2}\right]\zeta_*(R)RdR,
\eee
thus
\bee
    \int_{r>0}{\rm NL}(\nu,0)\zeta_*(R)rdr &= & \nut^{\beta_1}\int_{R>0}\frac{(\psi_*)^{\beta_1}}{R^2}\left[(\gamma-2)\psi_*+R\pa_R\psi_*\right]RdR\\ & &-  \nut^{\beta_2}\int_{R>0}(\psi_*)^{\beta_2}\left[(\gamma-2)\psi_*+R\pa_R\psi_*\right]RdR,
\eee
hence
\bee
\int_{r>0}{\rm NL}(\nu,0)\zeta_*(R)rdr &=& \nut^{\beta_1}\left[(\gamma-2)\int_{R>0}\frac{(\psi_*)^{\beta_1+1}}{R^2}RdR\right]\\
& & -\nut^{\beta_2}\left[\left(\gamma-2-\frac{2}{\beta_2+1}\right)\int_{R>0}(\psi_*)^{\beta_2+1}RdR\right],
\eee
finally giving 
\[\int_{r>0}{\rm NL}(\nu,0)\zeta_*(R)rdr =  m_1^*\nut^{\beta_1}-m_2^*\nut^{\beta_2}.\] Observe that $m_1^*,m_2^*$ are finite and strictly positive  from \eqref{limitiationpariamters}. We have therefore obtained the equivalent formulation of \eqref{cneioneineionvoe}:
$$
\frac{m_0^*}{\gamma-2}\pa_z^2\nut+m_1^*\nut^{\beta_1}-m_2^*\nut^{\beta_2}=-\int_{r>0}\left[\frac{a^2}{r^2}\psi_{{\rm sol},\nu}\frac{\pa_z^2h}{h}+{\rm NL}(\nu,\psi_{{\rm sol},\nu})-{\rm NL}(\nu,0)\right]\zeta_*(R)rdr,
$$ which is \eqref{equaitonoineog}.
\end{proof}


\subsection{Approximate solution to \eqref{equaitonoineog}}


We start with neglecting the $a$ dependence of the right hand side of \eqref{equaitonoineog} and solve the approximate bifurcation equation. 

\begin{lemma}[Solving the approximate flow]
\label{lemmaapprsolution}
Assume $\beta_1>\beta_2>1$, then there exists a unique non trivial even solution  $\nut_\infty$ to
\be
\label{eqauivejentformual}
\pa_z^2\nut+d_1^*\nut^{\beta_1}-d_2^*\nut^{\beta_2}=0,
\ee 
which decays as $z\to+\infty$. Moreover we have 
\be
\label{vnenvenenvenv}
\forall z>0, \ \ \nu_\infty'(z)>0,
\ee
and letting 
\be
\label{valueetainfty}
\eta_\infty=\frac{2}{(\beta_2-1)(\gamma-2)},
\ee then $\nu_\infty\in \mathcal N^{\eta_\infty}(c_0,C_0,(C_i)_{1\le i\le2})$ for some universal constant $c_0,C_0,C_1,C_2>0$ depending on $(\gamma,\beta_1,\beta_2)$.
\end{lemma}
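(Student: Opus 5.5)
This is a conservative (Newtonian) autonomous ODE $\nut''=-V'(\nut)$, so I will work with the first integral. Set
$$V(\nut)=\frac{d_1^*}{\beta_1+1}\nut^{\beta_1+1}-\frac{d_2^*}{\beta_2+1}\nut^{\beta_2+1}.$$
Then \eqref{eqauivejentformual} reads $\nut''=-V'(\nut)$, with energy $E=\frac12(\nut')^2+V(\nut)$. A solution decaying at $+\infty$ (with $\nut'\to0$, which follows from the equation) has $E=0$, so $(\nut')^2=-2V(\nut)$.

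Since $\beta_1>\beta_2$, we have $V<0$ on $(0,\nut_0)$ and $V>0$ beyond, where
$$\nut_0^{\beta_1-\beta_2}=\frac{d_2^*(\beta_1+1)}{d_1^*(\beta_2+1)}.$$
At $\nut_0$ the derivative $V'(\nut_0)$ is positive, so $\nut_0$ is a simple turning point. The zero-energy orbit is therefore the homoclinic orbit to $0$ passing through $\nut_0$. I will define the even solution by $\nut(0)=\nut_0$, $\nut'(0)=0$. Integrating
$$z=\int_{\nut}^{\nut_0}\frac{du}{\sqrt{-2V(u)}}$$
gives a decreasing profile on $z>0$, well defined near $\nut_0$ because the turning point is simple. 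It never reaches $0$ in finite $z$: near $u=0$ the integrand behaves like $c\,u^{-(\beta_2+1)/2}$, which is not integrable because $\beta_2>1$.

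For uniqueness, any nontrivial decaying even solution has $E=0$, $\nut'(0)=0$ and $\nut(0)>0$, which forces $\nut(0)=\nut_0$. Monotonicity of $\nut$ on $z>0$ is immediate from the formula, and this gives \eqref{vnenvenenvenv}, since $\nu=\nut^{-1/(\gamma-2)}$.

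For the asymptotics, near $0$ we have $-2V(u)=\frac{2d_2^*}{\beta_2+1}u^{\beta_2+1}(1+O(u^{\beta_1-\beta_2}))$. Solving $\nut'=-c\,\nut^{(\beta_2+1)/2}(1+\dots)$ gives
$$\nut(z)\sim C z^{-2/(\beta_2-1)}.$$
Hence $\nu=\nut^{-1/(\gamma-2)}\sim C' z^{\eta_\infty}$ with $\eta_\infty=\frac{2}{(\beta_2-1)(\gamma-2)}$. Since $\nu$ is continuous and positive on compact sets, this yields the two-sided bound \eqref{neovnonvievvnkvnoneii} with constants depending on $(\gamma,\beta_1,\beta_2)$.

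For the derivative bounds, I use
$$\frac{\nu'}{\nu}=-\frac{1}{\gamma-2}\frac{\nut'}{\nut}=\frac{1}{\gamma-2}\frac{\sqrt{-2V(\nut)}}{\nut}\approx c\,\nut^{(\beta_2-1)/2}\approx \frac{c}{z}.$$
For the second derivative,
$$\frac{\nu''}{\nu}=\left(\frac{\nu'}{\nu}\right)'+\left(\frac{\nu'}{\nu}\right)^2,$$
and $\left(\nut'/\nut\right)'=\nut''/\nut-(\nut'/\nut)^2=-V'(\nut)/\nut+2V(\nut)/\nut^2=O(\nut^{\beta_2-1})=O(z^{-2})$. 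All of these are bounded on compacts, which gives \eqref{estoatmgnoi}. Smoothness and evenness follow from ODE theory and from the symmetry $z\mapsto-z$ of the equation together with the initial data.

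The main obstacle is the quantitative asymptotic expansion at infinity, specifically showing that the correction terms from $u^{\beta_1-\beta_2}$ do not spoil the power-law rate. I would handle this by writing $w=\nut^{-(\beta_2-1)/2}$. Then $w'=\frac{\beta_2-1}{2}c(1+O(w^{-\kappa}))$ for some $\kappa>0$, so $w$ is asymptotically linear in $z$. Integrating gives $w=c'z+O(z^{1-\kappa})+O(1)$, which is enough for the bounds above.
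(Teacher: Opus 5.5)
Your proof is correct and follows the paper's route. Both arguments use the first integral $\frac12(\tilde{\nu}')^2=-V(\tilde{\nu})$, the even orbit launched from the simple zero of $V$ (the paper's $\tilde{\nu}_*$), monotone decay for $z>0$, and the $z^{-2/(\beta_2-1)}$ tail that yields $\eta_\infty$. The difference is only technical: you obtain the tail via the substitution $w=\tilde{\nu}^{-(\beta_2-1)/2}$ and the derivative bounds from the energy identity, whereas the paper uses a fixed point argument around the homogeneous solution $c_\infty z^{-2/(\beta_2-1)}$ and reads the bounds off the equation.
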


\begin{proof}[Proof of Lemma \ref{lemmaapprsolution}] We recall from \eqref{limitiationpariamters} the fundamental constraint 
\be
\label{vnevnnvenenv}
\beta_1>\beta_2.
\ee We restrict our discussion of \eqref{eqauivejentformual} to even solutions and hence $\nut'(0)=0$. We integrate the Hamiltonian system \eqref{eqauivejentformual}:
$$\left|\begin{array}{l}
\frac12(\nut')^2=G(\nut(0))-G(\nut)\\
G(\nut)=-\frac{d^*_2}{\beta_2+2}\nut^{\beta_2+1}+\frac{d^*_1}{\beta_1+1}\nut^{\beta_1+1}
\end{array}\right..
$$
We have $$G'(\nut)=-d^*_2\nut^{\beta_2}+d^*_1\nut^{\beta_1}=d_2\nut^{\beta_2}\left[\frac{d^*_1}{d^*_2}\nut^{\beta_1-\beta_2}-1\right]$$ and hence from \eqref{vnevnnvenenv}, $G'$ vanishes exactly once on $\Bbb R_*^+$ and is strictly negative near the origin, hence $G<0$ near the origin and vanishes exactly once on $\Bbb R^*_+$ with $\lim_{\nut\to +\infty}G(\nut)=+\infty$. Moreover we compue
\be
\label{vneioneioneinveoi}
-\frac{d^*_2}{\beta_2+1}\nut^{\beta_2+1}+\frac{d^*_1}{\beta_1+1}\nut^{\beta_1+1}=0\Leftrightarrow \nut=\nut_*=\left(\frac{d_2^*(\beta_1+1)}{d_1^*(\beta_2+1)}\right)^{\frac{1}{\beta_1-\beta_2}},
\ee
and note that by \eqref{vneoivneionenoeivn}
\[
\partial^2_z\nut(0)=-G'(\nut_*)<0
\]
Thus the trajectory emanating from $(\nut(0)=\nut_*,\nut'(0)=0)$ solves 
\[
\nut'=-\sqrt{-2G(\nut(z))},
\]
and is the unique orbit entering the unstable endpoint $$\lim_{z\to+\infty}(\nut,\nut')=(0,0)$$ and then from the phase portrait:
 \be
\label{neovenovneonvenv}
\forall z>0, \ \ \nut'(z)<0, \ \ \nut(z)>0.
\ee
Let the homogeneous solution:
$$\left|\begin{array}{l}
\nut_\infty''-d^*_2\nut_\infty^{\beta_2}=0\\
\nut_\infty=\frac{c_\infty}{z^{\beta}}
\end{array}\right.\Leftrightarrow \frac{c_\infty\beta(\beta+1)}{z^{\beta+2}}-d^*_2\left(\frac{c_\infty}{z^\beta}\right)^{\beta_2}=0\Rightarrow \left|\begin{array}{l}\beta=\frac{2}{\beta_2-1}\\ c_\infty=\left(\frac{\beta(\beta+1)}{d^*_2}\right)^{\frac{1}{\beta_2-1}},\end{array}\right.
$$
then since $\beta_1>\beta_2$, an elementary fixed point argument near the critical point
 ensures that the leading order near $+\infty$ is given by $\nut_\infty$: $$\nut(z)=\frac{c_\infty(1+o(1))}{z^{\frac{2}{\beta_2-1}}}\ \ \mbox{as}\ \ z\to +\infty.$$ We then easily infer from the equation $$\left|\frac{\la z\ra^k}{\nut}\frac{d^k}{dz^k}\nut\right|\lesssim1, \ \ k=1,2.$$ The collection of above bounds
  together with \eqref{defntuilde}, \eqref{neovenovneonvenv} concludes the proof of \eqref{valueetainfty} and Lemma \ref{lemmaapprsolution}.
\end{proof}

\begin{remark} Let $\nut_\infty(0)=\nu_*$ given by \eqref{vneioneioneinveoi}, then
\bea
\label{vneoivneionenoeivn}
\nonumber \nut''_\infty(0)&=&d^*_2\nut_*^{\beta_2}-d^*_1\nut_*^{\beta_1}=-d_2\nut^{\beta_2}\left[\frac{d^*_1}{d^*_2}\nut_*^{\beta_1-\beta_2}-1\right]\\
&=& -d_2\nut_*^{\beta_2}\left[\frac{\beta_1+1}{\beta_2+1}-1\right]<0.
\eea
\end{remark}

\subsection{Fixed point formulation}
We now reformulate \eqref{equaitonoineog} as a fixed point. \\

\noindent{\em Linearized operator}. Let $$H=-\pa_z^2-\beta_1d_1^*\nut^{\beta_1-1}+\beta_2d_2^*\nut^{\beta_2-1},$$ then by translation invariance of \eqref{eqauivejentformual} we have $$H\nut_\infty'=0.$$ Let $\Gamma$ be the even solution to $$\left|\begin{array}{l}
H\Gamma=0\\
\Gamma(0)=1\\
\Gamma'(0)=0
\end{array}\right.,
$$
then the Wronskian is constant $$W=\left[\Gamma' \nut'_\infty-\Gamma\nut''_\infty\right](x)=\left[\Gamma' \nut_\infty'-\Gamma\nut_\infty''\right](0)=-\nut_\infty''(0)>0,$$ from \eqref{vneoivneionenoeivn} from which since $\nu'$ does not vanish $$\left(\frac{\Gamma}{\nut_\infty'}\right)'=\frac{W}{(\nut_\infty')^2},$$ and hence for some universal constant $z_0>0$:
$$\Gamma(z)=\nut_\infty'(z)\int_{z_0}^z\frac{W}{(\nut_\infty')^2(\tau)}d\tau.$$ In the limit $z\to +\infty$ we have $$\nut_\infty\sim \frac{1}{\la z\ra^{(\gamma-2)\eta_\infty}}$$ 
which yields the asymptotics near $+\infty$: $$\Gamma(z)\sim\frac{c}{z^{(\gamma-2)\eta_\infty+1}}\int_{z_0}^{z}\tau^{2[(\gamma-2)\eta_\infty+1]}d\tau\sim cz^{(\gamma-2)\eta_\infty+2}.$$  The unique even solution to $Hu=f$ decaying at $+\infty$  is therefore given by the formula for $z\ge0$:
\be
\label{eniovneineoneonv}
u(z)=-\Gamma\int_z^{+\infty}\frac{f\nut_\infty'}{W}d\tau-\nut_\infty'\int_0^z\frac{f\Gamma}{W}d\tau\equiv H^{-1}f.
\ee

\noindent{\em Reformulation of \eqref{equaitonoineog}}. We define $$\nut=\nut_\infty(1+\e).$$ We therefore reformulate \eqref{equaitonoineog} as the fixed point equation:

\be
\label{equaigneoignengon}
\left|\begin{array}{l}
\nut=\nut_\infty(1+\e)=\frac{1}{\nu^{\gamma-2}}\\
\e=G_1(\e)-G_2(\e)+G_3(\e)+G_4(\e)\\
G_1(\e)=\frac{1}{\nut_\infty}H^{-1}\left\{d_1^*\nut_\infty^{\beta_1}\left[(1+\e)^{\beta_1}-1-\beta_1\e\right]\right\}\\
G_2(\e)=\frac{1}{\nut_\infty}H^{-1}\left\{d_2^*\nut_\infty^{\beta_2}\left[(1+\e)^{\beta_2}-1-\beta_2\e\right]\right\}\\
G_3(\e)=\frac{1}{\nut_\infty}H^{-1}\left\{\frac{\gamma-2}{m_0^*}\int_{r>0}\left[\frac{a^2}{r^2}\psi_{{\rm sol},\nu}\frac{\pa_z^2h}{h}\right]\zeta_*(R)rdr\right\}\\
G_4(\e)=\frac{\gamma-2}{m_0^*\nut_\infty}H^{-1}\left[\int_{r>0}\left[{\rm NL}(\nu,\psi_{{\rm sol},\nu})-{\rm NL}(\nu,0)\right]\zeta_*(R)rdr\right].
\end{array}\right.
\ee

Pick a small enough universal constant $\delta_0$, we introduce the norm $$\|\e\|\equiv\sum_{k=0}^2\|\la z\ra^{\delta_0}\la z\ra^k\e^{(k)}\|_{L^\infty}.$$

\begin{proposition}[Contraction mapping]
\label{vneknenenvoeevndnvdkngdklnower} Let $K_a=\sqrt{a}$, then for $0<a<a^*$ small enough, \eqref{equaigneoignengon} admits a unique solution in the ball of radius $K_a$ of $ C^2(\Bbb R,\Bbb R)$ equipped with the norm $\|\e\|.$
\end{proposition}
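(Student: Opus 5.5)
We will apply the Banach fixed point theorem to the map $\e\mapsto \Phi(\e)=G_1(\e)-G_2(\e)+G_3(\e)+G_4(\e)$ on the ball $\mathcal B_a=\{\|\e\|\le K_a\}$. The first step is a weighted resolvent bound for $H^{-1}$ given by \eqref{eniovneineoneonv}. Using $\nut'_\infty\sim -c z^{-(\gamma-2)\eta_\infty-1}$, $\Gamma\sim c z^{(\gamma-2)\eta_\infty+2}$, $W$ constant, and $\nut_\infty\sim z^{-(\gamma-2)\eta_\infty}$, a direct computation of the two integrals shows
\[
\Big\|\frac{1}{\nut_\infty}H^{-1}f\Big\|\lesssim \big\|\la z\ra^{2+\delta_0}\nut_\infty^{-1} f\big\|_{L^\infty}
\]
for $0<\delta_0$ small. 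Decay on the left is preserved because $\Gamma\int_z^\infty f\nut_\infty'\sim z^{2+\kappa}\cdot z^{-\kappa-1-2-\delta_0+\kappa}\cdot z^{...}$ balances exactly. Here we write $\kappa=(\gamma-2)\eta_\infty$ and use $\delta_0<\kappa$. The derivative bounds follow from differentiating the formula and using $Hu=f$ for the second derivative. Evenness is preserved by construction.

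Next we estimate the four terms. For $G_1,G_2$, Taylor expansion gives $|(1+\e)^{\beta_i}-1-\beta_i\e|\lesssim \e^2$, with Lipschitz constant $\lesssim\|\e_1\|+\|\e_2\|$. Moreover $\nut_\infty^{\beta_i-1}\lesssim \la z\ra^{-2}$, because $(\beta_2-1)\kappa=2$ and $\beta_1>\beta_2$. Hence $\|G_{1,2}(\e)\|\lesssim K_a^2$, and they are contractions with constant $\lesssim K_a$.

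For $G_3,G_4$, we first note that for $\|\e\|\le K_a\ll1$ the function $\nu=\nut^{-1/(\gamma-2)}$ lies in $\mathcal N^{\eta_\infty}$ with constants uniform in $a$, by Lemma \ref{lemmaapprsolution}. We fix parameters so that $\eta_\infty<\eta^*$; this is compatible with \eqref{limitiationpariamters} by taking $m_2$ large, which is the choice $\eta=\eta_\infty$. Lemma \ref{lemmainversion} then applies and $\Psi_{{\rm sol},\nu}\in B_M$. For $G_3$, we use $|\pa_z^2 h/h|\lesssim \la z\ra^{-2}$ together with the pointwise bound in $E_\alpha$, $|\psi_{\rm sol}|\lesssim Ma^{-1/2}\la z\ra^{-1/4}r^2\la r\ra^{-\alpha-2}$. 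Integrating against $\zeta_*(R)rdr$ (with $r\lesssim\nu$) gives a source of size $a^{3/2}M\la z\ra^{-2-1/4+C\eta}\nut_\infty$. Its image therefore has norm $\lesssim a^{3/2}\le K_a/4$, provided $\delta_0<1/4-C\eta$. For $G_4$, we use Lemma \ref{lemmanonlin} with $\psi_1=0$, and Lemma \ref{lemmanonlinbis}. The crucial extra factor $\la z\ra^{1/20}$ supplies the $\la z\ra^{-2-\delta_0}\nut_\infty$ decay after pairing with $\zeta_*$, which is where the Remark after Lemma \ref{lemmanonlin} is used. This gives $\|G_4\|\lesssim aM^2+a^2M\ll K_a$.

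The Lipschitz estimates for $G_3,G_4$ come from \eqref{vneovneionienvoei}, combined with the differences in \eqref{firsingengeo} and \eqref{firsingengebis}. We bound $\|(\nu_2-\nu_1)\la z\ra^{-\delta_*/6}\|_{L^\infty}$ and $\|\pa_z^2(\Psi_{\nu_2}-\Psi_{\nu_1})\|_{\alpha+6}$ by $C\|\e_2-\e_1\|$. Here we use $\nu_2-\nu_1\approx \nu\,(\e_2-\e_1)/(\gamma-2)$ and the fact that $\|\e\|$ controls two weighted derivatives. All these contributions carry a prefactor $a^{1/2}$ or $aM$, so they are contractive for small $a$.

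The main obstacle is the growth bookkeeping. Since $\nu\sim\la z\ra^{\eta}$, we lose powers $\la z\ra^{C\eta}$ and $\nu^{-\gamma+2}$ in the $z$-weights. We must check that the $z$-decay of the $G_3,G_4$ sources beats $\la z\ra^{-2-\delta_0}\nut_\infty$ uniformly in $\e$. I expect this to need $\eta_\infty$ small (large $m_2$) and $\delta_0\ll 1/20$, and I would first verify it on $G_4$, where the margin is thinnest.
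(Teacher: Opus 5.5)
Your architecture (Picard on $\{\|\varepsilon\|\le K_a\}$, with $G_1,G_2$ quadratic in $\varepsilon$ and $G_3,G_4$ controlled through $\Psi_{{\rm sol},\nu}\in B_M$) is the paper's. Your weighted $L^\infty$ resolvent bound is also correct; it only needs $0<\delta_0<1$, and the condition $\delta_0<\kappa$ plays no role.

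That bound suffices for $G_1,G_2$, since $\tilde\nu_\infty^{\beta_i-1}\lesssim\langle z\rangle^{-2}$. It also suffices for $G_3$: the $L^\infty$ component of $\|\cdot\|_{E_\alpha}$ and $|\partial_z^2h/h|\lesssim\langle z\rangle^{-2}$ give a source $\lesssim a^{3/2}M\langle z\rangle^{-9/4}$, which is admissible when $\delta_0+(\gamma-2)\eta_\infty<\frac14$.

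The gap is in $G_4$, namely the $\mathcal F_1,\mathcal F_2$ contributions. Your resolvent bound requires $\int_{r>0}a^{-2}\mathcal F_i\,\zeta_*(R)\,r\,dr$ to decay pointwise like $\langle z\rangle^{-2-\delta_0}\tilde\nu_\infty$. But \eqref{firsingengeo} controls $\langle z\rangle^{1/20}a^{-2}\mathcal F_i$ only in $\|\cdot\|_{\alpha+6}$:
\begin{itemize}
\item its $L^\infty$ component gives the pointwise decay $\langle z\rangle^{-1/20}$ and nothing more;
\item all remaining $z$-decay is in $L^2_z$, with weight $\langle z\rangle^{3-\delta_*+\frac1{10}}$.
\end{itemize}
Integrating against $\zeta_*(R)\,r\,dr$ does not turn $L^2_z$ decay into pointwise decay, and a gain of $\frac1{20}$ cannot bridge an exponent gap of about $2$. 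Using the quadratic structure of $\mathcal F_1$ together with the $L^\infty$ component of $E_\alpha$ only yields roughly $\langle z\rangle^{-1/2}$. So your claim that the factor $\langle z\rangle^{1/20}$ supplies the $\langle z\rangle^{-2-\delta_0}\tilde\nu_\infty$ decay is false, and the contraction does not close as written.

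The missing idea is to invert $H$ in an $L^2_z$-based topology that matches the $L^2_z$ control of the sources; this is Lemma \ref{conoeingiorwoskevtn}. Cauchy--Schwarz in \eqref{eniovneineoneonv} gives $\|\tilde\nu_\infty^{-1}H^{-1}f\|\lesssim\|\langle z\rangle^\beta f\|_{L^2}$ whenever $0<\delta_0\le\min\{1,\beta-\frac32-(\gamma-2)\eta_\infty\}$. For $f=\int_{r>0}g\,\zeta_*(R)\,r\,dr$, Cauchy--Schwarz in $r$ then gives $\|\langle z\rangle^\beta f\|^2_{L^2}\lesssim\int\!\!\int\langle z\rangle^{2\beta+C\eta_\infty}\langle r\rangle^2g^2\,dz\,dr\lesssim\|\langle z\rangle^{1/20}g\|^2_{\alpha+6}$, as soon as $2\beta+C\eta_\infty\le3-\delta_*+\frac1{10}$.

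Without the $\frac1{20}$ gain, the weight $\langle z\rangle^{3-\delta_*}$ built into $\|\cdot\|_\alpha$ sits just below the threshold $\beta>\frac32$. With it, the window $\frac32+(\gamma-2)\eta_\infty<\beta\le\frac32+\frac1{20}-\frac{\delta_*}2-C\eta_\infty$ is nonempty for $\eta_\infty,\delta_*$ small. That, not pointwise decay, is where the Remark after Lemma \ref{lemmanonlin} is used.

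With this resolvent bound, which also handles $G_1$--$G_3$ and forces $\delta_0<\frac1{20}$ as you anticipated, your bound $\|G_4\|\lesssim aM^2$ and the Lipschitz estimate via \eqref{vneovneionienvoei} go through. If you instead keep your $L^\infty$ resolvent, you would need genuine pointwise $z$-decay of the projected $\mathcal F$-terms from a separate argument. One option is a one-dimensional Sobolev embedding in $z$ using the $a\,\partial_z\Psi$ component of $E_\alpha$. That is not what \eqref{firsingengeo} provides, and it is absent from your proposal.
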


The rest of this section is devoted to the proof of Proposition \ref{vneknenenvoeevndnvdkngdklnower}.

\subsection{Continuity of the resolvent}

\begin{lemma}[Continuity of the resolvent]
\label{conoeingiorwoskevtn}
Let 
\be
\label{choieviobiebt}
0<\delta_0\le \min\{1,\beta-\frac 32-(\gamma-2)\eta_\infty\}
\ee then we have the bound 
\be
\label{estnoagnrieodkvente}
\left\|\frac{1}{\nut_\infty}H^{-1}f\right\|\lesssim \|\la z\ra^{\beta}f\|_{L^2}.
\ee
\end{lemma}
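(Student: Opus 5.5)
We work directly from the explicit representation \eqref{eniovneineoneonv}, valid for $z\ge 0$ (evenness handles $z<0$), and from the asymptotics of the two homogeneous solutions. Write $\kappa=(\gamma-2)\eta_\infty$, so that $\nut_\infty\sim z^{-\kappa}$, $\nut_\infty'\sim -\kappa z^{-\kappa-1}$, $\nut_\infty''\sim z^{-\kappa-2}$, and, as computed above, $\Gamma\sim c z^{\kappa+2}$ with $\Gamma'\sim c' z^{\kappa+1}$ and $\Gamma''\sim c'' z^{\kappa}$. The last two follow from $\Gamma=\nut_\infty'\int_{z_0}^z W(\nut_\infty')^{-2}$ after differentiating. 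On the compact region $z\le 1$ everything is smooth and bounded. Here $\nut_\infty'(0)=0$, but $\Gamma(0)=1$, so the formula is regular; I would simply use $H\Gamma=0$, $H\nut_\infty'=0$ and local Cauchy--Schwarz. The whole difficulty is the large-$z$ behaviour.

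\textbf{Step 1: pointwise bound on $u=H^{-1}f$.} Set $N=\|\la z\ra^{\beta}f\|_{L^2}$. By Cauchy--Schwarz,
$$\int_z^{\infty}|f||\nut_\infty'|\lesssim N\Big(\int_z^\infty \tau^{-2\beta-2\kappa-2}\Big)^{1/2}\lesssim N z^{-\beta-\kappa-\frac12}.$$
Multiplying by $\Gamma\sim z^{\kappa+2}$ gives the bound $N z^{\frac32-\beta}$. For the second term,
$$\int_0^z|f|\Gamma\lesssim N\Big(\int_0^z \tau^{2\kappa+4-2\beta}\Big)^{1/2}\lesssim N\big(1+z^{\kappa+\frac52-\beta}\big),$$
and multiplying by $|\nut_\infty'|\sim z^{-\kappa-1}$ gives $N(z^{-\kappa-1}+z^{\frac32-\beta})$. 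Hence $|u|\lesssim N z^{\frac32-\beta}$ for large $\beta$. Dividing by $\nut_\infty\sim z^{-\kappa}$, we get
$$\Big|\frac{u}{\nut_\infty}\Big|\lesssim N z^{\frac32+\kappa-\beta}\le N z^{-\delta_0},$$
which is exactly where the constraint \eqref{choieviobiebt} on $\delta_0$ enters.

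\textbf{Step 2: derivatives.} Differentiating \eqref{eniovneineoneonv}, the boundary terms cancel by the Wronskian identity, which leaves
$$u'=-\Gamma'\int_z^\infty\frac{f\nut_\infty'}{W}-\nut_\infty''\int_0^z\frac{f\Gamma}{W}.$$
The same estimates, now with an extra factor $z^{-1}$ coming from $\Gamma'/\Gamma$ and $\nut_\infty''/\nut_\infty'$, give $|u'|\lesssim Nz^{\frac12-\beta}$. For the second derivative I would not differentiate again; instead I would use the equation,
$$u''=-f-\big(\beta_1d_1^*\nut_\infty^{\beta_1-1}-\beta_2d_2^*\nut_\infty^{\beta_2-1}\big)u.$$
The potential is $O(z^{-2})$, since $\nut_\infty^{\beta_2-1}\sim z^{-2}$ by the choice of $\eta_\infty$, so the potential term is bounded by $Nz^{-\frac12-\beta}$. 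The $f$ term is only controlled in $L^2$.

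\textbf{Step 3: the norm of $\varepsilon=u/\nut_\infty$, and the main obstacle.} By Leibniz, $\varepsilon^{(k)}$ is a combination of $u^{(j)}(1/\nut_\infty)^{(k-j)}$. Since $\la z\ra^{i}(1/\nut_\infty)^{(i)}\lesssim 1/\nut_\infty$ by Lemma \ref{lemmaapprsolution}, the cases $k=0,1$ follow from Steps 1 and 2.

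The main obstacle is $k=2$. There, $\la z\ra^{2+\delta_0}u''/\nut_\infty$ contains the term $z^{2+\delta_0+\kappa}f$, and an $L^2$ weight on $f$ does not bound $f$ pointwise. So \eqref{estnoagnrieodkvente} cannot literally hold in $L^\infty$ for the second derivative unless $f$ carries extra regularity. In practice, the sources in \eqref{equaigneoignengon} come with pointwise bounds (from $\|\cdot\|_{\alpha}$ and the $L^\infty$ part of $E_\alpha$).

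My first attempt would therefore be to read the right-hand side as $\|\la z\ra^{\beta}f\|_{L^2}+\|\la z\ra^{\beta-\frac12}f\|_{L^\infty}$. With that reading, $|z^{2+\delta_0+\kappa}f|\lesssim z^{\frac52+\delta_0+\kappa-\beta}$, which is bounded under the same constraint on $\delta_0$ (with $\beta$ shifted by $\frac12$). Alternatively, I would check whether the paper's $L^2$ is effectively a weighted $L^\infty$ for the sources at hand, and state the bound accordingly. Apart from this point, the proof is only the bookkeeping of powers given above.
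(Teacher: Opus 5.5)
Your argument is the paper's route. You use the representation \eqref{eniovneineoneonv}, Cauchy--Schwarz against $\Gamma\sim z^{(\gamma-2)\eta_\infty+2}$ and $\nut_\infty'\sim z^{-(\gamma-2)\eta_\infty-1}$, the differentiated formula for $k=1$, and the equation for $k=2$.

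Your objection at $k=2$ is correct, and it also applies to the paper's proof, which dismisses that case with ``the second derivative is estimated from the equation''. Since $u''=-f+O(\la z\ra^{-2}|u|)$, the quantity $\la z\ra^{2+\delta_0}\e''$ contains $\la z\ra^{2+\delta_0}f/\nut_\infty$. An even pair of smooth bumps of height $h$ and width $h^{-4}$ at $z=\pm1$ has $\|\la z\ra^{\beta}f\|_{L^2}\lesssim h^{-1}$ but forces $\|\e\|\gtrsim h$. So \eqref{estnoagnrieodkvente} is false as stated, and a pointwise term on $f$ has to be added.

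This repair only moves the difficulty. The $k=2$ part of $\|\cdot\|$ cannot be dropped, because it is what gives $|\pa_z^2\nu|/\nu\lesssim\la z\ra^{-2}$, i.e.\ $\nu\in\mathcal N^{\eta}$. Yet Lemmas \ref{sorucetnent}, \ref{reghogneo} and \ref{reghogneofour} only bound the sources in $\|\la z\ra^{\beta}\cdot\|_{L^2}$. The matching pointwise decay of those sources would have to be proved separately; it is not supplied by what the paper proves.

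There are also two bookkeeping slips.

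\emph{The bound on $u$ in Step 1.} Consider the bounded part of $\int_0^z|f|\Gamma$, i.e.\ the ``$1$'' in your $N(1+z^{(\gamma-2)\eta_\infty+\frac52-\beta})$. It contributes $Nz^{-(\gamma-2)\eta_\infty-1}$ to $u$, and this dominates $Nz^{\frac32-\beta}$ exactly when $\beta>\frac52+(\gamma-2)\eta_\infty$. So for large $\beta$ your stated bound on $u$ is the wrong one. After division by $\nut_\infty$ this term is $Nz^{-1}$, and it is the reason for the $1$ in the $\min$ of \eqref{choieviobiebt}; the paper splits the two cases explicitly. Likewise $u'$ carries an extra term $Nz^{-(\gamma-2)\eta_\infty-2}$.

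\emph{The weight in your added norm.} With $\|\la z\ra^{\beta-\frac12}f\|_{L^\infty}$ you get $\la z\ra^{2+\delta_0}|f|/\nut_\infty\lesssim z^{\frac52+\delta_0+(\gamma-2)\eta_\infty-\beta}$. This needs $\delta_0\le\beta-\frac52-(\gamma-2)\eta_\infty$, which is strictly stronger than \eqref{choieviobiebt}. The weight $\la z\ra^{\beta+\frac12}$, the natural pointwise analogue of the $L^2$ weight $\la z\ra^{\beta}$, reproduces exactly \eqref{choieviobiebt}.
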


\begin{proof}[Proof of Lemma \ref{conoeingiorwoskevtn}] Recall \eqref{eniovneineoneonv} which gives
\bee
&&\frac{1}{\nut_\infty}\left|\Gamma\int_z^{+\infty}\frac{f\nu'}{W}d\tau\right|\\
&\lesssim& \la z\ra^{2(\gamma-2)\eta_\infty+2}\left(\int_{z\in \Bbb R}\la z\ra^{2\beta}f^2dz\right)^{\frac 12}\left(\int_z^{+\infty}\frac{d\tau}{\la \tau\ra^{2[(\gamma-2)\eta_\infty+1]+2\beta}}\right)^{\frac 12}\\
& \lesssim & \frac{\la z\ra^{2(\gamma-2)\eta_\infty+2}\|\la z\ra^\beta f\|_{L^2}}{\la z\ra^{(\gamma-2)\eta_\infty+1+\beta-\frac 12}}=\frac{\|\la z\ra^\beta f\|_{L^2}}{\la z\ra^{\beta-\frac 32-(\gamma-2)\eta_\infty}}\lesssim \frac{\|\la z\ra^\beta f\|_{L^2}}{\la z\ra^{\delta_0}},
\eee
and 
\bee
&&\frac{1}{\nut_\infty}\left|\nu'\int_0^z\frac{f\Gamma}{W}d\tau\right|\lesssim \frac{\la z\ra^{(\gamma-2)\eta_\infty}}{\la z\ra^{(\gamma-2)\eta_\infty+1}}\|\la z\ra^\beta f\|_{L^2}\left(\int_0^z\frac{\la \tau\ra^{4+2(\gamma-2)\eta_\infty}}{\la \tau\ra^{2\beta}}d\tau \right)^{\frac 12}\\
&\lesssim &\left|\begin{array}{l}\frac{\|\la z\ra^\beta f\|_{L^2}}{\la z\ra} \ \ \mbox{for}\ \ \beta>\frac 52+(\gamma-2)\eta_\infty\\
\frac{\|\la z\ra^\beta f\|_{L^2}}{\la z\ra^{\beta-\frac 32-(\gamma-2)\eta_\infty}}\lesssim \frac{\|\la z\ra^\beta f\|_{L^2}}{\la z\ra^{\delta_0}} \ \ \mbox{for}\ \ \beta<\frac 52+(\gamma-2)\eta_\infty
\end{array}\right..
\eee
We now take a derivative $$ z\pa_zu =-z\Gamma'\int_z^{+\infty}\frac{f\nu'}{W}d\tau-z\nu''\int_0^z\frac{f\Gamma}{W}d\tau,$$ which yields the corresponding estimate for $\la z\ra\pa_zu$, and the second derivative is estimated from the equation. This concludes the proof of \eqref{estnoagnrieodkvente}.
\end{proof}

\subsection{Estimate for the source term}
We start with estimating the dominant source term.

\begin{lemma}[Source term]
\label{sorucetnent}
 There holds the bound for $\e=\e_1,\e_2\in B_{K_a}$:
$$\|G_3(\e)\|\lesssim a^2M$$ and $$\|G_3(\e_2)-G_3(\e_1)\|\lesssim a^2M\|\e_2-\e_1\|.$$
\end{lemma}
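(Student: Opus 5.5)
The plan is to reduce both bounds to the resolvent estimate of Lemma \ref{conoeingiorwoskevtn}. It then suffices to show that the source
$$f_3(z)=\frac{\gamma-2}{m_0^*}\int_{r>0}\frac{a^2}{r^2}\psi_{{\rm sol},\nu}\frac{\pa_z^2h}{h}\zeta_*(R)rdr$$
satisfies $\|\la z\ra^\beta f_3\|_{L^2}\lesssim a^2M$. Here $\nu$ is the function determined by $\nut=\nut_\infty(1+\e)$. We first check that $\e\in B_{K_a}$ forces $\nu\in\mathcal N^{\eta_\infty}$ with constants uniform in $a$. This holds because $\|\e\|\le\sqrt a\ll1$ controls $\la z\ra^k\e^{(k)}$ for $k\le2$, and Lemma \ref{lemmaapprsolution} gives the corresponding bounds for $\nu_\infty$. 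Consequently Lemma \ref{lemmainversion} applies, and $\Psi_{{\rm sol},\nu}\in B_M$.

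\textbf{Pointwise bound on the source.} Since $h=\nu^{-p}\zeta_*(R)$, differentiating twice in $z$ and using \eqref{estoatmgnoi} gives
$$\left|\frac{\pa_z^2h}{h}\right|\lesssim \frac{1}{\la z\ra^2}\,\frac{|\zeta_*|+|R\pa_R\zeta_*|+|R^2\pa_R^2\zeta_*|}{\zeta_*}\lesssim\frac{1}{\la z\ra^{2}},$$
where the last step uses the asymptotics \eqref{asymptoticexpansionvortex}. Write $\psi_{\rm sol}=r^2\Psi_{\rm sol}$. Then
$$|f_3(z)|\lesssim \frac{a^2}{\la z\ra^2}\int_{r>0}|\Psi_{\rm sol}|\zeta_*(R)rdr.$$
We estimate the $r$ integral with the $L^\infty$ part of the $E_\alpha$ norm, namely $|\Psi_{\rm sol}|\lesssim a^{-1/2}M\la z\ra^{-1/4}\la r\ra^{-\alpha}$. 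Together with $\int\zeta_*(R)rdr\lesssim\nu^2\lesssim\la z\ra^{2\eta}$, this gives $|f_3|\lesssim a^{3/2}M\la z\ra^{-9/4+2\eta}$.

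\textbf{Weighted $L^2$ bound and the main obstacle.} The pointwise bound above loses $a^{1/2}$ and gives too little $z$-decay once $\beta$ exceeds $3/2$. To fix this we use the weighted $L^2$ part of $E_\alpha$ instead. Applying Cauchy--Schwarz in $r$ with weight $r^3$, and using the Hardy/radial Sobolev bound from \eqref{ekvivnonrirh},
$$\int_{r>0}|\Psi_{\rm sol}|\zeta_*rdr\lesssim\Big(\int\frac{\Psi_{\rm sol}^2}{r^2}r^3dr\Big)^{\frac12}\Big(\int\zeta_*^2 r^{-1}dr\Big)^{\frac12}\lesssim\Big(\int(\pa_r\Psi_{\rm sol})^2r^3dr\Big)^{\frac12}.$$
Multiplying by $\la z\ra^{\beta-2}$ and integrating in $z$ then yields
$$\|\la z\ra^\beta f_3\|_{L^2}\lesssim a^2\|\la z\ra^{\frac32-\delta_*}\pa_r\Psi_{\rm sol}\|_{L^2(\Bbb R^5)}\lesssim a^2M,$$
provided $\beta-2\le\frac32-\delta_*$, that is $\beta\le\frac72-\delta_*$. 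This is compatible with \eqref{choieviobiebt}: take $\beta=\frac72-\delta_*$, which leaves room for a small $\delta_0$ since $(\gamma-2)\eta_\infty$ is small. This bookkeeping of exponents is where I expect the main obstacle. The $\nu^{2}$ factors from rescaling, of size $\la z\ra^{C\eta}$, must be absorbed by the small margin $\delta_*$, so $\eta_\infty$ must be small relative to $\delta_*$, which in turn requires $m_2$ large through \eqref{valueetainfty}.

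\textbf{Difference estimate.} For the Lipschitz bound we write $f_3(\e_2)-f_3(\e_1)$ as two pieces. The first is the difference $\Psi_{{\rm sol},\nu_2}-\Psi_{{\rm sol},\nu_1}$ against the fixed weight $\pa_z^2h/h\,\zeta_*$. The second is $\Psi_{{\rm sol},\nu_1}$ against the difference of the weights, which is controlled by $\la z\ra^{-2}|\e_2-\e_1|$ up to $\la z\ra^{C\eta}$ losses. The first piece is handled by the same Cauchy--Schwarz argument combined with \eqref{vneovneionienvoei}. For this we need
$$\left\|\frac{\nu_2-\nu_1}{\la z\ra^{\delta_*/6}}\right\|_{L^\infty}+\|\pa_z^2(\Psi_{\nu_2}-\Psi_{\nu_1})\|_{\alpha+6}\lesssim\|\e_2-\e_1\|.$$
This follows from $\nu_i=\nu_\infty(1+\e_i)^{-1/(\gamma-2)}$ and the explicit formula for $\pa_z^2\Psi_\nu$ in Lemma \ref{lemmainversion}. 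The $\la z\ra^{\delta_0}$ weight in $\|\cdot\|$ absorbs the $\la z\ra^{\eta}$ growth of $\nu$ provided $\eta\lesssim\delta_0$. Applying Lemma \ref{conoeingiorwoskevtn} to the resulting source difference then concludes.
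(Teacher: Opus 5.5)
Your proposal follows essentially the same route as the paper. You bound the source in $\|\la z\ra^\beta\cdot\|_{L^2}$ via Cauchy--Schwarz in $r$, the four-dimensional Hardy inequality and the weighted $L^2$ part of the $E_\alpha$ norm, then apply Lemma \ref{conoeingiorwoskevtn}. For the difference you split into the $\Psi_{{\rm sol},\nu_2}-\Psi_{{\rm sol},\nu_1}$ piece, handled by \eqref{vneovneionienvoei}, and the piece where the weight $\frac{\pa_z^2h}{h}\zeta_*$ varies.

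One small correction. The second Cauchy--Schwarz factor should be $\int\zeta_*^2(R)\,r\,dr\sim\nu^2$, not the scale-invariant $\int\zeta_*^2r^{-1}dr$. This produces a genuine $\la z\ra^{\eta}$ loss, so you need $\beta$ strictly below $\frac72-\delta_*$; the paper takes $\beta=3$.
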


\begin{proof}[Proof of Lemma \ref{sorucetnent}]  We keep track of the $\nu$ dependance of all terms.\\

\noindent{\bf step 1} Boundedness in $B_{K_a}$. We compute:
$$
\left|\begin{array}{l}
\frac{\pa_zh}{h}=-p\frac{\pa_z\nu}{\nu}-\frac{\pa_z\nu}{\nu}\frac{R\partial_R\zeta_*}{\zeta_*}=-\frac{\pa_z\nu}{\nu}\left(p+\frac{R\partial_R\zeta_*}{\zeta_*}\right)\\
\frac{\pa_z^2h}{h}-\left(\frac{\pa_zh}{h}\right)^2=-\pa_z\left(\frac{\pa_z\nu}{\nu}\right)\left(p+\frac{R\partial_R\zeta_*}{\zeta_*}\right)+\left(\frac{\pa_z\nu}{\nu}\right)^2R\partial_R\left[\frac{R\partial_R\zeta_*}{\zeta_*}\right]
\end{array}\right.,
$$ and hence
$$\left|\begin{array}{l}
\frac{\pa_z^2h}{h}\zeta_*=\frac{\pa^2_z\nu}{\nu}H_1(R)+\left(\frac{\pa_z\nu}{\nu}\right)^2H_2(R)\\
H_1=p\zeta_*+R\partial_R\zeta_*\\
H_2=\left\{\left(p+\frac{R\partial_R\zeta_*}{\zeta_*}\right)^2+p+\frac{R\partial_R\zeta_*}{\zeta_*}+R\partial_R\left[\frac{R\partial_R\zeta_*}{\zeta_*}\right]\right\}\zeta_*
\end{array}\right..
$$
Next compute
$$\frac{\pa_z\nu}{\nu}=\frac{\pa_z\nu_\infty}{\nu_\infty}+\frac{\pa_z\e}{1+\e}$$ and thus in $B_{K_a}$ we have:
\be
\label{vniovnenvenvenvenvenvennevoe}
\left|\begin{array}{l}
\left(\frac{\pa_z\nu}{\nu}\right)^2\lesssim \frac{1}{\la z\ra^2}\\
\left|\left(\frac{\pa_z\nu_2}{\nu_2}\right)^2-\left(\frac{\pa_z\nu_1}{\nu_1}\right)^2\right|\lesssim \frac{1}{\la z\ra}\left|\frac{\pa_z\e_2}{1+\e_2}-\frac{\pa_z\e_1}{1+\e_1}\right|\lesssim \frac{\|\e_2-\e_1\|}{\la z\ra^{2+\delta_0}}
\end{array}\right.,
\ee
and similarly
\be
\label{vniovnenvenvenvenvenvennevoebis}
\left|\begin{array}{l}
\left|\frac{\pa^2_z\nu}{\nu}\right|\lesssim \frac{1}{\la z\ra^2}\\
\left|\frac{\pa^2_z\nu_2}{\nu_2}-\frac{\pa^2_z\nu_1}{\nu_1}\right|\lesssim \frac{\|\e_2-\e_1\|}{\la z\ra^{2+\delta_0}}
\end{array}\right..
\ee
This yields the bound in $B_{K_a}$:
$$\left|\frac{a^2}{r^2}\psi_{{\rm sol},\nu}\frac{\pa_z^2h}{h}\zeta_*(R)\right|\lesssim \frac{a^2R^2|\Psi_{{\rm sol},\nu}|}{\la z\ra^{2}\la R\ra^{|\l_-|}},$$
and hence 
\bee
&&\left\|\la z\ra^\beta\int_{r>0}\frac{a^2}{r^2}\psi_{{\rm sol},\nu}\frac{\pa_z^2h}{h}\zeta_*(R)rdr\right\|_{L^2}\lesssim a^2\left\|\la z\ra^\beta\int_{r>0}\frac{a^2R^2|\Psi_{{\rm sol},\nu}|rdr}{\la z\ra^{2}\la R\ra^{|\l_-|}}\right\|_{L^2}\\
& \lesssim & a^2\left[\int_{z\in \Bbb R}\frac{1}{\la z\ra^{4+2\delta}}\left(\int_{r>0}\frac{\la z\ra^{2\beta}\Psi_{{\rm sol},\nu}^2}{r^2}r^3dr\right)\left(\int_{r>0}\frac{R^4r^3dr}{r^3R^{2|\l_-|}}\right)dz\right]^{\frac 12}\\
& \lesssim & a^2\left[\int_{z\in \Bbb R}\frac{\la z\ra^{2\beta}}{\la z\ra^{4-(2|\lambda_-|-4)\eta}}(\pa_r\Psi_{{\rm sol},\nu})^2r^3drdz\right]^{\frac 12}\lesssim a^2M,
\eee
as long as recalling \eqref{vneionveoiveovei} $$2\beta-(4-(2|\lambda_-|-4)\eta_\infty)<3-2\delta_*\Longrightarrow \frac{3}{2}+(\gamma-2)\eta_\infty<\beta<\frac{7}{2}-\delta_*-(|\lambda_-|-2)\eta_\infty.$$  Thus for $\eta_\infty$ sufficiently small (thus $\beta_2\gg1$) we may therefore choose $\beta=3$ and  conclude from \eqref{estnoagnrieodkvente}:
$$\|G_3(\e)\|\lesssim a^2M.$$

\noindent{\bf step 2} Lipschitz regularity.\\

\noindent\underline{$\Psi_{{\rm sol},\nu}$ term}. We claim that
\be
\label{venivnevneionvennvenveonv}
a^2\left\|\la z\ra^3\int_{r>0}\left[\left(\Psi_{{\rm sol},\nu_2}-\Psi_{{\rm sol},\nu_1}\right)\frac{\pa_z^2h}{h}\right]\zeta_*(R)rdr\right\|_{L^2}\lesssim a^2\|\e_2-\e_1\|.
\ee
Indeed, we first estimate with $\beta=3$ as above and recalling \eqref{vneovneionienvoei}:
\bee
&&a^2\left\|\la z\ra^\beta\int_{r>0}\left[\left(\Psi_{{\rm sol},\nu_2}-\Psi_{{\rm sol},\nu_1}\right)\frac{\pa_z^2h}{h}\right]\zeta_*(R)rdr\right\|_{L^2}\\
& \lesssim & a^2\|\Psi_{{\rm sol},\nu_2}-\Psi_{{\rm sol},\nu_1}\|_{E_\alpha}\lesssim a^2\left[ \left\|\frac{\nu_2-\nu_1}{\la z\ra^{\frac{\delta_*}{6}}}\right\|_{L^\infty}+\left\|\pa_z^2(\Psi_{\nu_2}-\Psi_{\nu_1})\right\|_{\alpha+6}\right].
\eee
We estimate for $\delta_0$ chosen such that $\eta_\infty-\frac{\delta}{6}<\delta_0$
$$\left\|\frac{\nu_2-\nu_1}{\la z\ra^{\frac{\delta_*}{6}}}\right\|_{L^\infty}\lesssim \|\e_2-\e_1\|.$$ Next recall $$\Psi_\nu=\frac{1}{\nu^{\gamma}}\Psi_*(R),$$ and hence
$$
\pa_z\Psi_\nu=-\frac{\pa_z\nu}{\nu}\frac{1}{\nu^{\gamma}}\left(\gamma+\Lamdba_R\right)\Psi_*,$$
and 
\bee
\pa^2_z\Psi_\nu &= &\frac{1}{\nu^{\gamma}}\left[-\pa_z\left(\frac{\pa_z\nu}{\nu}\right)\left(\gamma+\Lamdba_R\right)+\left(\frac{\pa_z\nu}{\nu}\right)^2\left(\gamma+\Lamdba_R\right)^2\right]\Psi_*\\
&= &\frac{1}{\nu^\gamma}\left[\frac{\pa^2_z\nu}{\nu}N_1(R)+\left(\frac{\pa_z\nu}{\nu}\right)^2 N_2(R)\right].
\eee
Let $R_\infty=\frac{r}{\nu_\infty},$ then we estimate pointwise for $i=1,2$:
\bee
|N_i(R_2)-N_i(R_1)|&=&\left|(\nu_2-\nu_1)\int_0^1 \pa_\nu G_i(\nu_2+t(\nu_2-\nu_1))dt\right|\\
&\lesssim& \frac{\nu_\infty\|\e_2-\e_1\|}{\la z\ra^{\delta_0}}\frac{1}{\nu_\infty\la R_\infty\ra^{|\l_-|}}\\
&\lesssim &\frac{\|\e_2-\e_1\|}{\la z\ra^{\delta_0}\la R_\infty\ra^{|\l_-|}}.
\eee
Hence the pointwise bound with $R_\infty=\frac{r}{\nu_\infty}$ and recalling \eqref{vniovnenvenvenvenvenvennevoe}:
\bee
\left|\pa_z^2(\Psi_{\nu_2}-\Psi_{\nu_1})\right|\lesssim \frac{\|\e_2-\e_1\|}{\la z\ra^{2+\delta_0}\la R_\infty\ra^{\l_-}},
\eee which ensures the bound
\be
\label{venneonveneovi}
\left\|\pa_z^2(\Psi_{\nu_2}-\Psi_{\nu_1})\right\|_{\alpha+6}\lesssim \|\e_2-\e_1\|\left(\int_{z\in \Bbb R}\int_{r>0}\frac{\la z\ra^{3-2\delta_*}}{\la z\ra^{4+2\delta_0}}\frac{r^3\la r\ra^{2\alpha}drdz}{\la R_\infty\ra^{2\l_-}}\right)^{\frac 12}\lesssim \|\e_2-\e_1\|,
\ee 
provided $\delta_0>\lambda_-\eta_\infty-\delta_*$, which can always be guaranteed by taking $\beta_2$ sufficiently large. The collection of above bounds yields \eqref{venivnevneionvennvenveonv}.\\

\noindent\underline{$\frac{\pa_z^2h}{h}\zeta_*$ term}. We argue as above to estimate pointwise:
$$\left|\frac{\pa_z^2h_2}{h_2}\zeta_*(R_2)-\frac{\pa_z^2h_1}{h_1}\zeta_*(R_1)\right|\lesssim \frac{\|\e_2-\e_1\|R^2}{\la z\ra^{2+\delta_0}\la R_\infty\ra^{|\l_-|}},$$ and hence the bound with $\beta=3$ as above:
\bee
&&a^2\left\|\la z\ra^\beta\int_{r>0}\Psi_{{\rm sol},\nu_i}\left[\frac{\pa_z^2h_2}{h_2}\zeta_*(R_2)-\frac{\pa_z^2h_1}{h_1}\zeta_*(R_1)\right]rdr\right\|_{L^2}\\
& \lesssim & a^2\|\e_2-\e_1\|\left[\int_{z\in \Bbb R}\frac{\la z\ra^{2\beta}}{\la z\ra^{4+2\delta_0}}\left(\int_{r>0}\frac{\Psi^2_{{\rm sol},\nu_1}r^3dr}{r^2}\right)\left(\int_{r>0}\frac{R^4r^3dr}{r^2\la R\ra^{2\l_-}}\right)dz\right]^{\frac 12}\\
& \lesssim & a^2\|\e_2-\e_1\| M.
\eee
The collection of above bounds together with \eqref{sorucetnent} concludes the proof of Lemma \ref{sorucetnent}.
\end{proof}
\subsection{Lipschitz regularity of $G_1,G_2$}
We now estimate the power nonlinearity corrections.

\begin{lemma}[Lipschitz regularity of $G_1,G_2$]
\label{reghogneo}
Consider $\e=\e_1,\e_2\in B_{K_a}$ then for $i=1,2$ the following bounds hold:
$$\|G_i(\e)\|\lesssim K_a^2,$$ and $$\|G_i(\e_2)-G_i(\e_1)\|\lesssim K_a\|\e_2-\e_1\|.$$
\end{lemma}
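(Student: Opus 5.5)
The plan is to reduce everything to the resolvent bound of Lemma \ref{conoeingiorwoskevtn} with $\beta=3$. Once that lemma is in hand, it suffices to prove
\[
\left\|\la z\ra^{3}d_i^*\nut_\infty^{\beta_i}\left[(1+\e)^{\beta_i}-1-\beta_i\e\right]\right\|_{L^2}\lesssim K_a^2
\]
together with the corresponding difference estimate. We write $g_i(\e)=(1+\e)^{\beta_i}-1-\beta_i\e$.

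\textbf{Pointwise Taylor bounds.} For $\e\in B_{K_a}$ we have $|\e(z)|\le K_a\la z\ra^{-\delta_0}\le \sqrt a\ll 1$. Taylor's formula with integral remainder therefore gives
\[
|g_i(\e)|\lesssim \beta_i^2\,\e^2 .
\]
For two elements,
\[
g_i(\e_2)-g_i(\e_1)=\beta_i(\e_2-\e_1)\int_0^1\left[(1+\e_1+t(\e_2-\e_1))^{\beta_i-1}-1\right]dt,
\]
which yields
\[
|g_i(\e_2)-g_i(\e_1)|\lesssim (|\e_1|+|\e_2|)\,|\e_2-\e_1|\lesssim K_a\la z\ra^{-2\delta_0}\|\e_2-\e_1\| .
\]
In the same way, $|g_i(\e)|\lesssim K_a^2\la z\ra^{-2\delta_0}$. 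The constants depend on $\beta_i$, which is fixed.

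\textbf{Weighted $L^2$ bound.} Lemma \ref{lemmaapprsolution} gives
\[
\nut_\infty\lesssim \la z\ra^{-2/(\beta_2-1)}=\la z\ra^{-(\gamma-2)\eta_\infty},\qquad \nut_\infty^{\beta_i}\lesssim \la z\ra^{-2\beta_i/(\beta_2-1)}\le \la z\ra^{-2\beta_2/(\beta_2-1)}.
\]
Since $\beta_1>\beta_2$, this is at most $\la z\ra^{-2}$. Consequently
\[
\left\|\la z\ra^3\nut_\infty^{\beta_i}g_i(\e)\right\|_{L^2}\lesssim K_a^2\left\|\la z\ra^{3-\frac{2\beta_2}{\beta_2-1}-2\delta_0}\right\|_{L^2}.
\]
The right-hand side is finite exactly when $\frac{2\beta_2}{\beta_2-1}+2\delta_0>\frac72$.

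\textbf{Main obstacle: closing the weights.} With the crude bound $\nut_\infty^{\beta_i}\lesssim\la z\ra^{-2}$ the margin is insufficient. I would therefore exploit the structure of the nonlinearity: combining the exact decay rate of $\nut_\infty$ with $\e\nut_\infty$ (rather than $\e$ alone) gains powers of $\nut_\infty$. Concretely, I would write
\[
\nut_\infty^{\beta_i}\e^2=\nut_\infty^{\beta_i-1}\,\e^2\,\nut_\infty .
\]
If this is still not enough, I would use that $\beta_2$ is taken large (so that $\eta_\infty$ is small), or reweight using the explicit kernel in \eqref{eniovneineoneonv}.

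The cleanest route is to bypass the $L^2$ form of Lemma \ref{conoeingiorwoskevtn} and estimate $\frac{1}{\nut_\infty}H^{-1}f$ pointwise, directly from \eqref{eniovneineoneonv}, for sources satisfying $|f|\lesssim \nut_\infty^{\beta_2}\la z\ra^{-2\delta_0}$. Using $\Gamma\sim z^{(\gamma-2)\eta_\infty+2}$, $\nut_\infty'\sim z^{-(\gamma-2)\eta_\infty-1}$ and $\nut_\infty^{\beta_2-1}\sim z^{-2}$, each integral in \eqref{eniovneineoneonv}, divided by $\nut_\infty$, is bounded by $\la z\ra^{-2\delta_0}\lesssim\la z\ra^{-\delta_0}$ (up to logarithms, which the slack $2\delta_0$ versus $\delta_0$ absorbs). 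The derivative bounds $\la z\ra^k\partial_z^k$ then follow as in Lemma \ref{conoeingiorwoskevtn}: differentiate the representation formula for $k=1$, and use the equation $Hu=f$ for $k=2$.

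Applying the same computation to the difference $g_i(\e_2)-g_i(\e_1)$ gives the Lipschitz bound with constant $K_a$. This yields both $\|G_i(\e)\|\lesssim K_a^2$ and $\|G_i(\e_2)-G_i(\e_1)\|\lesssim K_a\|\e_2-\e_1\|$.
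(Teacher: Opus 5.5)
Your final pointwise route is correct but differs from the paper's. The obstacle that pushed you there is self-inflicted.

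The paper stays inside Lemma \ref{conoeingiorwoskevtn}. It uses the same Taylor bound $|g_i(\e)|\lesssim K_a^2\la z\ra^{-2\delta_0}$ and the crude bound $\nut_\infty^{\beta_i}\lesssim\la z\ra^{-2}$, but takes $\beta=\frac32+\tilde\epsilon$ instead of $\beta=3$. Then $\la z\ra^{\beta}\nut_\infty^{\beta_i}|g_i(\e)|\lesssim K_a^2\la z\ra^{-\frac12-(2\delta_0-\tilde\epsilon)}\in L^2$ as soon as $\tilde\epsilon<2\delta_0$. The hypothesis \eqref{choieviobiebt} of that lemma needs $\tilde\epsilon\ge\delta_0+(\gamma-2)\eta_\infty$. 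Both hold once $\beta_2$ is large enough that $(\gamma-2)\eta_\infty<\delta_0$. The value $\beta=3$ is the choice made for $G_3$; the resolvent lemma is stated for a range of $\beta$, and nothing forces $\beta=3$ here.

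Your intermediate remedies are dead ends and should be dropped. Writing $\nut_\infty^{\beta_i}\e^2=\nut_\infty^{\beta_i-1}\e^2\nut_\infty$ is an identity and gains nothing. Enlarging $\beta_2$ makes the $\beta=3$ condition $\frac{2\beta_2}{\beta_2-1}+2\delta_0>\frac72$ harder, not easier, since $\frac{2\beta_2}{\beta_2-1}\downarrow 2$.

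Your pointwise route keeps the exact decay $\nut_\infty^{\beta_2}\sim\la z\ra^{-2-b}$ with $b=(\gamma-2)\eta_\infty$. This extra factor $\la z\ra^{-b}$ exactly compensates the growth of $\Gamma/\nut_\infty$. For $|f|\lesssim\la z\ra^{-2-b-2\delta_0}$:
\begin{itemize}
\item $\Gamma\int_z^{\infty}|f\nut_\infty'|\lesssim z^{b+2}z^{-2-2b-2\delta_0}$;
\item $|\nut_\infty'|\int_0^z|f|\Gamma\lesssim z^{-b-1}z^{1-2\delta_0}$.
\end{itemize}
Both are $\lesssim\nut_\infty\la z\ra^{-2\delta_0}$, with no logarithm when $2\delta_0<1$. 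The boundary terms cancel when you differentiate \eqref{eniovneineoneonv}, and the second derivative comes from $Hu=f$ using $\nut_\infty^{\beta_2-1}\sim\la z\ra^{-2}$.

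What your route buys is a bound with no constraint linking $\delta_0$ and $\eta_\infty$, and an output decay $\la z\ra^{-2\delta_0}$ that is better than needed. What the paper's route buys is brevity: it quotes Lemma \ref{conoeingiorwoskevtn} instead of redoing the kernel estimates, at the price of needing $\beta_2\gg1$.
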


\begin{proof}[Proof of Lemma \ref{reghogneo}] Let $F(\e)=(1+\e)^\ell-1-\e$, then we have
$$|F(\e_2)-F(\e_1)|=\left|(\e_2-\e_1)\int_0^1F'(\e_1+t(\e_2-\e_1))dt\right|\lesssim \frac{K_a\|\e_2-\e_1\|}{\la z\ra^{2\delta_0}},$$ and higher derivatives are estimated analogously. Moreover the bound $$\nut_\infty^{\beta_i}\lesssim \frac{1}{\la z\ra^{\eta_\infty(\gamma-2)\beta_i}}\lesssim \frac{1}{\la z\ra^{\frac{2\beta_2}{\beta_2-1}}}\lesssim \frac{1}{\la z\ra^2},
$$ensures in $B_{K_a}$ with $\frac{3}{2}+(\gamma-2)\eta_\infty<\beta=\frac{3}{2}+\tilde{\epsilon}$:
$$\left\|\la z\ra^{\beta}\nut_\infty^{\beta_i}\left[(1+\e)^{\beta_i}-1-\beta_i\e\right]\right\|_{L^2}\lesssim K_a^2\left\|\frac{1}{\la z\ra^{\frac{1}{2}+2\delta_0-\tilde{\epsilon}}}\right\|_{L^2}\lesssim  K_a^2,$$ 
under the condition $\tilde{\epsilon}<2\delta_0$ and thus $\eta_\infty\ll1$ and $\beta_2\gg 1$. Thus combined with \eqref{estnoagnrieodkvente} this concludes the proof of Lemma \ref{estnoagnrieodkvente}. 
\end{proof}
\subsection{Lipschitz regularity of $G_4$}  We now estimate the remaining nonlinear interactions.
\begin{lemma}[Lipschitz regularity of $G_4$]
\label{reghogneofour}
Consider $\e=\e_1,\e_2\in B_{K_a}$ then for $i=1,2$ the following bounds hold:
\be
\label{venoneinempbiorjgroitj}
\|G_4(\e)\|\lesssim K_a^2,
\ee and 
\be
\label{venoneinempbiorjgroitjbis}
\|G_4(\e_2)-G_4(\e_1)\|\lesssim K_a\|\e_2-\e_1\|.
\ee
\end{lemma}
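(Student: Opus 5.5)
By the resolvent bound \eqref{estnoagnrieodkvente} of Lemma \ref{conoeingiorwoskevtn} with $\beta=3$ (as fixed in Lemma \ref{sorucetnent}), both bounds reduce to weighted $L^2_z$ estimates on the projected nonlinearity
$$g_\e(z)=\int_{r>0}\left[{\rm NL}(\nu,\psi_{{\rm sol},\nu})-{\rm NL}(\nu,0)\right]\zeta_*(R)rdr,\qquad \nut=\nut_\infty(1+\e),$$
namely $\|\la z\ra^3 g_\e\|_{L^2}\lesssim K_a^2$ and $\|\la z\ra^3(g_{\e_2}-g_{\e_1})\|_{L^2}\lesssim K_a\|\e_2-\e_1\|$. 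First I check that $\e\in B_{K_a}$ gives $\nu\in\mathcal N^{\eta_\infty}$ with uniform constants, since $\nu=\nu_\infty(1+\e)^{-1/(\gamma-2)}$ and $\e$ is small with the right derivative decay. Lemma \ref{lemmainversion} then applies, so $\psi_{{\rm sol},\nu}\in B_M$.

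I split ${\rm NL}(\nu,\psi)-{\rm NL}(\nu,0)$ into two pieces: the $\mathcal R$-part
$$\frac{\R_1(\psi_\nu+a^2\psi)-\R_1(\psi_\nu)}{r^2}-\left[\R_2(\psi_\nu+a^2\psi)-\R_2(\psi_\nu)\right],$$
and the $\mathcal F$-part $a^{-2}(\mathcal F_1-\mathcal F_2)(\nu,\psi)$, using $\mathcal F_i(\nu,0)=0$.

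For the $\mathcal R$-part, the mean value computation in Lemma \ref{lemmanonlinbis} gives a pointwise bound
$$\lesssim a^2|\psi|\,\frac{|\psi_\nu|^{\ell}}{\psi_\nu}\Big/r^{2}\cdots$$
which is of order $a^2|\psi|(\la z\ra^2+\la r\ra^{2/\eta})^{-1}$ times a power of $\la R\ra$. For the $\mathcal F$-part, Lemma \ref{lemmanonlin} with $\psi_1=0$ gives a bound of order $aM\|\Psi\|_{E_\alpha}$ in the norm $\|\la z\ra^{1/20}\cdot\|_{\alpha+6}$. I then pair against $\zeta_*(R)\,rdr$, using Cauchy–Schwarz in $r$ with the decay $\la R\ra^{-|\l_-|}$ of $\zeta_*$ and $r\lesssim\nu\lesssim\la z\ra^{\eta_\infty}$, and integrate in $z$ with weight $\la z\ra^{6}$. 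The $\|\cdot\|_\alpha$ norm supplies $\la z\ra^{3-\delta_*}$ in $L^2_z$, the extra $\la z\ra^{1/20}$ gain, and the $\mathcal R$ factor $\la z\ra^{-2}$. Together these should absorb $\la z\ra^{6}$ up to losses $\la z\ra^{C\eta_\infty}$, which are harmless for $\beta_2\gg1$. The outcome is $\|g_\e\|\lesssim a^2M+aM^2\lesssim a\ll K_a^2=a$. Since these constants do not literally fit, I will instead aim for $\lesssim a$ and note that $a\le K_a^2$.

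For the Lipschitz bound I take differences and use the two-variable Lipschitz estimates \eqref{firsingengeo} and \eqref{firsingengebis}, with $\psi_i=\psi_{{\rm sol},\nu_i}$. I combine this with \eqref{vneovneionienvoei} to bound $\|\Psi_{{\rm sol},\nu_2}-\Psi_{{\rm sol},\nu_1}\|_{E_\alpha}$ by $\|\e_2-\e_1\|$, exactly as in \eqref{venneonveneovi}, and with $\|(\nu_2-\nu_1)\la z\ra^{-\delta_*/6}\|_{L^\infty}\lesssim\|\e_2-\e_1\|$. The change of $\zeta_*(R_2)$ versus $\zeta_*(R_1)$ in the projection is handled by the mean value argument of Lemma \ref{sorucetnent}. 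Its factor multiplies $g$, which is already $O(a)$.

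The main obstacle is the $z$-weight bookkeeping. The target $\la z\ra^3$ in $L^2$ requires $\la z\ra^{3}$ of decay beyond $L^2_z$, while the $\|\cdot\|_\alpha$ norm only supplies $\la z\ra^{3/2-\delta_*/2}$. The extra $\la z\ra^{1/20}$ from Lemma \ref{lemmanonlin} and the $r$-integration over $r\lesssim\la z\ra^{\eta}$ must make up the gap. If $\beta=3$ fails here, I would rerun Lemma \ref{conoeingiorwoskevtn} with a smaller $\beta\in(\frac32+(\gamma-2)\eta_\infty,\frac32+\frac1{20}-\delta_*)$, as was done for $G_1,G_2$, which is allowed by \eqref{choieviobiebt} for $\eta_\infty$ small.
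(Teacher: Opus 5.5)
The size bound \eqref{venoneinempbiorjgroitj} is essentially fine. The Lipschitz bound \eqref{venoneinempbiorjgroitjbis}, however, has a genuine gap in the $\R_i$-piece.

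Estimate \eqref{firsingengebis} controls $\R_i(\psi_{\nu_2}+a^2\psi_2)-\R_i(\psi_{\nu_1}+a^2\psi_1)$ by $a^2\|\Psi_2-\Psi_1\|_{E_\alpha}+\|(\nu_2-\nu_1)\la z\ra^{-\delta_*/6}\|_{L^\infty}$. The $\nu$-term carries no small factor. This is because \eqref{firsingengebis} is a first-order bound on $\R_i(\psi_\nu+a^2\psi)$ itself, whose dependence on $\nu$ is $O(1)$.

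In $G_4$ you need the $\nu$-variation of the \emph{difference} $\R_i(\psi_\nu+a^2\psi_{{\rm sol},\nu})-\R_i(\psi_\nu)$. If you apply \eqref{firsingengebis} to the two terms separately and subtract, you get a Lipschitz constant of order one, not $K_a=\sqrt a$. The contraction in Proposition \ref{vneknenenvoeevndnvdkngdklnower} then does not close. The $\mathcal F_i$-piece has no such problem, since $\mathcal F_i(\nu,0)=0$ and \eqref{firsingengeo} carries $aM$ in front of both differences.

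The missing step is a mixed second-order expansion, which is what the paper means when it says it revisits \eqref{firsingengebis}. Before taking differences, write $\R_i(\psi_\nu+a^2\psi)-\R_i(\psi_\nu)=a^2\psi\int_0^1\R_i'(\psi_\nu+ta^2\psi)\,dt$. Then:
\begin{itemize}
\item the $\psi$-variation gives $a^2|\psi_{{\rm sol},\nu_2}-\psi_{{\rm sol},\nu_1}|\,\psi_\nu^{\beta_1-1}$;
\item the $\nu$-variation falls on $\R_i'$ and gives $a^2|\psi_{{\rm sol},\nu}|\,|\psi_{\nu_2}-\psi_{\nu_1}|\,\psi_\nu^{\beta_1-2}$, and similarly with $\beta_2$.
\end{itemize}
This is \eqref{vnevnenveineinvenevnoegnroegn}. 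Combine it with the pointwise decay in $z$ and $R$ of $\psi_\nu^{\beta_1-1}/r^2$ and $\psi_\nu^{\beta_1-2}/r^2$, with $|\psi_{\nu_2}-\psi_{\nu_1}|\lesssim|\e_2-\e_1|$, with \eqref{vneovneionienvoei}, and with $\beta=3$. This gives a contribution $O(a^2M)\|\e_2-\e_1\|$, which is what you need.

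Separately, your primary choice $\beta=3$ cannot work for the $\mathcal F_i$-piece. The norm in \eqref{firsingengeo} only controls $\int\la z\ra^{3-\delta_*+\frac1{10}}f^2dz$. Cauchy--Schwarz against $\zeta_*(R)\,rdr$ needs $\la z\ra^{2\beta+2\eta_\infty}$, and the localization $r\lesssim\nu$ costs a factor rather than gaining one. So your fallback, with $\beta$ just above $\frac32$ and below $\frac32+\frac1{20}$ for that piece, is mandatory. This is the paper's choice, with $\beta=3$ kept for the $\R_i$-piece thanks to its extra $\la z\ra^{-2}$.
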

\begin{proof}[Proof of Lemma \ref{reghogneofour}] First we write 
\bee
 {\rm NL}(\nu,\psi_{{\rm sol},\nu})-{\rm NL}(\nu,0)&=& \frac{\R_1(\psi_\nu+a^2\psi_{{\rm sol},\nu})-\R_1(\psi_\nu)}{r^2}\\
& - &\left[\R_2(\psi_\nu+a^2\psi_{{\rm sol},\nu})-\R_2(\psi_\nu)\right]+\frac{\mathcal F_1(\nu,\psi_{{\rm sol},\nu})-\mathcal F_2(\nu,\psi_{{\rm sol},\nu})}{a^2}.
\eee
\noindent{\bf step 1} Estimate for $\mathcal F_i$. We estimate from \eqref{firsingengeo}
$$\left\|\la z\ra^{\frac 1{20}}\frac{\mathcal F_1(\nu_2,\psi_{{\rm sol},\nu_2})-\mathcal F_1(\nu_1,\psi_{{\rm sol},\nu_1})}{a^2}\right\|_{\alpha+6}\lesssim aM^2+a^2M\|\e_2-\e_1\|\lesssim aM^2.
$$
Next define 
$$f=\frac{\mathcal F_1(\nu_2,\psi_{{\rm sol},\nu_2})-\mathcal F_1(\nu_1,\psi_{{\rm sol},\nu_1})}{a^2},$$
and estimate
\bee
&&\int_{z\in \Bbb R}\la z\ra^{2\beta}\left(\int_{r>0}f(r,z)\zeta_*(R)rdr\right)^2dz\lesssim \int_{z\in \Bbb R}\la z\ra^{2\beta}\left(\int_{r>0}f^2(r,z)dz\right)\left(\int_{r>0}\zeta_*^2(R)rdr\right)\\
& \lesssim & \int_{r>0} \int_{z\in \Bbb R}\la z\ra^{2\beta+2\eta_\infty}f^2(r,z)dz\lesssim \|\la z\ra^{\frac 1{20}}f\|_{\alpha+6}
\eee
as long as $$2\beta+2\eta_\infty<\frac{1}{10}+3-\delta_*\Longrightarrow \frac{3}{2}+(\gamma-2)\eta_\infty<\beta<\frac{3}{2}+\frac{1}{20}-\eta_\infty-\delta_*.$$ 
We therefore can choose with $\eta_\infty\ll1$, $2\beta=\frac{1}{20}+3-\delta_*$.
\begin{remark}
    It is here that the crucial extra gain of $\frac{1}{\la z\ra^{\frac{1}{20}}}$ of Lemma \ref{lemmanonlin} is used.
\end{remark}
\noindent Thus we get the estimate from \eqref{choieviobiebt} and \eqref{choieviobiebt} for $\delta_0>0$ universal small enough
\bee
&&\left\|\frac{1}{\nut_\infty}H^{-1}\left[\int_{r>0}f(r,z)\zeta_*(R)rdr\right]\right\|\lesssim \|\la z\ra^\beta \left[\int_{r>0}f(r,z)\zeta_*(R)rdr\right]\|_{L^2}\\
&\lesssim& \left\|\la z\ra^{\frac 1{20}}\frac{\mathcal F_1(\nu_2,\psi_{{\rm sol},\nu_2})-\mathcal F_1(\nu_1,\psi_{{\rm sol},\nu_1})}{a^2}\right\|_{\alpha+6}\lesssim aM^2.
\eee
We argue similarly for $\matchal F_2$, and the collection of above bounds injected into \eqref{estnoagnrieodkvente} concludes the proof of \eqref{venoneinempbiorjgroitj}, \eqref{venoneinempbiorjgroitjbis} for $\matchal F_1,\mathcal F_2$.\\

\noindent{\bf step 2} Estimate for $\mathcal R_i$. We now revisit \eqref{firsingengebis} to take into account the $a^2$ smallness. Let $$
\left|\begin{array}{l}
H(u)=u^{\ell}\\
\ell=\frac{\gamma}{\gamma-2}+m_1=\beta_1
\end{array}\right.,
$$
We estimate:
$$\frac{\psi_{\nu}^{\ell-1}}{r^2}=\left(\frac{1}{\nu^{\gamma-2}}\psi_*(R)\right)^{\beta_1}\lesssim\frac{1}{\nu^2R^2}\left( \frac{R^2}{\la R\ra^{\gamma-4}\nu^{\gamma-2}}\right)^{\beta_1-1}\lesssim \frac{1}{\la z\ra^2\la R_\infty\ra^{\frac{2}{\eta_\infty}},}$$
and
$$\frac{\psi_{\nu}^{\ell-2}}{r^2}=\left(\frac{1}{\nu^{\gamma-2}}\psi_*(R)\right)^{\beta_1-1}\lesssim\frac{1}{\nu^2R^2}\left( \frac{R^2}{\la R\ra^{\gamma-4}\nu^{\gamma-2}}\right)^{\beta_1-1}\lesssim \frac{1}{\la z\ra^{2-\eta_\infty(\gamma-2)}\la R_\infty\ra^{\frac{1}{\eta_\infty}}},$$
Next we have 
$$\left|\R_1(\psi_\nu+a^2\psi_{{\rm sol},\nu})-\R_1(\psi_\nu)\right|=\left|a^2\psi_{{\rm sol},\nu}\int_0^1H'(\psi_\nu+a^2t\psi_{{\rm sol},\nu})dt\right|\lesssim \frac{a^2\psi_{{\rm sol},\nu}}{\la z\ra^2\la R_\infty\ra^{\frac{2}{\eta_\infty}}}$$ and

\bea
\label{vnevnenveineinvenevnoegnroegn}
\nonumber &&\frac{1}{r^2}\left\{\R_1(\psi_{\nu_2}+a^2\psi_{{\rm sol},\nu_2})-\R_1(\psi_{\nu_2})-\left[\R_1(\psi_{\nu_2}+a^2\psi_{{\rm sol},\nu_2})-\R_1(\psi_{\nu_1})\right]\right\}\\
& \lesssim & a^2\frac{|\psi_{{\rm sol},\nu_2}-\psi_{{\rm sol},\nu_1}|}{\la z\ra^2\la R_\infty\ra^{\frac{2}{\eta_\infty}}}+a^2\frac{|\psi_{\nu_2}-\psi_{\nu_1}|(|\psi_{{\rm sol},\nu_2}|+|\psi_{{\rm sol},\nu_1}|)}{\la z\ra^{2-\eta_\infty(\gamma-2)}\la R_\infty\ra^{\frac{1}{\eta_\infty}}}.
\eea
We now use in the ball $B_{K_a}$: $$|\psi_{\nu_2}-\psi_{\nu_1}|=|\nu_2-\nu_1|\left|\int_0^1\pa_\nu \psi_\nu(\nu_2+t(\nu_2-\nu_1)dt\right|\lesssim |\e_2-\e_1|,$$ 
and hence the bound:
\bee
&&\frac{1}{r^2}\left\{\R_1(\psi_{\nu_2}+a^2\psi_{{\rm sol},\nu_2})-\R_1(\psi_{\nu_2})-\left[\R_1(\psi_{\nu_2}+a^2\psi_{{\rm sol},\nu_2})-\R_1(\psi_{\nu_1})\right]\right\}\\
&\lesssim & a^2\frac{|\psi_{{\rm sol},\nu_2}-\psi_{{\rm sol},\nu_1}|}{\la z\ra^2\la R_\infty\ra^{\frac{2}{\eta_\infty}}}+a^2\frac{|\psi_{\nu_2}-\psi_{\nu_{1}}|(|\psi_{{\rm sol},\nu_2}|+|\psi_{{\rm sol},\nu_1}|)}{\la z\ra^{2-\eta_\infty(\gamma-2)}\la R_\infty\ra^{\frac{1}{\eta_\infty}}},
\eee
Now we estimate 
\bee
&&\int_{z\in \Bbb R}\la z\ra^{2\beta}\left(\int_{r>0} \frac{|\psi_{{\rm sol},\nu_2}-\psi_{{\rm sol},\nu_1}|}{\la z\ra^2\la R\ra^{\frac{2}{\eta_\infty}}}\zeta_*(R)rdr\right)^2dz\\
&\lesssim &\int_{z\in \Bbb R}\la z\ra^{2\beta}\left(\int_{r>0}\frac{|\Psi_{{\rm sol},\nu_2}-\Psi_{{\rm sol},\nu_1}|^2}{\la z\ra^4}r^3dr\right)\left(\int_{r>0}\zeta_*^2(R)\la r\ra^3dr\right)\\
& \lesssim & \|\Psi_{{\rm sol},\nu_2}-\Psi_{{\rm sol},\nu_1}\|_{E_\alpha}\lesssim \|\e_2-\e_1\|
\eee
as long as $$2\beta-4+4\eta<3-2\delta_*\Longrightarrow \frac{3}{2}+(\gamma-2)\eta_\infty<\beta<\frac{7}{2}-\delta_*-2\eta_\infty,$$ we may therefore take $\beta=3$, and we used \eqref{venneonveneovi} and \eqref{vneovneionienvoei} in the last step. All other terms are treated similarly
and this concludes the proof of \eqref{venoneinempbiorjgroitj} and \eqref{venoneinempbiorjgroitjbis} for $\matchal R_1,\mathcal R_2$ terms.
\end{proof}
\subsection{Proof of Proposition \ref{vneknenenvoeevndnvdkngdklnower} and Theorem \ref{thmmain}}
Lemma \ref{sorucetnent}, Lemma \ref{reghogneo} and Lemma \ref{reghogneofour} ensure from Picard's contraction mapping Theorem that \eqref{equaigneoignengon} admits a unique solution $\nu_{{\rm sol}}$ in the ball $B_{K_a}$ with $K_a=\sqrt{a}$, and this concludes the proof of Proposition \ref{vneknenenvoeevndnvdkngdklnower}.\\
 
\begin{proof}[Proof of Theorem \ref{thmmain}] The profile $$\psi_{\rm tot}=\psi_{\nu_{\rm sol}}+a^2\psi_{{\rm sol},\nu_{\rm sol}}$$ is a stationary solution to the elliptic equation \eqref{vnbeioneinveneoivbis}.\\

\noindent\underline{Regularity for $\psi_{\rm tot}$}. We have recalling \eqref{formulasink} with $\gamma=2+\frac{1}{m}$ :
\[
\frac{\mathcal C\matchal C'}{r^2}=\frac{A(\gamma)}{r^2}\psi_{\rm tot}^{1+2m}\left(1+\frac{a^2}{A(\gamma)}\psi_{\rm tot}^{m_1}\right)
=  A(\gamma)r^{2m}\Psi^{1+2m}{\rm tot}\left(1+\frac{a^2}{A(\gamma)}r^{2m_1}\Psi_{\rm tot}^{m_1}\right)
\]
and similarly from \eqref{vneobnveineoeonoenioebis}:
$$\H'=\psi_{\rm tot}^{\frac{\gamma+2}{\gamma-2}}\left(1+a^2\psi_{\rm tot}^{m_2}\right)=r^{8m+2}\Psi_{\rm tot}^{4m+1}\left(1+a^2r^{2m_2}\Psi_{\rm tot}^{m_2}\right)$$ and the integers $m,m_1$ and $m_2$ are such that $$m_1+\frac{\gamma}{\gamma-2}>m_2+\frac{\gamma+2}{\gamma-2}$$ and large enough to verify all the required hypothesis. This yields the equation 
\bee
    & & -\pa_r^2\Psi_{\rm tot}-\frac{3}{r}\pa_r\Psi_{\rm tot}-\pa_z^2\Psi_{\rm tot}\\
     & &=A(\gamma)r^{2m}\Psi^{2m+1}\left(1+\frac{a^2}{A(\gamma)}r^{2m_1}\Psi_{\rm tot}^{m_1}\right)-r^{8m+2}\Psi_{\rm tot}^{4m+1}\left(1+a^2r^{2m_2}\Psi_{\rm tot}^{m_2}\right).\eee The $\matchal C^\infty$ regularity of $\Psi_{\rm tot}$ now follows from standard elliptic regularity.\\
     
\noindent\underline{Computation of the swirl}. From \eqref{vneobnveineoeonoenioe}
\bee
&&\mathcal C\matchal C'=\frac{A(\gamma)}{r^2}\psi_{\rm tot}^{1+2m}\left(1+\frac{a^2}{A(\gamma)}\psi_{\rm tot}^{m_1}\right)\\
&\Rightarrow& \frac{\mathcal C^2}{2}=\frac{A(\gamma)}{r^2}\psi_{\rm tot}^{2+2m}\left(\frac{1}{2+2m}+\frac{1}{2+2m+m_1}\frac{a^2}{A(\gamma)}\psi_{\rm tot}^{m_1}\right),\eee
thus
\bee
u_\phi&= &\frac{\mathcal C}{r}=\frac{\sqrt{A(\gamma)}}{r}\psi_{\rm tot}^{1+m}\left(\frac{1}{2+2m}+\frac{1}{2+2m+m_1}\frac{a^2}{A(\gamma)}\psi_{\rm tot}^{m_1}\right)^{\frac{1}{2}}\\
& = & \sqrt{A(\gamma)}r^{1+2m}\Psi_{\rm tot}^{1+m}\left(\frac{1}{2+2m}+\frac{1}{2+2m+m_1}\frac{a^2}{A(\gamma)}\psi_{\rm tot}^{m_1}\right)^{\frac{1}{2}},
\eee
 This structure implies the regularity of the vector field \eqref{aysitniflow} which satisfies the stationary equation \eqref{remainingequationsbis} by Lemma \ref{vneoivnovnkvnnve}.\\

\noindent\underline{Non vanishing}. $\Psi_{\rm tot}$ does not vanish since $a^2|\Psi_{{\rm sol},\nu}|\leq \frac{|\Psi_\nu|}{2}$ by construction and $\Psi_\nu$ does not vanish since $\Psi_*$ does not vanish. The same conclusion holds for the swirl $u_\phi$.\\

This concludes the proof of the vortex construction of Theorem \ref{thmmain}.
\end{proof}


\section{Bifurcation of a self similar profile}
\label{sectionselfsim}

We now turn to the construction of the self similar profile of Theorem \ref{thmmain}.


\subsection{Bifurcation}


Let swirl and curl be given exactly by the homogeneous non linearities \eqref{veonoeinoineonevo}, then a brute force computation reveals that the vector field generated by the stream function 
\be
\label{vneionoennioev}
\psi(r,z)=\frac{1}{(\gamma+1)|z|^{\gamma-2}}\Psi\left(\frac{r}{|z|}\right)
\ee is a solution to the self similar equation \eqref{selfsim} whenever $$\left|\bear
\Psi=r^{2-\gamma}\phi(x)\\
x=\log\left(\frac{r}{|z|}\right)
\ear\right.
$$ 
solves the non linear ode $$\left[\pa_x^2-2(\gamma-1)\pa_x+\gamma(\gamma-2)\right]\phi+e^{2x}\left(\pa_x^2+\pa_x\right)\phi=|\phi|^{\frac{\gamma+2}{\gamma-2}}-A|\phi|^{\frac{\gamma}{\gamma-2}}$$ which up to a translation in $x$ is equivalent to
\be
\label{eq: closed ODE}
\left[\pa_x^2-2(\gamma-1)\pa_x+\gamma(\gamma-2)\right]\phi+a^2e^{2x}\left(\pa_x^2+\pa_x\right)\phi=|\phi|^{\frac{\gamma+2}{\gamma-2}}-A|\phi|^{\frac{\gamma}{\gamma-2}}.
\ee 
One easily checks that the corresponding velocity field satisfies \eqref{veionoienoinoevnoive} and hence these solutions are both stationary and self similar.\\

\noi\und{Degenerate vortex}. For $a=0$, this is verbatim the Emden formulation \eqref{emdnenfonro} of the degenerate vortex equation. We thus let $(\phi=\phi_\gamma, A=A_\gamma)$ be the degenerate vortex profile of Proposition \ref{propappendix} which has  the asymptotic behaviour
\be
\label{waiofneonei}
\phi_\gamma=\left|\bear \left[1+o_{x\to -\infty}(1)\right]e^{\gamma x}\\
\phi_{\gamma,+}\left[1+O_{x\to +\infty}(e^{\l_-x})\right], \ \ \phi_{\gamma,+}>0.
\ear\right.
\ee

\noi\und{Bifurcation}. The conclusion of Theorem \ref{thmmain} now follows from the following bifurcation claim.

\begin{proposition}[Bifurcated self similar profile]  
\label{thm: exist of sol}
  Let 
  \be
  \label{vieonoienvnoevnnevo}
  \gamma=2+\frac{1}{m}, \ m\in \mathbb{N}^*,
  \ee
  then for $m\geq m_*\gg1 $ universal large enough, 
  there exists $0<a_*(m)\ll1 $ such that for $0\leq a<a_*$ there exists $A_{\gamma,a}=A_{\gamma}+O(a^2)>0$ and a unique positive smooth solution $\phi_{\gamma,a}$ to \eqref{eq: closed ODE} verifying 
  \be
  \label{ieononveoompmev}
  \phi_{\gamma,a}(x)=\left|\bear c_{\gamma,a}\left[1+o_{x\to -\infty}(1)\right]e^{\gamma x}\\
  \phi^+_{\gamma,a}+\sum_{j=1}^k d_j e^{-2j x}+O_{x\to+\infty}\left[e^{-2(k+1)x}\right],\ \ \forall k\ge 1
  \ear\right.
  \ee
  for some constants $\phi^+_{\gamma,a}=\phi^+_{\gamma}+O(a^2)$ and computable sequence $(d_j)_{j\ge 1}.$
  \end{proposition}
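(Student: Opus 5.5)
We treat \eqref{eq: closed ODE} as a singular perturbation in $a$ of the Emden equation \eqref{emdnenfonro} and build $\phi_{\gamma,a}$ by matching two solutions. For $x\le x_0$ we take the unstable branch from $(0,0)$; for $x\ge x_0$ we take a one-parameter family of bounded solutions regular at $+\infty$. The parameter $A$ is then adjusted to match at $x_0$. Throughout we use the $\gamma\downarrow 2$ description of Proposition \ref{propappendix}, that is, the semiclassical variable $y$ and the profile $H_0$, together with the resolvent bound \eqref{bnoigeneoipotinrtesolvetn}. The spectral gap $|\l_-|\to+\infty$ from \eqref{fneionfenvoinoe} is what we expect to make the perturbation small uniformly in $x$.

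\textbf{Step 1 (left region).} For $x\to-\infty$ the term $a^2e^{2x}(\pa_x^2+\pa_x)\phi$ is exponentially small. A Banach fixed point in the weighted space $e^{\gamma x}L^\infty(-\infty,x_0]$, as in \eqref{vneonneinvnqyrtueytuy}, produces a unique solution with $\phi\sim c\,e^{\gamma x}$. In the original variable $\Psi(r/|z|)$ this is simply regularity at $r=0$, i.e.\ on the axis $z\ne0$, and \eqref{vieonoienvnoevnnevo} gives smoothness exactly as in step 5 of the proof of Proposition \ref{propvrotexprofile}. This solution depends smoothly on $(A,a)$ and equals $\phi_\gamma$ up to $O(a^2)$ on compact sets in $x$.

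\textbf{Step 2 (right region, the main obstacle).} For $x\to+\infty$ the $a^2e^{2x}$ term dominates, so the equation becomes degenerate in the variable $\rho=r/|z|$. Explicitly, $a^2\rho^2(\rho^2\pa_\rho^2+2\rho\pa_\rho)\phi$ is the leading operator, which has a regular singular point at $\rho=\infty$ in the variable $w=e^{-2x}$. We would rewrite \eqref{eq: closed ODE} in $w=e^{-2x}/a^2$. The equation then reads
$$4(\pa_w(w\pa_w)+\dots)\phi+w\,[\text{Emden operator}]\phi=w\,N(\phi).$$
Its regular solutions near $w=0$ form a one-parameter family $\phi=\phi^+ +\sum_j d_jw^j$, obtained as a convergent or Borel-free power series by a Frobenius/contraction argument. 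The second solution carries a $\log w$ term and is excluded. This gives the expansion \eqref{ieononveoompmev}, since its $d_j$ are computed recursively, and it also gives smoothness through $z=0$: $e^{-2x}=z^2/r^2$, so $\psi$ is a smooth function of $z^2$ near $z=0$, $r>0$.

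The hard part is to continue this one-parameter family backward from $x=+\infty$ to $x_0$ with uniform control as $a\to0$, in the transition zone $e^{x}\sim a^{-1}$. There the full operator must be linearized near the constant $\phi_{\gamma,+}$. Our approach is to treat $\phi-\phi_{\gamma,+}$ perturbatively. The linear operator $\pa_x^2-2(\gamma-1)\pa_x+2(\gamma-1)F'(\phi_+^*)+a^2e^{2x}(\pa_x^2+\pa_x)$ has, for $a=0$, the decaying mode $e^{\l_-x}$ with $|\l_-|\gg1$. We would construct its resolvent explicitly by variation of constants on $[x_0,\infty)$. The positivity $-F'(\phi_+^*)>0$ gives a maximum principle, and this should yield bounds uniform in $a$, with the boundary datum at $x_0$ as the free parameter. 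We expect verifying that the $a^2e^{2x}$ term has a sign compatible with this maximum principle to be the delicate point; the first attempt would be to check it in the semiclassical variables of Proposition \ref{propappendix}.

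\textbf{Step 3 (matching).} At $x_0$ we match $(\phi,\phi')$ from the two sides. The unknowns are $A$ and the right parameter $\phi^+$, and there are two equations. For $a=0$ the matching is solved by $(A_\gamma,\phi_{\gamma,+})$. Its Jacobian is nondegenerate because the heteroclinic connection of Proposition \ref{propvrotexprofile} is transverse in $A$. Transversality follows from the monotone-in-$A$ crossing argument of Lemma \ref{lemannvdnononrml;mv}, or, for $\gamma$ near $2$, from the invertibility of $L_0$ in step 5 of the proof of Proposition \ref{propappendix}, where the only obstruction is the kernel $H_0'$, absorbed by the translation/scale parameter. The implicit function theorem then gives $A_{\gamma,a}=A_\gamma+O(a^2)$ and $\phi^+_{\gamma,a}=\phi^+_\gamma+O(a^2)$. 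Uniqueness holds within this neighborhood. Positivity follows from $\phi_{\gamma,a}=\phi_\gamma(1+O(a^2))$ together with the lower bounds \eqref{waiofneonei}.
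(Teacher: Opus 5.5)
\textbf{Step 2: the structure at $+\infty$.}
Your description of the second solution at $+\infty$ is wrong, and this is exactly the point on which the paper's proof turns. In $w=e^{-2x}/a^2$ one has $a^2e^{2x}(\partial_x^2+\partial_x)=4w\partial_w^2+2\partial_w$. The indicial polynomial at $w=0$ is therefore $2s(2s-1)$, which is the factor $2j(2j-1)$ in the paper's recursion for $d_j$. Its roots are $0$ and $\tfrac12$, so there is no logarithm. The second solution is $w^{1/2}(1+O(w))\propto e^{-x}$, i.e.\ a multiple of $|z|/r$: bounded, but not smooth across $z=0$.

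This is clearest with $t=e^{-x}$, where the equation becomes
\[
(a^2+t^2)\partial_t^2\phi+(2\gamma-1)t\,\partial_t\phi+\gamma(\gamma-2)\phi=|\phi|^{\frac{\gamma+2}{\gamma-2}}-A|\phi|^{\frac{\gamma}{\gamma-2}} .
\]
This equation is regular at $t=0$ and invariant under $t\mapsto -t$, so smoothness through $z=0$ is the single nontrivial condition $\partial_t\phi(0)=0$. This is the coefficient $M_\infty$ of $e^{-X}$ in Lemma \ref{elnannn}. Cancelling it by tuning $\epsilon$ (hence $A$) is the heart of the paper's argument.

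Because this mode is bounded, your mechanism for uniform control in Step 2 fails as stated. On $[x_0,\infty)$ the bounded solutions of the linearized equation vanishing at $x_0$ form a one-dimensional space: multiples of $f_1-\frac{f_1(X_0)}{f_2(X_0)}f_2$ in the paper's notation. So a maximum principle with ``the boundary datum at $x_0$ as the free parameter'' gives no uniqueness, and what it would produce generically contains the $|z|/r$ term.

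\textbf{Step 3: the implicit function theorem and transversality.}
The implicit function theorem in the unknowns $(A,\phi^+)$ is singular at $a=0$.
\begin{itemize}
\item For $a=0$, the limit at $+\infty$ is forced to be the zero $\phi^*_+(A)$ of $F$, so $\phi^+$ is not a free unknown there. The right family is the stable manifold, parametrized by translation.
\item For $a>0$, the derivative of the trace $(\phi,\partial_x\phi)(x_0)$ with respect to $\phi^+$ is, to leading order, the even solution $f_2+f_{3,\gamma}f_1$ at $X_0=x_0+\log a$. This is of order $(ae^{x_0})^{\lambda_{-,\gamma}}$, which blows up as $a\to 0$, so the Jacobian has no limit.
\end{itemize}
Transversality in $A$ also does not follow from the continuity argument producing $A(\gamma)$. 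Nor does it follow from the invertibility of $L_0$ modulo $H_0'$, which concerns translations. What is needed is the Melnikov integral $\int_{\mathbb R}\phi_\gamma'(y)e^{-2(\gamma-1)y}|\phi_\gamma|^{\frac{\gamma}{\gamma-2}}(y)\,dy>0$, which is what enters $c_\gamma$ and $v_{2,\gamma}$ in the paper.

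\textbf{How your route could be repaired, and how the paper proceeds.}
Your route can plausibly be repaired as follows.
\begin{enumerate}
\item Impose $\partial_t\phi(0)=0$ explicitly and solve the Dirichlet--Neumann problem on $[0,e^{-x_0}]$. There $V_+>0$ does give an $a$-uniform maximum principle.
\item Parametrize the right family by its trace at $x_0$.
\item Show that its Dirichlet-to-Neumann map converges, as $a\to0$, to that of the Emden stable manifold. The limiting bounded mode is $t^{|\lambda_{-,\gamma}|}=e^{\lambda_{-,\gamma}x}$.
\item Match at $x_0$ using the Melnikov integral.
\end{enumerate}
The paper avoids a fixed matching point altogether. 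It continues the inner solution up to $x_*=\log(\eta_*/a)$ with $A=A_\gamma+c+\epsilon$, where $c\approx-c_\gamma a^2$ removes the projection onto the growing mode $h_2$. It then linearizes at the constant $\phi^+_\gamma$ using the explicit basis $(f_1,f_2)$ of $L^+_\gamma$. Finally it selects $\epsilon$ by continuity so that $M_\infty=0$.
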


\begin{remark}[Regularity and decay of the stream function \eqref{vneionveoinioenvnev}] The behaviour near $-\infty$ of the profile together with the quantized choice \eqref{vieonoienvnoevnnevo} ensures the $\mathcal C^\infty$ regularity of the velocity field \eqref{aysitniflow}, \eqref{vneionoennioev} at $r=0$, $z\neq 0$. Self similar decay at $+\infty$ follows from \eqref{ieononveoompmev}. Let us insist that the asymptotic expansion  \eqref{ieononveoompmev} near $+\infty$ encodes {\em a cancellation at the heart of our bifurcation analysis}, see Lemma \ref{elnannn}, since the canonical decay at $+\infty$ of solutions to \eqref{eq: closed ODE} near $\phi^+_{\gamma}$ should involve powers of $e^{-x}$, not $e^{-2x}$. Pick now $r>0$, then the stream function \eqref{vneionveoinioenvnev}
  $$\psi(r,z)=\frac{1}{|aZ|^{\gamma-2}}\psi_a\left(\frac{r}{|Z|}\right)=\frac{1}{|aZ|^{\gamma-2}}\left(\frac{r}{|Z|}\right)^{2-\gamma}\phi_{\gamma,a}\left(\frac{r}{|Z|}\right)=\frac{1}{(ra)^{\gamma-2}}\phi_{\gamma,a}\left(\frac{r}{|Z|}\right)
  $$
  admits near $Z=0$ from \eqref{vieonoienvnoevnnevo} an expansion where $x=\log\left(\frac{r}{|Z|}\right)$:
  \bee
  \psi(r,z)&=&\frac{1}{(ra)^{\gamma-2}}\left[\phi^+_{\gamma,a}+\sum_{j=1}^k d_j e^{-2jx}+O_{x\to+\infty}\left[e^{-2(k+2)x}\right]\right]\\
  &=& \frac{1}{(ra)^{\gamma-2}}\left[\phi^+_{\gamma,a}+\sum_{j=1}^k d_j \left(\frac{Z}{r}\right)^{2j}+O\left(\frac{|Z|}{r}\right)^{2k+2}\right]
  \eee
  which ensures the $\matchal C^\infty$ regularity through $z=0$ of the full stream function. The regularity of swirl follows from the smoothness of the homogeneous non linearity, hence the $\matchal C^\infty$ smoothness of the full velocity field in $\Bbb R^3\backslash \{0\}$.
\end{remark}

The rest of this section is devoted to the proof of Proposition \ref{thm: exist of sol} which follows from a non linear perturbative argument. In all this section, $\gamma$ is fixed once and for all close to enough to $2$ according to \eqref{vieonoienvnoevnnevo}. Following \eqref{vneioneinveoinenoier}, we let 
 \be
 \label{febionoienionoef}
 \left|\bear
 \l_{+,\gamma}+\l_{-,\gamma}=2(\gamma-1)
\ear\right.
 \ee be the unstable eigenvalue at $+\infty$ of the degenerate vortex profile and recall \eqref{defnumberK}, \eqref{fneionfenvoinoe}:
  \be
  \label{vneinveonioenvnnievonove}
  \left|\bear
  \l_{-,\gamma}=\gamma-1-\sqrt{1+K_\gamma}\\
K_{\gamma}=\frac{(\phi^+_{\gamma})^{\frac{2}{\gamma-2}}}{\gamma-2}\left[(\gamma+2)(\phi^+_{\gamma})^{\frac{2}{\gamma-2}}-\gamma A_{\gamma}\right]\underset{\gamma \downarrow 2}{\to}+\infty.
\ear\right.
\ee
Our strategy is to construct for all $0<a<a_*$ a family of inner solutions from $-\infty$ which is are then shown not to exit a small tube around $\phi_{\gamma,+}$, and for a suitable choice of parameters have the anomalous asymptotic expansion \eqref{waiofneonei}


\subsection{Inner solutions}


We construct a family of  inner solutions which bifurcate from $\phi_{\mu}$ on $(-\infty, x_*]$, $x_*=x_*(a)\gg1$. 

\begin{lemma}[Inner solutions]
\label{inenrsotuon}
There exist constants $$c_\gamma>0, v_{1,\gamma}>0, v_{2,\gamma}>0$$ such that the following holds. Pick  $0<\eta_*\ll 1$, $\Lambda_*\gg1 $ be  respectively small and large enough. Then there exists $a_*\equiv a_*(\eta_*,\Lambda_*)>0$ such that for all $0<a<a_*$, the following holds. Let 
\be
\label{conosnmslalnes}
x_{*}=\ln\left(\frac{\eta_*}{a}\right)\gg 1.
\ee 
then 
\be
\label{eq: eps bound}
\forall \eps\in \Bbb R \ \ \mbox{with}\ \ |\epsilon e^{\lambda_{+,\gamma}x_*}| \leq \Lambda_* a^2 ,
\ee

 there exist
   \be
   \label{venovenineoniennoev}
   \left|
   \bear
   c\equiv c(a,x_*,\epsilon)=-c_{\gamma}a^2+O(a^2\eta_* )\\
   c_{\gamma}>0\\
   A\equiv A_{\gamma,a,\epsilon}=A_{\gamma}+c+\epsilon\\
   \phi_{\gamma,a,\epsilon}=\phi_{\gamma}+ v 
   \ear\right.
   \ee
  such that $\phi_{\gamma,a,\epsilon}$ solves \eqref{eq: closed ODE} on $(-\infty,x_*]$ with the endpoint values 
   \be
  \label{eq: exp v x*}
  \left|\bear
  v(x_*)=v_{1,\gamma}a^2-v_{2,\gamma}e^{\l_{+,\gamma}x_*}\eps+O\left(a^2\eta_*\right)\\
  \pa_xv(x_*)=-\l_{+,\gamma}v_{2,\gamma}e^{\l_{+,\gamma}x_*}\eps+O\left(a^2\eta_*\right)
  \ear\right.
  \ee
  where $O$ constants do not depend on $\Lambda_*$.

 \end{lemma}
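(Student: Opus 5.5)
We write $\phi=\phi_\gamma+v$, $A=A_\gamma+c+\epsilon$ in \eqref{eq: closed ODE} and treat $a^2e^{2x}(\pa_x^2+\pa_x)$, $c$ and $\epsilon$ as perturbations on $(-\infty,x_*]$. On this interval $a^2e^{2x}\le \eta_*^2\ll1$, so the operator stays uniformly elliptic and all $a$ terms are small. The linearized operator at the degenerate vortex is
$$\mathcal L v=\pa_x^2v-2(\gamma-1)\pa_xv+\gamma(\gamma-2)v-\left[\tfrac{\gamma+2}{\gamma-2}\phi_\gamma^{\frac{4}{\gamma-2}}-\tfrac{\gamma A_\gamma}{\gamma-2}\phi_\gamma^{\frac{2}{\gamma-2}}\right]v .$$
The equation for $v$ then reads
$$\mathcal Lv=-a^2e^{2x}(\pa_x^2+\pa_x)(\phi_\gamma+v)-(c+\epsilon)\phi_\gamma^{\frac{\gamma}{\gamma-2}}+N(v,c+\epsilon),$$
with $N$ quadratic.

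\textbf{Fundamental system.} The scaling invariance of the Emden flow, i.e.\ translation in $x$, gives the explicit kernel element $T_1=\pa_x\phi_\gamma$. It behaves like $\gamma e^{\gamma x}$ at $-\infty$ and like $\l_-\phi_{\gamma,+}e^{\l_- x}$ at $+\infty$, so it is decaying at both ends. A second solution $T_2$ follows from the Wronskian formula. It behaves like $e^{(\gamma-2)x}$ at $-\infty$ and like $e^{\l_+x}$ at $+\infty$.

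We impose regularity at $-\infty$, namely $v=o(e^{\gamma x})$ relative to $\phi_\gamma$ (the exceptional $\gamma$ slope). This forces the variation of constants formula
$$v(x)=\int_{-\infty}^x \frac{T_1(x)T_2(y)-T_2(x)T_1(y)}{W(y)}\,S(y)\,dy,$$
where $S$ is the right-hand side of the equation for $v$. The free multiple of $T_1$ is removed by the normalization $c_{\gamma,a}$, i.e.\ by translation.

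\textbf{Fixed point.} We set up a Banach fixed point for $v$ on $(-\infty,x_*]$ in a norm of the form $\sup e^{-\gamma x}\la x\ra^{-1}|v|$ for $x\le0$ and $\sup (a^2e^{2x}+|\epsilon| e^{\l_+x})^{-1}|v|$ for $0\le x\le x_*$. The forcing $a^2e^{2x}\pa_x(\ldots)\phi_\gamma$ is of size $a^2e^{2x}\le\eta_*^2$, and the $\epsilon$ forcing gives at most $|\epsilon| e^{\l_+x_*}\le\Lambda_*a^2$. The quadratic terms are therefore of order $(\Lambda_* a^2)^2$, which is negligible for $a<a_*(\eta_*,\Lambda_*)$, so the map is a contraction.

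\textbf{Main obstacle: extracting the endpoint values.} The $T_2$ component carries the growth $e^{\l_+x}$ and must be computed exactly. Since $2<\l_+$ for $\gamma$ near $2$ (because $\l_+\sim\sqrt{K}\to\infty$), the $T_2$ coefficient at $x_*$ is
$$-\frac{1}{W}\int_{-\infty}^{x_*}T_1S\,dy .$$
The $\epsilon$ part gives a convergent integral $\epsilon\int T_1\phi_\gamma^{\frac{\gamma}{\gamma-2}}/W$, which produces $-v_{2,\gamma}e^{\l_+x_*}\epsilon$. We need this integral to be nonzero; since $T_1>0$ by the monotonicity of $\phi$ from Proposition \ref{propvrotexprofile}, it has a sign. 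The $a^2$ part gives $a^2\int^{x_*}e^{2y}T_1(\pa_y^2+\pa_y)\phi_\gamma/W$, where the weight is like $e^{(2+\l_--\l_+... )y}$; we expect this integrand to be integrable because $|\l_-|$ is large.

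The remaining problem is that $e^{\l_+x_*}$ times an $O(a^2)$ coefficient is huge. We therefore choose $c$ so as to kill the projection onto $T_2$ accumulated from the $a^2$ forcing:
$$c\int_{-\infty}^{\infty}T_1\phi_\gamma^{\frac{\gamma}{\gamma-2}}=-a^2\int_{-\infty}^{\infty}T_1e^{2y}(\pa_y^2+\pa_y)\phi_\gamma+O(a^2\eta_*).$$
This gives $c=-c_\gamma a^2+O(a^2\eta_*)$, with the sign of $c_\gamma$ read off from $(\pa_y^2+\pa_y)\phi_\gamma$, whose sign we take from the phase portrait. The tail $\int_{x_*}^\infty$ is $O(a^2\eta_*^{2+|\l_-|})$.

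After this choice, the residual at $x_*$ consists of the $T_1$ part, which gives $v_{1,\gamma}a^2$ (this needs care since $T_1$ decays), the local term $a^2e^{2x_*}\sim\eta_*^2$ (reabsorbed into $O(a^2\eta_*)$ after scaling), and the $\epsilon$ term. Differentiating the representation gives the $\pa_xv(x_*)$ expansion with the factor $\l_+$. The delicate part, which we would attack first, is the uniform bookkeeping showing that all errors are $O(a^2\eta_*)$ independently of $\Lambda_*$.
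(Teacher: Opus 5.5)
Your skeleton matches the paper's: the basis $(\phi_\gamma',h_2)$, the resolvent integrated from $-\infty$, $c$ used to kill the growing mode, and $\epsilon$ left free to produce the $e^{\lambda_{+,\gamma}x_*}\epsilon$ term. The step that makes this work, however, is not set up correctly.

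\textbf{The choice of $c$.} Any residual $\delta$ in the $h_2$-coefficient at $x_*$ is multiplied by $h_2(x_*)\sim(\eta_*/a)^{\lambda_{+,\gamma}}$, with $\lambda_{+,\gamma}\gg1$. So $c$ cannot be fixed from the $a^2e^{2x}(\partial_x^2+\partial_x)\phi_\gamma$ forcing alone, nor only up to $O(a^2\eta_*)$.
\begin{itemize}
\item The $v$-dependent terms ($a^2e^{2x}(\partial_x^2+\partial_x)v$, the $(c+\epsilon)$-linear term, the quadratic term) have $h_2$-projections of size $O(a^4)$. After amplification these are $\gg a^2$.
\item An $O(a^2\eta_*)$ error in $c$ moves $v(x_*)$ by about $a^2\eta_*e^{\lambda_{+,\gamma}x_*}\gg a^2$.
\item You did not estimate the $c$-tail $c\int_{x_*}^{\infty}\phi_\gamma'e^{-2(\gamma-1)y}\phi_\gamma^{\frac{\gamma}{\gamma-2}}dy=O(a^2e^{-\lambda_{+,\gamma}x_*})$. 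It contributes $O(a^2)$ to both $v(x_*)$ and $\partial_xv(x_*)$, so the orthogonality must be imposed on $(-\infty,x_*]$, not on $\mathbb{R}$.
\end{itemize}
Consequently your contraction, asserted before $c$ is chosen, cannot close in your norm. That norm tolerates $e^{\lambda_{+,\gamma}x}$ growth only at the level $|\epsilon|$; take $\epsilon=0$ to see the problem.

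The paper instead makes $c=c_\gamma(x_*)+c_1(v)$ part of the fixed point. It defines $c$ by the exact condition that $\mathcal P^{x_*}_{\phi'_\gamma}$ of the whole right-hand side, except the $\epsilon$-forcing, vanishes. It then proves:
\begin{itemize}
\item $|c_1(v)|\lesssim a^2\|v\|+\|v\|^2$, together with a Lipschitz bound;
\item the $x_*$-uniform estimate $\|(R^-_\gamma-\mathcal P^{x_*}_{\phi'_\gamma})f\|\le C_\gamma\|f\|$ in an unweighted sup norm, which follows from $\lambda_{+,\gamma}+\lambda_{-,\gamma}=2(\gamma-1)$.
\end{itemize}

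\textbf{The weight on $[0,x_*]$.} Your weight $a^2e^{2x}$ is too lossy: it allows $|v(x_*)|\sim\eta_*^2\gg a^2\eta_*$. Likewise your ``local term $a^2e^{2x_*}\sim\eta_*^2$'' is not $O(a^2\eta_*)$. In fact the forcing is $O(a^2e^{(2+\lambda_{-,\gamma})x})$ for $x\ge0$, since $(\partial_x^2+\partial_x)\phi_\gamma\sim e^{\lambda_{-,\gamma}x}$. Hence $v=O((1+\Lambda_*)a^2)$ uniformly. Only with this bound can $v(x_*)$ be read off, from $-(c+\epsilon)(R^-_\gamma-\mathcal P^{x_*}_{\phi'_\gamma})(\phi_\gamma^{\frac{\gamma}{\gamma-2}})(x_*)$ and $-\epsilon\,\mathcal P^{x_*}_{\phi'_\gamma}(\phi_\gamma^{\frac{\gamma}{\gamma-2}})(x_*)$.

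\textbf{The sign of $c_\gamma$.} It cannot be read off from $(\partial_y^2+\partial_y)\phi_\gamma$, because this function changes sign. It behaves like $\gamma(\gamma+1)e^{\gamma y}>0$ at $-\infty$, but like $(1+\lambda_{-,\gamma})d_\gamma e^{\lambda_{-,\gamma}y}<0$ at $+\infty$, since $|\lambda_{-,\gamma}|\gg1$. Your formula for $c$ also drops the weight $1/W=e^{-2(\gamma-1)y}$. Positivity comes from integrating by parts against $e^{2y}/W=e^{2(2-\gamma)y}$, using $\phi_\gamma'\phi_\gamma''=\frac12\partial_y(\phi_\gamma')^2$ and the vanishing boundary terms:
\[
\int_{\mathbb{R}}e^{2(2-\gamma)y}\phi_\gamma'\left(\phi_\gamma''+\phi_\gamma'\right)dy=(\gamma-1)\int_{\mathbb{R}}e^{2(2-\gamma)y}(\phi_\gamma')^2\,dy>0 .
\]
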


\begin{remark} The linear term in $\eps$ dominates in the expansion \eqref{eq: exp v x*} at the extremities of the condition   \eqref{eq: eps bound} provided $\Lambda_*$ has been chosen large enough.
\end{remark}

\begin{proof}[Proof of Lemma \ref{inenrsotuon}] This is a bifurcation analysis. We define for $k\in \Bbb N$.
\[
\left\Vert f \right\Vert_{E^{k,x_*}_{-}}=\sup_{0\leq i\leq k}\left\Vert \left[1+e^{-(\gamma+2)x}\right]\partial_x^i f(x)\right\Vert_{L^\infty\left((-\infty,x_*)\right)}
\]
We will systematically use from \eqref{conosnmslalnes}, \eqref{febionoienionoef}, \eqref{vneinveonioenvnnievonove}: $$
\left|\bear
a e^{x_*}=\eta_*\\
1\ll |\l_{-,\gamma}|\le \l_{+,\gamma}\Rightarrow e^{2x_*}e^{-\l_{+,\gamma}x_*}\leq e^{2x_*}e^{-|\lambda_{-,\gamma}|x_*}=\left(\frac{a}{\eta_*}\right)^{|\l_{-,\gamma}|-2}\ll a^2.
\ear\right.
$$
It is understood in the sequel that $C_\gamma$ is a universal constant that does not depend on $\Lambda_*$ in \eqref{eq: eps bound}.\\

\noi{\bf step 1} The linearized flow.\\

\noi\und{Linearized formulation}. We let $$A\equiv A_{a,\gamma}=A_\gamma+c+\epsilon$$ and look for a solution to \eqref{eq: closed ODE} in the form $\phi=\phi_\gamma +v$ so that 
\bea
\label{lienarinoing}
\nonumber L^{-}_{\gamma} v&=&-a^2e^{2x}(\partial^2_x+\partial_x)\left(\phi_{\gamma}+v\right)-(c+\epsilon)|\phi_\gamma|^{\frac{\gamma}{\gamma-2}}-(c+\epsilon)\frac{\gamma \int_0^{1}\left(\phi_\gamma+s  v\right)|\phi_\gamma+s v|^{\frac{4-\gamma}{\gamma-2}}ds }{\gamma-2}v\\
&+&\frac{\int_0^1 \left[4(\gamma+2)|\phi_\gamma+sv|^{\frac{6-\gamma}{\gamma-2}}-2\gamma A_{\gamma} |\phi_\gamma+sv|^{\frac{4-\gamma}{\gamma-2}}\right](1-s)ds}{(\gamma-2)^2}  v^2
\eea
where $L^{-}_{\gamma}$ is the linearized operator close to $\phi_\gamma$ of \eqref{eq: closed ODE} for $a=0$:
$$L^{-}_{\gamma}h= \left[\partial_x^2-2(\gamma-1)\partial_x+\gamma(\gamma-2)\right]h-\frac{\gamma+2}{\gamma-2}\phi_\gamma|\phi_\gamma|^{\frac{6-\gamma}{\gamma-2}}h+\frac{A_{\gamma}\gamma}{\gamma-2}\phi_\gamma|\phi_\gamma|^{\frac{4-\gamma}{\gamma-2}}h.
$$

\noi\und{Fundamental basis}. By translation invariance of the vortex equation \eqref{emdnenfonro}:
$$L^{-}_{\gamma}\phi_\gamma'=0$$ which implies the asymptotic expansion 
\be
\label{eq:int basis asymp 1}
\phi_\gamma'(x)= \left|\bear
\gamma \left[1+o_{x\to -\infty}(1)\right]e^{\gamma x}\\
d_\gamma e^{\lambda_{-,\gamma} x}+O_{x\to+\infty}\left(e^{2\lambda_{-,\gamma} x}\right), \ \\ d_{\gamma}>0.
\ear\right.
\ee
The non degeneracy $\phi_\gamma'>0$ allows us to exhibit a basis of fundamental solutions  $(\phi_\gamma',h_2)$ with normalized Wronskian ${\rm Wr}(\phi_\gamma',h_2)=e^{2(\gamma-1)x}$:
\be
\label{esthtwto}
h_2(x)=\phi'_\gamma(x)\int_{0}^{x}\frac{ e^{2(\gamma-1)y}}{(\phi_\gamma')^2(y)}dy=\left|\bear
-\frac{1+o_{x\to -\infty}(1)}{2\gamma}e^{(\gamma-2) x}\\
\frac{1+O_{x\to +\infty}(e^{\lambda_{-,\gamma}x})}{2d_\gamma \left(\gamma-1-\lambda_{-,\gamma}\right)} e^{\lambda_{+,\gamma}  x}
\ear\right.
\ee
where we used \eqref{febionoienionoef}, and similarly for derivatives.\\

\noi\und{Resolvent}. The solutions to
    \[y''+py'+qy=g\]
    are
    \be
    \label{variontocnstante}
    y=c_1y_1+c_2y_2-y_1(t)\int^{t}_{t_0}\frac{y_2(s)}{W[y_1,y_2](s)}g(s)ds+y_2(t)\int^{t}_{t_0}\frac{y_1(s)}{W[y_1,y_2](s)}g(s)ds
    \ee
    where $(y_1,y_2)$ is a basis of fundamental solutions to the homogeneous equation with Wronskian 
    \be
    \label{fenoinoenineovnoive}
    W[y_1,y_2]\equiv y_1 y_2'-y_1'y_2=Ce^{-\int p}.
    \ee
We freeze an inverse by defining the resolvent
$$R^{-}_{\gamma}f=-\phi_\gamma'(x)\int^{x}_{-\infty} h_2(y) e^{-2(\gamma-1)y} f(y)dy+h_2(x)\int^{x}_{-\infty} \phi_\gamma'(y) e^{-2(\gamma-1)y} f(y)dy.
$$

\noi\und{Projection operator.} Given  a smooth increasing cut-off function 
$$
\chi_{+}(x)=0 \text{ for }x\leq -1 \text{ and }\chi_{+}(x)=1 \text{ for }x\geq 0,
$$
we introduce the projection 
\be
\label{eq: def proj}
    \mathcal{P}^{x_*}_{\phi'_{\gamma}}f=h_2(x)\chi_{+}(x)\int^{x_*}_{-\infty} \phi_\gamma'(y) e^{-2(\gamma-1)y} f(y)dy.
\ee

\noi{\bf step 2} Continuity of the resolvent. We claim the bounds
\be
\label{contionfiut}
\left\Vert R^{-}_{\gamma} f\right\Vert_{E^{2,x_*}_{-}} \leq C_{\gamma}(1+e^{\lambda_{+,\gamma} x_*})\left\Vert  f\right\Vert_{E^{0,x_*}_{-}}
\ee
and 
\be
\label{cointorissbis}
\left\Vert \left(R^{-}_{\gamma}- \mathcal{P}^{x_*}_{\phi'_{\gamma}}\right) f\right\Vert_{E^{2,x_*}_{-}} \leq C_{\gamma}\left\Vert  f\right\Vert_{E^{0,x_*}_{-}}.
\ee

\noi{\em Proof of \eqref{contionfiut}}. We estimate near $-\infty$ from \eqref{eq:int basis asymp 1}, \eqref{esthtwto}:

\begin{align}\label{eq: est R- phi 1}
\left|\phi_\gamma'(x)\int^{x}_{-\infty} h_2(y) e^{-2(\gamma-1)y} f(y)dy\right|&\leq C_{\gamma} \left\Vert  f\right\Vert_{E^{0,x_*}_{-}} e^{\gamma x}\int^{x}_{-\infty} e^{(\gamma-2) y} e^{-2(\gamma-1)y} e^{(\gamma+2) y}dy\nonumber \\
&\leq C_{\gamma} \left\Vert  f\right\Vert_{E^{0,x_*}_{-}} e^{(\gamma+2) x},
\end{align}
and 
\begin{align}\label{eq: est R- h2 1}
\left|h_2(x)\int^{x}_{-\infty} \phi_\gamma'(y) e^{-2(\gamma-1)y} f(y)dy\right|&\leq C_{\gamma} \left\Vert  f\right\Vert_{E^{0,x_*}_{-}} e^{(\gamma-2) x}\int^{x}_{-\infty} e^{\gamma y} e^{-2(\gamma-1)y} e^{(\gamma+2) y}dy\nonumber \\
&\leq C_{\gamma} \left\Vert  f\right\Vert_{E^{0,x_*}_{-}} e^{(\gamma+2) x}.
\end{align}
Then for $x\gg1 $ using again \eqref{eq:int basis asymp 1}, \eqref{esthtwto}
\begin{align}\label{eq: est R- phi 2}
\left|\phi_\gamma'(x)\int^{x}_{-\infty} h_2(y) e^{-2(\gamma-1)y} f(y)dy\right|&\leq C_{\gamma} \left\Vert  f\right\Vert_{E^{0,x_*}_{-}} e^{\lambda_{-,\gamma} x}\left[1+\int^{x}_{0} e^{\lambda_{+,\gamma} y} e^{-2(\gamma-1)y} dy\right]\nonumber\\
&\leq C_{\gamma} \left\Vert  f\right\Vert_{E^{0,x_*}_{-}},
\end{align}
and 
\begin{align*}
\left|h_2(x)\int^{x}_{-\infty} \phi_\gamma'(y) e^{-2(\gamma-1)y} f(y)dy\right|&\leq C_{\gamma} \left\Vert  f\right\Vert_{E^{0,x_*}_{-}} e^{\lambda_{+,\gamma} x}\left[1+\int^{x}_{0} e^{\lambda_{-,\gamma} y} e^{-2(\gamma-1)y} dy\right]\\
&\leq C_{\gamma} \left\Vert  f\right\Vert_{E^{0,x_*}_{-}} e^{\lambda_{+,\gamma} x}.
\end{align*}
Putting the previous estimates together gives 
\[
\left\Vert R^{-}_{\gamma} f\right\Vert_{E^{0,x_*}_{-}} \leq C_{\gamma}(1+e^{\lambda_{+,\gamma} x_*})\left\Vert  f\right\Vert_{E^{0,x_*}_{-}}.
\]
First and second derivatives are controlled similarly to derive \eqref{contionfiut}.\\

\noi{\em Proof of \eqref{cointorissbis}} We write 
    \begin{align*}
        &R^{-}_{\gamma}f-h_2(x)\chi_{+}(x)\int^{x_*}_{-\infty} \phi_\gamma'(y) e^{-2(\gamma-1)y} f(y)dy=-\phi_\gamma'(x)\int^{x}_{-\infty} h_2(y) e^{-2(\gamma-1)y} f(y)dy\\
        &+h_2(x)\left[1-\chi_{+}(x)\right]\int^{x}_{-\infty} \phi_\gamma'(y) e^{-2(\gamma-1)y} f(y)dy +h_2(x)\chi_{+}(x)\int^{x}_{x_*} \phi_\gamma'(y) e^{-2(\gamma-1)y} f(y)dy.
    \end{align*}
   From \eqref{eq: est R- phi 1} and \eqref{eq: est R- phi 2}
    \[
\left\Vert \phi_\gamma'(x)\int^{x}_{-\infty} h_2(y) e^{-2(\gamma-1)y} f(y)dy\right\Vert_{E^{0,x_*}_{-}} \leq C_{\gamma}\left\Vert  f\right\Vert_{E^{0,x_*}_{-}}
\]
and from \eqref{eq: est R- h2 1} 
\[
\left\Vert h_2(x)(1-\chi_{+}(x))\int^{x}_{-\infty} \phi_\gamma'(y) e^{-2(\gamma-1)y} f(y)dy\right\Vert_{E^{0,x_*}_{-}} \leq C_{\gamma}\left\Vert  f\right\Vert_{E^{0,x_*}_{-}}.
\]
For the last term, we note that it suffices to work with $1\ll x\leq x_{*}$ as $\chi_{+}$ vanishes at $-\infty$ giving the required decay.  In this range $\chi_+(x)=1$ and
    \[
    \left\vert h_2(x)\int^{x}_{x_{*}} \phi_\gamma'(y) e^{-2(\gamma-1)y} f(y)dy\right\vert \leq C_{\gamma}\underbrace{e^{\lambda_{+,\gamma}x}\left(e^{\lambda_{-,\gamma}x}e^{-2(\gamma-1)x}+e^{\lambda_{-,\gamma}x_*}e^{-2(\gamma-1)x_{*}}\right)}_{\leq 2}\left\Vert  f\right\Vert_{E^{x_*}_{-}}.
    \]
    Putting the previous estimates together ensures
  $$
\left\Vert R^{-}_{\gamma} f-h_2(x)\chi_{+}(x)\int^{x_*}_{-\infty} \phi_\gamma'(y) e^{-2(\gamma-1)y} f(y)dy\right\Vert_{E^{0,x_*}_{-}} \leq C_{\gamma}\left\Vert  f\right\Vert_{E^{0,x_*}_{-}}.
$$
First and second derivatives are controlled similarly to derive \eqref{cointorissbis}.\\

\noi{\bf step 3} Source term. We now estimate the source term generated by $(a,c,\eps)$ corrections in the rhs of \eqref{lienarinoing} through the resolvent operator and claim
\be
\label{ionoeinoe}
   \left|\bear     R^{-}_{\gamma}\left[e^{2x}(\partial^2_x+\partial_x)\phi_{\gamma}\right]=\left[1+(\gamma-2)\right]\left[\int^{+\infty}_{-\infty}  e^{2(2-\gamma)y} \left[\phi_\gamma'(y)\right]^2dy\right]h_2(x)+O\left(e^{(2+\lambda_{-,\gamma}) x}\right)\\
   R^{-}_{\gamma}\left[|\phi_\gamma|^{\frac{\gamma}{\gamma-2}}\right]=\left[\int^{+\infty}_{-\infty} \phi_\gamma'(y) e^{-2(\gamma-1)y}|\phi_\gamma|^{\frac{\gamma}{\gamma-2}}(y)dy\right]h_2(x)
   +O(1)
 \ear\right.
 \ee
 and
\be
\label{finaneonoexp}
\left|\bear
 \left(R^{-}_{\gamma}-\mathcal{P}^{x_*}_{\phi'_{\gamma}}\right)\left[e^{2x}(\partial^2_x+\partial_x)\phi_{\gamma}\right]=O_{W^{1,\infty}}\left[e^{(2+\lambda_{-,\gamma}) x}\right]\\
\left(R^{-}_{\gamma}-\mathcal{P}^{x_*}_{\phi'_{\gamma}}\right)\left(|\phi_\gamma|^{\frac{\gamma}{\gamma-2}}\right)=-\frac{|\phi^+_\gamma|^{\frac{\gamma}{\gamma-2}}}{2 \left(\gamma-1-\lambda_{-,\gamma}\right)\left[\lambda_{+,\gamma}-2(\gamma-1)\right]}   +O\left(\la x\ra e^{\lambda_{-,\gamma} x}\right)\\
 \partial_x\left[\left(R^{-}_{\gamma}-\mathcal{P}^{x_*}_{\phi'_{\gamma}}\right)\left(|\phi_\gamma|^{\frac{\gamma}{\gamma-2}}\right)\right]=O(\la x\ra e^{\lambda_{-,\gamma} x}).
 \ear\right.
 \ee

 \noi{\em Proof of \eqref{ionoeinoe}.} We systematically use \eqref{eq:int basis asymp 1}, \eqref{esthtwto}. First
    \[
R^{-}_{\gamma}\left(e^{2x}\partial^2_x\phi_{\gamma}\right)=-\phi_\gamma'(x)\int^{x}_{-\infty} h_2(y) e^{-2(\gamma-1)y} e^{2y}\partial^2_x\phi_{\gamma}(y) dy+h_2(x)\int^{x}_{-\infty} \phi_\gamma'(y) e^{-2(\gamma-1)y} e^{2y}\partial^2_x\phi_{\gamma}(y)dy,
    \]
  We estimate for $1\ll y $ 
  \bee
  &&
     \left|h_2(y) e^{-2(\gamma-1)y} e^{2y}\partial^2_x\phi_{\gamma}(y)\right|\leq C_{\gamma}e^{\lambda_{+,\gamma}  y}e^{-2(\gamma-1)y} e^{2y}  \lambda_{-,\gamma} e^{\lambda_{-,\gamma} y}=C_{\gamma}e^{2y}\\
     &\Rightarrow& 
  \left\vert \phi_\gamma'(x)\int^{x}_{-\infty} h_2(y) e^{-2(\gamma-1)y} e^{2y}\partial^2_x\phi_{\gamma}(y) dy\right\vert \leq C_\gamma e^{(2+\lambda_{-,\gamma}) x}.
  \eee
  Then
  \begin{align*}
  &\int^{x}_{-\infty} \phi_\gamma'(y) e^{-2(\gamma-1)y} e^{2y}\partial^2_x\phi_{\gamma}(y)dy=\frac{1}{2}\int^{x}_{-\infty}  e^{2(2-\gamma)y} \partial_x\left[\phi_\gamma'(y)\right]^2dy\\
  &=\frac{1}{2}\phi_\gamma'(y)^2 e^{2(2-\gamma)y} +(\gamma-2)\int^{x}_{-\infty}  e^{2(2-\gamma)y} \left[\phi_\gamma'(y)\right]^2dy>0,
  \end{align*}
  and hence 
  \[
  h_2(x)\int^{x}_{-\infty} \phi_\gamma'(y) e^{-2(\gamma-1)y} e^{2y}\partial^2_x\phi_{\gamma}(y)dy=(\gamma-2)\left[\int^{+\infty}_{-\infty}  e^{2(2-\gamma)y} \left[\phi_\gamma'(y)\right]^2dy\right]h_2(x)+O\left(e^{(2+\lambda_{-,\gamma}) x}\right).
  \]
  We analogously get 
  \begin{align*}   R^{-}_{\gamma}\left(e^{2x}\partial_x\phi_{\gamma}\right)&=-\phi_\gamma'(x)\int^{x}_{-\infty} h_2(y) e^{-2(\gamma-1)y} e^{2y}\partial_x\phi_{\gamma}(y) dy+h_2(x)\int^{x}_{-\infty} \phi_\gamma'(y) e^{-2(\gamma-1)y} e^{2y}\partial_x\phi_{\gamma}(y)dy,\\
    &=\left[\int^{+\infty}_{-\infty}  e^{2(2-\gamma)y} \left[\phi_\gamma'(y)^2\right]dy\right]h_2(x)+O\left(e^{(2+\lambda_{-,\gamma}) x}\right)
   \end{align*}
   and the first estimate in \eqref{ionoeinoe} follows. We now turn to 
   \[
R^{-}_{\gamma}\left(|\phi_\gamma|^{\frac{\gamma}{\gamma-2}}\right)=-\phi_\gamma'(x)\int^{x}_{-\infty} h_2(y) e^{-2(\gamma-1)y} |\phi_\gamma|^{\frac{\gamma}{\gamma-2}}(y) dy+h_2(x)\int^{x}_{-\infty} \phi_\gamma'(y) e^{-2(\gamma-1)y}|\phi_\gamma|^{\frac{\gamma}{\gamma-2}}(y)dy,
   \]
   and use the bound $
   |\phi_\gamma|^{\frac{\gamma}{\gamma-2}}(y)\leq C_{\gamma}$ to estimate
   \[
   \left\vert\phi_\gamma'(x)\int^{x}_{-\infty} h_2(y) e^{-2(\gamma-1)y} |\phi_\gamma|^{\frac{\gamma}{\gamma-2}}(y) dy\right\vert\leq C_\gamma.
   \]
 Next 
 \bee
 &&
    \int^{x}_{-\infty} \phi_\gamma'(y) e^{-2(\gamma-1)y}|\phi_\gamma|^{\frac{\gamma}{\gamma-2}}(y)dy\\
    &=&   \int^{+\infty}_{-\infty} \phi_\gamma'(y) e^{-2(\gamma-1)y}|\phi_\gamma|^{\frac{\gamma}{\gamma-2}}(y)dy-  \int^{+\infty}_{x} \phi_\gamma'(y) e^{-2(\gamma-1)y}|\phi_\gamma|^{\frac{\gamma}{\gamma-2}}(y)dy
 \eee
 and 
 \begin{align*}
\int^{+\infty}_{x} \phi_\gamma'(y) e^{-2(\gamma-1)y}|\phi_\gamma|^{\frac{\gamma}{\gamma-2}}(y)dy=O\left[e^{(\lambda_{-,\gamma}-2(\gamma-1))x}\right],
 \end{align*}
 ensure
 \begin{align*}
   R^{-}_{\gamma}\left(|\phi_\gamma|^{\frac{\gamma}{\gamma-2}}\right)&=\left[\int^{+\infty}_{-\infty} \phi_\gamma'(y) e^{-2(\gamma-1)y}|\phi_\gamma|^{\frac{\gamma}{\gamma-2}}(y)dy\right]h_2(x)+O(1).
 \end{align*}
and \eqref{ionoeinoe} is proved.\\

\noi{Proof of \eqref{finaneonoexp}}. We work in the range $x\ge 0$. The first estimate follows from \eqref{ionoeinoe}. We then compute
    \[
\left(R^{-}_{\gamma}-\mathcal{P}^{x_*}_{\phi'_{\gamma}}\right)\left(|\phi_\gamma|^{\frac{\gamma}{\gamma-2}}\right)=-\phi_\gamma'(x)\int^{x}_{-\infty} h_2(y) e^{-2(\gamma-1)y} |\phi_\gamma|^{\frac{\gamma}{\gamma-2}}(y) dy,
    \]
    and use
    \[
    h_2(x)= \frac{1}{2d_\gamma \left(\gamma-1-\lambda_{-,\gamma}\right)} e^{\lambda_{+,\gamma}  x}+O_{x\to+\infty}(e^{\left(\lambda_{-,\gamma}+2(\gamma-1)\right)x}),
    \]
    and 
    \bee
&&\phi_\gamma(x)=\phi^+_\gamma+\frac{d_\gamma}{\lambda_{-,\gamma}} e^{\lambda_{-,\gamma}x}+O_{x\to+\infty}(e^{2\lambda_{-,\gamma}x})\\
&\Rightarrow&     |\phi_\gamma|^{\frac{\gamma}{\gamma-2}}=|\phi^+_\gamma|^{\frac{\gamma}{\gamma-2}}+O_{x\to+\infty}(e^{\lambda_{-,\gamma}x}),
   \eee
    to estimate for $x\gg 1$:
    \[
    \int^{x}_{-\infty} h_2(y) e^{-2(\gamma-1)y} |\phi_\gamma|^{\frac{\gamma}{\gamma-2}}(y) dy=\frac{|\phi^+_\gamma|^{\frac{\gamma}{\gamma-2}}}{2d_\gamma \left(\gamma-1-\lambda_{-,\gamma}\right)\left[\lambda_{+,\gamma}-2(\gamma-1)\right]} e^{(\lambda_{+,\gamma}-2(\gamma-1))  x}+O_{x\to+\infty}(x).
    \]
    Thus 
    \[
     \left(R^{-}_{\gamma}-\mathcal{P}^{x_*}_{\phi'_{\gamma}}\right)\left(|\phi_\gamma|^{\frac{\gamma}{\gamma-2}}\right)=\frac{-|\phi^+_\gamma|^{\frac{\gamma}{\gamma-2}}}{2 \left(\gamma-1-\lambda_{-,\gamma}\right)\left[\lambda_{+,\gamma}-2(\gamma-1)\right]} 
     +O_{x\to+\infty}(x e^{\lambda_{-,\gamma}x}).
    \]
    Taking a derivative yields
    \bee
    &&\partial_x\left(R^{-}_{\gamma}-\mathcal{P}^{x_*}_{\phi'_{\gamma}}\right)\left(|\phi_\gamma|^{\frac{\gamma}{\gamma-2}}\right)\\
    &=&-\phi_\gamma''(x)\left[\int^{x}_{-\infty} h_2(y) e^{-2(\gamma-1)y} |\phi_\gamma|^{\frac{\gamma}{\gamma-2}}(y) dy\right]-\phi_\gamma'(x) h_2(x) e^{-2(\gamma-1)x} |\phi_\gamma|^{\frac{\gamma}{\gamma-2}}(x)\\
    &=&\frac{-\lambda_{-,\gamma}|\phi^+_\gamma|^{\frac{\gamma}{\gamma-2}}}{2 \left(\gamma-1-\lambda_{-,\gamma}\right)\left[\lambda_{+,\gamma}-2(\gamma-1)\right]}-\frac{|\phi^+_\gamma|^{\frac{\gamma}{\gamma-2}}}{2 \left(\gamma-1-\lambda_{-,\gamma}\right)} 
     +O_{x\to+\infty}(x e^{\lambda_{-,\gamma}x}).
   \eee
    The leading order term cancels out and \eqref{finaneonoexp} is proved.\\

\noi{\bf step 4} Choice of $c(v)$.\\

 \noi\und{Fixed point formulation}. We solve \eqref{lienarinoing} by showing that the map 
\begin{align*}
T^{-}_{\gamma} v&\equiv R^{-}_{\gamma}\left[-a^2e^{2x}(\partial^2_x+\partial_x)\left(\phi_{\gamma}+v\right)-(c+\eps)|\phi_\gamma|^{\frac{\gamma}{\gamma-2}}\right]-(c+\eps)R^{-}_{\gamma}\left[\frac{\gamma \int_0^{1}\left(\phi_\gamma+s v\right)|\phi_\gamma+s v|^{\frac{4-\gamma}{\gamma-2}}ds }{\gamma-2}v\right]
\\&+R^{-}_{\gamma}\left[\frac{\int_0^1 \left[4(\gamma+2)|\phi_\gamma+sv|^{\frac{6-\gamma}{\gamma-2}}-2\gamma A_{\gamma} |\phi_\gamma+sv|^{\frac{4-\gamma}{\gamma-2}}\right](1-s)ds}{(\gamma-2)^2}  v^2\right].
\end{align*}
has a fixed point in $B(0,C_{\gamma} a^2)$ in the topology $E^{0,x_*}$ for $a$ sufficiently small and a well chosen $c(v)$.\\

\noi\und{Choice of $c(v)$}. We rewrite the previous equations as
\begin{align*}
T^{-}_{\gamma} v&=\left(R^{-}_{\gamma}-\mathcal{P}^{x_*}_{\phi'_{\gamma}}\right)\left[-a^2e^{2x}(\partial^2_x+\partial_x)\left(\phi_{\gamma}+v\right)-(c+\epsilon)|\phi_\gamma|^{\frac{\gamma}{\gamma-2}}\right]+Er(x)\\
&-(c+\epsilon)\left(R^{-}_{\gamma}-\mathcal{P}^{x_*}_{\phi'_{\gamma}}\right)\left[\frac{\gamma \int_0^{1}\left(\phi_\gamma+s v\right)|\phi_\gamma+s v|^{\frac{4-\gamma}{\gamma-2}}ds }{\gamma-2}v\right]-\epsilon \mathcal{P}^{x_*}_{\phi'_{\gamma}}\left(|\phi_\gamma|^{\frac{\gamma}{\gamma-2}}\right)
\\&+\left(R^{-}_{\gamma}-\mathcal{P}^{x_*}_{\phi'_{\gamma}}\right)\left[\frac{\int_0^1 \left[4(\gamma+2)|\phi_\gamma+sv|^{\frac{6-\gamma}{\gamma-2}}-2\gamma A_{\gamma} |\phi_\gamma+sv|^{\frac{4-\gamma}{\gamma-2}}\right](1-s)ds}{(\gamma-2)^2}  v^2\right].
\end{align*}
where 
\bee
&&Er(x)\\
&=&\mathcal{P}^{x_*}_{\phi'_{\gamma}}\left[-a^2e^{2x}(\partial^2_x+\partial_x)\left(\phi_{\gamma}+v\right)-c|\phi_\gamma|^{\frac{\gamma}{\gamma-2}}\right]-(c+\epsilon)\mathcal{P}^{x_*}_{\phi'_{\gamma}}\left[\frac{\gamma \int_0^{1}\left(\phi_\gamma+s v\right)|\phi_\gamma+s v|^{\frac{4-\gamma}{\gamma-2}}ds }{\gamma-2}v\right]
\\&+&\mathcal{P}^{x_*}_{\phi'_{\gamma}}\left[\frac{\int_0^1 \left[4(\gamma+2)|\phi_\gamma+sv|^{\frac{6-\gamma}{\gamma-2}}-2\gamma A_{\gamma} |\phi_\gamma+sv|^{\frac{4-\gamma}{\gamma-2}}\right](1-s)ds}{(\gamma-2)^2}  v^2\right].
\eee
We explicitely choose $c(v)$ to ensure $$Er(x)=0, \text{ for }x\geq 0.$$

\noi\und{Main term}.  We split
$$c=c_\gamma(x_*)+c_1(v)$$  with for $x\ge 0$:
\bee
&&\mathcal{P}^{x_*}_{\phi'_{\gamma}}\left[-a^2e^{2x}(\partial^2_x+\partial_x)\phi_{\gamma}-c_\gamma(x_*)|\phi_\gamma|^{\frac{\gamma}{\gamma-2}}\right]=0\\
&\LR&c_\gamma(x_*)=-\frac{\mathcal{P}^{x_*}_{\phi'_{\gamma}}\left[a^2e^{2x}(\partial^2_x+\partial_x)\phi_{\gamma}\right]}{\mathcal{P}^{x_*}_{\phi'_{\gamma}}\left(|\phi_\gamma|^{\frac{\gamma}{\gamma-2}}\right)}.
\eee
We infer by evaluating \eqref{ionoeinoe}, \eqref{finaneonoexp} at $x=x_*$ and using \eqref{esthtwto}:
\bee
c_\gamma(x_*)&=&-
\frac{a^2\left[1+(\gamma-2)\right]\left[\int^{+\infty}_{-\infty}  e^{2(2-\gamma)y} \phi_\gamma'(y)^2dy\right]h_2(x_*)\left[1+O\left(e^{(2+\lambda_{-,\gamma}-\l_{+,\gamma}) x_*}\right)\right]}{\left[\int^{+\infty}_{-\infty} \phi_\gamma'(y) e^{-2(\gamma-1)y}|\phi_\gamma|^{\frac{\gamma}{\gamma-2}}(y)dy\right]h_2(x_*)\left[1+O\left(e^{-\l_{+,\gamma} x_*}\right)\right]}\\
&=& -a^2c_\gamma\left[1+O\left(e^{(2+\lambda_{-,\gamma})x_*}\right)\right]
\eee
with $$c_\gamma=\frac{\left[1+(\gamma-2)\right]\int^{+\infty}_{-\infty}  e^{2(2-\gamma)y} \left[\phi_\gamma'(y)\right]^2dy}{\int^{+\infty}_{-\infty} \phi_\gamma'(y) e^{-2(\gamma-1)y}|\phi_\gamma|^{\frac{\gamma}{\gamma-2}}(y)dy}>0.$$
\noi\und{Bound for $c_1(v)$.} We claim 
\be
\label{eq: est c1}
\left|\bear
|c_1(v)|\leq  C_\gamma\left( a^2 \left\Vert v \right\Vert_{E^{2,x_*}_{-}}+\left\Vert v \right\Vert_{E^{0,x_*}_{-}}^2 \right)\\
|c_1(v)-c_1(w)|\leq C_{\gamma}\left[\left(\left\Vert v \right\Vert_{E^{0,x_*}_{-}}+\left\Vert w \right\Vert_{E^{0,x_*}_{-}}\right)\left\Vert v-w \right\Vert_{E^{0,x_*}_{-}}+a^2 \left\Vert v-w \right\Vert_{E^{2,x_*}_{-}}\right].
\ear\right.
\ee
\noi{\em Proof of \eqref{eq: est c1}}. By definition 
\bee
    &&c_1\int^{x_*}_{-\infty} \phi_\gamma'(y) e^{-2(\gamma-1)y} |\phi_\gamma|^{\frac{\gamma}{\gamma-2}}(y)dy\\
    &=&\int^{x_*}_{-\infty} \phi_\gamma'(y) e^{-2(\gamma-1)y}\left[-a^2e^{2y}(\partial^2_x+\partial_x)v-(c+\epsilon)\frac{\gamma \int_0^{1}\left(\phi_\gamma+s v\right)|\phi_\gamma+s v|^{\frac{4-\gamma}{\gamma-2}}ds }{\gamma-2}v\right]dy\\
    &+&\int^{x_*}_{-\infty} \phi_\gamma'(y) e^{-2(\gamma-1)y}\left[\frac{\int_0^1 \left[4(\gamma+2)|\phi_\gamma+sv|^{\frac{6-\gamma}{\gamma-2}}-2\gamma A_{\gamma} |\phi_\gamma+sv|^{\frac{4-\gamma}{\gamma-2}}\right](1-s)ds}{(\gamma-2)^2}  v^2\right]dy
\eee
yields
\bee
&&c_1\int^{x_*}_{-\infty} \phi_\gamma'(y) e^{-2(\gamma-1)y} \left[|\phi_\gamma|^{\frac{\gamma}{\gamma-2}}(y)+\frac{\gamma \int_0^{1}\left(\phi_\gamma+s v\right)|\phi_\gamma+s v|^{\frac{4-\gamma}{\gamma-2}}ds }{\gamma-2}v\right]dy\\
&=&\int^{x_*}_{-\infty} \phi_\gamma'(y) e^{-2(\gamma-1)y}\left[-a^2e^{2y}(\partial^2_x+\partial_x)v+(c_{\gamma}a^2+\epsilon)\frac{\gamma \int_0^{1}\left(\phi_\gamma+s v\right)|\phi_\gamma+s v|^{\frac{4-\gamma}{\gamma-2}}ds }{\gamma-2}v\right]dy\\
 &+&\int^{x_*}_{-\infty} \phi_\gamma'(y) e^{-2(\gamma-1)y}\left[\frac{\int_0^1 \left[4(\gamma+2)|\phi_\gamma+sv|^{\frac{6-\gamma}{\gamma-2}}-2\gamma A_{\gamma} |\phi_\gamma+sv|^{\frac{4-\gamma}{\gamma-2}}\right](1-s)ds}{(\gamma-2)^2}  v^2\right]dy.
\eee
We write
\bee
&&\int^{x_*}_{-\infty} \phi_\gamma'(y) e^{-2(\gamma-1)y} |\phi_\gamma|^{\frac{\gamma}{\gamma-2}}(y)dy\\
&=&\underbrace{\int^{+\infty}_{-\infty} \phi_\gamma'(y) e^{-2(\gamma-1)y} |\phi_\gamma|^{\frac{\gamma}{\gamma-2}}(y)dy}_{>0}+\int^{+\infty}_{x_*} \phi_\gamma'(y) e^{-2(\gamma-1)y} |\phi_\gamma|^{\frac{\gamma}{\gamma-2}}(y)dy\\
&=&\int^{+\infty}_{-\infty} \phi_\gamma'(y) e^{-2(\gamma-1)y} |\phi_\gamma|^{\frac{\gamma}{\gamma-2}}(y)dy+O\left(e^{\left[\lambda_{-,\gamma}-2(\gamma-1)\right]x_*}\right),
\eee
and estimate for $v\in B(0,1)\subset E^{0,x_*}_{-}$ 
\[
\int^{x_*}_{-\infty} \phi_\gamma'(y) e^{-2(\gamma-1)y} \frac{\gamma \int_0^{1}\left(\phi_\gamma+s v\right)|\phi_\gamma+s v|^{\frac{4-\gamma}{\gamma-2}}ds }{\gamma-2}vdy\leq C_{\gamma}\left\Vert v\right\Vert_{E^{0,x_*}_{-}}\int^{x_*}_{-\infty} \phi_\gamma'(y) e^{-2(\gamma-1)y}\frac{e^{(\gamma+2)y}}{1+e^{(\gamma+2)y}}dy,
\]
from which
\begin{align}\label{eq: c1 denom est}
    &\int^{x_*}_{-\infty} \phi_\gamma'(y) e^{-2(\gamma-1)y} \left[|\phi_\gamma|^{\frac{\gamma}{\gamma-2}}(y)+\frac{\gamma \int_0^{1}\left(\phi_\gamma+s v\right)|\phi_\gamma+s v|^{\frac{4-\gamma}{\gamma-2}}ds }{\gamma-2}v\right]dy\nonumber \\
    &=\int^{+\infty}_{-\infty} \phi_\gamma'(y) e^{-2(\gamma-1)y} |\phi_\gamma|^{\frac{\gamma}{\gamma-2}}(y)dy+O\left[e^{(\lambda_{-,\gamma}-2(\gamma-1))x_*}+\left\Vert v \right\Vert_{ E^{0,x_*}_{-}}\right].
\end{align}
This ensures that $c_1(v)$ is a  well defined smooth function of $v\in B(0,\epsilon_{\gamma})\subset E^{2,x_*}_{-}$ for $\epsilon_{\gamma}$ sufficiently small given by the formula
\begin{align*}
    c_1(v)&=\frac{\int^{x_*}_{-\infty} \phi_\gamma'(y) e^{-2(\gamma-1)y}\left[-a^2e^{2y}(\partial^2_x+\partial_x)v+(c_{\gamma}a^2+\epsilon)\frac{\gamma \int_0^{1}\left(\phi_\gamma+s v\right)|\phi_\gamma+s v|^{\frac{4-\gamma}{\gamma-2}}ds }{\gamma-2}v\right]dy}{\int^{x_*}_{-\infty} \phi_\gamma'(y) e^{-2(\gamma-1)y} \left[|\phi_\gamma|^{\frac{\gamma}{\gamma-2}}(y)+\frac{\gamma \int_0^{1}\left(\phi_\gamma+s v\right)|\phi_\gamma+s v|^{\frac{4-\gamma}{\gamma-2}}ds }{\gamma-2}v\right]dy}\\
    &+\frac{\int^{x_*}_{-\infty} \phi_\gamma'(y) e^{-2(\gamma-1)y}\left[\frac{\int_0^1 [4(\gamma+2)|\phi_\gamma+sv|^{\frac{6-\gamma}{\gamma-2}}-2\gamma A_{\gamma} |\phi_\gamma+sv|^{\frac{4-\gamma}{\gamma-2}}](1-s)ds}{(\gamma-2)^2}  v^2\right]dy}{\int^{x_*}_{-\infty} \phi_\gamma'(y) e^{-2(\gamma-1)y} \left[|\phi_\gamma|^{\frac{\gamma}{\gamma-2}}(y)+\frac{\gamma \int_0^{1}\left(\phi_\gamma+s v\right)|\phi_\gamma+s v|^{\frac{4-\gamma}{\gamma-2}}ds }{\gamma-2}v\right]dy}.
\end{align*}
We estimate
\begin{align*}
\left\vert \int^{x_*}_{-\infty} \phi_\gamma'(y) e^{-2(\gamma-1)y}\left[-a^2e^{2y}(\partial^2_x+\partial_x)v\right]dy\right\vert&\leq C_{\gamma}a^2  \int^{x_*}_{-\infty} \phi_\gamma'(y) e^{-2(\gamma-1)y}e^{2y}\frac{e^{(\gamma+2)y}}{1+e^{(\gamma+2)y}} dy\left\Vert v \right\Vert_{E^{2,x_*}_{-}}\\
&\leq C_{\gamma}a^2 \left\Vert v \right\Vert_{E^{2,x_*}_{-}},
\end{align*}
and 
\[
\left\vert \int^{x_*}_{-\infty}  \phi_\gamma'(y) e^{-2(\gamma-1)y} (c_{\gamma}a^2+\epsilon)\frac{\gamma \int_0^{1}\left(\phi_\gamma+s v\right)|\phi_\gamma+s v|^{\frac{4-\gamma}{\gamma-2}}ds }{\gamma-2}vdy\right\vert\leq C_{\gamma}(a^2+|\epsilon|) \left\Vert v \right\Vert_{E^{0,x_*}_{-}},
\]
as well as
\[
\left\vert \int^{x_*}_{-\infty} \phi_\gamma'(y) e^{-2(\gamma-1)y}\left[\frac{\int_0^1 \left[4(\gamma+2)|\phi_\gamma+sv|^{\frac{6-\gamma}{\gamma-2}}-2\gamma A_{\gamma} |\phi_\gamma+sv|^{\frac{4-\gamma}{\gamma-2}}\right](1-s)ds}{(\gamma-2)^2}  v^2\right]dy\right\vert \leq C_{\gamma} \left\Vert v \right\Vert_{E^{0,x_*}_{-}}^2
\]
which concludes the proof of the first estimate in \eqref{eq: est c1}. The Lipschitz regularity is estimated similarly, this is left to the reader.\\

\noi{\bf step 5} Closing the fixed point. We claim that for $v,w\in B(0,\epsilon_{\gamma})\subset E^{2,x_{*}}_{-}$,
\be
\label{eq: est T-}
\left|\bear
\left\Vert T^{-}_{\gamma}v \right\Vert_{E^{2,x_{*}}_{-}}\leq C_{\gamma}\left(a^2+|\epsilon| e^{\lambda_{+,\gamma}x_*} +a^2 e^{2x_*}\left\Vert v \right\Vert_{E^{2,x_{*}}_{-}}+\left\Vert v \right\Vert_{E^{0,x_{*}}_{-}}^2\right)\\
\left\Vert T^{-}_{\gamma}v- T^{-}_{\gamma}w \right\Vert_{E^{2,x_{*}}_{-}}\leq C_{\gamma}\left[a^2 e^{2x_*}+(\left\Vert v\right\Vert_{E^{2,x_{*}}_{-}}+\left\Vert w\right\Vert_{E^{2,x_{*}}_{-}})\right]\left\Vert v -w\right\Vert_{E^{2,x_{*}}_{-}},
\ear\right.
\ee
Assume \eqref{eq: est T-}, then given $\eps$ in the range \eqref{eq: eps bound}, a standard application of the Banach fixed point Theorem for all $a<a_*(\gamma)$ sufficiently small ensures the existence of a fixed point in a ball $B\left(0,C(\Lambda_*) a^2\right)$ of $E^{2,x_{*}}_-$ yielding the inner solution of Lemma \ref{inenrsotuon}. Moreover by construction, we estimate at $x=x_*$ using \eqref{ionoeinoe}, \eqref{finaneonoexp}, \eqref{esthtwto}, \eqref{eq: est c1}:
\bee
v(x_*)&=&-\epsilon \mathcal{P}^{x_*}_{\phi'_{\gamma}}\left(|\phi_\gamma|^{\frac{\gamma}{\gamma-2}}\right)+\left(R^{-}_{\gamma}-\mathcal{P}^{x_*}_{\phi'_{\gamma}}\right)\left[-a^2e^{2x}(\partial^2_x+\partial_x)\phi_{\gamma}-(c+\eps)|\phi_\gamma|^{\frac{\gamma}{\gamma-2}}\right]+O\left(a^2 a^2 e^{2x_*}\right)\\
&=& -\eps \left[\int^{+\infty}_{-\infty} \phi_\gamma'(y) e^{-2(\gamma-1)y}|\phi_\gamma|^{\frac{\gamma}{\gamma-2}}(y)dy\right]\left[\frac{1+O(e^{\lambda_{-,\gamma}x_*})}{2d_\gamma \left(\gamma-1-\lambda_{-,\gamma}\right)} e^{\lambda_{+,\gamma}  x_*}\right]\\
&-&\left[c_\gamma(x_*)+c_1(v)+\eps\right]\left[\frac{|\phi^+_\gamma|^{\frac{\gamma}{\gamma-2}}}{2 \left(\gamma-1-\lambda_{-,\gamma}\right)\left[\lambda_{+,\gamma}-2(\gamma-1)\right]}   +O\left(\la x\ra e^{\lambda_{-,\gamma} x_*}\right)\right]+O(a^2\eta_*)\\
&=& v_{1,\gamma}a^2-v_{2,\gamma}e^{\l_{+,\gamma}x_*}\eps+O\left(a^2\eta_*\right)
\eee
with $$\left|\bear
v_{1,\gamma}=\frac{c_\gamma(x_*)|\phi^+_\gamma|^{\frac{\gamma}{\gamma-2}}}{2 \left(\gamma-1-\lambda_{-,\gamma}\right)\left[\lambda_{+,\gamma}-2(\gamma-1)\right]}\\
v_{2,\gamma}=\frac{\int^{+\infty}_{-\infty} \phi_\gamma'(y) e^{-2(\gamma-1)y}|\phi_\gamma|^{\frac{\gamma}{\gamma-2}}(y)dy}{2d_\gamma \left(\gamma-1-\lambda_{-,\gamma}\right)}>0.
\ear\right.
$$
similarly for the derivative
\bee
\pa_xv(x_*)&=&-\eps \left[\int^{+\infty}_{-\infty} \phi_\gamma'(y) e^{-2(\gamma-1)y}|\phi_\gamma|^{\frac{\gamma}{\gamma-2}}(y)dy\right]\left[\frac{1+O(e^{\lambda_{-,\gamma}x_*})}{2d_\gamma \left(\gamma-1-\lambda_{-,\gamma}\right)} \l_{+,\gamma}e^{\lambda_{+,\gamma}  x_*}\right]\\
&+& O\left(\left[c_\gamma(x_*)+c_1(v)+\eps\right]\la x_*\ra e^{\lambda_{-,\gamma} x_*}\right)+O\left(a^2\eta_*\right)\\
&=& -\l_{+,\gamma}v_{2,\gamma}e^{\l_{+,\gamma}x_*}\eps+O\left(a^2\eta_*\right)
\eee
which concludes the proof of \eqref{eq: exp v x*}.\\

\noi{\em Proof of \eqref{eq: est T-}}. The choice of $c(v)$ ensures
$$
\left|\bear
T^{-}_{\gamma} v=F_1+F_2+L^{-}_{1}v+L^{-}_{2}(v)v+N_1(v)v^2\\
F_1(v)=\left(R^{-}_{\gamma}-\mathcal{P}^{x_*}_{\phi'_{\gamma}}\right)\left[-a^2e^{2x}(\partial^2_x+\partial_x)\phi_{\gamma}-(c(v)+\epsilon)|\phi_\gamma|^{\frac{\gamma}{\gamma-2}}\right]\\
F_2=\epsilon \mathcal{P}^{x_*}_{\phi'_{\gamma}}\left(|\phi_\gamma|^{\frac{\gamma}{\gamma-2}}\right)\\
L^{-}_{1}v=-a^2\left(R^{-}_{\gamma}-\mathcal{P}^{x_*}_{\phi'_{\gamma}}\right)\left[e^{2x}(\partial^2_x+\partial_x)v\right]\\ L^{-}_{2}(v)v=-\left[c_\gamma(x_*)+c_1(v)+\eps\right]\left(R^{-}_{\gamma}-\mathcal{P}^{x_*}_{\phi'_{\gamma}}\right)\left[\frac{\gamma \int_0^{1}\left(\phi_\gamma+s v\right)|\phi_\gamma+s v|^{\frac{4-\gamma}{\gamma-2}}ds }{\gamma-2} v\right]\\
N_1(v)v^2=\left(R^{-}_{\gamma}-\mathcal{P}^{x_*}_{\phi'_{\gamma}}\right)\left[\frac{\int_0^1 \left[4(\gamma+2)|\phi_\gamma+sv|^{\frac{6-\gamma}{\gamma-2}}-2\gamma A_{\gamma} |\phi_\gamma+sv|^{\frac{4-\gamma}{\gamma-2}}\right](1-s)ds}{(\gamma-2)^2}   v^2\right].
\ear\right.
$$
We estimate each term separately for $v,w\in B(0,\epsilon_{\gamma})\subset E^{2,x_{*}}_{-}$.\\

\noi\und{Forcing and $L^{-}_{1}$.} A simple application of \eqref{cointorissbis} yields:
$$\left|\bear
\left\Vert F_1(v) \right\Vert_{E^{2,x_*}_{-}}\leq C_{\gamma} a^2\left[1+e^{(2+\lambda_{-,\gamma})x_*}\right]\\
\left\Vert F_1(v)-F_1(w) \right\Vert_{E^{2,x_*}_{-}}\leq C_{\gamma}\left[\left(\left\Vert v \right\Vert_{E^{0,x_*}_{-}}+\left\Vert w \right\Vert_{E^{0,x_*}_{-}}\right)\left\Vert v-w \right\Vert_{E^{0,x_*}_{-}}+a^2 \left\Vert v-w \right\Vert_{E^{2,x_*}_{-}}\right]\\
\left\Vert F_2 \right\Vert_{E^{2,x_*}_{-}}\leq C_{\gamma} \epsilon e^{\lambda_{+,\gamma}x_*}\\
\left\Vert L^{-}_{1}v \right\Vert_{E^{2,x_{*}}_{-}}\leq C_{\gamma}a^2 e^{2x_*}\left\Vert v \right\Vert_{E^{2,x_{*}}_{-}}.
\ear\right.
$$
\noi\und{$L^{-}_{2}(v)$ term} Using \eqref{cointorissbis}, \eqref{eq: est c1} yields
\begin{align*}
\left\Vert L^{-}_{2}(v)v \right\Vert_{E^{2,x_{*}}_{-}}\leq C_{\gamma}|c(v)|\left\Vert v \right\Vert_{E^{0,x_{*}}_{-}}&\leq  C_{\gamma}\left(a^2+a^2 e^{2x_*}\left\Vert v \right\Vert_{E^{2,x_{*}}_{-}}+a^2 \left\Vert v \right\Vert_{E^{0,x_{*}}_{-}}+\left\Vert v \right\Vert_{E^{0,x_{*}}_{-}}^2\right)\left\Vert v \right\Vert_{E^{0,x_{*}}_{-}}\\
&\leq  C_{\gamma}\left(a^2 e^{2x_*}+\left\Vert v \right\Vert_{E^{0,x_{*}}_{-}}^2\right)\left\Vert v \right\Vert_{E^{0,x_{*}}_{-}}.
\end{align*}
Then 
\bee
&&\left[L^{-}_{2}(v)-L^{-}_{2}(w)\right]v=-\left[c(v)-c(w)\right]\left(R^{-}_{\gamma}-\mathcal{P}^{x_*}_{\phi'_{\gamma}}\right)\left[\frac{\gamma \int_0^{1}\left(\phi_\gamma+s v\right)|\phi_\gamma+s v|^{\frac{4-\gamma}{\gamma-2}}ds }{\gamma-2} v\right]\\
&+&c(w)\left(R^{-}_{\gamma}-\mathcal{P}^{x_*}_{\phi'_{\gamma}}\right)\left[\frac{\gamma \int_0^{1}\left(\phi_\gamma+s v\right)|\phi_\gamma+s v|^{\frac{4-\gamma}{\gamma-2}}-\left(\phi_\gamma+s w\right)|\phi_\gamma+s v|^{\frac{4-\gamma}{\gamma-2}}ds }{\gamma-2} w\right],
\eee
which using \eqref{eq: est c1} ensures
\begin{align*}
   &\left\Vert [c(v)-c(w)]\left(R^{-}_{\gamma}-\mathcal{P}^{x_*}_{\phi'_{\gamma}}\right)\left[\frac{\gamma \int_0^{1}\left(\phi_\gamma+s v\right)|\phi_\gamma+s v|^{\frac{4-\gamma}{\gamma-2}}ds }{\gamma-2} v\right]\right\Vert_{E^{2,x_*}_{-}} \\
   &\leq C_{\gamma}\left[\left(\left\Vert v \right\Vert_{E^{0,x_*}_{-}}+\left\Vert w \right\Vert_{E^{0,x_*}_{-}}\right)\left\Vert v-w \right\Vert_{E^{0,x_*}_{-}}+a^2 \left\Vert v-w \right\Vert_{E^{2,x_*}_{-}}\right)\left\Vert v \right\Vert_{E^{0,x_{*}}_{-}},
\end{align*}
and the smoothness of the function $x|x|^{\frac{4-\gamma}{\gamma-2}}$ gives
\begin{align*}
    &\left\Vert c(w)\left(R^{-}_{\gamma}-\mathcal{P}^{x_*}_{\phi'_{\gamma}}\right)\left[\frac{\gamma \int_0^{1}\left(\phi_\gamma+s v\right)|\phi_\gamma+s v|^{\frac{4-\gamma}{\gamma-2}}-\left(\phi_\gamma+s w\right)|\phi_\gamma+s w|^{\frac{4-\gamma}{\gamma-2}}ds }{\gamma-2} w\right]\right\Vert_{E^{2,x_*}_{-}}\\
    &\leq C_\gamma\left( a^2 \left\Vert w \right\Vert_{E^{2,x_*}_{-}}+\left\Vert w \right\Vert_{E^{0,x_*}_{-}}^2 \right)\left\Vert v-w \right\Vert_{E^{0,x_{*}}_{-}}.
\end{align*}
The collection of above bounds yields
\begin{align*}
\left\Vert \left[L^{-}_{2}(v)-L^{-}_{2}(w)\right]v \right\Vert_{E^{2,x_{*}}_{-}}&\leq C_{\gamma}\left[\left(\left\Vert v \right\Vert_{E^{0,x_*}_{-}}+\left\Vert w \right\Vert_{E^{0,x_*}_{-}}\right)\left\Vert v-w \right\Vert_{E^{0,x_*}_{-}}+a^2 \left\Vert v-w \right\Vert_{E^{2,x_*}_{-}}\right]\left\Vert v \right\Vert_{E^{0,x_{*}}_{-}}\\
&+C_\gamma\left( a^2 \left\Vert v \right\Vert_{E^{2,x_*}_{-}}+\left\Vert v \right\Vert_{E^{0,x_*}_{-}}^2 \right)\left\Vert v-w \right\Vert_{E^{0,x_{*}}_{-}}.
\end{align*}
\noi\und{$N_1(v)$} From \eqref{cointorissbis}:
\[
\left\Vert N_1(v)v^2 \right\Vert_{E^{2,x_{*}}_{-}}\leq C_{\gamma}\left\Vert v \right\Vert_{E^{0,x_{*}}_{-}}^2,
\]
and the smoothness of the functions $x|x|^{\frac{4-\gamma}{\gamma-2}}$ and $x|x|^{\frac{6-\gamma}{\gamma-2}}$ ensures
\[\left\Vert N_1(v)v^2-N_1(w)w^2 \right\Vert_{E^{2,x_{*}}_{-}}\leq C_{\gamma}(\left\Vert v \right\Vert_{E^{0,x_{*}}_{-}}+\left\Vert w \right\Vert_{E^{0,x_{*}}_{-}})\left\Vert v -w\right\Vert_{E^{0,x_{*}}_{-}}.
\]
which concludes the proof of \eqref{eq: est T-}.
\end{proof}


\subsection{Trapping near $\phi_\gamma^+$}


We conclude the proof of Proposition \ref{thm: exist of sol} by showing that all inner solutions of Lemma \ref{inenrsotuon} remains trapped in a tube near $\phi_{\gamma,+}$, and a special choice of $\eps(a)$ ensures a cancellation which produces the asymptotic \eqref{ieononveoompmev} near $+\infty$. Let us insist that the full stabilization mechanism is created by the $a$ deformation of the equation which pushes for us.

\begin{lemma}[Trapping of the inner solution]
\label{elnannn}
There exists $\eps(\gamma,a)$ in the range \eqref{eq: eps bound} such that the inner solution of Lemma \ref{elnannn} can be continued as a global solution to \eqref{eq: closed ODE} with the asymptotic behaviour \eqref{ieononveoompmev}.
\end{lemma}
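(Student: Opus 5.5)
The plan is to analyse \eqref{eq: closed ODE} on $[x_*,+\infty)$ in the variable $\rho=ae^{x}\ge \eta_*$. In this variable the $a^2e^{2x}(\pa_x^2+\pa_x)$ term is the dominant, highest order part, while the Emden terms are lower order. Dividing by $\rho^2$ and writing $\pa_x=\rho\pa_\rho$, the equation becomes
\[
(\rho^2+1)\rho^2\pa_\rho^2\phi+\left[\rho^2+2-2\gamma+1\right]\rho\pa_\rho\phi+\gamma(\gamma-2)\phi=|\phi|^{\frac{\gamma+2}{\gamma-2}}-A|\phi|^{\frac{\gamma}{\gamma-2}}.
\]
For $\rho\to+\infty$ this equation is regular singular at $\rho=\infty$. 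In the variable $t=\rho^{-2}=a^{-2}e^{-2x}$, the leading operator is $\rho^2(\pa_\rho^2+\rho^{-1}\pa_\rho)$, so the indicial roots at $\rho=\infty$ are $0$ (double), i.e. the solutions behave like constants and $\log\rho$. We linearise around the constant $\phi^+(A)$, the root of $F(\phi)=0$ from \eqref{venonoenoeovnvjeojejioe} with $A$ in place of $A_\gamma$. The linear operator is then $(\rho^2+1)\rho^2\pa_\rho^2+(\rho^2+3-2\gamma)\rho\pa_\rho-2(\gamma-1)F'(\phi^+)$. Its two homogeneous solutions at $\infty$ are one regular solution, analytic in $\rho^{-2}$, and one solution carrying a $\log\rho$ (the coefficient of $\rho^{-1}\pa_\rho$ at infinity is $1$). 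The smooth, $e^{-2jx}$-expanded solutions required by \eqref{ieononveoompmev} form, after adding the nonlinear terms, a one-parameter family parametrised by the limit value $\phi^+_{\gamma,a}$. This follows by a standard Frobenius/Banach fixed point in powers of $t$ on $t\in(0,\eta_*^{-2}]$, since the nonlinearity is smooth near $\phi^+>0$. The $\log$ mode is the obstruction, which is exactly the cancellation announced in the remark.

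Step 1 is the trapping. For every $\eps$ in \eqref{eq: eps bound}, I will propagate the inner solution of Lemma \ref{inenrsotuon} from $x_*$ forward. Its data at $x_*$ satisfy $v=O(\Lambda_*a^2)$ and $\pa_xv=O(\Lambda_* a^2)$, so it starts in an $O(\Lambda_*a^2)$ tube around $\phi_\gamma\approx\phi_\gamma^+$. I will show it stays in a tube of size $C\Lambda_* a^2$ for all $x\ge x_*$. On the transition zone $\rho\in[\eta_*,\eta_*^{-1}]$, I will use an energy/Lyapunov functional for the linearised flow. The friction coefficient $-2(\gamma-1)+\rho^2$ and the stiffness $2(\gamma-1)|F'(\phi^+)|\gg1$ (from $K_\gamma\to\infty$) prevent exit, given that the unstable mode $e^{\l_{+}x}$ at $x=x_*$ has amplitude $\eps e^{\l_+x_*}\lesssim\Lambda_*a^2$. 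For $\rho\ge\eta_*^{-1}$, the equation is a small perturbation of $\pa_x^2\phi+\pa_x\phi=\rho^{-2}(\dots)$, which is damped and integrable, and this gives a global solution with $\phi\to\phi^+_{\gamma,a}$. The solution generically has the expansion constant $+\kappa\, x e^{-2x}$ plus the $e^{-2jx}$ series, with the constant near $\phi^+_\gamma$ since $A=A_\gamma+O(a^2)$.

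Step 2 is the shooting in $\eps$. The coefficient $\kappa(\eps)$ of the $\log\rho$ (equivalently $x e^{-2x}$) mode is a continuous function of $\eps$. I will match at $x=x_*$ the inner data \eqref{eq: exp v x*} with the two-dimensional family of outer solutions on $[x_*,\infty)$, computed by the regular fixed point in $t$ together with the linear $\log$ mode. Matching shows $\kappa(\eps)=\kappa_0 a^2\left[v_{1,\gamma}-v_{2,\gamma}e^{\l_+x_*}\eps/a^2\right]+O(a^2\eta_*)$ up to a nonzero factor. By the remark following Lemma \ref{inenrsotuon}, at the endpoints $\eps e^{\l_+x_*}=\pm\Lambda_*a^2$ the linear term dominates, so $\kappa$ changes sign, and the intermediate value theorem gives $\eps(\gamma,a)$ with $\kappa=0$. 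For this $\eps$ the solution belongs to the regular family, so \eqref{ieononveoompmev} holds to all orders. Positivity follows from trapping, and smoothness from the ODE.

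The main obstacle is Step 1 on the transition zone $\rho\sim1$. There neither the Emden dynamics nor the $a$-terms dominate, and one needs uniform-in-$a$ control of the linearised flow with the variable coefficient $(\rho^2+1)$. I would first try a weighted energy $E=(\rho^2+1)(\pa_x v)^2+2(\gamma-1)|F'(\phi^+)|v^2$ and check that $\frac{dE}{dx}\le CE$ on $\rho\in[\eta_*,\eta_*^{-1}]$, a zone of length $O(|\log\eta_*|)$. This loses only a factor $\eta_*^{-C}$, which is absorbed by choosing $a_*(\eta_*,\Lambda_*)$ small. Continuity of $\kappa$ in $\eps$ follows from this same estimate.
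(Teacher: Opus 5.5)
Your overall plan matches the paper's: continue the inner solution past $x_*$, show it stays near $\phi^+_\gamma$, then vary $\epsilon$ over \eqref{eq: eps bound} and use the dominance of the $\epsilon$-linear term at the endpoints to cancel a tail coefficient. However, you shoot on the wrong coefficient, because of a slip in the change of variables. In $\rho=ae^x$ one has $a^2e^{2x}(\partial_x^2+\partial_x)=\rho^4\partial_\rho^2+2\rho^3\partial_\rho$, so the coefficient of $\rho\partial_\rho$ is $2\rho^2+3-2\gamma$, not $\rho^2+3-2\gamma$. With $X=\log\rho=x-\log(1/a)$, the operator at infinity is $\partial_X^2+\partial_X$. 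Its indicial roots are $0$ and $-1$ (in $t=\rho^{-2}$: $0$ and $\tfrac12$, so no logarithms), and the two modes are $1$ and $e^{-X}$. There is no $\log\rho$ mode and no $xe^{-2x}$ term. A trapped solution generically has $w=w_\infty+M_\infty e^{-X}+O(e^{-2X})$. The $e^{-X}$ tail is what $\epsilon$ must cancel: it produces odd powers of $Z/r$, which is exactly what destroys smoothness through $z=0$. Your version is also internally inconsistent: a $\log\rho$ mode is unbounded, so your trapping step would already rule it out and the shooting would have nothing to do.

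Second, you assert the sign change of the shooting quantity but do not derive it, and your tools cannot produce it. The $\epsilon$-dependence of $M_\infty$ comes from transporting the data at $X_*=\log\eta_*$ across $\rho\sim1$, which amplifies it by $e^{-\lambda_{+,\gamma}X_*}=\eta_*^{-\lambda_{+,\gamma}}$. A Gronwall bound on your $E$ must lose at least this factor, since the unstable mode really grows like $e^{\lambda_{+,\gamma}X}$ there. It therefore bounds errors only at the size of the main term and says nothing about its sign. Your Frobenius construction in $t$ also does not reach $t=\eta_*^{-2}$: the factor $1+t$ in front of $\partial_x^2$ limits the radius of convergence to $t<1$.

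The missing idea is that in the variable $X$ the linearization at $\phi^+_\gamma$ is an $a$-independent operator $L^+_\gamma$, so the transition zone is a fixed ODE with an explicit fundamental basis:
\begin{itemize}
\item $f_1\sim f_{1,\gamma}e^{\lambda_{-,\gamma}X}$ at $-\infty$ and $f_1\sim e^{-X}$ at $+\infty$;
\item $f_2\sim f_{2,\gamma}e^{\lambda_{+,\gamma}X}$ at $-\infty$ and $f_2=1-f_{3,\gamma}e^{-X}+O(e^{-2X})$ at $+\infty$;
\item $f_{3,\gamma}>0$, which follows from the repulsivity $V_+>0$.
\end{itemize}
Variation of constants then gives $M_\infty=K_1^*-f_{3,\gamma}K_2^*-\int_{X_*}^{+\infty}(f_2+f_{3,\gamma}f_1)G\rho\,dY$. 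The $\epsilon$-term enters through $K_2^*$ with the nonzero coefficient $f_{3,\gamma}f_{1,\gamma}(\lambda_{+,\gamma}+|\lambda_{-,\gamma}|)v_{2,\gamma}\eta_*^{-\lambda_{+,\gamma}}e^{\lambda_{+,\gamma}x_*}$. Every other contribution is bounded by $C(\eta_*)a^2$ with $C(\eta_*)$ independent of $\Lambda_*$. This is what forces the sign change once $\Lambda_*$ is large, and your proposal would need to supply this step.
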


\begin{proof}[Proof of Lemma \ref{elnannn}] The key is the structure of the $a$ independent linearized operator near $\phi_{\gamma}^+$ which nicely enough has an explicit scattering structure thanks to the repulsivity of the potential.\\

\noi{\bf step 1} The linearized flow. We solve \eqref{eq: closed ODE} from $x_*$. We look for the solution in the form $$\phi=\phi^+_{\gamma}+w$$ so that $w$ solves 
\bee
   &&\partial^2_x w -2(\gamma -1)\partial_xw+\gamma(\gamma-2) w+a^2e^{2x}(\partial^2_x+\partial_x)w-\frac{\gamma+2}{\gamma-2}\phi^+_{\gamma}|\phi^+_{\gamma}|^{\frac{6-\gamma}{\gamma-2}}w+\frac{A_{\gamma}\gamma}{\gamma-2}\phi^+_{\gamma}|\phi^+_{\gamma}|^{\frac{4-\gamma}{\gamma-2}}w\\
   &=&-(c(a,x_*)+\epsilon)|\phi^+_{\gamma}+w|^{\frac{\gamma}{\gamma-2}}\\
   &+&\frac{ \int_0^1 [4(\gamma+2)|\phi^+_{\gamma}+sw|^{\frac{6-\gamma}{\gamma-2}}-2\gamma \left[A_{\gamma}+c(a,x_*)\right] |\phi^+_{\gamma}+sw|^{\frac{4-\gamma}{\gamma-2}}](1-s)ds}{(\gamma-2)^2}w^2.
\eee
Let$$\left|\bear
X=x-\log\left(\frac{1}{a}\right)\LR a^2e^{2x}=e^{2X}\\
X\in [X_*,+\infty), \ \ X_*=\log(\eta_*)\ll -1
\ear\right.
$$ where we used \eqref{conosnmslalnes},  this is equivalently:
\begin{equation}
\label{eq: l plus mu}
\left|\bear
L^{+}_{\gamma}w=G\\
G=N_1^+(w)+N_2^+(w)w^2\\
L^{+}_{\gamma}w=\partial^2_X w +\left[1-\frac{1+2(\gamma -1)}{1+e^{2X}}\right]\partial_X w-\frac{V_+}{1+e^{2X}}w\\
V_+=-\left[\gamma(\gamma-2)-\frac{\gamma+2}{\gamma-2}\phi_{\gamma}^+|\phi^+_\gamma|^{\frac{6-\gamma}{\gamma-2}}+\frac{A_{\gamma}\gamma}{\gamma-2}\phi^+_\gamma|\phi^+_\gamma|^{\frac{4-\gamma}{\gamma-2}}\right]\\
N_1^+(w)=-\frac{\left[c(a,x_*)+\epsilon\right]|\phi^+_{\gamma}+w|^{\frac{\gamma}{\gamma-2}}}{1+e^{2X}}\\
N_2^+(w)w^2=\frac{ \int_0^1 \left[4(\gamma+2)|\phi^+_{\gamma}+sw|^{\frac{6-\gamma}{\gamma-2}}-2\gamma \left[A_{\gamma}+c(a,x_*)+\epsilon\right]|\phi^+_{\gamma}+sw|^{\frac{4-\gamma}{\gamma-2}}\right](1-s)ds}{(\gamma-2)^2(1+e^{2X})}w^2.
\ear\right.
\end{equation}
The boundary condition at $X_*$ generated by the inner solution satisfies from \eqref{eq: exp v x*}, \eqref{waiofneonei}:
\be
\label{esigonionvnoenione}
\left|\bear
   w(X_*)=v_{1,\gamma}a^2-v_{2,\gamma}e^{\l_{+,\gamma}x_*}\eps+O\left(a^2\eta_*\right)\\
    \partial_X w(X_*)=-\l_{+,\gamma}v_{2,\gamma}e^{\l_{+,\gamma}x_*}\eps+O\left(a^2\eta_*\right).
\ear\right.
\ee

\noi{\bf step 2} Fundamental basis. We claim that the $a$ independent linearized operator $L_{\gamma}^+$ admits a basis of fundamental solutions $f_1(X),f_2(X)$ with 
\be
\label{eq: exp f1}
    f_1(X)=\left|\bear
    f_{1,\gamma}e^{\lambda_{-,\gamma}X}+O_{X\to-\infty}\left[e^{(2+\lambda_{-,\gamma})X}\right], \ \ f_{1,\gamma}>0\\
    \left[1+O_{X\to +\infty}(e^{-2X})\right]e^{-X}
    \ear\right.
\ee
\be
\label{eq: exp f2}
    f_2(X)=\left|\bear
    f_{2,\gamma} e^{\lambda_{+,\gamma}X}+O_{X\to-\infty}\left[e^{(2+\lambda_{+,\gamma})X}\right], \ \ f_{2,\gamma} > 0\\
    1 -f_{3,\gamma}e^{-X}+O(e^{-2X}), \ \ f_{3,\gamma}>0
    \ear\right. 
\ee
and similarly for derivatives.\\

\noi{\em Proof of \eqref{eq: exp f1}, \eqref{eq: exp f2}}. Let
$$
\left|\bear \rho=e^{X} \left(1+e^{-2X}\right)^{\frac{1+2(\gamma -1)}{2}}\\
\frac{\pa_X\rho}{\rho}=1-\frac{1+2(\gamma -1)}{2}\frac{2e^{-2X}}{1+e^{-2X}}=1-\frac{1+2(\gamma -1)}{1+e^{2X}}
\ear\right.
$$
we recall \eqref{fenoinoenineovnoive} and freeze the Wronskian 
\be
\label{vneinoenvionioveninoevoenv}
\frac{\pa_XW}{W}=-\left[1-\frac{1+2(\gamma -1)}{1+e^{2X}}\right]\Leftarrow W=\frac1{\rho}= e^{-X} \left(1+e^{-2X}\right)^{-\frac{1+2(\gamma -1)}{2}}.
\ee

\noi\und{Basis at $\pm \infty$}. An elementary perturbation argument ensures the existence of two basis of fundamental solutions with the asymptotic behaviour \eqref{eq: exp f1}, \eqref{eq: exp f2} as $X\to -\infty$.\\

\noi\und{Conjuguation.} We integrate the homogeneous equation:
\bea
\label{heioonenioe}
 L^{+}_{\gamma}f=0&\LR&
\frac{1}{\rho}\partial_X\left(\rho\partial_X f\right)-\frac{V_+f}{1+e^{2X}}=0\LR\partial_X\left(\rho\partial_X f\right)f=\frac{V_+\rho }{1+e^{2X}}f^2\\
\nonumber&\LR&
\partial_X\left(\rho f \partial_X f \right)=\rho\left(\partial_X f\right)^2+\frac{V_+\rho f^2}{1+e^{2X}}.
\eea
Let $g(\phi)=\gamma(\gamma-2) \phi-|\phi|^{\frac{\gamma+2}{\gamma-2}}+A_{\gamma}|\phi|^{\frac{\gamma}{\gamma-2}}$, then  by definition $\phi^{+}_{\gamma}$ is the unique positive solution of $g(\phi)=0$ and $\lim_{\phi\to+\infty}g(\phi)=-\infty$, and hence $g'(\phi^{+}_{\gamma})<0$ which yields
\be
\label{dinoenonnv}
V_+=-\left[\gamma(\gamma-2) -\frac{\gamma+2}{\gamma-2}\phi^+_\gamma|\phi^+_\gamma|^{\frac{6-\gamma}{\gamma-2}}+\frac{A_{\gamma}\gamma}{\gamma-2}\phi_\gamma^+|\phi^+_\gamma|^{\frac{4-\gamma}{\gamma-2}}\right]=-g'(\phi^{+}_{\gamma})>0.
\ee

\noi\und{$f_1$ solution}. We integrate \eqref{heioonenioe} from $+\infty$:
\be
\label{veinonveoinnevnoev}
 \partial_X\left(\rho\partial_X f_1\right)=\frac{V_+\rho f_1}{1+e^{2X}}\Leftarrow \left|\bear
\pa_Xf_1=-\frac{1}{\rho}\left(1+\int_X^{+\infty}\frac{V_+\rho f_1}{1+e^{2Y}}dY\right)\\
f_1=-\int_X^{+\infty}\pa_X f_1 dY.
\ear\right.
\ee
An elementary fixed point argument yields the existence of a solution
with the behaviour $$f_1(X)=\left[1+O_{X\to +\infty}(e^{-2X})\right]e^{-X}.$$ The repulsivity of the potential \eqref{dinoenonnv} and an elementary bootstrap from $+\infty$ ensure $f_1>0, f'_1<0$ which forces the asymptotic behaviour on the left 
\be
\label{vionievnoinieveniov}
f_1(X)=f_1^{\gamma}\left[1+O_{X\to-\infty}(e^{2X})\right] e^{\lambda_{-,\gamma}X}, \ \ f_1^{\gamma}>0.
\ee

\noi\und{$f_2$} We enforce the normalization \eqref{vneinoenvionioveninoevoenv}:
\bee
&&{\rm Wr}(f_1,f_2)=\frac{1}{\rho}\LR\left(\frac{f_2}{f_1}\right)'=\frac{ e^{-X} \left(1+e^{-2X}\right)^{-\frac{1+2(\gamma -1)}{2}}}{f_1^2}\\
&\Leftarrow& f_2=  f_1\int_{-\infty}^X\frac{e^{-Y} \left(1+e^{-2Y}\right)^{-\frac{1+2(\gamma -1)}{2}}}{f_1^2}dY.
\eee
Recalling \eqref{febionoienionoef}, the behaviour \eqref{eq: exp f2} near $-\infty$ follows from \eqref{vionievnoinieveniov}. We now study the asymptotic behaviour near $+\infty$.  Let $$\left|\bear
H_1=\int_X^{+\infty}\frac{V_+ \rho f_1}{1+e^{2Y}}dY>0\\
H_1=O_{X\to +\infty}(e^{-2X})
\ear\right.
$$ then from \eqref{veinonveoinnevnoev}
$$\pa_Xf_1=-\frac{1+H_1}{\rho}\LR\frac{1}{\rho}=-\pa_Xf_1-\frac{H_1}{\rho}$$ so that
  \bee
 f_2&=& f_1\int_{-\infty}^X\frac{e^{-Y} \left(1+e^{-2Y}\right)^{-\frac{1+2(\gamma -1)}{2}}}{f_1^2}dY= f_1\int_{-\infty}^X\frac{1}{\rho f_1^2}dY\\
 &=&  f_1\int_{-\infty}^X\frac{1}{f_1^2}\left(-\pa_Xf_1-\frac{H_1}{\rho}\right)dY= 1-f_1\int_{-\infty}^X\frac{H_1}{\rho f_1^2}dY
\eee
which yields the expansion \eqref{eq: exp f2} with $f_{3,\gamma}=\int_{-\infty}^{+\infty}\frac{H_1}{\rho f_1^2}dY>0.$\\

\noi{\bf step 3} Resolvent estimate. From \eqref{variontocnstante}, the resolvent of $L^+_{\gamma}$ with data at $X_*$ is 
$$\left|\bear
R^+_{\gamma}(X)=R^+_{hom,\gamma}f(X)+R^+_{inh,\gamma}f(X)\\
R^+_{hom,\gamma}f(X)=K^*_1f_1+K^*_2f_2\\
R^+_{inh,\gamma}f(X)=\int^{X}_{X_*}\left[f_1(Y)f_2(X)-f_1(X)f_2(Y)\right]f(Y)e^{Y} \left(1+e^{-2Y}\right)^{\frac{1+2(\gamma -1)}{2}}dY
\ear\right.
$$
where $(K_1^*, K_2^*)$ solve
\be
\label{enoeonoelkstar}
\left|\bear
    K_1^*f_1(X_*)+K_2^*f_2(X_*)=f(X_*)\\
    K_1^*f'_1(X_*)+K_2^*f'_2(X_*)=f'(X_*)
    \ear\right.\LR
\left|\bear K_1^*=\frac{f(X_*)f'_2(X_*)-f'(X_*)f_2(X_*)}{W(X_*)}\\
K_2^*=\frac{f_1(X_*)f'(X_*)-f_1'(X_*)f(X_*)}{W(X_*)}.
\ear\right.
\ee
We introduce the space $E^{0,X_*}_{+}$ defined by the norm
   \[
   \left\Vert f  \right\Vert_{E^{0,X_*}_{+}}=\left\Vert (1+e^{2X})  f  \right\Vert_{L^\infty([X_*,+\infty))},
   \]
   and the spaces $F^{0,X_*}_{+}$ and $F^{1,X_*}_{+}$ defined by 
  $$\left|\bear
     \left\Vert f  \right\Vert_{F^{0,X_*}_{+}}=\left\Vert   f  \right\Vert_{L^\infty([X_*,+\infty))}\\
   \left\Vert f  \right\Vert_{F^{1,X_*}_{+}}=\left\Vert   f  \right\Vert_{L^\infty([X_*,+\infty))}+\left\Vert (1+e^{X}) \partial_X f  \right\Vert_{L^\infty([X_*,+\infty))}.
   \ear\right.
   $$
   We claim 
   \be
\label{gionioneonenog}
\left|\bear
|K_1^*|\le  \eta_* a^2\\
K_2^*=f_{1,\gamma}e^{-\l_{+,\gamma}X_*}\left[1+O(\eta_*)\right]\left[\l_{-,\gamma}v_{1,\gamma}a^2-\left(\l_{+,\gamma}-\l_{-,\gamma}\right)v_{2,\gamma}\left[1+O(\eta_*)\right]e^{\l_{+,\gamma}x_*}\eps+O\left(a^2\eta_*\right)\right]\\
\ear\right.
\ee
and the resolvent estimates
   \be
   \label{resoestimatebis}
   \left|\bear
   \left\Vert R^+_{inh,\gamma} f\right\Vert_{F^{1,X_*}_{+}}\leq C_{\gamma} \left\Vert f  \right\Vert_{E^{0,X_*}_{+}}\\
   \left\Vert R^+_{hom,\gamma} w\right\Vert_{F^{1,X_*}_{+}}\leq C_\gamma (1+\Lambda_*)a^2.
   \ear\right.
    \ee

 \noi{\em Proof of \eqref{gionioneonenog}}. We compute
$$W(X_*)=e^{-X_*} \left(1+e^{-2X_*}\right)^{-\frac{1+2(\gamma -1)}{2}}=e^{-X_*}e^{\left[1+2(\gamma -1)\right]X_*}\left[1+O(e^{2X_*})\right]=e^{2(\gamma-1)X_*}\left[1+O(\eta_*)\right].$$

From \eqref{enoeonoelkstar}, \eqref{eq: exp f2}, \eqref{esigonionvnoenione}:
\bee
\left|K_1^*\right|&=&\left|\frac{w(X_*)f'_2(X_*)-w'(X_*)f_2(X_*)}{W(X_*)}=\frac{f_2(X_*)}{W(X_*)}\left[\frac{f'_2(X_*)}{f_2(X_*)}w(X_*)-w'(X_*)\right]\right|\\
&=&\Bigg|\frac{f_2(X_*)}{W(X_*)}\Bigg\{\left[v_{1,\gamma}a^2-v_{2,\gamma}e^{\l_{+,\gamma}x_*}\eps+O\left(a^2\eta_*\right)\right]\left[\l_{+,\gamma}+O(\eta_*)\right]\\
&-&\left[-\l_{+,\gamma}v_{2,\gamma}e^{\l_{+,\gamma}x_*}\eps+O\left(a^2\eta_*\right)\right]\Bigg\}\Bigg|\\
&=& \Bigg|\frac{f_2(X_*)}{W(X_*)}\Bigg\{\left[v_{1,\gamma}a^2-v_{2,\gamma}e^{\l_{+,\gamma}x_*}\eps+O\left(a^2\eta_*\right)\right]\left[\l_{+,\gamma}+O(\eta_*)\right]\\
&\lesssim & C_\gamma e^{\left[\l_+-2(\gamma-1)\right]X_*}a^2\leq \eta_* a^2.
\eee
similarly,
\bee
K_2^*&=& \frac{f_1(X_*)w'(X_*)-f_1'(X_*)w(X_*)}{W(X_*)}=\frac{f_1(X_*)}{W(X_*)}\left[w'(X_*)-\frac{f_1'(X_*)}{f_1(X_*)}w(X_*)\right]\\
&=& \frac{f_1(X_*)}{W(X_*)}\Bigg\{-\l_{+,\gamma}v_{2,\gamma}e^{\l_{+,\gamma}x_*}\eps+O\left(a^2\eta_*\right)\\
&-& \l_{-,\gamma}\left[1+O(e^{2X_*})\right]\left[v_{1,\gamma}a^2-v_{2,\gamma}e^{\l_{+,\gamma}x_*}\eps+O\left(a^2\eta_*\right)\right]\Bigg\}\\
&=& \frac{f_{1,\gamma}e^{\l_{-,\gamma}X_*}\left[1+O(e^{2X_*})\right]}{e^{2(\gamma-1)X_*}\left[1+O(e^{2X_*})\right]}\left[-\l_{-,\gamma}v_{1,\gamma}a^2-\left(\l_{+,\gamma}-\l_{-,\gamma}\right)v_{2,\gamma}\left[1+O(\eta_*)\right]e^{\l_{+,\gamma}x_*}\eps+O\left(a^2\eta_*\right)\right]\\
&=& f_{1,\gamma}e^{-\l_{+,\gamma}X_*}\left[1+O(\eta_*)\right]\left[-\l_{-,\gamma}v_{1,\gamma}a^2-\left(\l_{+,\gamma}-\l_{-,\gamma}\right)v_{2,\gamma}\left[1+O(\eta_*)\right]e^{\l_{+,\gamma}x_*}\eps+O\left(a^2\eta_*\right)\right]
\eee
which concludes the proof of \eqref{gionioneonenog}.\\

\noi{\em Proof of \eqref{resoestimatebis}.}   Assume wlog $\left\Vert f  \right\Vert_{E^{0,X_*}_{+}}=1$. We first estimate $R^+_{inh,\gamma} f$ in the sup norm. For $X\le 0$:
 $$
 \left|\bear
 \left|f_2(X)\int^{X}_{X_*}\frac{f_1(y)}{1+ e^{2y}}e^{y} \left(1+e^{-2y}\right)^{\frac{1+2(\gamma -1)}{2}}dy\right|\lesssim e^{\l_{+,\gamma}X}\int^{X}_{X_*}e^{\l_{-,\gamma}y} e^{-2(\gamma-1)y}dy\leq  C_\gamma \\
 \left|f_1(X)\int^{X}_{X_*}\frac{f_2(y)}{1+ e^{2y}}e^{y} \left(1+e^{-2y}\right)^{\frac{1+2(\gamma -1)}{2}}dy\right|\lesssim e^{\l_{-,\gamma}X}\int_{X_*}^Xe^{\l_{+,\gamma}y}e^{-2(\gamma-1)y}dy\leq C_\gamma,
 \ear\right.
 $$
and for $X\ge 0$:
 $$\left|\bear
 \left|f_2(X)\int^{X}_{X_*}\frac{f_1(y)}{1+ e^{2y}}e^{y} \left(1+e^{-2y}\right)^{\frac{1+2(\gamma -1)}{2}}dy\right|\leq C_\gamma\left[1+\int^{0}_{X}\frac{e^{-y}e^y}{1+e^{2y}} \left(1+e^{-2y}\right)^{\frac{1+2(\gamma -1)}{2}}dy\right]\leq C_\gamma\\
  \left|f_1(X)\int^{X}_{X_*}\frac{f_2(y)}{1+ e^{2y}}e^{y} \left(1+e^{-2y}\right)^{\frac{1+2(\gamma -1)}{2}}dy\right|\leq C_\gamma\left(1+e^{-x}\int_0^Xe^{-y}dy\right)\le C_\gamma
 \ear\right.
 $$
 where we recall that $C_\gamma$ depends on $\eta_*$. The decay of derivatives follows by writing  
 \bee
\partial_X R^+_{inh,\gamma}f(X)&=&\frac{f'_2(X)}{W_{\gamma}}\int^{X}_{X_*}f_1(Y)f(Y)e^{Y} \left(1+e^{-2Y}\right)^{\frac{1+2(\gamma -1)}{2}}dY\\
&-&\frac{f'_1(X)}{W_{\gamma}}\int^{X}_{X_*}f_2(Y)f(Y)e^{Y} \left(1+e^{-2Y}\right)^{\frac{1+2(\gamma -1)}{2}}dY
\eee
and using similarly \eqref{eq: exp f1}, \eqref{eq: exp f2}. From \eqref{gionioneonenog}, \eqref{eq: exp f1}, \eqref{eq: exp f2}, $$ \left\Vert R^+_{hom,\gamma} w\right\Vert_{F^{1,X_*}_{+}}\leq C_\gamma(1+\Lambda_*) a^2.$$

\noi{\bf step 5} Closing the fixed point. We solve \eqref{eq: l plus mu} as the fixed point problem in a ball $B(0,C(\Lambda_*) a^2)$ of $F^{0,x_*}_{+}$ for the map
$$
T_{\gamma}^+w=R^+_{hom,\gamma}+R^+_{inh,\gamma}\left[N_1^{+}(w)+N_2^{+}(w)w^2\right],
$$
We estimate in brute force recalling $\eps\ll a^2$:
$$\left|\bear
\left\Vert N^{+}_{1}(w)\right\Vert_{E^{0,x_*}_+}\leq C_{\gamma} (a^2+|\epsilon|) \left(1+\left\Vert w \right\Vert_{F^+_{0,x_*}}\right)\le C_{\gamma} a^2\left(1+\left\Vert w \right\Vert_{F^+_{0,x_*}}\right)\\
\left\Vert N^{+}_{1}(w)-N^{+}_{1}(w')\right\Vert_{E^{0,x_*}_+}\leq C_{\gamma} a^2 \left\Vert w-w' \right\Vert_{F^+_{0,x_*}}
\ear\right.
$$
and
$$\left|\bear
\left\Vert N_2^{+}(w)w^2\right\Vert_{E^{0,x_*}_+}\leq C_{\gamma} \left\Vert w  \right\Vert_{E^{0,x_*}_{+}}^2\\
  \left\Vert \left[N_2^{+}(w)-N_2^{+}(w')\right]w^2\right\Vert_{E^{0,x_*}_+}\leq C_{\gamma} \left\Vert w-w'  \right\Vert_{E^{0,x_*}_{+}} \left\Vert w  \right\Vert_{E^{0,x_*}_{+}}^2.
\ear\right.
$$
We conclude from \eqref{resoestimatebis}:
\[\left\Vert  T^{+}_{\gamma}w \right\Vert_{F^{1,x_*}_{+}}\leq C_{\gamma}\left(a^2+a^2\left\Vert w  \right\Vert_{E^{0,x_*}_{+}}+\left\Vert w  \right\Vert_{E^{0,x_*}_{+}}^2\right).\]
and 
\[\left\Vert  T^{+}_{\gamma}w -T^{+}_{\gamma}w'\right\Vert_{F^{1,x_*}_{+}}\leq C_{\gamma}\left(a^2 +\left\Vert w  \right\Vert_{E^{0,x_*}_{+}}+\left\Vert w'  \right\Vert_{E^{0,x_*}_{+}}\right)\left\Vert w-w'  \right\Vert_{E^{0,x_*}_{+}},\]
the result then follows by standard Banach fixed point in $B\left(0,C(\Lamdba_*)a^2\right)$ of $F^{0,x_*}_{+}$. The smoothness of the solution globally follows directly from standard elliptic regularity theory.\\

\noi{\bf step 6} Asympotic behaviour at $+\infty$ and choice of $\eps$.\\

\noi\und{Expansion at $+\infty$}.  Recall $G=N_1^{+}(w)+N_2^{+}(w)w^2$ so that using \eqref{eq: exp f1}, \eqref{eq: exp f2}:
\bee
w(x)&=&K^*_1f_1+K^*_2f_2+\int^{X}_{X_*}\left[f_1(Y)f_2(X)-f_1(X)f_2(Y)\right]G(Y)\rho dY\\
&=& \left[K^*_1-\int^{X}_{X_*}f_2G\rho dY\right]f_1+\left[K^*_2+\int^{X}_{X_*}f_1G\rho dY\right]f_2\\
&=& \left[K^*_1-\int_{X_*}^{+\infty}f_2G\rho dY+\int_X^{+\infty}f_2G\rho dY\right] \left[1+O_{X\to +\infty}(e^{-2X})\right]e^{-X}\\
&+&\left[K^*_2+\int_{X_*}^{+\infty}f_1G\rho dY-\int_{X}^{+\infty}f_1G\rho dY\right]\left[1 -f_{3,\gamma}e^{-X}+O(e^{-2X})\right]\\
&=& w_\infty+M_\infty e^{-X}+O\left(e^{-2X}\right)
\eee
with
$$w_{\infty}=K^*_2+\int_{X_*}^{+\infty}f_1G\rho dY$$ and 
\bee
M_\infty&=&K^*_1-\int^{+\infty}_{X_*}f_2G\rho dY-f_{3,\gamma}\left(K^*_2+\int^{+\infty}_{X_*}f_1G\rho dY\right)\\
& =& K^*_1-f_{3,\gamma} K^*_2-\int_{X_*}^{+\infty}(f_2+f_{3,\gamma}f_1)G\rho dY.
\eee

\noi\und{Cancellation of $M_\infty$.}
We have by construction of $w$
\[
\left\vert \int_{X_*}^{+\infty}(f_2+f_{3,\gamma}f_1)G\rho dY\right\vert\leq C_{\gamma}\left(a^2+|\epsilon|\right)\le C_\gamma a^2
\]
which together with \eqref{gionioneonenog} yields 
\bea
\label{vneoniveonvnenvo}
\non &&\left|M_\infty-f_{3,\gamma}f_{1,\gamma}e^{-\l_{+,\gamma}X_*}\left(\l_{+,\gamma}-\l_{-,\gamma}\right)v_{2,\gamma}\left[1+O(\eta_*)\right]e^{\l_{+,\gamma}x_*}\eps\right|\le C_\gamma a^2\\
\non &\LR&\left|M_\infty-\frac{f_{3,\gamma}f_{1,\gamma}\left(\l_{+,\gamma}+|\l_{-,\gamma}|\right)v_{2,\gamma}\left[1+O(\eta_*)\right]}{a^{\l_{+,\gamma}}}\eps\right|\le C_\gamma a^2\\
&\LR&\left|M_\infty-\frac{f_{3,\gamma}f_{1,\gamma}}{\eta_*^{\l_{+,\gamma}}}\left(\l_{+,\gamma}+|\l_{-,\gamma}|\right)v_{2,\gamma}\left[1+O(\eta_*)\right]e^{\l_{+,\gamma}x_*}\eps\right|\le C_\gamma a^2.
\eea
The continuity of the map $\eps\mapsto M_\infty$ is a simple application of the Banach fixed point theorem with parameters, and hence deforming $\eps$ in the range \eqref{eq: eps bound}, \eqref{vneoniveonvnenvo} ensures the existence of at least one\footnote{uniqueness of $\eps(a)$ follows again from Banach fixed point, and it can be computed as a function of $a$ independent of $x_*$ as it should be.}  $\eps(a)$ with $M_\infty=0$.\\

\noi\und{Proof of \eqref{ieononveoompmev}}. We conclude that the constructed solution satisfies $$w=w_\infty+O(e^{-2X}).$$  We now rewrite the equation \eqref{eq: l plus mu} as
\bee
&&L^{+}_{\gamma}w=\left[\partial^2_X +\left[1-\frac{1+2(\gamma -1)}{1+e^{2X}}\right]\partial_X -\frac{V_+}{1+e^{2X}}\right]w=G\\
&\LR&\left|\bear
(\partial^2_X +\pa_X)w=\tilde{G} \\
\tilde{G}=G+\frac{1+2(\gamma -1)}{1+e^{2X}}\pa_Xw+\frac{V_+}{1+e^{2X}}w
\ear\right.
\eee
and hence the a priori bound $|\tilde{G}|\lesssim e^{-2X}$ ensures that $w$ satisfies the integral equation:
\be
\label{vneionoevnoinevnoevn}
\pa_X\left(e^X\pa_Xw\right)=e^X\tilde{G}\Rightarrow \pa_X w=-e^X\int_{X}^{+\infty}\tilde{G}dY\Rightarrow w=w_\infty-\int_{X}^{+\infty}\pa_X wdX'.
\ee
Uniqueness of the solution to \eqref{vneionoevnoinevnoevn} in $[X_0,+\infty)$ with the a priori bound $|w-w_\infty|\lesssim e^{-2X}$ for $X_0\gg1 $ large enough is a simple sequence of the structure of $G$ and the smoothness of the non linearity. We now build a solution to \eqref{vneionoevnoinevnoevn} near $+\infty$ in the form $$w_{\rm new}=w_\infty+\sum_{j=1}^k d_je^{-2jX}+w_k$$ which for an explicit sequence $$(4j^2-2j)d_j=2j(2j-1)d_j=F\left[(d_\ell)_{1\le \ell \le j-1}\right]$$ and using an elementary fixed point in $[X_0,+\infty)$ yields a solution with $w_k=O(e^{-2(k+1)X})$. Hence $w=w_{\rm new}$ yields \eqref{ieononveoompmev}. This concludes the proof of Lemma \ref{elnannn} and Proposition \ref{thm: exist of sol}.
 
 \end{proof}

\end{document}